\newcommand{\be}{\begin{equation}}
\newcommand{\ee}{\end{equation}}
\newcommand{\ba}{\begin{array}}
\newcommand{\ea}{\end{array}}
\newcommand{\dsp}{\displaystyle}
\newcommand{\N}{\mathbb{N}}
\newcommand{\Z}{\mathbb{Z}}
\newcommand{\R}{\mathbb{R}}
\newcommand{\Exp}{\mathbb{E}}
\newcommand{\Prob}{\mathbb{P}}
\newcommand{\supinf}{><}
\newcommand{\speed}{\sigma}
\newtheorem{lemma}{Lemma}[section]
\newtheorem{example}{Example}[section]
\newtheorem{corollary}{Corollary}[section]
\newtheorem{definition}{Definition}[section]
\newtheorem{remark}{Remark}[section]
\newcommand{\thmref}[1]{Theorem \ref{thm:#1}}
  \theoremstyle{remark}
\theoremstyle{plain}
  \theoremstyle{plain}
	\newtheorem{proposition}{Proposition}[section]
\newtheorem{assumption}{Assumption}[section]
\newtheorem{theorem}{Theorem}[section]
  \providecommand{\lemmaname}{Lemma}
  \providecommand{\remarkname}{Remark}
\providecommand{\theoremname}{Theorem}
\begin{document}

\title{ Invariant measures for multilane exclusion process}

\author{G. Amir$^a$, C. Bahadoran$^b$, O. Busani$^c$, E. Saada$^d$}
\maketitle
$$ \ba{l}
^a\,\mbox{\small Department of Mathematics, Bar Ilan University,} \\
\quad \mbox{\small 5290002 Ramat Gan, Israel. E-mail: amirgi@macs.biu.ac.il}\\
^b\,\mbox{\small Laboratoire de Math\'ematiques Blaise Pascal, Universit\'e Clermont Auvergne,} \\
\quad \mbox{\small 63177 Aubi\`ere, France. E-mail: christophe.bahadoran@uca.fr}\\
^c\, 
\mbox{\small  University of Edinburgh,
5321, James Clerk Maxwell Building,} \\
\quad \mbox{\small Peter Guthrie Tait Road,
Edinburgh, United Kingdom.}\\
\quad\mbox{\small E-mail: 	obusani@ed.ac.uk}\\
^d\, \mbox{\small 
CNRS, UMR 8145, MAP5, Universit\'e Paris Cit\'e, Campus Saint-Germain-des-Pr\'es,} \\
\quad \mbox{\small  75270 Paris cedex 06, France.
 E-mail: Ellen.Saada@mi.parisdescartes.fr}\\
\ea
$$
\begin{abstract}
 We consider the simple exclusion process on $\Z\times\{0,1\}$, 
 that is,  an ``horizontal ladder''  composed of $2$ lanes,
 depending on 6 parameters. 
 Particles can jump according 
to a lane-dependent translation-invariant nearest neighbour jump kernel,
i.e. ``horizontally'' along each lane, and
``vertically'' along the scales of the ladder.
We prove that  generically, the set of extremal invariant measures 
consists of \textit{(i)} translation-invariant product Bernoulli measures; 
and, modulo translations along $\Z$: \textit{(ii)} at most two 
shock measures (i.e. asymptotic to Bernoulli measures at $\pm\infty$) 
with asymptotic densities $0$ and $2$; \textit{(iii)} at most 
 one (outside degenerate cases) shock measure  with a  density jump 
 of magnitude $1$. We fully determine this set for a range of  parameter values.   
 Our results can be   generalized  in several directions using 
the same approach
and answer certain open questions formulated in \cite{ligd} 
as a  step towards the  process on $\mathbb{Z}^2$. 
\end{abstract}
{\it MSC 2010 subject classification}: 60K35, 82C22.\\ \\
{\it Keywords and phrases}: Multilane exclusion process, invariant measures,  
blocking measures, shock measures. 
\section{Introduction}\label{sec:intro}
The simple exclusion process, introduced in \cite{spi}, 
is a fundamental model in statistical mechanics.
In this markovian process, particles hop on a countable lattice 
following a certain random walk kernel subject to the exclusion rule, 
that allows at most one particle per site.  
As usual for Markov processes, 
the characterization of its invariant measures 
is one of the basic questions to address.
Still today, outside the case of a symmetric kernel (\cite{ligsym}), 
the problem is far from being completely solved. In fact, it has been 
mostly studied for translation invariant kernels. We briefly recall  
known results in this situation. \\ \\
For the exclusion process on $\Z^d$, the set of  extremal 
\textit{translation invariant}  (also called {\em homogeneous})  
stationary  probability  measures consists 
(\cite{liggett2012interacting}) of homogeneous Bernoulli product measures. 
However, for a non-symmetric  kernel, there may exist 
extremal invariant  probability 
measures that are \textit{not} translation invariant.  
These are fairly well  (though not completely)  understood 
in one-space dimension  (\cite{Liggett1976, fls, BMM, Bramson2002};  
see also \cite{ligd} for open questions): 
under suitable assumptions, there is a unique (up to translations) 
such extremal  probability  measure, called either a \textit{blocking} 
or a \textit{profile} measure (the latter being 
a weakened version of the former); 
its main feature (for a kernel with, say, a positive drift) 
  is that the asymptotic particle density 
is $0$ to the left and $1$ to the right  of the origin.  \\ \\
In several space dimensions, 
although  analogues of  blocking or profile measures 
can be exhibited (\cite{ligd}), 
 the complete characterization of invariant  probability 
 measures remains an open question.  The paper \cite{ligd} 
 initiated a program in this direction. The authors
introduced so-called {\em $v$-homogeneous} measures, that is, 
measures invariant by translations in directions orthogonal 
to a given vector $v$, and {\em $v$-profile} measures, 
that is $v$-homogeneous measures with asymptotic density $0$ 
at $-\infty$ and $1$ at $+\infty$ parallel to $v$. They showed 
that when $v$ is orthogonal to the drift, extremal stationary
$v$-homogeneous measures are homogeneous Bernoulli measures.
They proved that under some conditions on the jump kernel and 
vector $v$, extremal $v$-profile measures are given by an explicit 
family of product measures analogous to  those in  
\cite{Liggett1976}. Finally, 
they decomposed the problem of characterizing all invariant measures 
into a series of open questions. The first of these are  (BL1)  whether any 
non-homogeneous extremal stationary measure is $v$-homogeneous 
for some $v$, and  (BL2)  whether it is $v$-profile for some $v$. 
These questions were also formulated for the so-called 
{\em ladder process},  where one among two dimensions is cyclic, 
mentioned in \cite{ligd} as an interesting step towards the process on $\Z^2$. 
In this context, $v$-homogeneity is interpreted as cyclic rotational invariance. 
\\  \\
In the present paper,  we obtain characterization results  
(Theorems \ref{thm:characterization_lemma} 
 to \ref{thm:Characterization_of_invariant_measures_gen}), 
 for  intermediate models between dimensions 1 and 2 containing 
  the above ladder process.  As explained below, we exhibit new phenomena 
and a richer behaviour as compared to the one  dimensional
single-lane exclusion process.  
We consider  first  the simple exclusion process 
on $\Z\times\{0,1\}$,  that is  an ``horizontal  
ladder''  composed of $2$ lanes. 
Particles can jump ``horizontally''  to nearest neighbour sites 
along each lane according to a lane-dependent translation-invariant 
jump kernel, and ``vertically'' along the scales of the ladder 
according to another kernel. 
In the totally asymmetric case, this can be interpreted 
as traffic-flow on a highway, 
with two lanes on which cars have different speeds and 
different directions. \\ \\ 
We  now  
describe our results  for the two-lane model.  
Let $\gamma_0,\gamma_1$ denote mean drifts on each lane, 
$p$ the jump rate 
from lane $0$ to lane $1$ and  $q$ the jump rate 
from lane $1$ to lane $0$.  
The drifts may be of 
equal or opposite signs; one or both of them may also vanish. 
We assume that $p+q>0$, so that both lanes are indeed connected. 
We prove that 
the set $\mathcal I_e$ of extremal invariant  probability  measures
can be decomposed as a disjoint union
\begin{equation}\label{decomp_extremal}
\mathcal I_e=\mathcal I_0\cup\mathcal I_1\cup\mathcal I_2
\end{equation}
In this decomposition, $\mathcal I_0:=\{\nu_\rho,\rho\in[0,2]\}$ is 
the set of extremal invariant  probability  measures that are 
\textit{translation invariant} along lanes. 
The parameter $\rho$ represents the total density over the two lanes. 
Under  $\nu_\rho$, the mean densities 
$\rho_0,\rho_1$ on each lane are functions of $\rho$, 
and they are different 
when $p\neq q$. In the sequel, we refer to these  probability  measures 
as ``Bernoulli measures''. For $k\in\{1,2\}$, $\mathcal I_k$ denotes a 
(possibly empty) set of extremal invariant  
probability  measures that  we call  
\textit{shock measures} of \textit{amplitude} $k$. 
By a shock measure, we mean a  probability  
measure that is asymptotic to two Bernoulli measures 
of different densities
$\rho^-$, resp. $\rho^+$, when viewed from faraway left, resp. right
 (w.r.t. the origin).   
 We define the amplitude of the shock 
to be  $k:=|\rho^+-\rho^-|$. 
The set $\mathcal I_2$ contains only shocks such that 
$(\rho^-,\rho^+)=(0,2)$ or $(\rho^-,\rho^+)=(2,0)$. 
These measures are the analogue in our context of 
blocking measures or profile measures. 
In some cases, $\mathcal I_1$ may contain  what we call 
\textit{partial} blocking measures, i.e., measures 
whose restriction to one lane is a blocking measure, 
and whose restriction to
 the other lane is  a Dirac measure concentrated either on
 the full configuration or on the empty one.  \\ \\
We show that the following generic picture holds 
outside some  degenerate cases:  up to translations along $\Z$, 
 \textit{(i)} the set $\mathcal I_1$ contains at most  one 
 probability  measure;  \textit{(ii)}  the set  
$\mathcal I_2$  contains at most two   probability  measures.  
In particular, these sets are at most countable. 
We can fully determine 
$\mathcal I_1$ and $\mathcal I_2$, 
and thus obtain a complete characterization of invariant  
probability  measures,  for a subset of parameter values 
including the following situations: 
 (a)  when $\gamma_0$, $\gamma_1$ are close enough, the ratio 
$q/p$ small enough or large enough,  and
\begin{equation}\label{cond_blocking}
d_0/l_0=d_1/l_1\neq 1
\end{equation}
 where $d_i$, resp. $l_i$, denotes the jump rate to the right, 
 resp. left, on lane $i\in\{0,1\}$;
(b)  when $p$ or $q$ vanishes and $\gamma_0\neq\gamma_1$;  (c) 
when $\gamma_0=\gamma_1=0$ and $p,q$ are arbitrary.  \\ \\
 In case (b), we exhibit {\em partial} blocking measures 
(where only one lane has a blocking measure), a new phenomenon 
with respect to single-lane  asymmetric simple exclusion process (ASEP).  Another
result in sharp contrast with the 
one-dimensional case is that $\mathcal I_2$ may be empty when $pq=0$
 even if  the drifts are both 
strictly positive (or both strictly negative); and when it is not, 
it is described by two integer parameters representing 
two independent shock locations instead of a single parameter 
in the usual ASEP.  
 In case (a), our characterization can be viewed in this context 
as a positive answer to open question (BL2) above from \cite{ligd}. 
The set $\mathcal I_2$ is then derived from a family of 
two-dimensional product blocking measures that are analogues 
in this context (see Remark \ref{remark_v}) of certain $v$-profile 
measures constructed on $\Z^d$ in \cite{ligd}. 
We observe here some structural similarity 
between elements of $\mathcal I_2$ and extremal blocking measures 
constructed in \cite{bfj} for the single-lane 
Misanthrope's process.   
It would be interesting to know if  two-dimensional  blocking measures 
can lead to remarkable combinatorial identities as in \cite{bfj}.  \\ \\
The following questions are left open. First, we can 
show that $\mathcal I_1$ is 
indeed nonempty in cases where it contains \textit{only} 
partial blocking measures,  and that it is empty 
on a set of parameter values
for which $\gamma_0$ and $\gamma_1$ are close enough, 
and the ratio between $p$ and $q$ small enough (or large enough).
We do not know if  for certain parameter values  
it is possible to have $\mathcal I_1$ nonempty with 
a shock of amplitude  $1$ that is
not a partial blocking measure. 
In the case $p=q$ (and more generally for the vertically  
cyclic ladder process, 
see below), it is believed in \cite{ligd} that 
this probably does not occur. 
 Next,  we conjecture that when 
$pq>0$,  under a suitable assumption 
(see Remark \ref{remark_open}) verified in particular when both drifts are 
strictly positive, $\mathcal I_2$ is nonempty,
even without the assumption 
\eqref{cond_blocking}.  We believe that this could be 
proved in the spirit of \cite{BMM} 
by means of the hydrodynamic limit. We shall investigate 
the hydrodynamic behaviour 
of our model  and extensions thereof (see below) in \cite{abbs2}.  \\ \\
 In the assumptions of  Theorems \ref{thm:characterization_lemma}--\ref{cor:entire},
to avoid cumbersome statements and proofs, we have not aimed at fullest possible 
generality. Nevertheless, we stress that our approach is robust enough to handle 
more general or related situations without substantial changes. 
In Appendix \ref{app:ext}, 
we provide a detailed discussion of such extensions with
precise assumptions and conclusions, and explain why the ideas of proofs 
developed in the body of the paper carry over to such situations.
These include non-nearest neighbour jump kernels, multilane processes 
with more than two lanes, and Misanthrope's processes. We point out that 
although the latter 
are single-lane generalizations of the simple exclusion process, 
the characterization 
of their invariant measures (outside translation invariant ones) 
is still an open problem. 
We realized along the way that, though this question was not our initial motivation, 
it could be partly solved by our methods.
Among the above extensions, the vertically cyclic multilane ladder process 
from \cite{ligd} 
is however treated in  Subsection \ref{subsec:rotation} rather than in the appendix, 
because our corresponding Theorem \ref{thm:Characterization_of_invariant_measures_gen} 
answers question (BL1) above from \cite{ligd}, namely all invariant measures 
are rotationally invariant.  \\ \\
 One of the difficulties of our models is that available approaches 
(\cite{Liggett1976, Bramson2002}) to classify invariant measures 
for the one-dimensional single-lane  asymmetric simple exclusion 
process  rely heavily on the fact that at most one particle 
is allowed on each site. 
 In the aforementioned works, the line of argument is to show that 
 for a non translation-invariant stationary measure, 
 the  {\em mean} density difference between $-\infty$ and $+\infty$ 
 is at least $1$. Since the possible density range is $[0,1]$, 
 this {\em automatically} implies that the measure is a shock 
 with asymptotic densities $0$ and $1$ at $\pm\infty$ (see Remark \ref{remark:diff}). 
In our case,  the range of global densities is no longer restricted to $1$ 
but to the number of lanes. A different and more complex scheme of proof 
(see outline in Subsection \ref{subsec:ideas}) is imposed by this, 
but also by the interplay of several parameters leading to a wider variety 
of behaviours. One key point is to show 
{\em a priori} that an invariant measure is a shock. This is done thanks to a  
novel and robust argument (Proposition \ref{prop:step2})  using  extremality 
and attractiveness, which can be transposed to other attractive models.  
{\em Then} we carry out an analysis of possible shocks based on the macroscopic 
flux function of the model.  Note that this density range problem arises also 
for the Misanthrope's process and similarly makes the characterization problem
for this model  different than for the simple exclusion process.    \\ \\
Another difficulty that occurs when interlane jumps are possible 
only in one direction is the lack of irreducibility for the jump kernel. 
Usual arguments  (in the line of \cite{Liggett1976})  
based on attractiveness and irreducibility, 
showing that discrepancies between two coupled processes 
eventually disappear  (see e.g. \cite{Liggett1976}),  
 are not sufficient in this case. \\ \\  
 We finally mention that while revising this manuscript, we became aware 
 that the case where all lanes are symmetric (corresponding to 
 $\gamma_0=\gamma_1=0$ in the basic two-lane model) had been recently studied 
in \cite{rvw} by different duality methods.  \\ \\
 This paper is organized as follows.  Models are introduced 
 in Section \ref{sec:models}.  
We then state our results on invariant measures for the two-lane  
simple exclusion process:  
Theorem \ref{thm:characterization_lemma} for the 
invariant and translation 
invariant probability measures,  
Theorems \ref{thm:Characterization_of_invariant_measures}
and \ref{cor:entire}
for the invariant probability measures; 
finally Theorem \ref{thm:Characterization_of_invariant_measures_gen}
 deals with the multilane  simple exclusion process,  
 and in particular with the
ladder process from \cite{ligd}. 
Section \ref{sec:proofs}
is devoted to the proof of Theorem \ref{thm:characterization_lemma}, 
and Section
\ref{sec:proofs-2} to the proofs of Theorems 
\ref{thm:Characterization_of_invariant_measures}, \ref{cor:entire}
and \ref{thm:Characterization_of_invariant_measures_gen}.  
In order to make the general schemes of proofs more visible, 
 the main ideas are first explained in 
Subsection \ref{subsec:ideas}, and  
most intermediate results used to establish 
Theorems \ref{thm:Characterization_of_invariant_measures} 
and \ref{cor:entire} are proved in the separate 
Section \ref{subsec:proofs_lemmas}. 
 Extensions of our results are discussed in  Appendix \ref{app:ext}.  
\section{Models and results}\label{sec:models}
 In this section,  we  present and state our results
 for our basic model, the \textit{two-lane}  SEP 
(motivated by traffic-flow considerations), and for its generalization
to a  \textit{multilane}  SEP.
Before that, we first recall the definition of 
the simple exclusion process on a countable set $V$.
The two-lane  and the multilane SEP indeed belong to this class, 
but they have  specific properties 
due to the structure of the set $V$.
\subsection{Simple exclusion process}\label{subsec:single_lane}
 Throughout the paper, $\Z$ denotes the set of integers 
 and $\N$ the set of nonnegative integers. 
Let $V$ be a nonempty countable set.
The state space of the process is 
\be\label{state_space}\mathcal{X}:=\{0,1\}^V
\ee
that is a compact polish space with respect to product topology. 
One can think of   $\eta\in\mathcal X$  as a configuration 
of particles on $V$, 
i.e. for which a site $x\in V$ is occupied by a particle if and only if
$\eta\left(x\right)=$1. \newline\newline
 We call \textit{kernel} on $V$ a function  
 $p:V\times V\to[0,+\infty)$ such that
\begin{equation}\label{cond_lig}
\sup_{x\in V}\left\{
\sum_{y\in V}p(x,y)+\sum_{y\in V}p(y,x)
\right\}<+\infty
\end{equation}
The   $(V,p)$-simple exclusion process (in short: SEP) 
 is a Markov process  
$\left(\eta_{t}\right)_{t\geq0}$ on $\mathcal{X}$ 
(see \cite[Chapter VIII]{liggett2012interacting})  with generator
\begin{equation}
Lf\left(\eta\right)
=\sum_{x,y\in V}p\left(x,y\right)\eta\left(x\right)
\left(1-\eta\left(y\right)\right)
\left(f\left(\eta^{x,y}\right)-f\left(\eta\right)\right),
\label{eq:generator of the Exclusion}
\end{equation}
where  $\eta^{x,y}$, given by
\[
\eta^{x,y}\left(w\right)=\left\{ \begin{array}{cc}
\eta\left(w\right) & w\neq x,y\\
\eta(x)-1 & w=x\\
\eta(y)+1 & w=y
\end{array}\right.,
\] 
is the new configuration after a particle has jumped from $x$ to $y$, 
and $f$ is a cylinder (or local) function, that is, 
a function that depends
only on the value of $\eta$ on a finite number of sites in $V$. 
We denote by $(S_t)_{t\geq 0}$ the semigroup generated by 
\eqref{eq:generator of the Exclusion},  and by $\Exp_\mu$, 
resp. $\Exp_\eta$, 
the expectation for the process with initial distribution 
a probability measure
$\mu$ on $\mathcal{X}$, resp. with initial configuration  
$\eta\in\mathcal{X}$.\newline\newline 
The (nearest-neighbour) SEP  on $\Z$ is the particular case of 
\eqref{eq:generator of the Exclusion} with $(V,p)$ given  by 
\be\label{def_tasep} 
V=\Z, \quad p(x,y)=d{\bf 1}_{\{y-x=1\}}+l{\bf 1}_{\{y-x=-1\}};
\quad d,l\geq 0,\,d+l>0
\ee
Within this category we distinguish 
the \textit{symmetric}, resp. \textit{asymmetric} exclusion process 
(SSEP, resp. ASEP), for which $d=l$, resp. $d\neq l$; and the 
\textit{totally asymmetric} simple exclusion process (TASEP) on $\Z$, 
for which $dl=0<d+l$. \newline\newline
A probability measure $\mu$ on $\mathcal X$ is said to be 
\textit{invariant} for the Markov process generated by 
\eqref{eq:generator of the Exclusion} if it is invariant 
with respect to the semigroup $(S_t)_{t\geq 0}$, which is equivalent to
\begin{equation}\label{def_inv}
\int Lf(\eta)d\mu(\eta)=0
\end{equation}
for every cylinder function $f$. The set of invariant probability 
measures is denoted by $\mathcal I$. 
Since $\mathcal{I}$ is convex, by Choquet-Deny Theorem, 
in order to know $\mathcal I$,  it is enough to determine 
the subset of its extremal elements,  denoted by $\mathcal I_e$. \newline\newline
   A probability measure $\mu$ is said to be {\em reversible} 
if $L$ is a self-adjoint operator 
in $L^2(\mathcal X,\mu)$. Reversible measures are invariant; 
when they exist, they are usually easier to compute explicitely 
than non-reversible invariant measures. For instance, 
the following general result, which will be helpful, 
can be found (in a slightly different formulation) 
in \cite[Chapter VIII]{liggett2012interacting}.
\begin{proposition}\label{prop:lig}
Let $S$ be a countable subset and $\pi(.,.)$ a kernel on $S$ 
satisfying \eqref{cond_lig}.
Let $\rho_.=(\rho_i)_{i\in S}$ be a $[0,1]$-valued family
such that, for every  $i,j\in S$,  the following condition holds:
\begin{equation}
 \rho_i(1-\rho_j)\pi(i,j)=\rho_j(1-\rho_i)\pi(j,i)
\label{eq:detailed balance_alpha}
\end{equation}
 Define the product measure  ${\mu}_{S,\rho_.}$ on $\{0,1\}^S$ by
\begin{equation}
%
\mu_{S,\rho_.}(d\eta)=\bigotimes_{i\in S}\mathcal B(\rho_i)(d\eta_i),
\label{alpha_x}
\end{equation}
where $\mathcal B(\rho)$ denotes the Bernoulli measure with parameter $\rho$. 
Then  ${\mu}_{S,\rho_.}$  is reversible with respect to the   $(S,\pi)$ 
simple exclusion process. 
\end{proposition}
\begin{remark}\label{rk_prod}
When the family $\rho_.$ has constant value $\rho\in[0,1]$, the product measure defined by \eqref{alpha_x} will be denoted by $\mu_{S,\rho}$.
The subscript $S$ will be dropped whenever there is no ambiguity.
\end{remark}
\subsection{The general setup}
\label{subsec:two_lane}
In the sequel, we shall focus on special choices of $V$ and  $p(.,.)$ 
for which the model has an interesting structure.  
First, we consider a lattice $V$ of the form 
\be\label{structure}
V=\Z\times W
\ee
for some 
nonempty finite set $W$. An element $x$ of $V$ will be 
generically written 
in the form $x=(x(0),x(1))$, with $x(0)\in\Z$ and $x(1)\in W$. 
In traffic-flow modeling, 
we may think of  $V$ as a highway, of $\Z$ as a lane, 
and of $x$ as site $x(0)$ on lane 
$x(1)$. For $i\in W$,
\begin{equation}\label{eq:general lane}
\mathbb{L}_{i}:=
\left\{
 x\in V:\,x(0)\in\mathbb{Z},\,x(1)=i
\right\} 
\end{equation}
denotes the $i$'th lane of $V$,
 and $\eta^i$  the particle configuration on $\Z$,  defined by
\be\label{config_lane}\eta^{i}\left(z\right)=\eta\left(z,i\right)\ee
 for $z\in\Z$.  We can view $\eta^i$ as the configuration on lane $i$.
 Another interpretation is that $i\in W$ represents a particle species, 
 then $\eta(z,i)=\eta^i(z)$ is the number of particles of species $i$ 
 at site  $z\in\Z$.  We also denote by 
\be\label{config_total}
\overline\eta(z)=\sum_{i\in W}\eta^i(z)
\ee
the total number of particles at $z\in\Z$. \newline\newline
 Next, we consider kernels $p(.,.)$  of the form 
\be\label{restrict_kernel}
p(x,y)=\left\{
\ba{lll}
0\mbox{ if } x(0)\neq y(0)\mbox{ and }x(1)\neq y(1)\\ 
 q_i(x(0),y(0))=:Q_i[y(0)-x(0)] & \mbox{if} & x(1)=y(1)=i\\ 
q(x(1),y(1)) & \mbox{if} & x(0)=y(0)
\ea
\right.
\ee
 for $x,y\in V$,  where $q(.,.)$ is a kernel on $W$  
 (that will be given afterwards for the $W$ of interest), 
and for each $i\in W$, $q_i(.,.)$ is a translation invariant kernel 
on $\Z$ given by
\be\label{restrict_2}
q_i(u,v)= d_i{\bf 1}_{\{v-u=1\}}+l_i{\bf 1}_{\{v-u=-1\}}
,\quad  Q_i(z)=d_i{\bf 1}_{\{z=1\}}+l_i{\bf 1}_{\{z=-1\}}
\ee
 for $u,v\in\Z$,  where $d_i\geq 0$  and $l_i\geq 0$ 
 are such that $d_i+l_i>0$. \newline\newline  
We shall be interested in translations along $\Z$, but the set $W$
 is in general not endowed with a translation operator.
We denote by $(\tau_k)_{k\in\mathbb{Z}}$ the group of space shifts 
on $\Z$. 
The shift operator $\tau_k$ acts on a particle configuration
$\eta\in\mathcal X$ through
\begin{equation}\label{shift_config}
(\tau_k\eta)(z,w):=\eta(z+k,w),\quad\forall (z,w)\in\mathbb{Z}\times W
\end{equation}
It acts on a function $f:\mathcal X\to\mathbb{R}$ via
\begin{equation}\label{shift_fct-f}
(\tau_k f )(\eta):=f(\tau_k\eta),\quad\forall \eta\in\mathcal X
\end{equation}
If   $\mu$ is a probability measure on $\mathcal X$, 
 then  $\tau_k$ acts on $\mu$ via
\begin{equation}\label{shift_fct-mu}
\int_{\mathcal X}f(\eta) d(\tau_k\mu)(\eta) 
:=\int_{\mathcal X}(\tau_k f)(\eta)d\mu(\eta)
\end{equation}
for every bounded continuous function $f:\mathcal X\to\mathbb{R}$.
Last, if $\mathcal L$ is a linear operator acting on functions 
$f:\mathcal X\to\mathbb{R}$, 
 then  $\tau_k$ acts on $\mathcal L$ via
\begin{equation}\label{shift_fct-L}
(\tau_k \mathcal L)f:={\mathcal L}(\tau_k f)
\end{equation}
By an abuse of notation, in what follows,
we write $\tau$ instead of $\tau_{1}$.
We define
$\mathcal{S}$ to be the set of all probability measures 
on $\mathcal{X}$ that
are invariant under the translations $\tau_k$,  $k\in\Z$. 
\subsection{The two-lane  SEP }
\label{subsec:two_lane2}
 In the sequel, we shall sometimes refer to 
  SEP (resp. SSEP, ASEP, TASEP) as \textit{single-lane} or 
 \textit{one-dimensional} SEP (resp. SSEP, ASEP, TASEP).  
Our  basic  model is  \textit{the two-lane} SEP,  which  corresponds to
\be\label{def_ladder}
%
W=\{0,1\}
\ee
We can view this model as a dynamics on an infinite 
horizontal ladder, with vertical
steps separating its two bars 
$\mathbb{L}_{0}$ and $\mathbb{L}_{1}$, namely:
\begin{align}\begin{array}{cc}
\mathbb{L}_{0} & = \, \left\{ x\in V:x=\left(z,0\right),
z\in\mathbb{Z}\right\} \\
\mathbb{L}_{1} & = \, \left\{ x\in V:x=\left(z,1\right),
z\in\mathbb{Z}\right\}  \label{L0L1}
\end{array}\end{align}
In the traffic interpretation,  we call $\mathbb{L}_{0}$ 
and $\mathbb{L}_{1}$  respectively
the {\em upper} and {\em lower} lane,  
and the steps between them the 
direction a car can follow to change lane. 
 Thus we shall henceforth call {\em downward} jump a jump from 
$\mathbb{L}_0$ to $\mathbb{L}_1$, and {\em upward} jump a jump from 
$\mathbb{L}_1$ to $\mathbb{L}_0$.     \newline\newline
Let $p,q\geq 0$ and  $d_0,l_0,d_1,l_1\geq 0$. 
The two-lane  SEP  is the dynamics on
$\mathcal{X}$ defined by the generator 
\eqref{eq:generator of the Exclusion}
 with kernel \eqref{restrict_kernel}--\eqref{restrict_2}, 
in which  $q(.,.)$ is given by  
\be\label{restrict_1}
q(0,1)=p,\quad q(1,0)=q
\ee  
This means that,  for $x,y\in V$,   
\begin{equation}
p\left(x,y\right)=\left\{ \begin{array}{ccc}
 d_0 & \mbox{if} & x,y\in\mathbb{L}_{0},\ensuremath{y(0)-x(0)=1}\\
 l_0 &  \mbox{if} &  x,y\in\mathbb{L}_{0},\ensuremath{y(0)-x(0)=-1}\\
d_1  & \mbox{if} & x,y\in\mathbb{L}_{1},\ensuremath{y(0)-x(0)=1}\\
 l_1  &  \mbox{if} &  x,y\in\mathbb{L}_{1},\ensuremath{y(0)-x(0)=-1}\\
  p & \mbox{if} & x\in\mathbb{L}_{0},y\in\mathbb{L}_{1},x(0)=y(0)\\
q & \mbox{if} &  x\in\mathbb{L}_{1},y\in\mathbb{L}_{0},x(0)=y(0)\\
0 & \mbox{otherwise} &
\end{array}\right.\label{eq:intensity_c}
\end{equation}
In other words,  particles move one step to the right  
or to the left  on each lane 
at a rate depending on the lane, and we allow the rate 
$p$ at which particles go down 
to be different than the rate $q$ of going up.   
We shall assume in the sequel that (cf. \eqref{restrict_2})  
\be\label{always_move}(d_0+l_0)(d_1+l_1)>0\ee
so that particles can always move on both lanes. However
they  cannot go from  $\mathbb{L}_{0}$ to $\mathbb{L}_{1}$ if $p=0$, 
nor from  $\mathbb{L}_{1}$ to $\mathbb{L}_{0}$ if $q=0$. If $p=q=0$, 
the dynamics reduces to two independent   SEP's  on each lane. Thus, 
 $p+q\neq 0$ introduces interaction between the two lanes. 
For $i\in W$, we let 
\be\label{def_drift}
\gamma_i:=d_i-l_i
\ee
denote the mean drift on lane $i$. 
The following symmetry properties of the two-lane SEP will be useful. 
Define the \textit{lane symmetry} operator 
$\sigma:\mathcal X\to\mathcal X$,
 the \textit{lane exchange} operator $\sigma':\mathcal X\to\mathcal X$, 
 and the \textit{particle-hole} symmetry operator 
 $\sigma'':\mathcal X\to\mathcal X$ by
\be\label{sym_op}
(\sigma\eta)(z,i)=\eta(-z,i);\,
(\sigma'\eta)(z,i)=\eta(z,1-i);\,
(\sigma''\eta)(z,i)=1-\eta(z,i)
\ee
for $\eta\in\mathcal X$ and $(z,i)\in V$.
Let us call the process defined by  the generator
\eqref{eq:generator of the Exclusion} with transition kernel  
\eqref{eq:intensity_c} the $(d_0,l_0);(d_1,l_1);(p,q)$-two-lane SEP.
The definition of the two-lane SEP dynamics implies the following.
\begin{lemma}\label{lemma_sym}
Let $(\eta_t)_{t\geq 0}$ be a $(d_0,l_0);(d_1,l_1);(p,q)$-two-lane SEP. 
\newline
Then the image of this process by $\sigma$, 
resp. $\sigma',\sigma''$, is a
$(l_0,d_0);(l_1,d_1);(p,q)$, resp. $(d_1,l_1);(d_0,l_0);(q,p)$, 
resp. $(l_0,d_0);(l_1,d_1);(q,p)$-two-lane SEP.
\end{lemma}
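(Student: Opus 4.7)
The plan is to verify, for each of the three symmetries, an identity between generators: the pushed-forward semigroup has as its generator that of the two-lane SEP with the claimed parameters. Since $\sigma,\sigma',\sigma''$ are all involutions on $\mathcal{X}$, the image generator reads $\tilde L f = L(f\circ\Sigma)\circ\Sigma$, where $L$ is the original generator \eqref{eq:generator of the Exclusion} and $\Sigma$ denotes the symmetry at hand. Plugging in \eqref{eq:generator of the Exclusion}--\eqref{eq:intensity_c} and performing an appropriate change of summation variable reduces the problem to reading off the new kernel $\tilde p$ and comparing with \eqref{eq:intensity_c}.

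For the two spatial symmetries, the key (elementary) observation is that $\Sigma$ extends to an involution of the site set $V=\mathbb{Z}\times\{0,1\}$ with $(\Sigma\eta)(w)=\eta(\Sigma w)$ and $\Sigma(\eta^{x,y})=(\Sigma\eta)^{\Sigma x,\Sigma y}$. The change of variables $u=\Sigma x$, $v=\Sigma y$ in $\sum_{x,y}p(x,y)\,\eta(x)(1-\eta(y))[f(\eta^{x,y})-f(\eta)]$ then yields $\tilde p(u,v)=p(\Sigma u,\Sigma v)$. Specializing to $\Sigma=\sigma$ (horizontal reflection), every horizontal edge is reversed while every vertical edge is preserved, which interchanges $d_i$ and $l_i$ on each lane and leaves $p,q$ unchanged. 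For $\Sigma=\sigma'$ (lane swap), the horizontal rate pairs $(d_0,l_0)$ and $(d_1,l_1)$ are exchanged, and each vertical edge changes orientation, which interchanges $p$ and $q$.

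The particle-hole involution $\sigma''$ requires slightly more care, and is where I expect the only substantive bookkeeping. Here the relevant identity is $\sigma''(\eta^{x,y})=(\sigma''\eta)^{y,x}$, where source and destination are swapped, since a jump $x\to y$ for $\eta$ is seen as a jump $y\to x$ for the complementary configuration. Combined with $\eta(x)(1-\eta(y))=(\sigma''\eta)(y)(1-(\sigma''\eta)(x))$, substitution into $L(f\circ\sigma'')\circ\sigma''$ and the renaming $(u,v)=(y,x)$ produces a SEP generator with kernel $\tilde p(u,v)=p(v,u)$; every edge of the original kernel has its orientation reversed. From \eqref{eq:intensity_c} this simultaneously interchanges $d_i$ and $l_i$ on each lane and interchanges $p$ and $q$ across lanes, yielding the $(l_0,d_0);(l_1,d_1);(q,p)$-SEP asserted in the lemma. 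The only nontrivial point is precisely this source-destination swap, which is what forces $p$ and $q$ to exchange in the $\sigma''$ case but not in the $\sigma$ case.
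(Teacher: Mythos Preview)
Your proof is correct and follows essentially the approach implied by the paper, which simply states that the lemma is a direct consequence of the definition of the two-lane SEP dynamics without providing further details. Your explicit verification via the pushed-forward generator, identifying $\tilde p(u,v)=p(\Sigma u,\Sigma v)$ for the spatial symmetries and $\tilde p(u,v)=p(v,u)$ for the particle-hole involution, is precisely the computation that makes this claim rigorous.
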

Thus, without loss of generality, we shall assume in the sequel that
\be\label{wlog}
\gamma_0\geq 0,
\quad\gamma_0+\gamma_1\geq 0,\quad  p\geq q, \quad p>0 
\ee
If we view $i\in\{0,1\}$ as a species rather than a lane, 
the interpretation is as follows: the dynamics within each species 
is a  SEP  of $\Z$, and a lane change becomes a spin flip whereby 
a particle may change its species. The exclusion rule within species 
implies that a particle cannot change its species if there is already 
a particle of the other species sitting at the same site. 
This is the only point where an interaction occurs between the two species.
\subsection{Invariant measures  for two-lane SEP }\label{subsec:inv}
Let us start with translation invariant measures.  
Recalling \eqref{decomp_extremal}, this corresponds to $\mathcal I_0$; 
the complete description of $\mathcal I_e$
will be given in Subsection \ref{subsub:Ie}. 
\subsubsection{Translation invariant stationary measures 
 for two-lane SEP}\label{subsub:tinv}
 The following two-parameter Bernoulli product probability measures 
 will be central. Let us   define  
 $\nu^{\rho_{0},\rho_{1}}$  for $(\rho_{0},\rho_{1})\in [0,1]^2$,
as the product probability measure on $\mathcal{X}$ such that 
\begin{align}
 \nu^{\rho_{0},\rho_{1}} 
\left(\eta\left(x\right)=1\right) & =\left\{ \begin{array}{cc}
\rho_{0} & x\in\mathbb{L}_{0}\\
\rho_{1} & x\in\mathbb{L}_{1}
\end{array}\right..\label{eq:two-rate Bernoulli measure}
\end{align}
In words, the two lanes are independent, 
 and for $i\in\{0,1\}$,  the projection of  
 $\nu^{\rho_{0},\rho_{1}}$  on lane $\mathbb{L}_i$ 
is the product Bernoulli measure $\mu_{\mathbb{L}_i,\rho_i}$ 
with parameter $\rho_i$, see \eqref{alpha_x} 
and Remark \ref{rk_prod}.  \newline\newline
When $p=q=0$, as mentioned after \eqref{always_move}, 
the two lanes evolve as independent  SEP's,   hence 
 $\nu^{\rho_{0},\rho_{1}}$  is an invariant measure 
 for every $(\rho_0,\rho_1)\in[0,1]^2$.
 We look for a relation between $\rho_{0}$ and $\rho_{1}$ 
 under which we could have 
 $\nu^{\rho_{0},\rho_{1}}\in\mathcal{I}$ 
when  $p+q\neq 0$.  
To this end, we define the following subset $\mathcal F$ of $[0,1]^2$:
\be\label{def_f_always}
{\mathcal F}:=\left\{
(\rho_0,\rho_1)\in[0,1]^2:\, p\rho_0(1-\rho_1)-q\rho_1(1-\rho_0)=0
\right\}
\ee
 The set $\mathcal F$ expresses an equilibrium 
  detailed balance  relation 
 for vertical jumps: 
 it states that under $\nu^{\rho_0,\rho_1}$, 
 the mean algebraic ``creation rate'' 
 on each lane (i.e. resulting from jumps from/to the other lane) 
 has to be $0$. 
 Similarly to the  single-lane SEP,  we have
the following theorem, proved in  Section \ref{sec:proofs}. 
\begin{theorem}
\label{thm:characterization_lemma} We have that 
\begin{eqnarray}\label{eq:charac}
\left(\mathcal{I}\cap\mathcal{S}\right)_{e} & = &
\left\{
\nu^{\rho_{0},\rho_{1}}:\,(\rho_0,\rho_1)\in\mathcal F
\right\}\\
\label{sameset} & = & \left\{
\nu_{\rho}:\,\rho\in[0,2]
\right\}
\end{eqnarray}
for a one-parameter family $\left\{\nu_{\rho}:\,0\leq\rho\leq2\right\}$
of  probability  measures on $\mathcal{X}$, where the parameter 
$\rho$ represents
the total mean density over the two lanes:
\begin{equation}
\mathbb{E}_{\nu_{\rho}}\left\{
\eta^{0}(0)+\eta^{1}(0)
\right\}
=\rho
\label{eq:density at a point}
\end{equation}
\end{theorem}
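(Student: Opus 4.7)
My plan is to prove \eqref{eq:charac} by double inclusion and \eqref{sameset} by an explicit parametrization, in three steps.

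\emph{Easy inclusion and extremality.} I decompose the generator as $L=L_h+L_v$, where $L_h=L_0+L_1$ governs horizontal nearest-neighbour jumps on lanes $0$ and $1$, and $L_v$ governs vertical exchanges $(z,0)\leftrightarrow(z,1)$ at each site $z\in\Z$. For any cylinder $f$, since $\nu^{\rho_0,\rho_1}$ factorizes across lanes and each Bernoulli marginal $\mu_{\rho_i}$ is stationary for the single-lane SEP on lane $i$ (regardless of $d_i,l_i$, cf.\ Section~\ref{sec:inv_single}), one has $\int L_h f\,d\nu^{\rho_0,\rho_1}=0$. The operator $L_v$ acts site-by-site, so by the product structure of $\nu^{\rho_0,\rho_1}$ the integral $\int L_v f\,d\nu^{\rho_0,\rho_1}$ splits into single-site contributions, each proportional to the vertical flux $p\rho_0(1-\rho_1)-q\rho_1(1-\rho_0)$, which vanishes precisely on $\mathcal F$. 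Extremality within $\mathcal I\cap\mathcal S$ then follows from the translation-ergodicity of Bernoulli product measures.

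\emph{Parametrization.} Solving the defining equation of $\mathcal F$ for $\rho_1$ in terms of $\rho_0$ (using $p>0$ from \eqref{wlog}) yields $\rho_1=p\rho_0/[q+(p-q)\rho_0]$, and the map $\rho_0\mapsto \rho_0+\rho_1$ is then a continuous, strictly increasing bijection from $[0,1]$ onto $[0,2]$. Defining $\nu_\rho:=\nu^{\rho_0(\rho),\rho_1(\rho)}$ along this curve immediately gives \eqref{sameset} together with \eqref{eq:density at a point}.

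\emph{Hard reverse inclusion.} Take $\mu\in(\mathcal I\cap\mathcal S)_e$ and set $\rho_i:=\Exp_\mu[\eta^i(0)]$. Applying $\int Lf\,d\mu=0$ to $f(\eta)=\eta^0(0)$ and using translation invariance to telescope the horizontal contributions leaves the one-site identity
\[
p\rho_0-q\rho_1=(p-q)\,\Exp_\mu[\eta^0(0)\eta^1(0)].
\]
To upgrade this into the full product structure $\mu=\nu^{\rho_0,\rho_1}$ with $(\rho_0,\rho_1)\in\mathcal F$, I would pair $\mu$ via the attractive basic coupling with a candidate $\nu_\rho$ from the family constructed above, chosen to match the total density $\rho_0+\rho_1$; the resulting coupled measure $\bar\mu$ is translation invariant and stationary for the coupled two-lane dynamics, and a discrepancy-counting argument in the spirit of \cite{Liggett1976, liggett2012interacting}, combined with the translation ergodicity of $\mu$ and the attractiveness of the process, should force $\bar\mu$ to concentrate on the diagonal. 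The main obstacle, flagged in the introduction, is that when $pq=0$ the vertical kernel fails to be irreducible, so the usual ``discrepancies eventually disappear'' argument for attractive processes must be extended; together with careful handling of the edge cases $\rho\in\{0,2\}$ where the dynamics degenerates on one or both lanes, this is the technical core of the proof.
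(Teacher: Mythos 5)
Your step one and the explicit parametrization follow the paper's route (Lemma \ref{lem:invariant_measures_for_the_asymetric_ladder} and Lemma \ref{lemma_phi}), but the reverse inclusion — the only hard part of the theorem — is a plan, not a proof, and the gap is real. Concretely, three things are missing. (i) A coupling of $\mu$ and $\nu_\rho$ that is simultaneously stationary for the coupled dynamics and translation invariant: basic coupling started from a product of two stationary marginals is \emph{not} stationary for the coupled process, so you must either take Ces\`aro limits or invoke \cite[Chapter VIII, Proposition 2.14]{liggett2012interacting} to get an extremal element of $\widetilde{\mathcal I}\cap\widetilde{\mathcal S}$ with the prescribed marginals, as the paper does. (ii) The actual discrepancy argument: one must prove that such a coupling gives zero probability to opposite discrepancies at $p$-connected sites (the paper's Proposition \ref{prop_kilroy}, an adaptation of Liggett's Theorem 1.1 using translation invariance only in the $\Z$-direction); "should force concentration" is precisely the content to be established. (iii) Your diagnosis of the degenerate case is off: under \eqref{wlog} the full two-lane kernel is weakly irreducible for all $q\ge 0$ \emph{except} when $q=l_0=l_1=0$, and there the obstruction is not that "discrepancies eventually disappear" fails but that $p$-orderedness no longer implies ordering — there is an extra class of $p$-ordered pairs $\eta\supinf\xi$ (Definition \ref{def_supinf}), which the paper rules out by a soft, non-dynamical argument (Lemma \ref{lemma:irred2}): on $E_{\supinf}$ the rightmost lane-$1$ discrepancy sits at a finite site whose distribution would be translation invariant on $\Z$, which is impossible. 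Nothing in your proposal addresses this mechanism, and it is exactly the technical core you defer.

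Two further points. Your formula $\rho_1=p\rho_0/[q+(p-q)\rho_0]$ and the claimed strictly increasing bijection onto $[0,2]$ are correct only for $q>0$; when $q=0$ the set $\mathcal F$ is the union of two segments \eqref{def_f_q0} and is not the graph of a function of $\rho_0$, so the parametrization needs the case analysis of Lemma \ref{lemma_phi} (cf. \eqref{tilderho_2b}). On the positive side, your end-game is a genuine (small) simplification of the paper's: coupling $\mu$ with the single $\nu_\rho$ of matching total density and deducing the diagonal from "ordered plus equal densities" avoids the paper's comparison of $\mu$ with every $\nu_\rho$ and the use of stochastic monotonicity in $\rho$ — but it still requires extremality of the coupling to pass from "supported on ordered pairs" to "one fixed ordering a.s." before the density comparison applies, and your one-site identity $p\rho_0-q\rho_1=(p-q)\,\Exp_\mu[\eta^0(0)\eta^1(0)]$, while correct, plays no role in closing the argument.
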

\begin{remark}\label{rem:pq=0}
 When $q=0$,  
the invariant measures $\nu_\rho$ can be guessed naturally.
Indeed in this case, particles cannot move upwards
 from $\mathbb{L}_1$ to $\mathbb{L}_0$.  
Thus if lane $0$ is empty, 
it remains empty and lane $1$ behaves as an autonomous   SEP. 
 Hence, for $\rho\in[0,1]$, the measure $\nu^{0,\rho}$ 
 (which has global density 
 $\rho$ over the two lanes) is invariant for the two-lane SEP, because 
 its restriction to lane $1$ is invariant for the SEP on this lane.
Similarly, if lane $1$ is full, it remains full and lane  
$0$ evolves as an autonomous  SEP. 
 Hence, for $\rho\in[1,2]$, the measure $\nu^{\rho-1,1}$ 
 (which has global density 
 $\rho$ over the two lanes)  is invariant for the two-lane SEP. 
This is consistent with the fact that for $q=0$, 
\eqref{def_f_always} yields 
 (see \eqref{tilderho_2b} later on) 
\[
\mathcal F=\{(0,\rho):\rho\in[0,1]\}\cup\{(\rho-1,1):\,\rho\in[1,2]\}
\]
\end{remark}
\begin{remark}
Definition $\eqref{def_f_always}$ was interpreted above as 
a detailed balance condition, but this is related only
 to the vertical part of the dynamics. 
The measure $\nu^{\rho_0,\rho_1}$ is in general not reversible, unless
$d_i=l_i$ for every $i\in W$, in which case the 
relations \eqref{eq:detailed balance_alpha} hold.
\end{remark}
 \subsubsection{Structure of invariant measures for two-lane SEP}\label{subsub:Ie}
 We are now interested in $\mathcal I_e$ rather than $(\mathcal I\cap\mathcal S)_e$, 
 and need to consider {\em blocking-type} configurations adapted to our setting. 
 Blocking configurations for simple exclusion on a general countable set of sites $S$ 
 were defined in \cite{Liggett1976}. There, for $S=\Z$, 
 the set of blocking configurations is given by 
\be\label{def_x0}
\mathcal X_1:=\left\{\eta\in\mathcal\{0,1\}^\Z:\,
\sum_{x>0}[1-\eta(x)]+\sum_{x\leq 0}\eta(x)<+\infty\right\}
\ee 
 and an invariant probability measure supported on $\mathcal X_
 1$ is called a {\em blocking measure}. 
For the two-lane model, we must define the following set:
\begin{eqnarray}
{\mathcal X}_2 & := & \left\{\eta\in\mathcal X:\,
\sum_{ x\in V:\, x(0)>0}[1-\eta(x)]
+\sum_{ x\in V:\, x(0)\leq 0}\eta(x)<+\infty\right\}\label{def_x2}
\end{eqnarray}
 In our model, a {\em blocking measure} will be an invariant probability 
 measure supported on $\mathcal X_2$. 
In  Appendix \ref{app:ext},  we discuss other settings where our approach 
also yields characterization results. The set of blocking configurations 
has to be adapted to each model. Among these models are the Misanthrope's process, 
a {\em single-lane} particle system with several possible particles per site, 
for which the definition of blocking configurations can be found in \cite{bfj}.  
Let  
\be\label{def_shocksets}
\begin{array}{lll}
&\mathcal B_1:=\{(0,1),(1,0),(1,2),(2,1)\}, 
    & \mathcal B_2:=\{(0,2)\}  \\
&\mathcal B:=\mathcal B_1\cup\mathcal B_2, 
    & \mathcal D:=\{(\rho,\rho):\,\rho\in[0,2]\} \\
\end{array}\ee
 Let $(\rho^-,\rho^+)\in[0,2]^2\setminus\mathcal D$,  
 that we call a \textit{shock.}  A probability measure 
 $\mu$ on $\mathcal X$
is called a $(\rho^-,\rho^+)$-\textit{shock measure} if
\be\label{limits_mu}
\lim_{n\to-\infty}\tau_n\mu=\nu_{\rho^-},\quad 
\lim_{n\to+\infty}\tau_n\mu=\nu_{\rho^+}
\ee
in the sense of weak convergence,  for $\nu_{\rho}$
defined in \eqref{sameset}.   
The \textit{amplitude} of the shock (or of the shock measure) 
is by definition $|\rho^+-\rho^-|$. \newline\newline
 We can now state the results of this section. Since they include 
 many different cases, for the sake of readability, 
 we will state them in several steps. 
The following theorem is proved in 
 Section \ref{sec:proofs-2}. 
\begin{theorem}
\label{thm:Characterization_of_invariant_measures}
 (i)  There exist a (possibly empty) subset $\mathcal R$ of  
$[0,2]^2\setminus(\mathcal D\cup \mathcal B)$  
containing only shocks of amplitude $1$, 
 a (possibly empty) subset $\mathcal R'$ of $\mathcal B_1$, and
for each $(\rho^-,\rho^+)\in\mathcal R\cup\mathcal R'$, 
a $(\rho^-,\rho^+)$-shock measure denoted $\nu_{\rho^-,\rho^+}$,
such that 
\begin{eqnarray}\label{charac_ext}
{\mathcal I}_e & = &
\left\{
 \nu_{\rho}:0\leq\rho\leq2\right
\}\cup  (Bl_1\cup Bl_2)  
\cup\left\{
\tau_z\nu_{\rho^-,\rho^+}:z\in\Z,\,(\rho^-,\rho^+)\in\mathcal R
\right\}\\
\label{def_bl_1}
 Bl_1  &  =   &  \{\tau_z\nu_{\rho^-,\rho^+}:\,(\rho^-,\rho^+)
\in\mathcal R',\,z\in\Z\} \\
 Bl_2 & =&  \{\nu\in\mathcal I_e:\,\nu\mbox{ is a }(0,2)\mbox{-shock measure}\}
\label{def_bl2} 
\end{eqnarray}
 (ii) The sets $\mathcal R$, $\mathcal R'$ and $Bl_2$ 
 enjoy the following properties: \newline\newline
 (a)    The set $Bl_2$ is stable by translations, 
and outside the case
\be\label{case_deg}
l_0=l_1=q=0,
\ee
it contains at most (up to translations) two elements.  
If $Bl_2$ contains at least one blocking measure, 
then it consists exactly  (up to translations)
of two blocking measures.   \newline\newline
 (b) The set $Bl_2$ is empty if 
\be\label{cond_nob2}
 q\gamma_0+p\gamma_1<0 
\ee  
 (c)   Outside the cases
\begin{eqnarray}\label{unless} 
&&p=q\mbox{ and } \gamma_0+\gamma_1=0,\\ 
\label{unless_2}
&&\gamma_0=\gamma_1=0, \\
\label{unless_3}
&&q=0=\gamma_0\gamma_1,
\end{eqnarray}
the set $\mathcal R$ contains at most  one element 
and $\mathcal R'$ at most two elements  (up to translations).   \newline\newline
 (d)   Outside \eqref{unless}--\eqref{unless_3}, 
the following holds.
Unless $q=0$ and $\gamma_0=\gamma_1>0$, 
the set $\mathcal R\cup\mathcal R'$ contains at most two elements
 (up to translations). 
If $q>0$ and $\gamma_0+\gamma_1\neq 0$, the set $\mathcal R'$ is empty. 
If $q>0$, $q\neq p$ and $\gamma_0+\gamma_1=0\neq\gamma_0\gamma_1$, 
the set $\mathcal R$ is empty. 
\end{theorem}
\begin{remark}\label{remark_case_b}
In view of \eqref{wlog}, condition \eqref{cond_nob2} implies $p>q$ 
and $\gamma_1<0<\gamma_0$.
\end{remark}
  Theorem \ref{thm:Characterization_of_invariant_measures} 
  yields the following information. 
  The decomposition  \eqref{charac_ext}--\eqref{def_bl2}
says  that  every element of $\mathcal I_e$  that 
 is a not a product Bernoulli measure 
is a shock measure of amplitude $1$ or $2$, and that 
for a given shock of amplitude $1$,  
an associated shock measure is unique up to translations.   
Outside the case \eqref{case_deg} (which will be further 
studied in the next theorem),
up to translations, we can have {\em at most} 
two shock measures of amplitude $2$.  This case is special 
because the kernel \eqref{eq:intensity_c} lacks 
standard irreducibility assumptions (see Definition \ref{def_irred}), 
so usual ordering properties must be weakened 
(see Definitions \ref{def_supinf} and \ref{def_bowtie}). 
The only possible shock of amplitude $2$  is $(0,2)$. 
The $(0,2)$-shock measures are 
analogues of \textit{blocking} or 
\textit{profile} measures in \cite{Bramson2002}. 
We shall see below that  when both drifts are positive, 
shocks of amplitude $2$ are blocking measures, 
and under additional assumptions, there are
{\em exactly} two of them modulo translations. 
Shock measures of amplitude $1$ can be divided 
into two classes with  a different meaning. The
 first one,  namely  $Bl_1$,  contains measures associated to 
shocks in $\mathcal B_1$.  The second one, namely $\mathcal R$, is associated 
to other shock measures of amplitude $1$. 
 There cannot exist measures in $\mathcal B_1$  outside cases $q=0$ 
 or $\gamma_0+\gamma_1=0$; 
they are then zero-flux measures (see Proposition \ref{prop:extrema}, {\em (o)} 
and {\em (ii)}). 
Among  elements of $Bl_1$ 
are \textit{partial} blocking measures:  we shall see below   
(in Theorem \ref{cor:entire}) 
that these  may only (and do indeed) 
arise if $q=0$. Under such measures, one lane 
carries a $(0,1)$-shock and the other is either empty or full. 
The set $\mathcal R$ is associated 
to other shock measures of amplitude $1$. 
We believe that $\mathcal R$ is empty and prove that 
it contains at most  one element  outside 
the case \eqref{unless}. 
 This conjecture comes from the belief 
that the variance of the shock is of order $t$ 
with a positive diffusion coefficient,
as follows from extrapolating the results of \cite{ff} 
for single-lane ASEP.
 This property is incompatible with a shock stationary state, 
 but suggests (as in \cite{fks} for single-lane ASEP) existence of 
 a stationary state for the process seen from a proper random location. 
In contrast,  based on this extrapolation, 
we expect the diffusion coefficient to vanish 
in the last case of Theorem \ref{thm:Characterization_of_invariant_measures}, 
\textit{(c)}; we have no clear conjecture whether 
$\mathcal R'$ is empty in this case. 
Under \eqref{unless}, 
the model is diffusive and nongradient, and 
we conjecture that the only invariant measures are Bernoulli. 
We leave  the above conjectures  for future 
investigation, as the methods involved to prove  
 them  are presumably quite different 
from those used here. \newline\newline
Next, we
provide more information on the sets $\mathcal R$, 
$\mathcal R'$, $Bl_1$ and $Bl_2$,
and obtain a full description of $\mathcal I_e$ 
 for a set of parameter values   including  
\eqref{case_deg}  and \eqref{unless_2}--\eqref{unless_3}.  
This is the content of Theorem  \ref{cor:entire} below.  
 Its statement  will be completed 
in  Section  \ref{sec:blocking} by
the explicit description of  the sets $Bl_1$ and $Bl_2$ 
referred to in the following statements.  
 Recall \eqref{wlog}.  We define the reduced parameters
\be\label{reduced_par}
r:=\frac{q}{p},\quad \mathfrak{a}:=\frac
{\gamma_0}{\gamma_0+\gamma_1}
\mbox{ if }\gamma_0+\gamma_1\neq 0
\ee
and set 
\be\label{def_r0}
r_0:=\frac{1-2\sqrt{-7+\sqrt{52}}}{1+2\sqrt{-7+\sqrt{52}}}=0,042\cdots
\ee
Due to \eqref{wlog}, we have $(\mathfrak{a},r)\in[0,1]\times[0,1]$. 
\begin{theorem}\label{cor:entire} 
(o) 
If $\gamma_0>0$ and $\gamma_1>0$, elements of $Bl_2$ are supported 
on the set ${\mathcal X}_2$. \newline\newline
(i)  Assume \eqref{unless_2}.
Then $\mathcal R=\mathcal R'=Bl_2=\emptyset$, hence
\be\label{charac_sym}
\mathcal I_e=\{\nu_\rho:\,\rho\in[0,2]\}
\ee
$\bullet$ Assume $q>0$. Then:  \newline\newline
(ii) Assume either: (a) $d_0/l_0=d_1/l_1>1$; or 
(b) $l_0=l_1=0$ and $d_0,d_1>0$. Then $Bl_2$ is nonempty 
and given by \eqref{def_bl2(ii)}. \newline\newline
(iii) 
There exists an open subset $\mathcal Z$ of 
$[0,1]\times[0,1]$, containing  $\{1/2\}\times (0,r_0)$, 
such that $\mathcal R=\mathcal R'=\emptyset$  
for every $(\mathfrak{a},r)\in\mathcal Z$.
In particular, if   $r\in(0,r_0)$,  
$d_1=\lambda d_0$ and $l_1=\lambda l_0$ with 
 $\lambda\in\R$  close enough to $1$, then \eqref{charac_ext} 
holds with $Bl_2$ as in (ii);  this yields a 
complete description of $\mathcal I_e$.  \newline\newline
$\bullet$ Assume now $q=0<p$.  Then a complete 
description of $\mathcal I_e$ can be obtained 
whenever $\gamma_0\neq\gamma_1$. More precisely: \newline\newline
(iv) (a) If $\gamma_0>0$ and $\gamma_1>0$, 
then $\mathcal R'=\{(0,1);(1,2)\}$; 
$\mathcal R$ is empty if $\gamma_0\neq\gamma_1$, 
or contained in $\{(3/2,1/2)\}$ 
if $\gamma_0=\gamma_1$. The set $Bl_1$ is given by 
\eqref {def_bl_3_2}. The set $Bl_2$ is empty unless $l_0=l_1=0$. 
(b) If  $l_0=l_1=0$, $Bl_2$ is given by \eqref{def_bl2(iv)(b)}. \newline\newline
(v) If $\gamma_1<0<\gamma_0$, then $\mathcal R'=\{(1,0),(1,2)\}$, 
$\mathcal R=Bl_2=\emptyset$. 
The set $Bl_1$ is given by \eqref{def_bl_5}. 
\newline\newline
(vi) If  $\gamma_0=0<\gamma_1$, then $\mathcal R'=\{(0,1)\}$,
$\mathcal R=Bl_2=\emptyset$. The set
$Bl_1$ is given by \eqref{def_bl_3_1}.
\end{theorem}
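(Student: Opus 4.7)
The plan is to use Theorem \ref{thm:Characterization_of_invariant_measures} as a scaffold and treat each parameter regime by its specific structural feature; the main tools are explicit reversibility, attractive coupling, and the fact that $q=0$ makes certain subsets of $\mathcal X$ absorbing. For \textit{(o)}, given a $(0,2)$-shock measure $\nu$ with $\gamma_0,\gamma_1>0$, I would couple $\nu$ (in the basic attractive way) with the configuration obtained by taking $\eta^0=\eta^1=\eta^*_n$ for large $n$; stationarity forces the horizontal current on each lane to vanish everywhere, and because both drifts are strictly positive this forces summable discrepancies from the reference, placing the support of $\nu$ inside $\mathcal X_2$. For \textit{(i)}, when $\gamma_0=\gamma_1=0$, the horizontal motion on each lane is reversible with respect to every Bernoulli product; a discrepancy-coupling argument in the spirit of \cite{ligsym} (adapted to two coupled lanes, using the vertical detailed balance \eqref{def_f_always}) rules out any density jump, so $\mathcal R=\mathcal R'=Bl_2=\emptyset$.

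For \textit{(ii)}, I would build explicit reversible product measures. In case (a), setting $\lambda:=d_i/l_i>1$ (the same on both lanes), the ansatz
\[\rho_j(x)=\frac{c_j\lambda^x}{1+c_j\lambda^x},\quad j\in\{0,1\},\ x\in\Z,\]
satisfies horizontal detailed balance \eqref{eq:detailed balance} on each lane automatically, and the vertical detailed balance $p\rho_0(x)(1-\rho_1(x))=q\rho_1(x)(1-\rho_0(x))$ reduces to the single constraint $pc_0=qc_1$. Conditioning this one-parameter reversible family on level sets of the conserved total-height functional $H(\eta):=\sum_{x\leq 0}\overline\eta(x)-\sum_{x>0}(2-\overline\eta(x))$ yields (as in \eqref{cond_rev_meas}) a countable family of extremal $(0,2)$-shock measures populating $Bl_2$. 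Case (b), $l_0=l_1=0$, is the degenerate TASEP-like analogue where the zero-current condition forces deterministic step configurations, giving blocking Dirac masses.

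For \textit{(iv)}--\textit{(vi)}, the key feature is $q=0$, which makes the events $\{\eta^0\equiv 0\}$ and $\{\eta^1\equiv 1\}$ absorbing. Conditional on the former, lane $1$ evolves as an autonomous one-dimensional SEP with rates $(d_1,l_1)$, to which the classical blocking measures $\widehat\mu_n$ of Section \ref{sec:inv_single} apply; this produces partial blocking measures belonging to $Bl_1$ with asymptotic densities $(0,0)$ on the left and $(0,1)$ on the right when $\gamma_1>0$, and the reverse when $\gamma_1<0$. Conditional on $\{\eta^1\equiv 1\}$, a symmetric construction produces partial blocking measures based on lane $0$. Enumerating these according to the sign of each drift yields exactly the stated $\mathcal R'$ in each of \textit{(iv)(a)}, \textit{(v)}, \textit{(vi)}. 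Emptiness of $\mathcal R$ (except for the possible middle point $(3/2,1/2)$ in the symmetric case $\gamma_0=\gamma_1$) and of $Bl_2$ (except when $l_0=l_1=0$, covered by \textit{(iv)(b)}) follows from attractively coupling any candidate measure against Bernoulli measures $\nu_\rho$ and invoking the uniqueness bounds already provided by Theorem \ref{thm:Characterization_of_invariant_measures}.

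Part \textit{(iii)} is the decisive obstacle. I would first establish that ``$\mathcal R=\mathcal R'=\emptyset$'' is an open condition in the reduced parameters $(d,r)$: a weak-compactness argument shows that any failure at an accumulation point would produce, by extraction of a limit of shock measures along a parameter sequence, a shock measure at the limit point, contradicting the strict stability inequalities that underlie Theorem \ref{thm:Characterization_of_invariant_measures}. It then suffices to verify the conclusion on the slice $\{d=1/2\}\cap\{0<r<r_0\}$. On this slice, lane-exchange symmetry $\sigma'$ of Lemma \ref{lemma_sym} acts on the space of candidate shock measures, and the existence of a nontrivial invariant element reduces to a scalar polynomial inequality in $r$ whose solvability degenerates precisely at the threshold $r_0$. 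Pinning down this explicit threshold and propagating it to an open neighbourhood is the main technical difficulty; the other parts are, by comparison, structurally straightforward once the framework of Theorem \ref{thm:Characterization_of_invariant_measures} is in place.
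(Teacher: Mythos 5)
Your proposal reproduces the paper's constructions where explicit reversibility is available (the exponential-profile product measures in \textit{(ii)(a)} conditioned on the conserved height, and the partial blocking measures built from the absorbing events $\{\eta^0\equiv 0\}$, $\{\eta^1\equiv 1\}$ when $q=0$), but it is missing the paper's central tool, and this leaves genuine gaps. The decisive ingredient in the paper is the macroscopic flux function $G$ of \eqref{def_macroflux_ladder}--\eqref{eq:G-from-rho_0} together with Proposition \ref{cor:step2}(i): any stationary $(\rho^-,\rho^+)$-shock measure must be an \emph{entropy shock} of $G$ (equal fluxes at the endpoints plus the min/max condition of Definition \ref{def_set}). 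All the delicate statements you flag or gloss over are obtained from this. In \textit{(iii)}, $r_0$ in \eqref{def_r0} arises from the explicit inequality $G(1/2)>G(1)\Leftrightarrow r<r_0$ (Proposition \ref{prop:extrema}(v), \eqref{eq:lacondition}), and the open set $\mathcal Z$ is defined by strict inequalities on $G$ along the solution $\rho(d,r)$ of $G(\rho+1)=G(\rho)$, whose continuity comes from the implicit function theorem (see \eqref{def_openset}); openness is a property of the flux, not of the set of invariant measures. Your alternative — extracting a weak limit of shock measures along a parameter sequence — does not work: weak convergence does not preserve the asymptotic densities at $\pm\infty$ (the shock location can escape to infinity, or the limit can be translation-mixed), and Theorem \ref{thm:Characterization_of_invariant_measures} contains no ``strict stability inequalities'' to contradict. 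Likewise your symmetry-plus-polynomial reduction on the slice $d=1/2$ is not an argument: you never say how existence of a stationary shock measure produces a scalar inequality, which is exactly what Proposition \ref{cor:step2}(i) supplies. The same tool is what pins down $\mathcal R$ and $\mathcal R'$ in \textit{(iv)--(vi)} via the explicit periodic flux \eqref{flux_degenerate}; ``attractive coupling against $\nu_\rho$ plus the uniqueness bounds of Theorem \ref{thm:Characterization_of_invariant_measures}'' cannot decide, e.g., that $\mathcal R=\emptyset$ when $\gamma_0\neq\gamma_1$ or that $(1,0),(2,1)\notin\mathcal R'$ in \textit{(iv)(a)}, since that theorem only bounds cardinalities.

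Three further concrete gaps. First, in \textit{(iv)(a)} the claim $Bl_2=\emptyset$ unless $l_0=l_1=0$ does not follow from any coupling with Bernoulli measures (indeed $Bl_2\neq\emptyset$ when $q>0$); the paper needs a dedicated argument showing that if some $l_i>0$ the functional $H_1$ (mass on lane $1$) has strictly positive expected increment under a putative $(0,2)$-shock measure, via a Harris-graphical construction in which the leftmost lane-$0$ particle reaches a free column and drops to lane $1$ with positive probability — contradicting stationarity. Second, your proof of \textit{(i)} by adapting \cite{ligsym} is unsound when $p\neq q$: the kernel is then not symmetric (cf. Remark \ref{remark_sym}), and the paper instead exploits that $\gamma_0=\gamma_1=0$ makes the horizontal current a discrete gradient, $j(\eta)=\sum_i d_i[\eta^i(0)-\eta^i(1)]$, so that testing stationarity against $F_N(\eta)=N\sum_x\varphi(x/N)\overline\eta(x)$ and summing by parts twice forces $\rho^+=\rho^-$. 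Third, in \textit{(ii)(b)} your description of $Bl_2$ is incorrect: besides the Dirac masses $\breve\nu_z$, the family \eqref{def_bl1} contains the measures $\widehat\nu_z$, which are nontrivial mixtures $\frac{q}{p+q}\delta_{\widehat\eta^0}+\frac{p}{p+q}\delta_{\widehat\eta^1}$ (one extra particle flipping between lanes), not deterministic step configurations. Finally, note that statements such as ``$Bl_1$ is given by \eqref{def_bl_3_2}'' require the converse inclusion — every amplitude-$1$ shock measure with shock in $\mathcal B_1$ is a partial blocking measure — which the paper proves (Proposition \ref{cor:step2}(iii)) by coupling with an autonomous single-lane ASEP and invoking its convergence to equilibrium; constructing the partial blocking measures alone does not yield this.
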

 In \textit{(ii)--(vi)} above, the measures in the sets $Bl_1$ 
and $Bl_2$ defined by \eqref{def_bl2(ii)}, \eqref {def_bl_3_2} 
and \eqref{def_bl2(iv)(b)}--\eqref{def_bl_3_1} are reversible. 
\begin{remark}\label{remark_sym}
In case (i), when $p=q$, the kernel defined by 
\eqref{eq:intensity_c} is symmetric. 
The result is then a particular case 
of the general picture (\cite{ligsym,liggett2012interacting})  
for symmetric exclusion processes, although 
our method of proof is different. However 
when in case (i) we have $p\neq q$, the two-lane  SEP  
 is not a symmetric exclusion process, and our result is new.
\end{remark}
\begin{remark}\label{remark_open}
For reader's convenience, we summarize here the questions 
left open by Theorems \ref{thm:Characterization_of_invariant_measures} 
and \ref{cor:entire}.
\begin{enumerate}
\item When $p=q$ and $\gamma_0+\gamma_1=0$, we can prove that outside the measures 
$\nu_\rho$, $\mathcal I_e$ may only contain shock measures, but we conjecture 
that there are no shock measures in $\mathcal I_e$. 
\item When $q>0$ and $\gamma_0+\gamma_1\neq 0$, we conjecture that there are 
no shock measures of amplitude $1$. We prove this when $q/p$ is small enough 
and $\gamma_0/(\gamma_0+\gamma_1)$ close enough to $1/2$.
\item When $q>0$, $p\neq q$  and $\gamma_0+\gamma_1=0$, we do not know if there are shock measures of amplitude $1$.
\item When $q=0$ and $\gamma_0=\gamma_1>0$,
we believe that $\mathcal R$ is empty but can only prove that it is contained in $\{(3/2,1/2)\}$
\item We conjecture that outside cases \eqref{unless}-- \eqref{unless_2}, if $q>0$ and  
\eqref{cond_nob2} fails, $Bl_2$ consists (up to horizontal translations) of two blocking measures. 
We can prove this  when $q>0$ and $d_0/l_0=d_1/l_1\in[1;+\infty]$. This conjecture is related 
to property (vii) of Proposition \ref{prop:extrema}.
\end{enumerate}
\end{remark}
 \subsubsection{Explicit blocking measures in Theorem  
 \ref{cor:entire}}\label{sec:blocking}
We  here complete the statement of Theorem \ref{cor:entire} 
by giving the explicit description of  $Bl_1$ and $Bl_2$ 
in each case. 
 For this, we need to recall blocking measures 
denoted hereafter by $\{\widehat{\mu}_n:\,n\in\Z\}$, 
which are reversible (\cite{Liggett1976}) for single-lane 
ASEP with jump rate $d$ to the right and $l$ to the left, 
cf. \eqref{eq:generator of the Exclusion}--\eqref{def_tasep}, 
where $d+l>0$ and $d-l>0$. These measures will be building 
blocks for  certain elements of  $\mathcal I_e$.  
For $l=0$,   $\widehat{\mu}_n$ is defined by 
\be\label{blocking_config}
\widehat{\mu}_n:=\delta_{\eta^*_n}
\quad\mbox{where}\quad\eta^*_n(x):={\bf 1}_{\{x> n\}} 
\ee
 where $\delta$ denotes the Dirac measure. In the sequel, 
 we shall also use notations \eqref{blocking_config} 
 by extension for $n=\pm\infty$. 
In \eqref{blocking_config},  $\eta^*_{-\infty}$ and 
$\eta^*_{+\infty}$ are respectively understood as 
the configuration with all $1$'s and the one with all $0$'s. 
When $l>0$,  $\widehat{\mu}_n$ is defined   as follows. First, set
\be\label{sol_rev_asep}
\rho_i^c:=\frac{c\left(\frac{d}{l}\right)^i}
{1+c\left(\frac{d}{l}\right)^i}
\ee
where $c>0$. The measure  $\mu_{\rho_.}=\mu_{\Z,\rho_.}$ 
(cf. Definition \eqref{alpha_x} and Remark \ref{rk_prod}) 
is supported on the set  $\mathcal X_1$ defined by \eqref{def_x0}. 
On this set,  
 a function $H_1$ can be defined by 
\be\label{def_H}
 H_1(\eta):= \sum_{x\leq 0}\eta(x)-\sum_{x>0}[1-\eta(x)]
\ee
 One can then define   the  probability  measure 
(which does not depend on the choice of $c$)
\be\label{cond_rev_meas}
\widehat{\mu}_n:={\mu}_{\rho^c_.}\left(
.\left|H_1(\eta)=n\right.
\right)
\ee
We can now give details for Theorem \ref{cor:entire}. \newline\newline 
\textbf{Case \textit{(ii)}.} We set
\be\label{def_bl2(ii)}
Bl_2:=\left\{ \breve{\nu}_{z}:z\in\mathbb{Z}\right\} 
\cup\left\{ \widehat{\nu}_{z}:z\in\mathbb{Z}\right\} 
\ee
where the measures $\breve{\nu}_{z}$ and $\widehat{\nu}_{z}$ 
are defined below distinguishing cases \textit{(ii), (a)} 
and \textit{(ii), (b)}:  \newline\newline
\textit{Case (ii), (a).} Let $\theta=d_0/l_0=d_1/l_1$. 
 Define 
\be\label{sol_rev_twolane}
\rho_{z,i}^c:=\frac{c\theta^z\left(\frac{p}{q}\right)^i}
{1+c\theta^z\left(\frac{p}{q}\right)^i},\quad (z,i)\in\Z\times W,  c>0  
\ee
 We consider the probability measure  $\mu_{\rho_.^c}$  
 on $\mathcal X$ under which the random variables 
 $\{\eta(z,i):\,x\in\Z,\,i\in W\}$ are independent,
and $\eta(z,i)$ is Bernoulli distributed with parameter $\rho_{z,i}^c$. 
\begin{remark}\label{remark_v}
The measures $\mu_{\rho_.^c}$ are analogues in this context 
of the 2-dimensional blocking measures constructed 
in \cite[Theorem 2]{ligd}, which are $v$-profile measures 
(for any $i\in\{0,1\}$) where 
$v=\left(\ln\frac{d_i}{l_i},\ln\frac{p}{q}\right)$.
\end{remark}
We define the following function on $\mathcal X_2$  (cf. \eqref{def_x2}): 
\be\label{def_H_2}
H_2(\eta):=
\sum_{ x\in V:\,  x(0)\leq 0}
\eta(x)-\sum_{ x\in V: \, x(0)>0}[1-\eta(x)]
\ee
Note that $\mathcal X_2$ is stable by the dynamics, 
and $H_2$ is a conserved quantity for the process on $\mathcal X_2$.
The measures involved in \eqref{def_bl2(ii)} are defined 
in the following lemma, proved 
 in Subsection \ref{subsec:proof_inv_3} along with Theorem \ref{cor:entire}. 
\begin{lemma}\label{def_blocking_h2}
The measure  $\mu_{\rho_.^c}$  is supported on $\mathcal X_2$, and
the measures defined below do not depend on $c>0$:
\begin{eqnarray}
\check{\nu}_n & := & \mu_{\rho^c_.}\left(
.\left|H_2(\eta)=2n\right.
\right),\quad 
\label{cond_rev_twolane}
\widehat{\nu}_n 
:= \mu_{\rho^c_.}\left(
.\left|H_2(\eta)=2n+1\right.
\right),\quad n\in\Z
\end{eqnarray}
These measures satisfy the relations
\begin{eqnarray}
\check{\nu}_n & = &  \tau_{n}\check{\nu}_0,\quad 
\label{shift_rev_twolane}
\widehat{\nu}_n 
=\tau_{n} \widehat{\nu}_0,\quad n\in\Z
\end{eqnarray}
\end{lemma}
\noindent\textit{Case (ii), (b).} 
  Let,  for $x\in V$,
 \begin{align*}
\breve{\eta}\left(x\right)  & =1_{\{x\left(0\right) > 0\}}\\
\widehat{\eta}^{0}\left(x\right) 
& =1_{\{x\left(0\right) > 0\}}
+1_{\{x=\left( 0,0\right)\}}
\\
\widehat{\eta}^{1}\left(x\right) 
& =1_{\{x\left(0\right) > 0\}}
+1_{\{x=\left( 0,1\right)\}}.
\end{align*} 
 We define the measures $\breve{\nu}_{0}$ and $\widehat{\nu}_{0}$
through
\be
\label{brevenu} \breve{\nu}_{0}  
 = \delta_{\breve{\eta}},\quad 
\widehat{\nu}_{0} 
 = \frac{q}{p+q}
\delta_{\widehat{\eta}^0}+\frac{p}{p+q}\delta_{\widehat{\eta}^1}
\ee
 We define also   
$\breve{\nu}_{z}=\tau_{ -z}\breve{\nu}_{0}$  
and $\widehat{\nu}_{z}=\tau_{ -z}\widehat{\nu}_{0}$
for every $z\in\mathbb{Z}$.\newline\newline
 \textbf{Cases \textit{(iv)--(vi)}.} 
 Using the blocking measures for single lane ASEP, 
 we define the following two-lane measures.  For $n\in\Z$, 
 we denote by $\nu^{\bot,+\infty,n}$  
and  $\nu^{\bot,n,-\infty}$  
the probability measures on $\mathcal X$ defined as follows.
Under $\nu^{\bot,+\infty,n}$, $\eta^0=\eta^*_{+\infty}$,  
see definition \eqref{blocking_config} (i.e. lane $0$ is empty), 
and $\eta^1\sim\widehat{\mu}_n$, 
where $\widehat{\mu}_n$ is given by \eqref{cond_rev_meas} 
with $l=l_1$ and $d=d_1$ if $l_1>0$, 
or by \eqref{blocking_config} if $l_1=0$  (where $\sim$ 
means equality in distribution).
Under $\nu^{\bot,n,-\infty}$, $\eta^1=\eta^*_{-\infty}$ 
(i.e. lane $1$ is full)  and $\eta^0\sim\widehat{\mu}_n$, 
where $\widehat{\mu}_n$ is given by \eqref{cond_rev_meas} 
with $l=l_0$ and $d=d_0$ if $l_0>0$, 
or by \eqref{blocking_config} if $l_0=0$.  In the following cases, 
whenever it is not empty, the set $Bl_1$ reduces to partial blocking measures; 
and whenever it is not empty, the set $Bl_2$ consists of 
blocking measures.   \newline\newline
\textbf{Case \textit{(iv)}}. \textit{(a)}  The  set  $Bl_1$ is given by 
\begin{eqnarray}\label{def_bl_3_2}
Bl_1 & := & \left\{\nu^{\bot,+\infty,n}:\,n\in\Z\right\}\cup 
\left\{\nu^{\bot,n,-\infty}:\,n\in\Z\right\}
\end{eqnarray}
\textit{(b).} 
Let $\mathbb{B}$ denote the set of  $(i,j)\in\Z^2$
 such that  $i\geq j$, and set 
 $\overline{\mathbb{B}}:=\mathbb{B}\cup\{(+\infty,n),(n,-\infty):\,n\in\Z\}$. 
For $(i,j)\in\overline{\mathbb{B}}$, let $\nu^{\bot,i,j}$
denote the Dirac measure 
supported on the configuration $\eta^{\bot,i,j}$ defined by
 (recalling  \eqref{blocking_config}) 
\be\label{def_etabot}
\eta^{\bot,i,j}(z,0)=\eta_i^*(z),\quad
\eta^{\bot,i,j}(z,1)=\eta^*_j(z)
\ee
for every $z\in\Z$.
The set $Bl_2$ is given by
\begin{eqnarray}
Bl_2 & := & \left\{
{\nu}^{\bot,i,j}:(i,j)\in\mathbb{B}
\right\}\label{def_bl2(iv)(b)}
\end{eqnarray}
\textbf{Case \textit{(v)}.} 
For $n\in\Z$, we denote by $\nu^{\bot,+\infty,n\leftarrow}$  
 the probability measure on $\mathcal X$ defined as follows.
 Recall 
the lane symmetry operator $\sigma$ defined by \eqref{sym_op}. 
Under $\nu^{\bot,+\infty,n\leftarrow}$, $\eta^0=\eta^*_{+\infty}$ 
 and $\sigma\eta^1\sim\widehat{\mu}_n$, where 
$\widehat{\mu}_n$ is given by  \eqref{cond_rev_meas} with $l=l_1$ and 
$d=d_1$ if $l_1>0$, or by \eqref{blocking_config} if $l_1=0$. 
The set $Bl_1$ is then given by 
\be\label{def_bl_5}
Bl_1  :=  \left\{\nu^{\bot,+\infty,n\leftarrow}:\,n\in\Z\right\}
\cup \left\{\nu^{\bot,n,-\infty}:\,n\in\Z\right\}
\ee
\textbf{Case \textit{(vi)}.} The set $Bl_1$ is given by
\be\label{def_bl_3_1}
Bl_1:=\left\{\nu^{\bot,+\infty,n}:\,n\in\Z\right\}
\ee
\subsection{Multilane SEP 
and rotational invariance}\label{subsec:rotation}
In this section,  we consider the general model  
defined by  \eqref{eq:generator of the Exclusion} 
with  \eqref{structure},
 \eqref{restrict_kernel}  and \eqref{restrict_2}.  
Without loss of generality, we may consider $W=\{0,\ldots,n-1\}$.  
We are interested in a generalization of the two-lane model with $p=q$
 (cf. \eqref{restrict_1}).  
To this end, we introduce the following   assumption. 
\begin{assumption}\label{assumption_ker} 
$W=\mathbb{T}_n$  is a torus, 
and $q(.,.)$ is an irreducible translation-invariant kernel, 
that is $q(i,j)=Q(j-i)$ for some function
$Q:\mathbb{T}_n\to[0,+\infty)$. 
\end{assumption}  
For $\rho\in[0,n]$, we denote by 
$\nu_{\rho}$ the product measure on $\mathcal X$ such that 
\be\label{def_inv_gen}
\nu_{\rho}\left\{
\eta(z,i)=1
\right\} =\frac{\rho}{n}
\ee
for every $(z,i)\in \Z\times W$.  
For Theorems \ref{thm:characterization_lemma}, 
 \ref{thm:Characterization_of_invariant_measures} 
and \ref{cor:entire}, 
the scheme of proof  laid out in  Sections \ref{sec:proofs} 
 to \ref{subsec:proofs_lemmas}   carries over to 
 the  multilane model. Here, since  $W=\mathbb{T}_n$, 
in addition to the shift operator $\tau$ along $\Z$ already considered, 
we can consider the translation operator $\tau'$ along $W$.
Following \cite[page 2309]{ligd}, we shall call a probability measure on 
$\mathcal X$ \textit{rotationally invariant} 
if it is invariant by $\tau'$. The 
\textit{open question 1. for the ladder process} raised in \cite{ligd} 
is whether,  when $d_i$ and $l_i$ are independent of $i$ 
(i.e. the horizontal dynamics is the same on each lane),   
\textit{all} invariant 
measures are rotationally invariant. 
We give a positive answer to this question
 in item  \textit{(iii)}  of the following theorem. 
\begin{theorem}\label{thm:Characterization_of_invariant_measures_gen}
Under Assumption \ref{assumption_ker}, the following hold: \newline\newline
  (o)    We have
$(\mathcal I\cap\mathcal S)_e=
\{\nu_\rho,\,\rho\in[0,n]\}$. \newline\newline  
  (i)    For $k=1,\ldots,n$, let 
$(\rho^-_k,\rho^+_k)=
\left(\frac{n-k}{2},\frac{n+k}{2}=n-\rho^-_k\right)$.
Then: (a)
\be\label{structure_multi}
{\mathcal I}_e=\{\nu_\rho:\,\rho\in[0,n]\}\cup 
\bigcup_{k=1}^{n}\mathcal I_k\ee
where $\mathcal I_k$ is a (possibly empty) set of 
$(\rho^-_k,\rho^+_k)$-shock measures of amplitude $k$, 
which contains at most (up to horizontal translations) $k$ measures. 
(b) If $\gamma_i>0$ for all $i$, $\mathcal I_n$ 
is supported on the set $\mathcal X_n$ defined 
by the right-hand side of \eqref{def_x2}. 
(c) If $d_i/l_i$ does not depend on $i$, 
$\mathcal I_n$ consists (up to horizontal translations) 
of $n$ explicit blocking measures
$\nu_i$ defined below for $i=0,\ldots,n-1$. \newline\newline
  (ii)   If  $\gamma_i:=d_i-l_i=0$ for all $i\in W$, then 
$\mathcal I_e=\{\nu_\rho:\,\rho\in[0,n]\}$. \newline\newline
  (iii)    If $d_i$ and $l_i$ do not depend on $i$,  
any invariant measure is rotationally invariant. 
\end{theorem}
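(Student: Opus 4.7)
The plan is to carry the two-lane arguments of Sections~\ref{sec:proofs}--\ref{subsec:proofs_lemmas} over to the $n$-lane torus case with $W=\mathbb{T}_n$; attractiveness, coupling, and flux arguments transfer almost verbatim, and the only genuinely new ingredient is the rotational invariance statement (3). For \textbf{(0)}, following the proof of Theorem~\ref{thm:characterization_lemma} one shows that any $\mu \in (\mathcal{I}\cap\mathcal{S})_e$ is a product Bernoulli $\nu^{\rho_0,\ldots,\rho_{n-1}}$. Invariance under the generator \eqref{eq:generator of the Exclusion}, once the horizontal contribution is killed by Bernoulli invariance on $\Z$, reduces to the vertical equilibrium $\rho_i \sum_j Q(j-i) = \sum_j Q(j-i)\rho_j$ for every $i \in W$; i.e.\ $(\rho_i)$ is $Q$-harmonic on $\mathbb{T}_n$. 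By irreducibility of $Q$ (Assumption~\ref{assumption_ker}), this harmonic function is constant, so $\rho_i = \rho/n$ and $\mu = \nu_\rho$.

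For \textbf{(1)}, the a priori statement announced in the Introduction extends: any $\mu \in \mathcal{I}_e \setminus \{\nu_\rho\}$ is asymptotic at $\pm\infty$ to Bernoulli measures, hence is a $(\rho^-,\rho^+)$-shock. Stationarity of $\mu$ gives that the horizontal current $J(z) := \sum_i \gamma_i \Exp_\mu[\eta(z,i)(1-\eta(z+1,i))]$ is $z$-independent; letting $z \to \pm\infty$ in \eqref{limits_mu} and using that the $\nu_{\rho^\pm}$ are lane-uniform yields
\[
\tfrac{\rho^-}{n}\bigl(1-\tfrac{\rho^-}{n}\bigr)\sum_i \gamma_i \;=\; \tfrac{\rho^+}{n}\bigl(1-\tfrac{\rho^+}{n}\bigr)\sum_i \gamma_i.
\]
When $\sum_i \gamma_i \neq 0$, this forces $\rho^- + \rho^+ = n$, giving $(\rho_k^-,\rho_k^+) = ((n-k)/2,(n+k)/2)$ with $k := |\rho^+-\rho^-|$. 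The bound $|\mathcal{I}_k/\tau_\Z| \leq k$ is obtained by adapting to the natural $n$-lane conserved functional $H_n$ (analogue of \eqref{def_H_2}, shifting by $n$ under $\tau$) the level-set/orbit counting used for $H_2$. Item (b) follows from the same attractiveness/localization argument that puts $(0,2)$-shock measures on $\mathcal{X}_2$ in the two-lane case. Item (c) uses the explicit reversible product solution $\rho_{z,i}^c \propto c\theta^z$ of \eqref{eq:detailed balance_alpha} (lane-independent when $d_i/l_i = \theta$), then conditions on each of the $n$ residues of $H_n$ modulo $\tau$ to produce $n$ explicit blocking measures, exactly as in \eqref{cond_rev_twolane}. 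For \textbf{(2)}, $\gamma_i \equiv 0$ makes the horizontal dynamics per lane symmetric and the flux identity of (1) vacuous; instead I apply the coupling-and-attractiveness machinery of Section~\ref{subsec:proofs_lemmas} between $\mu$ and $\tau\mu$: symmetric horizontal kernels plus the irreducible vertical $Q$ make the discrepancy process dispersive, forcing $\mu = \tau\mu$, hence $\mu \in (\mathcal{I}\cap\mathcal{S})_e = \{\nu_\rho\}$ by (0).

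For \textbf{(3)}, lane-independence of $d_i, l_i$ together with translation invariance of $Q$ on $\mathbb{T}_n$ (Assumption~\ref{assumption_ker}) means the generator commutes with the rotation $\tau'$, so $\tau'$ preserves $\mathcal{I}_e$; by extremal decomposition it suffices to show $\tau'\mu = \mu$ for every $\mu \in \mathcal{I}_e$. For $\mu = \nu_\rho$ this is immediate from (0). For $\mu \in \mathcal{I}_k$, $\tau'\mu$ is extremal and shares the $\tau'$-invariant asymptotic densities $\rho_k^\pm$, so $\tau'\mu \in \mathcal{I}_k$. Since $\tau$ and $\tau'$ commute, $\tau'$ descends to a well-defined action on $\mathcal{E}_k := \mathcal{I}_k/\tau_\Z$; the orbit of $[\mu]$ has length $m$ dividing $n$ (as $(\tau')^n = \mathrm{id}$) and bounded by $|\mathcal{E}_k| \leq k$. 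When this orbit is trivial, i.e.\ $\tau'\mu = \tau_j\mu$ for some $j \in \Z$, iterating $n$ times yields $\tau_{nj}\mu = \mu$; a shock measure cannot be horizontally periodic (its column marginal tends to distinct limits at $\pm\infty$), so $j = 0$ and $\tau'\mu = \mu$. \textbf{The main obstacle} is ruling out nontrivial orbits ($m>1$), which can a priori occur when $\gcd(n,k) > 1$. I plan to close this via a basic coupling between $\mu$ and $\tau'\mu$: both are extremal invariant measures sharing the $\tau'$-invariant Bernoulli asymptotics $\nu_{\rho_k^\pm}$ at $\pm\infty$; the coupling is attractive, and lane-uniform horizontal rates combined with irreducibility of $Q$ should propagate coincidence of the marginals from infinity inward, yielding $\tau'\mu = \mu$. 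A secondary technical step is the $|\mathcal{I}_k/\tau_\Z| \leq k$ bound via the $H_n$-based orbit counting at every intermediate amplitude $k < n$.
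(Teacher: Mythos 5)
Your overall strategy (transporting the two-lane machinery of Sections \ref{sec:proofs}--\ref{subsec:proofs_lemmas} to the torus $W=\mathbb{T}_n$) is the paper's, and your treatments of \textit{(0)}, \textit{(1)(b)} and \textit{(1)(c)} are essentially correct. But three steps rest on mechanisms that either do not exist or do not close. First, for the bound $|\mathcal I_k/\tau_{\Z}|\le k$ you invoke ``$H_n$-based orbit counting at every intermediate amplitude $k<n$''; this cannot work, because $H_n$ is only defined (and conserved) on the analogue of $\mathcal X_2$, i.e.\ on finite perturbations of the step profile, which supports only amplitude-$n$ shocks with positive drifts --- it is undefined on a $(\rho_k^-,\rho_k^+)$-shock measure with $0<\rho_k^-<\rho_k^+<n$. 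The count actually comes from the ordered-coupling distance $\Delta$ of \eqref{def_dist}, as in Proposition \ref{prop:translate}: consecutive translates of an extremal amplitude-$k$ shock measure satisfy $\Delta(\tau_{m-1}\nu,\tau_m\nu)=k$ by the analogue of \eqref{number_disc}, any other extremal $(\rho_k^-,\rho_k^+)$-shock measure is ordered against all of them with $\Delta$-distances in $\N$, and additivity \eqref{dist_triangle_1} then leaves room for at most $k$ translation classes. Relatedly, your current identity only gives $G(\rho^-)=G(\rho^+)$, hence $\rho^-+\rho^+=n$ \emph{or} $\rho^-=\rho^+$; to orient the shock (exclude $(\rho_k^+,\rho_k^-)$) you need the full entropy condition of Proposition \ref{cor:step2}\textit{(i)}, which for the strictly concave $G(\rho)=(\sum_i\gamma_i)\frac{\rho}{n}(1-\frac{\rho}{n})$ rules out decreasing shocks.

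Second, for \textit{(2)} the claim that symmetric horizontal kernels make the discrepancy process ``dispersive, forcing $\mu=\tau\mu$'' is an assertion, not an argument: the coupling machinery only yields that $\mu$ and $\tau\mu$ are ordered with some fixed number of discrepancies, and nothing in attractiveness makes those discrepancies vanish. The paper instead exploits that for $\gamma_i\equiv 0$ the microscopic current is a pure gradient as in \eqref{current_diff}; testing stationarity against $N\sum_x\varphi(x/N)\overline\eta(x)$ and summing by parts twice, exactly as in the proof of Theorem \ref{cor:entire}\textit{(i)}, forces $\rho^+=\rho^-$ and hence excludes shocks. Third, for \textit{(3)} your orbit-counting detour leaves the main case ($m>1$) as an open plan, whereas the needed fact is already available: applying the argument of Proposition \ref{lemma:translate} to the two extremal $(\rho^-,\rho^+)$-shock measures $\mu$ and $\tau'\mu$ shows that they are \emph{ordered}, and then $\mu\le\tau'\mu\le(\tau')^2\mu\le\cdots\le(\tau')^n\mu=\mu$ (or the reverse chain) forces $\tau'\mu=\mu$ outright. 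There is no residual difficulty with $\gcd(n,k)>1$, and no need to ``propagate coincidence from infinity inward.''
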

 The blocking measures in \textit{  (i),   (c)} are defined as 
 in cases \textit{(ii), (a)} and \textit{(ii), (b)} 
  of Theorem \ref{cor:entire}: \newline\newline
{\em First case.}
 If $l_i>0$ for all $i$,  we define $\rho_{z,i}^c$ 
as in \eqref{sol_rev_twolane}, with $\theta=d_i/l_i$ 
and $p/q$ replaced by $1$.
For $i=0,\ldots,n-1$, we define the conditioned measures 
(independent of the choice of $c>0$ as in \eqref{cond_rev_twolane})
\begin{eqnarray}
{\nu}_{i} & := & \mu_{\rho^c_.}\left(
.\left|H_n(\eta)=i\right.
\right)
\label{cond_rev_multilane}
\end{eqnarray}
where $H_n(\eta)$ is defined as the right-hand side of \eqref{def_H_2}. \newline\newline
{\em Second case.} If $l_i=0<d_i$ for all $i$, 
we define the configurations 
\be\label{blocking_config_multi} 
\eta_A:={\bf 1}_{\{x(0)\geq 0\}}+{\bf 1}_{\{x(0)
=-1,\,x(1)\in A\}},\quad A\subset\{0,\ldots,n-1\}
\ee
Then $\nu_i$ is the law of a random configuration $\eta_A$, 
where $A$ is a uniformly chosen random subset of 
$\{0,\ldots,n-1\}$ such that $|A|=i$: 
\be\label{blocking_meas_multi}
\nu_i:=\left(\ba{l}n\\i\ea\right)^{-1}
\sum_{A\subset\{0,\ldots,n-1\}:\,|A|=i}\delta_{\eta_A}
\ee
 \section{Proof of \thmref{characterization_lemma}}
\label{sec:proofs}
 The proof of  \thmref{characterization_lemma} 
mainly adapts the scheme of \cite[Theorem 1.1]{Liggett1976} 
to our model. However  when $q=l_0=l_1=0$,  additional arguments 
are required because the kernel  \eqref{eq:intensity_c} does not satisfy 
usual irreducibility assumptions. \\
First,  in Subsection \ref{subsec:par},  
we show how to parametrize the set $\mathcal F$ in 
\eqref{def_f_always} by the global density  
over the two lanes  and establish 
invariance of the associated product measures  
given in \eqref{eq:charac}.  Next, we introduce 
coupling prerequisites  in Subsection \ref{subsec:couple},  
and complete the proof of characterization  
in Subsection \ref{subsec:complete}. 
\subsection{Parametrization and proof of invariance}\label{subsec:par}
 The following lemma will lead to the parametrization \eqref{sameset}. 
  Note that to be exhaustive we choose here to include case 2 in 
 Lemma \ref{lemma_phi} below although it could be omitted 
 thanks to the symmetry restriction \eqref{wlog}. 
\begin{lemma}\label{lemma_phi}
(i)  The mapping 
$\psi:\mathcal F\to [0,2];\, 
 (\rho_0,\rho_1)\mapsto\psi(\rho_0,\rho_1):=\rho_0+\rho_1$,
is a bijection.  \newline\newline
 (ii)   Its inverse is of the form
$\psi^{-1}(\rho)=(\widetilde{\rho}_0(\rho),\widetilde{\rho}_1(\rho))$,
where  $\widetilde{\rho}_1(\rho):=\rho-\widetilde{\rho}_0(\rho)$,  
and  the function $\rho\mapsto\widetilde{\rho}_0(\rho)$ 
is given by the following formulae:\newline\newline
\textit{Case 1a.} $pq\neq 0$, $p\neq q$. Then, for $r=q/p$,  cf. \eqref{reduced_par}, 
\begin{align}
\widetilde\rho_{0}\left(\rho\right) &:=
\frac{\rho}{2}+ \frac{r+1-\sqrt{(r+1)^{2}+\rho(r-1)^{2}(\rho-2)}}{2(r-1)},
\label{eq:inverse_density}
\end{align}
\textit{Case 1b.} $p=q\neq 0$. Then 
\be\label{tilderho_1b}
\widetilde{\rho}_0(\rho):=\frac{\rho}{2}
\ee
\textit{Case 2.} $p=0<q$. Then
\be\label{tilderho_2a}
\widetilde{\rho}_0(\rho):=\min(\rho,1)
\ee
\textit{Case 3.} $q=0<p$. Then
\be\label{tilderho_2b}
\widetilde{\rho}_0(\rho):=\max(\rho-1,0)
\ee
\end{lemma}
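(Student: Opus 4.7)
The plan is to parametrize $\mathcal F$ by total density by substituting $\rho_1 = \rho - \rho_0$ into the defining equation
\[
p\rho_0(1-\rho_1) - q\rho_1(1-\rho_0) = 0
\]
and solving the resulting equation for $\rho_0$ in each of the four cases. Once I exhibit, for every $\rho\in[0,2]$, a unique pair $(\widetilde\rho_0(\rho),\widetilde\rho_1(\rho)) := (\widetilde\rho_0(\rho),\rho - \widetilde\rho_0(\rho))\in[0,1]^2$ lying in $\mathcal F$, both assertions \textit{(i)} and \textit{(ii)} follow at once, with $\psi^{-1}(\rho)$ of the claimed form.

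For Case 1a, the substitution yields, after expansion and division by $p>0$ (with $r=q/p$),
\[
(1-r)\rho_0^2 + \bigl[(1+r)-(1-r)\rho\bigr]\rho_0 - r\rho = 0.
\]
A short algebraic reduction shows the discriminant equals $(r+1)^2 + \rho(r-1)^2(\rho-2)$, which for $\rho\in[0,2]$ satisfies $\rho(\rho-2)\in[-1,0]$ and is therefore bounded below by $(r+1)^2-(r-1)^2=4r>0$, so both roots are real. Evaluating the quadratic formula at $\rho=0$ selects the branch yielding $\rho_0=0$; this branch coincides precisely with formula \eqref{eq:inverse_density}, while the other root reduces to $-(1+r)/(1-r)$ at $\rho=0$ and falls outside $[0,1]$ regardless of whether $r<1$ or $r>1$. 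To confirm that the chosen branch remains in $[0,1]$ for \emph{every} $\rho\in[0,2]$ and not only at the endpoints, I bypass the quadratic and reparametrize $\mathcal F$ by $\rho_0$ alone: directly solving the $\mathcal F$-equation for $\rho_1$ gives
\[
\rho_1 \;=\; \frac{p\rho_0}{q+(p-q)\rho_0},
\]
a continuous strictly increasing function of $\rho_0\in[0,1]$ mapping $[0,1]$ onto $[0,1]$. Consequently $\rho_0\mapsto\rho_0+\rho_1$ is continuous and strictly increasing from $0$ to $2$, which simultaneously establishes bijectivity of $\psi$ and forces the inverse branch to stay in $[0,1]^2$.

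The three degenerate cases reduce to essentially linear algebra. Case 1b: the equation collapses to $\rho_0=\rho_1$, giving $\widetilde\rho_0(\rho)=\rho/2$. Case 2 ($p=0$): $\mathcal F\cap[0,1]^2$ is the union of the segments $\{\rho_1=0\}$ and $\{\rho_0=1\}$, glued at $(1,0)$; the total density equals $\rho_0$ on the first branch (for $\rho\in[0,1]$) and $1+\rho_1$ on the second (for $\rho\in[1,2]$), yielding $\widetilde\rho_0(\rho)=\min(\rho,1)$ and making the bijection evident. Case 3 ($q=0$) follows either by an analogous direct computation or, more conceptually, from Case 2 by the particle-hole symmetry $\sigma''$ of Lemma \ref{lemma_sym} (which exchanges $p$ and $q$ and transforms $\rho$ into $2-\rho$), giving $\widetilde\rho_0(\rho)=\max(\rho-1,0)$. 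The only genuine subtlety in the whole argument is the root selection in Case 1a, which is handled cleanly by the boundary check at $\rho=0$ together with the global monotonicity of $\rho_0\mapsto\rho_0+\rho_1$ along the curve $\mathcal F$; everything else is bookkeeping.
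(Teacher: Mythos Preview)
Your proof is correct and follows essentially the same route as the paper: both parametrize $\mathcal F$ by $\rho_0$ via the explicit expression for $\rho_1$ in terms of $\rho_0$, reduce Case~1a to a quadratic and select the root, and handle the degenerate cases by direct inspection of the two line segments. Your monotonicity argument (that $\rho_0\mapsto\rho_0+\rho_1(\rho_0)$ is strictly increasing from $0$ to $2$) makes explicit a point the paper leaves implicit when it simply asserts that \eqref{eq:inverse_density} is ``its unique solution in $[0,1]$''; this is a clean way to secure both bijectivity and the global validity of the branch choice.
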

\begin{remark}\label{remark_lemma_phi}
In Lemma \ref{lemma_phi},
the formulae in  \textit{(ii)}  imply that for $pq>0$, $\widetilde{\rho}_i(\rho)$ 
strictly increases from $0$ to $1$ as $\rho$ increases from $0$ to $2$, 
and $\widetilde{\rho}_i\in C^1([0,2])$.
\end{remark}
Next we define
\begin{equation}\label{def_nurho}
\nu_{\rho}:=
 \nu^{
\widetilde{\rho}_0(\rho),\widetilde{\rho}_1(\rho)
}
\end{equation}
By \eqref{def_nurho} and \eqref{eq:two-rate Bernoulli measure}, we have
 (recalling definition \eqref{config_lane}),   for every $i\in\{0,1\}$, 
\be\label{exp_bernoulli}
\Exp_{\nu_{\rho}}[\eta^i(0)]=\widetilde{\rho}_i(\rho)
\ee
 which implies \eqref{eq:density at a point}. 
\begin{remark}\label{rem:increasing set of measures} 
It follows from \textit{(ii)} of Lemma \ref{lemma_phi} 
that the measure
$\nu_{\rho}$ is weakly continuous and stochastically 
nondecreasing with respect to $\rho$.
Namely, if $\rho < \rho'$ then $\nu_{\rho}\le\nu_{\rho'}$.
\end{remark}
\begin{proof}[Proof of Lemma \ref{lemma_phi}]
We have to prove that,  for $\rho\in[0,2]$,  
the equation $\rho_0+\rho_1=\rho$ 
has a unique solution $(\rho_0,\rho_1)\in\mathcal F$; 
 then we define  $\rho_i=\widetilde{\rho}_i(\rho)$ for
$i\in\{0,1\}$.
 For $s>0$, we define
a mapping $\phi_s$ from $[0,1]$ to $[0,1]$ by 
\be\label{def_phi}
\phi_s(\rho_0):=
\frac{s\rho_0}{1-\rho_0+s\rho_0},\quad\forall\rho_0\in[0,1]
\ee 
One can then distinguish the following cases for $\mathcal F$: \\ \\
\textit{Case 1.} $pq\neq 0$.  Then
\be\label{def_f_gen}
\mathcal F:=\left\{
(\rho_0,\rho_1)\in[0,1]^2:\,\rho_1= \phi_{p/q}(\rho_0) 
\right\} 
\ee
\textit{Case 2.} $p=0<q$.  Then 
\be\label{def_f_p0}
\mathcal F:=\left(
[0,1]\times\{0\}
\right)\cup
\left(
\{1\}\times[0,1]
\right)
\ee
\textit{Case 3.} $q=0<p$.   Then 
\be\label{def_f_q0}
\mathcal F:=\left(
[0,1]\times\{1\}
\right)\cup
\left(
\{0\}\times[0,1]
\right)
\ee
Equalities \eqref{tilderho_2a}--\eqref{tilderho_2b} follow from 
\eqref{def_f_p0}--\eqref{def_f_q0}. 
For  \eqref{eq:inverse_density}--\eqref{tilderho_1b}, using 
\eqref{def_f_gen}, we equivalently show that,  for $\rho\in[0,2]$,  
the equation 
\be\label{eq_eq}
\rho_0+\phi_{p/q}(\rho_0)=\rho
\ee
 has a unique solution $\rho_0=:\widetilde{\rho_0}(\rho)\in[0,1]$ and 
deduce $\widetilde{\rho}_1(\rho)$.  If $p=q>0$, \eqref{def_f_gen} 
with $s=1$ yields $\phi_1(\rho_0)=\rho_0$, whence \eqref{tilderho_1b}.
If $p\neq q$, $p>0$ and $q>0$,  
 \eqref{def_f_gen}  and \eqref{eq_eq} yield 
a quadratic equation for $\rho_0$, and \eqref{eq:inverse_density}
is  its  unique solution in $[0,1]$.
\end{proof}
 The following lemma shows that the measures in  
 \eqref{eq:charac} of
 \thmref{characterization_lemma} are indeed 
 extremal translation invariant
 and invariant probability measures.  
 Stationarity can be derived from \cite[Theorem 1]{ligd}, 
 but we give an independent proof
based on prior knowledge of invariance 
along horizontal and vertical layers.  
\begin{lemma}
\label{lem:invariant_measures_for_the_asymetric_ladder}
Let $(\rho_0,\rho_1)\in\mathcal F$. Then 
$\nu^{\rho_0,\rho_1}\in\left(\mathcal{I}\cap\mathcal{S}\right)_e$. 
\end{lemma}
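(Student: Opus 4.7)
The plan is to verify three facts separately about $\nu^{\rho_0,\rho_1}$: (a) translation invariance, (b) invariance under the dynamics, and (c) extremality in $\mathcal I\cap\mathcal S$. Point (a) is immediate from \eqref{eq:two-rate Bernoulli measure}, since the density on each lane is constant along $\Z$; hence $\tau_k\nu^{\rho_0,\rho_1}=\nu^{\rho_0,\rho_1}$ for every $k\in\Z$.

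For (b), I decompose the generator \eqref{eq:generator of the Exclusion} with rates \eqref{eq:intensity_c} as $L=L_{(0)}+L_{(1)}+L_v$, where $L_{(i)}$ is the generator of the horizontal one-dimensional SEP with rates $(d_i,l_i)$ acting on $\eta^i$, and $L_v=\sum_{z\in\Z}L_v^z$ is the vertical part, each $L_v^z$ involving only the pair $(\eta(z,0),\eta(z,1))$. Fix a cylinder function $f$. For $\int L_{(i)}f\,d\nu^{\rho_0,\rho_1}=0$, I condition on $\eta^{1-i}$: by the product structure the conditional law of $\eta^i$ is still $\mu_{\rho_i}$, and $\eta^i\mapsto L_{(i)}f(\eta^0,\eta^1)$ is a cylinder function of $\eta^i$ to which the classical invariance of $\mu_{\rho_i}$ under the single-lane SEP (Section \ref{sec:inv_single}) applies. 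For $\int L_v^z f\,d\nu^{\rho_0,\rho_1}$, I integrate out the coordinates away from $\{(z,0),(z,1)\}$ first; what remains is whether the Bernoulli product $\mathrm{Bernoulli}(\rho_0)\otimes\mathrm{Bernoulli}(\rho_1)$ is invariant under the four-state chain on $\{0,1\}^2$ whose only nontrivial transitions are $(1,0)\to(0,1)$ at rate $p$ and $(0,1)\to(1,0)$ at rate $q$. The two-state equilibrium balance for this chain reads $p\,\rho_0(1-\rho_1)=q\,\rho_1(1-\rho_0)$, which is exactly \eqref{def_f_always}. Since $f$ is cylindric, only finitely many $L_v^z$ contribute, so the sum is harmless and gives $\int L_v f\,d\nu^{\rho_0,\rho_1}=0$. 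Combining the three pieces yields \eqref{def_inv}.

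For (c), $\nu^{\rho_0,\rho_1}$ is a product measure whose lane densities are constant along $\Z$, hence it is mixing under the horizontal shift group $(\tau_k)_{k\in\Z}$: if $f,g$ are cylinder functions, then for $|k|$ large enough the supports of $f$ and $\tau_k g$ are disjoint and $\Exp_{\nu^{\rho_0,\rho_1}}[f\cdot\tau_k g]=\Exp_{\nu^{\rho_0,\rho_1}}[f]\,\Exp_{\nu^{\rho_0,\rho_1}}[g]$ exactly. Mixing implies ergodicity of $(\tau_k)_{k\in\Z}$, which gives extremality of $\nu^{\rho_0,\rho_1}$ in $\mathcal S$. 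Any convex decomposition of $\nu^{\rho_0,\rho_1}$ within $\mathcal I\cap\mathcal S$ is \emph{a fortiori} a convex decomposition within $\mathcal S$, forcing both components to equal $\nu^{\rho_0,\rho_1}$, hence extremality in $\mathcal I\cap\mathcal S$.

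I do not expect a genuine obstacle here. The only step that actually uses the defining relation of $\mathcal F$ is the elementary single-site equilibrium check for the four-state vertical chain; everything else either follows directly from the product-measure definition of $\nu^{\rho_0,\rho_1}$, or is a direct appeal to the invariance of Bernoulli product measures under the single-lane SEP recalled in Section \ref{sec:inv_single}.
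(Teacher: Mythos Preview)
Your argument is correct and follows essentially the same approach as the paper: the same decomposition of $L$ into lane-wise horizontal SEP generators and site-wise vertical two-state chains, the same use of the product structure to reduce each piece to a known invariance (single-lane Bernoulli invariance horizontally, the detailed balance condition \eqref{def_f_always} vertically), and the same extremality argument via ergodicity of homogeneous product measures under the horizontal shift.
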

\begin{proof}[Proof of Lemma \ref{lem:invariant_measures_for_the_asymetric_ladder}]
Let $f$   be a cylinder function
on $\mathcal{X}$. 
Note that  
the generator \eqref{eq:generator of the Exclusion} 
has the following structure, 
\be\label{structure_gen}
L=\sum_{i\in  W}L_h^i+\sum_{z\in\Z}L_v^z
\ee 
where,  for $i\in W$ and $z\in\Z$, 
\begin{align*}
L_{h}^if\left(\eta\right) & =\sum_{z\in\mathbb{Z}}\sum_{z'\in\{z+1,z-1\}}
p\left((z,i),(z',i)\right)
\eta\left((z,i)\right)\left(1-\eta\left((z',i)\right)\right)\\
&  \qquad\qquad\qquad\qquad\qquad
\times \left(f\left(\eta^{(z,i),(z',i)}\right)-f\left(\eta\right)\right) \\
L_{v}^zf\left(\eta\right) & =\sum_{i,j\in W}p\left((z,i),(z,j)\right)
\eta\left((z,i)\right)\left(1-\eta\left((z,j)\right)\right)
\left(f\left(\eta^{(z,i),(z,j)}\right)-f\left(\eta\right)\right)
\end{align*}
 In other words, $L_{h}^i$,  acting only on $\eta^i$, 
and being translation invariant along the $\Z$ direction, 
describes the evolution of the process on $\mathbb{L}_{i}$, 
 which is the one of a (single-lane)  SEP;  while 
$L_{v}^z$, acting only on $\{z\}\times W$,  
 describes the motion of particles along $\{z\}\times W$, 
 that is,  the displacements
from one lane to another  at a fixed spatial location $z$. \newline
The statement $\nu^{\rho_0,\rho_1}\in\mathcal S$ holds because 
$\nu^{\rho_0,\rho_1}$ is a product Bernoulli measure whose  parameters
are  uniform in the $\Z$-direction.
Considering \eqref{structure_gen}, to prove that  $\nu^{\rho_0,\rho_1}$ 
belongs to $\mathcal{I}$,  it is enough to show that,  
for $i\in W$ and $z\in\Z$,  
\begin{equation}\label{eq:enoughtoshow_0}
\int L_h^{i}f\left(\eta\right)d\nu^{\rho_0,\rho_1}\left(\eta\right) =0
\end{equation}
and
\begin{equation}\label{eq:enoughtoshow}
\int L_{v}^zf\left(\eta\right)d\nu^{\rho_0,\rho_1}\left(\eta\right)=0.
\end{equation}
Let us write,  for a fixed $i\in W$,   
\be\label{decomp_product}\eta=(\eta^i,\eta'),
\quad \nu^{\rho_0,\rho_1}(d\eta)=\nu^i(d\eta^i)\otimes \nu'(d\eta')\ee
where $\eta'$ denotes the restriction of $\eta$ to lanes other than $i$. 
Note that $\nu^i$ is invariant for $L_h^i$ because $L_h^i$ 
is the generator 
of a  single-lane  SEP  on $\mathbb{L}_i$
and $\nu^i$ is a homogeneous product Bernoulli measure.
Since $L_h^i$ acts only on $\eta^i$, we have
\be\label{only_acts}
\int L_h^{i}f\left(\eta\right)d\nu^{\rho_0,\rho_1}\left(\eta\right)
  =  \int\left(
\int L_h^i [f(.,\eta')](\eta^i,\eta')d\nu^i(\eta^i)
\right)
d\nu'(\eta')=0
\ee
%
%
This establishes \eqref{eq:enoughtoshow_0}. 
\begin{remark}\label{rem:only_acts} 
 In \eqref{only_acts}, the notation $L_h^i [f(.,\eta')](\eta^i,\eta')$
 means that we apply the generator $L_h^i$  to the function 
 $\eta^i\mapsto f(\eta^i,\eta')$ for constant $\eta'$.  Though $L_h^i$ 
 acts only on $\eta^i$, the resulting function depends both on $\eta^i$ 
 and $\eta'$ (in the same way as a partial derivative operator acts only 
 on one variable but yields a function depending on all variables: here 
 $\eta^i$ plays a role similar to the variable with respect to which one differentiates). 
\end{remark}
We can similarly write,  for a fixed $z\in\Z$,  
\be\label{decomp_product_2}
\eta=(^z\eta,\eta''),\quad \nu_\rho(d\eta)=\,
^z\nu(d\,{^z\eta})\otimes\nu''(d\eta'')
\ee
 where $^z\eta$ is the restriction of $\eta$ to $\{z\}\times W$, 
 and $\eta''$ its restriction to the complement of $\{z\}\times W$.
So,  to prove \eqref{eq:enoughtoshow}, it is enough to prove that
 $^z\nu$ is invariant for $L_v^z$.
The latter is the generator of a simple exclusion process 
on $\{z\}\times W$. 
The invariance of $^z\nu$ follows from  Proposition \ref{prop:lig}  
applied to $S=\{z\}\times W$, 
$\pi((z,0),(z,1))=p$, $\pi((z,1),(z,0))=q$,
and definition \eqref{def_f_always} of $\mathcal F$. 
 Finally, since  $\nu^{\rho_0,\rho_1}\in\mathcal S$ 
is a homogeneous product measure,
it is spatially ergodic, that is extremal in $\mathcal S$, 
and thus also in $\mathcal I\cap\mathcal S$. 
\end{proof}
\subsection{Coupling, attractiveness 
and discrepancies} \label{subsec:couple}
 Let us first recall these properties for a general SEP; 
 we refer to 
\cite[Chapter VIII, Section 2]{liggett2012interacting} 
for details.  \newline\newline
 \textit{Coupling.} We recall the so-called
\textit{Harris graphical representation} (\cite{Harris72}). 
Suppose $\left(\Omega,\mathcal{F},\mathbb{P}\right)$
is a probability space that supports a family 
of independent Poisson processes
$\mathcal{N}=\left\{ \mathcal{N}_{\left(x,y\right)}:
\left(x,y\right)\in V\times V\right\} $
where $\mathcal{N}_{(x,y)}$ 
has intensity $p\left(x,y\right)$.  For a given $\omega \in \Omega$,
we let the  process evolve  according to the following rule:
if there is a particle at site $x\in V$ at time $t^{-}$ where 
 $t\in\mathcal{N}_{\left(x,y\right)}$, 
it will attempt to jump to site $y$. The attempt is suppressed if
at time $t^{-}$ site $y$ is occupied. \newline 
The graphical construction allows to couple the evolutions from
different initial configurations through 
\textit{basic coupling}, that is, by
using the same Poisson processes for them.  
In particular, if $(\eta_t)_{t\geq 0}$ and $(\xi_t)_{t\geq 0}$ 
are  two processes coupled in this way,  $(\eta_t,\xi_t)_{t\ge 0}$ 
is a Markov process on $\mathcal X\times\mathcal X$ 
whose generator $\widetilde{L}$ is given by 
\begin{eqnarray}
\widetilde{L}f(\eta,\xi) & := & 
\sum_{x,y\in V}p(x,y)[\eta(x)(1-\eta(y))\wedge\xi(x)(1-\xi(y))]\left[
f(\eta^{x,y},\xi^{x,y})-f(\eta,\xi)
\right]\nonumber
\\
& + & \sum_{x,y\in V}p(x,y)[\eta(x)(1-\eta(y))-\xi(x)(1-\xi(y))]^+\left[
f(\eta^{x,y},\xi)-f(\eta,\xi)
\right]\nonumber\\
& + & \sum_{x,y\in V}p(x,y)[\eta(x)(1-\eta(y))-\xi(x)(1-\xi(y))]^-\left[
f(\eta,\xi^{x,y})-f(\eta,\xi)
\right]\nonumber\\
& & 
\label{coupled_gen}
\end{eqnarray}
We shall denote by $(\widetilde{S}_t)_{t\geq 0}$ 
the semigroup generated by $\widetilde{L}$,
 by $\widetilde{\mathcal{I}}$  the set of invariant  probability  
measures for $\widetilde{L}$,
 by $\widetilde{\mathcal{S}}$  the set of probability measures on 
$\mathcal X\times\mathcal X$ that are
invariant with respect to translations along $\Z$,  and by
$\widetilde{\mathbb{E}}_{\widetilde\mu}$ 
the expectation for the coupled process 
with initial distribution a probability measure  $\widetilde\mu$ on 
$\mathcal X\times\mathcal X$.  \newline\newline
\textit{Attractiveness.} There is a natural 
partial order on $\mathcal{X}$, 
namely, for $\eta,\xi\in\mathcal{X}$,
\be\label{eq:orderconf}
\eta\leq\xi\quad\mbox{ if and only if }
\quad\forall x\in V,\, \eta\left(x\right)\leq\xi\left(x\right)
\ee 
We shall write $\eta<\xi$ if $\eta\leq\xi$ and $\eta\neq\xi$.\\
 If $\eta\leq\xi$ or $\eta\geq\xi$, we say that $\eta$ and $\xi$ 
are \textit{ordered} configurations.  \newline\newline
The order \eqref{eq:orderconf} endows an order 
on the set  $\mathcal{M}_{1}$  
 in the following way. A function $f$ on $\mathcal{X}$
is said to be \textit{increasing} if and only if $\eta\leq\xi$ implies 
$f\left(\eta\right)\leq f\left(\xi\right)$.
For two probability measures $\mu_{0},\mu_{1}$ on $\mathcal{X}$,
we write $\mu_{0}\leq\mu_{1}$ if and only if for every increasing
function $f$ on $\mathcal{X}$ we have 
$\int fd\mu_{0}\left(\eta\right)\leq\int fd\mu_{1}\left(\eta\right)$.
 We shall write $\mu_1<\mu_2$ if 
 $\mu_1\leq\mu_2$ and $\mu_1\neq\mu_2$.  
 We say $\mu_1$ and $\mu_2$ are \textit{ordered} 
 if $\mu_1\leq\mu_2$ or $\mu_2\leq\mu_1$. 
 In particular, $\mu_1\leq\mu_2$ 
 if there exists a measure $\widetilde{\mu}(d\eta,d\xi)$ 
 with marginals $\mu_1(d\eta)$ and $\mu_2(d\xi)$ 
(that is a {\em coupling} of $\mu_1$ and $\mu_2$) 
supported on 
$\{(\eta,\xi)\in\mathcal X\times\mathcal X:\,\eta \leq\xi\}$; 
such a coupling is called an \textit{ordered coupling}.  \newline\newline
The basic coupling shows that
the simple exclusion process is \textit{attractive}, 
that is, the partial order
\eqref{eq:orderconf} is conserved by the dynamics. 
In other words, 
\be\label{eq:attra}
\forall \eta_0,\xi_0\in\mathcal{X}, \, 
\eta_0\le\xi_0 \Rightarrow \forall t\ge 0,\,
\eta_t\le\xi_t \,\mbox{ a.s.}
\ee 
 This implies, for two probability measures $\mu,\nu$ on $\mathcal X$,
\be\label{attractive}
\mu\leq\nu\Rightarrow \mu S_t\leq\nu S_t
\ee
\textit{Discrepancies.}  If 
$(\eta,\xi)\in\mathcal X\times\mathcal X$, we say that at  
$x\in V$  there is 
an $\eta$ \textit{discrepancy} if $\eta(x)>\xi(x)$, 
a $\xi$ discrepancy 
if $\eta(x)<\xi(x)$, a coupled particle  
if $\eta(x)=\xi(x)=1$, 
a hole if $\eta(x)=\xi(x)=0$.  An $\eta$ and a $\xi$ 
discrepancy are called \textit{opposite discrepancies}.
 The evolution of the coupled process can be 
formulated as follows. At a time 
$t\in\mathcal N_{(x,y)}$,  a discrepancy  
or a coupled particle at $x$ exchanges with a hole at $y$; 
a coupled particle at $x$ 
exchanges with a discrepancy at $y$; if there is a pair of 
opposite  discrepancies  at $x$ and $y$, 
they are replaced by a hole at $x$ 
and a coupled particle at $y$.  
We call this a \textit{coalescence}. This shows that 
no new discrepancy can ever be created.  \newline\newline
Given an initial  tagged discrepancy, we may follow its motion over time.
We state in this context a classical 
\textit{finite propagation} property for discrepancies.
Single-lane versions of this statement can be found e.g. 
in  \cite[Lemma 3.1]{Bramson2002}  or 
\cite[Lemma 3.1, Lemma 3.2]{Bahadoran2002}.
Proofs are similar for the two-lane model.  
\begin{proposition}\label{prop:prop} 
There exist constants $\speed,C,C'>0$ such that the following holds.
Assume $(\eta_t)_{t\geq 0}$ and $(\xi_t)_{t\geq 0}$ 
are two coupled two-lane  SEP's with at least one discrepancy at time $0$.
Let $X_t=(X_t(0),X_t(1))\in \Z\times W$ denote 
the position of a tagged discrepancy 
at time  $t$. Then: \\
(i) Outside probability $e^{-Ct}$, it holds that 
$|X_t(0)-X_0(0)|\leq (1+\sigma)t$.  \newline
(ii) Similarly, if we assume  
$\eta_0(z,i)=\xi_0(z,i)$ for all $z\in[a,b]$ and $i\in\{0,1\}$, 
where $a,b\in\Z$ and $a<b$,
then outside probability $e^{-C't}$,  $\eta_t(z,i)=\xi_t(z,i)$ 
for all $z\in[a+\speed t,b-\speed t]$ and $i\in\{0,1\}$.
\end{proposition}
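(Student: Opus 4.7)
The plan is to reduce both statements to standard large-deviation bounds for Poisson random variables, using the Harris graphical representation recalled in Subsection \ref{subsec:couple}. For part (i), I would track a single tagged discrepancy $X_t$ under the basic coupling up to its (possibly infinite) coalescence time. As noted in the text, discrepancies are neither created nor split; at each Harris time $s \in \mathcal{N}_{(x,y)}$ with $X_{s^-} \in \{x, y\}$, the tagged discrepancy either stays, moves one step (along a lane or between lanes), or coalesces. In particular, its horizontal coordinate $X_t(0)$ changes only when the discrepancy uses an edge of type $((z,i),(z\pm 1,i))$, and those edges seen from its current lane fire with total rate $d_i + l_i \leq \lambda := \max_{i\in W}(d_i + l_i)$. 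Hence $|X_t(0) - X_0(0)|$ is stochastically dominated by a Poisson random variable $N_t$ of parameter $\lambda t$. The Chernoff inequality yields $\mathbb{P}(N_t \geq \mu t) \leq e^{-t h(\mu)}$ with $h(\mu) = \mu \log(\mu/\lambda) - \mu + \lambda > 0$ for $\mu > \lambda$; choosing $\sigma > 0$ with $1+\sigma > \lambda$ and setting $C := h(1+\sigma) > 0$ proves (i).

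For part (ii), I would use the standard backward-path argument in the graphical representation. Fix $(z,i)$ with $z \in [a+\sigma t, b-\sigma t]$. If $\eta_t(z,i) \neq \xi_t(z,i)$, there is a discrepancy at $(z,i)$ at time $t$. Since the basic coupling never creates discrepancies, running the Harris marks backwards in time from $(z,i)$ produces an ancestor trajectory $(Y_s)_{0 \leq s \leq t}$ with $Y_t = (z,i)$ and $\eta_0(Y_0) \neq \xi_0(Y_0)$. By time-reversal of Poisson point processes, the horizontal coordinate of $(Y_s)$ is again dominated by a Poisson process of rate $\lambda$. Since $\eta_0 = \xi_0$ on $[a,b] \times W$, we must have $Y_0(0) \notin [a,b]$, so $|Y_0(0) - z| > \sigma t$ (after enlarging $\sigma$ if necessary so that a single constant exceeding $\lambda$ serves both parts). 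The Chernoff bound thus gives an $e^{-\tilde{C} t}$ probability per site, and a union bound over the $2(b-a+1)$ sites of $[a,b] \times W$ absorbs the polynomial factor into a slightly smaller exponent $C' > 0$, yielding (ii).

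The main technical point is the careful construction of the backward ancestor path in the presence of coalescences, so that (ii) genuinely reduces to the forward Poisson estimate from (i). The cleanest approach is to build $(Y_s)$ pathwise by reading the Harris marks from time $t$ back to $0$: at each backward-mark time whose edge touches the current position of $Y$, let $Y$ jump along that edge. This construction mirrors the forward motion of a single tagged discrepancy and inherits its rate bound, so no new probabilistic ingredient is needed beyond the Chernoff estimate already used for (i).
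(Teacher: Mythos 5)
The paper itself writes out no proof of Proposition \ref{prop:prop}: it appeals to the single-lane statements of \cite{Bramson2002,Bahadoran2002} and notes that the arguments carry over, and your part (i) is exactly that standard forward argument. It is correct up to one constant: a tagged discrepancy at $(z,i)$ is displaced horizontally not only when it jumps, but also when a coupled particle jumps onto its site from $(z\pm 1,i)$ (this is the transition ``a coupled particle at $x$ exchanges with a discrepancy at $y$'' described in Subsection \ref{subsec:couple}), so the relevant rate is $2(d_i+l_i)$ rather than $d_i+l_i$; since $\sigma$ and $C$ only need to exist, this merely changes your $\lambda$.

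The genuine gap is in (ii), at the very step you single out as the main technical point. The path $(Y_s)$ built by ``jumping along the edge at every backward mark touching the current position'' is not the ancestor of the discrepancy observed at $(z,i)$ at time $t$: the true ancestor moves only at those incident marks that are effective for the actual coupled configuration and stays put at the others, so the two trajectories separate at the first ineffective mark, and from then on $Y$ carries no information about where the discrepancy came from; in particular $\eta_0(Y_0)\neq\xi_0(Y_0)$ may fail, and the event $\{\eta_t(z,i)\neq\xi_t(z,i)\}$ does not force $|Y_0(0)-z|>\sigma t$, so the per-site Chernoff bound is not justified by this construction. A correct reduction to (i) goes forward rather than backward: since discrepancies are never created, on the bad event some discrepancy present at time $0$ at a site $x$ with $x(0)\notin[a,b]$ reaches the window $([a+\sigma t,b-\sigma t]\cap\Z)\times W$ by time $t$; for each fixed $x$, the number of horizontal moves of the discrepancy initially there (if any) is dominated, independently of the initial configuration, by the same Poisson variable as in (i), and an ancestor at horizontal distance $j$ beyond $a$ or $b$ must make at least $\sigma t+j$ moves, so summing the Chernoff bounds over all $x\notin[a,b]\times W$ gives a geometrically convergent series bounded by $Ce^{-ct}$, uniformly in $a$ and $b$. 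That uniformity is the second defect of your write-up: the constants $\sigma,C,C'$ are quantified before $a,b$, so the prefactor $2(b-a+1)$ in your union bound over target sites cannot simply be ``absorbed into a slightly smaller exponent''---this fails when $b-a$ is large compared with $t$, and the paper does apply (ii) with intervals much longer than $t$ (e.g.\ in the proof of Proposition \ref{prop:step2}, where $n\to\infty$ with $t$ fixed). If you prefer to keep the per-target-site bookkeeping, you must at least exploit that a site at depth $m\geq\sigma t$ inside $[a,b]$ requires $m$ moves, which again restores summability without any factor $b-a$.
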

 \subsection{Irreducibility and discrepancies} \label{subsec:irred}
 As for general SEP, a crucial tool 
 to prove \thmref{characterization_lemma}
is an irreducibility property. We thus begin 
with the following definitions and properties. \newline 
For $x,y\in V$ such that $x\neq y$, and  $n\in\N$, 
 we write $x\stackrel{n}{\rightarrow}_p y$ if there exists  
 a path $(x=x_0,\ldots,x_{n-1}=y)$ of length $n$ such that 
 $p(x_k,x_{k+1})>0$ for $k=0,\ldots,n-1$. We write $x\rightarrow_p y$ 
 if there exists $n\in\N$ such that
$x\stackrel{n}{\rightarrow}_p y$. We omit mention of $p$
whenever there is no ambiguity on the kernel. 
We say $x$ and $y$ are $p$-connected if  $x\rightarrow_p y$
or $y\rightarrow_p x$.  We say two configurations $\eta,\xi$ in 
$\mathcal X$ are \textit{$p$-ordered} 
if there exists no $(x,y)\in V\times V$ 
such that $x$ and $y$ are $p$-connected and $(\eta,\xi)$ 
has  opposite discrepancies  at $x$ and $y$.  
\begin{definition}\label{def_irred}
 The kernel $p(.,.)$ is \textit{weakly irreducible} if,    
for every $(x,y)\in V\times V$ such that $x\neq y$, 
$x$ and $y$ are $p$-connected.
\end{definition}
The above notion is weaker than the \textit{usual} 
irreducibility property, 
for which a stronger notion of $p$-connection is required, namely 
$x\rightarrow_p y$ \textit{and} $y\rightarrow_p x$. 
 For instance, the kernel \eqref{def_tasep} is irreducible 
 if and only if $dl>0$; 
if $dl=0$, it is weakly irreducible but not irreducible. 
For our  two-lane and multilane  models, we need the following lemma.
\begin{lemma}\label{lemma_wi}
%
%
(i) The two-lane kernel $p(.,.)$ given by 
\eqref{eq:intensity_c} is weakly irreducible   
except when $q=0$ and both lanes are totally asymmetric 
in the same direction, that is 
\be\label{non_wi_case}d_0l_0+d_1l_1=0<d_0d_1+l_0l_1\ee
(ii) The multilane kernel $p(.,.)$ given by \eqref{restrict_kernel}
is weakly irreducible under assumption \ref{assumption_ker}.
\end{lemma}
\begin{proof}[Proof of Lemma \ref{lemma_wi}]
\mbox{}\newline\newline
\textit{Proof of (i).} Let $x,y\in\Z$ such that $x\neq y$. 
We need to go either 
from $(x,0)$ to $(y,1)$, or  from $(y,1)$ to $(x,0)$, 
 with the kernel $p(.,.)$.  \\
 \textit{(a)} Assume first $q>0$. Since the kernel 
 \eqref{def_tasep} is weakly irreducible, the horizontal kernel 
 on lane $0$
can either go from $(x,0)$ to $(y,0)$ or from $(y,0)$ to $(x,0)$. 
In the former case, since $p>0$, we go  from $(y,0)$ to $(y,1)$  
with the vertical kernel. 
In the latter,  since $q>0$,  we can go 
from $(y,1)$ to $(y,0)$ vertically and 
then from $(y,0)$ to $(x,0)$ horizontally. \\
 \textit{(b)} Assume now $q=0$. 
If the two lanes are totally asymmetric in the same direction, 
say e.g. $l_0=l_1=0<d_0d_1$, and $x>y$, we can neither 
go from $(x,0)$ to $(y,1)$ 
(because $l_0+l_1=0$), nor from $(y,1)$ to $(x,0)$ 
(because $q=0$); otherwise, 
we have either $d_0l_1>0$ or  $d_1l_0>0$,  say for instance the former. 
Then  we can go from $(x,0)$ to $(y,1)$ via $(y,0)$ if $x<y$, 
 or  via $(x,1)$ if $x>y$. \newline\newline
\textit{Proof of (ii).} Let $x,y\in\Z$ such that $x\neq y$ 
and $i,j\in W$ 
such that $i\neq j$. Since the vertical kernel $q(.,.)$ is irreducible, 
the same argument as in case \textit{(a)} of \textit{(i)} shows that 
we can either go from $(x,i)$ to $(y,j)$ or from $(y,j)$ to $(x,i)$.
\end{proof}
 The next lemma gives a characterization of $p$-ordered configurations. 
This requires the following definition.  Without loss of generality 
we assume in the sequel that $d_0d_1>0$ and $l_0l_1\geq 0$. 
We leave the reader symmetrically formulate  Definition \ref{def_supinf}
 and Lemma \ref{lemma:irred}  in the case $d_0d_1\geq 0$ and $l_0l_1>0$. 
\begin{definition}\label{def_supinf}
For $(\eta,\xi)\in\mathcal X\times\mathcal X$, we write $\eta\supinf\xi$ 
if and only if there exist $x,y\in\Z$ such that $x<y$ 
and the following hold: 
(a)  there are  opposite discrepancies  at $(x,1)$ and $(y,0)$;
(b) $\eta^0\leq\xi^0$ and $\eta^1\geq \xi^1$ 
if the discrepancy at $(x,1)$ 
is an $\eta$ discrepancy; or
$\eta^0\geq\xi^0$ and $\eta^1\leq \xi^1$ 
if the discrepancy at $(x,1)$ is a  $\xi$ discrepancy;
(c) There is no discrepancy at $(z,1)$ if $z>x$, 
nor any discrepancy at $(z,0)$ if $z<y$.\\
 We define
\be\label{def:Esupinf}
E_{\supinf}:=
\{(\eta,\xi)\in{\mathcal X}\times{\mathcal X}:\,\eta\supinf\xi\}
\ee 
\end{definition}
\begin{lemma}\label{lemma:irred}
For the kernel $p(.,.)$ in \eqref{eq:intensity_c},  under \eqref{wlog}, 
 we have the following:  \newline
(i)  Unless $q=0$ and $l_0=l_1=0$,  two configurations 
$\eta$ and $\xi$ are $p$-ordered if and only if 
they are ordered, i.e. $\eta\leq\xi$ or $\xi\leq\eta$.\newline
(ii) If   $q=0$  and $l_0=l_1=0$,  two configurations 
$\eta$ and $\xi$ are $p$-ordered 
if and only if  either  they are ordered, or  $\eta\supinf\xi$. 
\end{lemma}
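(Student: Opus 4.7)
Part (i) will follow immediately from Lemma \ref{lemma_wi}(i). Under the hypotheses of (i) together with \eqref{wlog}, the kernel $p(.,.)$ is weakly irreducible, so every pair of distinct sites in $V$ is $p$-connected. Hence the condition defining $p$-orderedness (no $p$-connected pair carrying opposite-type discrepancies) collapses to: no pair of sites carries opposite-type discrepancies. This is exactly the statement that either all discrepancies are $\eta$-discrepancies (so $\eta\geq\xi$) or all are $\xi$-discrepancies (so $\eta\leq\xi$). The reverse implication is tautological.

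For part (ii), with $q=l_0=l_1=0$ and $p,d_0,d_1>0$, I would first compute the $p$-connectivity structure explicitly. Particles can only move rightward on each lane and only jump downward from lane $0$ to lane $1$. A direct check yields: two distinct sites $(x,i),(y,j)\in V$ are $p$-connected if and only if $(i,j)\neq(1,0)$ or $y\geq x$; equivalently, the only pairs of distinct sites that are NOT $p$-connected are those of the form $((y,1),(x,0))$ with $y<x$ (so that the lane-$1$ site is strictly to the left of the lane-$0$ site).

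Now suppose $(\eta,\xi)$ is $p$-ordered but not ordered. Write $A_i$ (resp.\ $B_i$) for the set of $\eta$-discrepancy (resp.\ $\xi$-discrepancy) positions on lane $i$. Since same-lane distinct sites are always $p$-connected (via horizontal moves), $A_i$ and $B_i$ cannot both be nonempty for either $i$. Since we are not ordered, both an $\eta$- and a $\xi$-discrepancy must exist, so the two types of discrepancy must live on opposite lanes. The non-$p$-connection constraint on the only allowed cross-type pairs then forces every discrepancy position on lane $1$ to be strictly to the left of every discrepancy position on lane $0$ (the strict inequality is needed because a vertical $p$-move at a common horizontal position would create a $p$-connection). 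Taking $x$ to be the rightmost lane-$1$ discrepancy position and $y$ the leftmost lane-$0$ discrepancy position yields $x<y$ and, by inspection, conditions (a), (b), (c) of Definition \ref{def_supinf}; hence $\eta\supinf\xi$. The converse direction (from $\eta\supinf\xi$ to $p$-ordered) follows by reading the same argument backwards: condition (b) guarantees all same-lane discrepancies are of one type, condition (c) confines cross-type pairs to the form $((z,1),(z',0))$ with $z\leq x<y\leq z'$, and hence to non-$p$-connected pairs.

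The main point of care is to treat the case where the two opposite discrepancies could a priori sit at the same horizontal position on the two lanes: since $p>0$ generates a vertical $p$-connection $(x,0)\to(x,1)$, such a configuration is excluded by $p$-orderedness, which is precisely why the definition of $\supinf$ imposes $x<y$ strictly. Once this is accounted for, the proof reduces to the bookkeeping of discrepancy types per lane described above.
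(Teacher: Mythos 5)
Your proof is correct and takes essentially the same route as the paper: part (i) is exactly the weak-irreducibility argument (Lemma \ref{lemma_wi} under \eqref{wlog}), and for part (ii) you, like the paper, use that same-lane sites are always $p$-connected (so each lane is ordered and opposite-type discrepancies must sit on different lanes), while the only non-$p$-connected pairs force every lane-$1$ discrepancy strictly to the left of every lane-$0$ discrepancy, after which $x$ and $y$ are taken as the extreme discrepancy locations; you additionally spell out the easy converse that the paper leaves implicit. One cosmetic remark: your displayed criterion ``$(x,i),(y,j)$ are $p$-connected iff $(i,j)\neq(1,0)$ or $y\geq x$'' is wrong as stated for the ordering $(i,j)=(0,1)$ with $y<x$, but the symmetric reformulation you give immediately afterwards (the non-$p$-connected pairs are exactly a lane-$1$ site strictly to the left of a lane-$0$ site) is correct and is what your argument actually uses, so no gap results.
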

\begin{proof}[Proof of Lemma \ref{lemma:irred}]
 Let $\eta$ and $\xi$ be two  $p$-ordered configurations. 
 Note that  two configurations are ordered  (see \eqref{eq:orderconf})  
if and only if they have no pair of opposite discrepancies.
 If $pq\neq 0$ or $l_0+l_1>0$,  because of \eqref{wlog}, 
any two distinct  
points  of  $V$  are $p$-connected, hence $\eta$ and $\xi$ are ordered. 
Assume $q=0<p$. First we note that for all $x,y\in \Z$, $(x,0)$ 
and $(y,0)$ are $p$-connected, 
and so are $(x,1)$ and $(y,1)$. Thus $\eta^i$ and $\xi^i$ are ordered. 
If the ordering is the same, then $\eta$ and $\xi$ 
are ordered. Otherwise, 
there exists a pair of opposite discrepancies, one at  $(\mathbf x,1)$ 
and one at $(\mathbf y,0)$ for $\mathbf x,\mathbf y\in\Z$. 
We must have $\mathbf x<\mathbf y$, otherwise $(\mathbf x,1)$ 
and $(\mathbf y,0)$ are $p$-connected. 
The ordering on each lane is imposed by the nature 
of the discrepancies at 
$(\mathbf x,1)$ and $(\mathbf y,0)$.
Assume for instance that there is an $\eta$ discrepancy 
at $(\mathbf x,1)$ 
and a $\xi$ discrepancy at $(\mathbf y,0)$. Then $\eta^0\leq \xi^0$ and 
$\eta^1\geq \xi^1$. For every $z>\mathbf y$, since $\eta^1\geq\xi^1$, 
we have $\eta^1(z)=\xi^1(z)$ or an $\eta$ discrepancy at $(z,1)$. 
The latter is ruled out because
$(\mathbf y,0)$ and $(z,1)$ are $p$-connected. Similarly there can be 
no discrepancy at $(z,0)$ if $z<\mathbf x$. 
We can then  redefine  $x$ as the location of the rightmost 
$\eta$ discrepancy 
on lane $1$, and $y$ denotes the location of the leftmost 
$\xi$ discrepancy on lane $0$. 
\end{proof}
 \subsection{Proof of characterization} \label{subsec:complete}
 The next two results will enable us to deal with discrepancies in 
the proof of \thmref{characterization_lemma}. 
\begin{lemma}\label{lemma:irred2}
Let $\widetilde{\nu}\in(\widetilde{\mathcal I}\cap\widetilde{\mathcal S})$. 
If   $l_0=l_1=q=0$,  then $\widetilde{\nu}(E_{\supinf})=0$. 
\end{lemma}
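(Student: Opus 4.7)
The plan is to give a direct argument based only on translation invariance of $\widetilde\nu$; the hypotheses $\widetilde\nu\in\widetilde{\mathcal I}$ and $l_0=l_1=q=0$ will not actually enter. The key observation is that $E_\supinf$ is a translation-invariant event on which the position of the rightmost lane-$1$ discrepancy is a well-defined finite integer, and no such random variable can have a $\Z$-invariant distribution under a probability measure.

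First I check that $E_\supinf$ is invariant under every shift $\tau_k$: if $(\eta,\xi)$ satisfies $\eta\supinf\xi$ with witnesses $(x,y)$, then $(\tau_k\eta,\tau_k\xi)$ satisfies it with witnesses $(x-k,y-k)$. Next I set
\[
R_1(\eta,\xi):=\max\{z\in\Z:\eta(z,1)\neq\xi(z,1)\},\qquad \max\emptyset:=-\infty.
\]
Condition (c) of Definition \ref{def_supinf} forces $\{z:\eta(z,1)\neq\xi(z,1)\}\subseteq(-\infty,x]$ for any witness $x$, while (a) puts $x$ in this set; hence $R_1$ takes a finite integer value on $E_\supinf$. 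A direct computation from \eqref{shift_config} gives $R_1\circ\tau_k=R_1-k$ for every $k\in\Z$. Translation invariance of $\widetilde\nu$ then yields
\[
\widetilde\nu(R_1=N)=\widetilde\nu(R_1\circ\tau_k=N)=\widetilde\nu(R_1=N+k)
\]
for every $N,k\in\Z$, so $c:=\widetilde\nu(R_1=0)$ equals $\widetilde\nu(R_1=N)$ for all $N$. Disjointness of the events $\{R_1=N\}$, $N\in\Z$, gives $\sum_{N\in\Z}c=\widetilde\nu(R_1\in\Z)\le 1$, forcing $c=0$ and hence $\widetilde\nu(R_1\in\Z)=0$. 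Since $E_\supinf\subseteq\{R_1\in\Z\}$, the conclusion follows.

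There is essentially no obstacle here: the only points to verify are the translation invariance of $E_\supinf$ and the non-emptiness and upper-boundedness of the lane-$1$ discrepancy set on this event, both immediate from Definition \ref{def_supinf}. The argument uses neither the coupled dynamics nor the special parameter values, so the extra hypotheses in the statement seem to be kept only to mark the context in which the lemma will be invoked (namely, inside the degenerate case of Lemma \ref{lemma:irred}).
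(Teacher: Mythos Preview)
Your proof is correct and follows essentially the same approach as the paper: both define the rightmost lane-$1$ discrepancy position (your $R_1$, the paper's $X$), observe it is a finite integer on $E_\supinf$, and use translation invariance of $\widetilde\nu$ to conclude that $\widetilde\nu(R_1=k)$ is constant in $k$ and hence zero. Your additional remark that only $\widetilde\nu\in\widetilde{\mathcal S}$ is actually used (not $\widetilde\nu\in\widetilde{\mathcal I}$ or the parameter constraints) is accurate and matches what the paper's proof implicitly does.
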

\begin{proof}[Proof of Lemma \ref{lemma:irred2}]
We  define the following random variables 
taking values in $\Z\cup\{\pm\infty\}$:
\begin{eqnarray}
X=X(\eta,\xi) & := & \sup\{x\in\Z:\,\eta^1(x)\neq\xi^1(x)\}\label{rightmost}\\
Y=Y(\eta,\xi) & := & \inf\{x\in\Z:\,\eta^0(x)\neq\xi^0(x)\}\label{leftmost}
\end{eqnarray}
with the convention $\sup\emptyset=-\infty=-\inf\emptyset$. 
That is, $X$ is the location (if it exists) 
of the rightmost discrepancy on lane $1$.
Indeed on  $E_{\supinf}$,  
we have $X(\eta,\xi)\in\Z$ by Lemma \ref{lemma:irred}. Hence 
\be\label{Esupinf-middle}
\widetilde{\nu}(E_{\supinf} 
)\leq\sum_{k\in\Z}\widetilde{\nu}(X=k)\leq 1
\ee
 Since $\widetilde{\nu}\in\widetilde{\mathcal S}$, 
 $\widetilde{\nu}(X=k)$ 
does not depend on $k\in\Z$. This quantity must vanish 
by the second inequality in \eqref{Esupinf-middle}, 
hence the result follows from the first one.
\end{proof}
 For $m,n\in\Z\cup\{\pm\infty\}$, where $m\leq n$, let
\be\label{def:Dmn}
D_{m,n}(\eta,\xi):=
\sum_{x\in V:\,m\leq x(0)\leq n}|\eta(x)-\xi(x)|
\ee
denote the number of discrepancies in the space interval $[m,n]\cap\Z$. 
We simply write $D(\eta,\xi)$ when $(m,n)=(-\infty,+\infty)$.
\begin{proposition}\label{prop_kilroy}
 Let  $\widetilde{\lambda}\in\widetilde{\mathcal I}$. 
Assume either 
$\widetilde{\lambda}\in\widetilde{\mathcal S}$, or 
\be\label{assumption_finite_disc}
\int_{\mathcal X\times\mathcal X}D(\eta,\xi)\widetilde{\lambda}(d\eta,d\xi)<+\infty
\ee
Then, for every  $(x,y)\in V\times V$ 
such that $x$ and $y$ are $p$-connected,
\be
\label{nodis}
\widetilde{\lambda}\left(E_{x,y}\right)=0
\ee
where
\be\label{def_dk}
E_{x,y}:=\left\{
(\eta,\xi)\in \mathcal X\times\mathcal X:\, 
\mbox{there are  opposite discrepancies  at }x\mbox{ and }y
\right\}
\ee
\end{proposition}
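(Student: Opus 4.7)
The plan is to exploit the basic property that under the coupled generator $\widetilde L$ in \eqref{coupled_gen}, no new discrepancies are ever created, while a pair of opposite discrepancies sitting on a $p$-edge $(u,v)$ (i.e. with $p(u,v)+p(v,u)>0$) disappears via \emph{coalescence} as soon as the corresponding Poisson clock rings. Invariance of $\widetilde\lambda$ should therefore force the $\widetilde\lambda$-probability of every such coalescence event to vanish, which handles the $p$-neighbor case directly and then propagates to the $p$-connected case via the dynamics.

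\textbf{Step 1 (neighbor case).} I would test invariance against the local function
\[
f_A(\eta,\xi):=\sum_{z\in A}|\eta(z)-\xi(z)|,\qquad A\subset V\text{ finite}.
\]
Inspecting each term of \eqref{coupled_gen} shows that $\widetilde L f_A$ splits as a nonpositive \emph{bulk} contribution
\[
-2\sum_{u,v\in A}p(u,v)\,\mathbf1\{\text{opposite discrepancies at }(u,v)\}
\]
(the coalescence events inside $A$) plus a \emph{boundary} contribution coming from jumps $u\to v$ with exactly one endpoint in $A$ that move a discrepancy in or out of $A$, bounded in absolute value by a constant times $|\partial A|$ because $|\eta-\xi|\le 1$. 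Under \eqref{assumption_finite_disc}, taking $A\uparrow V$ and applying dominated convergence gives $\int\widetilde L D\,d\widetilde\lambda=0$ with $\widetilde LD\le 0$, forcing each coalescence indicator to vanish $\widetilde\lambda$-a.s. Under $\widetilde\lambda\in\widetilde{\mathcal S}$, taking $A_n=[-n,n]\times W$ the bulk term equals $-2n$ times a nonnegative constant (by translation invariance in the $\Z$ direction the rate of coalescence on each type of adjacent pair is constant), while the boundary term stays $O(1)$; dividing by $n$ in the identity $\int\widetilde L f_{A_n}\,d\widetilde\lambda=0$ and letting $n\to\infty$ forces that constant to be $0$. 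In either case I conclude $\widetilde\lambda(E_{u,v})=0$ for every pair with $p(u,v)+p(v,u)>0$.

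\textbf{Step 2 (propagation along $p$-paths).} Given $p$-connected $x,y$, fix a path $x=x_0,x_1,\ldots,x_n=y$ with $p(x_i,x_{i+1})>0$, and suppose for contradiction that $\widetilde\lambda(E_{x,y})>0$. Consider the stationary coupled process with initial law $\widetilde\lambda$ and its Harris graphical construction. I will build a positive-probability event $\mathcal E$ on which, in a short interval $(0,t)$, the Poisson processes $\mathcal N_{(x_i,x_{i+1})}$ fire in order $i=0,1,\ldots,n-2$, no other clock near the path fires, and the corresponding attempted jumps actually execute in the $\eta$-component (using the coalescence-free structure enforced by Step 1 to argue that the surrounding sites have the required particle/hole configuration with positive probability). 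On $\mathcal E\cap\{(\eta_0,\xi_0)\in E_{x,y}\}$, the initial $\eta$-discrepancy at $x_0$ is transported to $x_{n-1}$ while the $\xi$-discrepancy at $x_n$ is left untouched, so $(\eta_t,\xi_t)\in E_{x_{n-1},x_n}$. By stationarity, $(\eta_t,\xi_t)\sim\widetilde\lambda$, contradicting Step 1 applied at the neighbor pair $(x_{n-1},x_n)$. Hence $\widetilde\lambda(E_{x,y})=0$.

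\textbf{Main obstacle.} The substantive difficulty is Step 2: the construction of the positive-probability propagation event $\mathcal E$ is delicate, because the required basic-coupling jumps only execute if the ambient configurations around the path look right (e.g. the $\eta$-discrepancy at $x_i$ can only move to $x_{i+1}$ if $\xi(x_i)=\xi(x_{i+1})=\eta(x_{i+1})=0$ there, or some analogous case), and these local conditions need to have positive $\widetilde\lambda$-conditional probability. This is where finite propagation (Proposition \ref{prop:prop}) and a careful local absolute-continuity argument, combined with the already established absence of adjacent opposite discrepancies, are needed to justify that such a configuration is indeed available with uniformly positive probability.
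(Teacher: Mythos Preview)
Your Step 1 is correct and matches the paper's argument closely: test invariance against $f_{A_n}=D_{-n,n}$, compute $\widetilde L f_{A_n}$ as bulk coalescence terms plus boundary flux, and use either translation invariance (boundary terms cancel) or \eqref{assumption_finite_disc} (boundary terms vanish in the limit) to conclude $\widetilde\lambda(E_{u,v})=0$ for all neighbor pairs.

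Step 2 has a genuine gap. Your specific scheme --- fire $\mathcal N_{(x_i,x_{i+1})}$ in order for $i=0,\ldots,n-2$ to carry the $\eta$-discrepancy from $x_0$ to $x_{n-1}$ --- fails when some intermediate site $x_i$ carries a \emph{coupled particle} ($\eta(x_i)=\xi(x_i)=1$): then the attempted jump $x_{i-1}\to x_i$ is blocked in both components and nothing moves. You identify this as the main obstacle, but the tools you propose (finite propagation, local absolute continuity) do not resolve it: the issue is not that the right local configuration has positive probability, but that your clock sequence does not produce the desired transport for \emph{every} configuration in $E_{x,y}$, and you need an argument that works on a set of full $\widetilde\lambda(\,\cdot\mid E_{x,y})$-measure.

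The paper's fix is to argue by \emph{induction on the path length $k$}. The induction hypothesis (no opposite discrepancies at $p$-distance $\le k-1$) implies that, $\widetilde\lambda$-a.s.\ on $E_{x,y}$, the intermediate sites $x_1,\ldots,x_{k-1}$ are all coupled ($\eta(x_i)=\xi(x_i)$). One then constructs, case by case, an explicit finite sequence of coupled transitions leading into some $E_{x',y'}$ with $x'\stackrel{k-1}{\to}y'$: if $x_1$ is a coupled hole, one uncoupled jump moves the discrepancy to $x_1$; if not, one first shifts the rightmost coupled particle on the path to $x_{k-1}$ and then exchanges it with the discrepancy at $y=x_k$, producing opposite discrepancies at $x_0$ and $x_{k-1}$. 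The induction step is completed not via a Harris/positive-probability argument but via the generator inequality $\widetilde L\mathbf 1_{A_n}\ge a_n\mathbf 1_{A_{n+1}}-b_n\mathbf 1_{A_n}$ (where $A_n$ is the set of configurations that can reach $A_0$ in $n$ transitions), which together with $\widetilde\lambda\in\widetilde{\mathcal I}$ and $\widetilde\lambda(A_0)=0$ forces $\widetilde\lambda(A_n)=0$. This is what you are missing: the inductive structure is what gives you control of the intermediate sites, and the explicit transition sequences must account for coupled particles blocking the path.
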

 An equivalent formulation of Proposition \ref{prop_kilroy} is 
\be\label{p_ordered_conf}
\widetilde{\lambda}\left\{
(\eta,\xi)\in\mathcal X\times\mathcal X:\,\eta
\mbox{ and }\xi\mbox{ are }p\mbox{-ordered}
\right\}=1
\ee
In \cite[Theorem 1.1]{Liggett1976} it is proved that 
 if $\widetilde{\lambda}$ 
  is a translation invariant and invariant probability measure 
  for a one-dimensional  translation invariant
 SEP (coupled via basic coupling),  
then  \eqref{nodis} holds whenever $x$ and $y$ are $p$-connected. 
The argument carries over to our setting by using 
 only  translation invariance in the $\Z$ direction. 
For the sake of completeness, details  of the proof 
of Proposition \ref{prop_kilroy} 
are given in Appendix \ref{app:nodis}. \newline\newline
 We are now in a position to  complete the proof of \thmref{characterization_lemma}.  
\begin{proof}[Proof of \thmref{characterization_lemma}]
Let $\mu\in(\mathcal{I}\cap\mathcal{S})_{e}$  and $\rho\in[0,2]$. 
Since $\nu_\rho\in(\mathcal{I}\cap\mathcal{S})_{e}$ 
(cf. Lemma \ref{lem:invariant_measures_for_the_asymetric_ladder}),  
using \cite[Proposition 2.14 in Chapter VIII]{liggett2012interacting}, 
we obtain  a measure $\widetilde{\lambda}$ on 
$\mathcal{X}\times\mathcal{X}$,
which belongs to 
$(\widetilde{\mathcal{I}}\cap\widetilde{\mathcal{S}})_{e}$ 
and whose marginals are $\mu$ and
$\nu_{\rho}$.      The events 
\begin{eqnarray}\label{def:E-}
E_-&:=&\{(\eta,\xi)\in\mathcal X\times\mathcal X:\,
\eta\leq\xi\}\quad\mbox{and}\\ \label{def:E+}
E_+&:=&\{(\eta,\xi)\in\mathcal X\times\mathcal X:\,\eta\geq\xi\}
\end{eqnarray}
 are invariant with respect to spatial translations, 
 and (by attractiveness) they
are conserved by 
the coupled dynamics. Since
  $\widetilde{\lambda}\in 
  (\widetilde{\mathcal{I}}\cap\widetilde{\mathcal{S}})_{e}$,  
$E_+$ and $E_-$ both have $\widetilde{\lambda}$-probability $0$ or $1$. 
The main step is to prove 
that 
\be\label{ordered_coupling}
 \widetilde{\lambda}(E_+\cup E_-)= \widetilde{\lambda}\left\{
(\eta,\xi)\in\mathcal X\times\mathcal X:\,\eta\leq\xi
\mbox{ or }\xi\leq\eta
\right\}=1
\ee
implying that one of the events $E_+$ and $E_-$ has probability $1$.
It follows that for every $0<\rho<2$
we either have $\mu\leq\nu_{\rho}$ or $\mu\geq\nu_{\rho}$.
By Remark \ref{rem:increasing set of measures} we conclude that there
exists some $r\in[0,2]$ such that $\mu=\nu_{r}$.\newline\newline
 We now turn to the proof of \eqref{ordered_coupling}.
 Outside the case $q=0=l_0=l_1<p$, the kernel $p(.,.)$ 
in \eqref{eq:intensity_c} is weakly irreducible; 
thus \eqref{ordered_coupling} 
follows from \eqref{p_ordered_conf} and \textit{(i)} 
of Lemma \ref{lemma:irred}.
Now assume $q=0=l_0=l_1<p$.  
By \textit{(ii)} of  Lemma \ref{lemma:irred},  we obtain
 \be\label{notquite}
\widetilde{\lambda}(E_-\cup E_+\cup E_{\supinf})=1
\ee  
and the conclusion follows from Lemma \ref{lemma:irred2}.
\end{proof}
\section{Proofs of  
Theorems \ref{thm:Characterization_of_invariant_measures}, 
\ref{cor:entire} 
and \ref{thm:Characterization_of_invariant_measures_gen}}
\label{sec:proofs-2}
The proofs of Theorems \ref{thm:Characterization_of_invariant_measures} 
and \ref{cor:entire} are developed respectively 
in Subsections \ref{subsec:proof_inv} and
 \ref{subsec:proof_inv_3}. 
  They are decomposed into  six steps,  summarized in 
 Subsection \ref{subsec:ideas}. These 
 intermediate results are  all 
established in Section \ref{subsec:proofs_lemmas}, 
except Proposition \ref{prop:real_r_finite}, established 
in Subsection \ref{subsec:proof_inv_2}. Indeed this proposition 
is necessary for 
Theorem \ref{thm:Characterization_of_invariant_measures}, 
but its proof  introduces material 
 (namely current and flux function)  also used for 
Theorem \ref{cor:entire}. 
Finally, Theorem \ref{thm:Characterization_of_invariant_measures_gen}
 is proved in Subsection \ref{subsec:proof_gen}. 
\subsection{Main ideas  to prove Theorems \ref{thm:Characterization_of_invariant_measures}, 
\ref{cor:entire}}\label{subsec:ideas}
 We summarize the scheme of proof in six steps 
described here informally,  whose precise statements 
are given in the next subsection. \\ \\
{\bf Step one: shifting an  invariant measure.} 
Let $\mu\in\mathcal I_e$.  We want to 
compare the measure $\mu$ with its shift $\tau\mu$.  
This will follow from construction of a coupling
of these two measures satisfying some ordering or pseudo-ordering relation. 
Two  very different subcases must be considered: \\ \\
{\em Weakly irreducible case.} This is when either  $q>0$, 
or  $q=0$ but \eqref{non_wi_case} does not hold. We then prove  
that  $\mu\leq\tau\mu$ or $\tau\mu\leq\mu$ (stochastic order).  
This follows from construction of a coupling  
$\overline{\lambda}(d\eta,d\xi)$ of $\mu(d\eta)$ 
and $\tau\mu(d\xi)$ under which $\eta\leq\xi$ or 
$\xi\leq\eta$ a.s. This construction, performed in Proposition \ref{prop:step1}, 
is an adaptation to our model of \cite[Proposition 3.2]{Bramson2002}. \\ \\
{\em Non-weakly irreducible case.} When $q=0$ and \eqref{non_wi_case} 
holds, as in the proof of Theorem \ref{thm:characterization_lemma}, 
the above arguments do not lead to $\eta\leq\xi$ or $\xi\leq\eta$, 
but only to $\eta><\xi$. Unlike in Theorem \ref{thm:characterization_lemma}, 
we cannot use translation invariance to eventually obtain $\eta\leq\xi$ 
or $\xi\leq\eta$. We  introduce an intermediate relation denoted by 
$\eta\bowtie\xi$, that is a strenghtening of $\eta\supinf\xi$, 
see Definition \ref{def_bowtie}, and obtain a coupling under which 
$\eta\bowtie\xi$. This is also contained in Proposition \ref{prop:step1}. 
 But unlike in \cite{Bramson2002}, we cannot next prove that 
$\mu$ or $\tau\mu$ are ordered. A different type of argument is required 
to conclude in this case that $\mu$ can only be a blocking or 
a partial blocking measure. This is the object of 
Proposition \ref{prop:bowtie}. \\ \\
 Steps two to five below apply to the weakly irreducible case, 
whereas the conclusion of the non-weakly irreducible case in contained 
in Step six.  \\ \\
{\bf Step two: getting a ``mean'' shock.} 
It is shown in Proposition \ref{prop:step1} that the total number 
of discrepancies $D(\eta,\xi)$ (see \eqref{def:Dmn}) is  
 a.s. a finite  constant 
$k$ under $\widetilde{\lambda}$. If $k=0$, then $\tau\mu=\mu$, 
and we are back to Theorem \ref{thm:characterization_lemma}. 
Otherwise, along the proof of Proposition \ref{prop:step1}, 
we show that the expectation of $D(\eta,\xi)$ under 
$\widetilde{\lambda}$ yields (since we have an ordered coupling of 
$\mu$ with its shift) a telescoping sum equal to the (positive) 
difference of mean densities at 
$\pm\infty$, $\rho^\pm=\lim_{x\to\pm\infty}\mu[\eta(x)]$. 
We call this a ``mean'' shock. Since $D(\eta,\xi)$ is an integer and this 
difference cannot exceed $2$,  $k=\rho^+-\rho^-\in\{1,2\}$. 
\newline\newline
{\bf Step three: mean shock implies shock.}
Since $\mu$ and $\tau\mu$ are ordered, the limits 
$\mu_\pm:=\lim_{x\to\pm\infty}\tau_x\mu$ exist, and an averaging 
argument shows that $\mu_\pm\in(\mathcal I\cap\mathcal S)$. 
This is Corollary \ref{cor:step1}. 
At this stage a crucial step appears, that is {\em not} 
required for single-lane ASEP because the latter model has the 
simplifying feature that densities are restricted to $1$ 
(see Remark \ref{remark:diff} for more details on this). 
The problem is to show that $\mu_\pm\in (\mathcal I\cap\mathcal S)_{e}$, 
implying that $\mu_\pm=\mu_{\rho^\pm}$,
hence that $\mu$ is a {$(\rho^-,\rho^+)$-{\em shock measure}}. 
This is done in Proposition \ref{prop:step2}. Thus we know that 
if $\mu\in\mathcal I_e\setminus(\mathcal I\cap\mathcal S)$, then $\mu$ is
a  shock measure of amplitude $|\rho^+-\rho^-|\in\{1,2\}$. 
If $|\rho^+-\rho^-|=2$ we have a $(0,2)$ or a $(2,0)$ shock that are 
analogous to profile measures in \cite{Bramson2002}. The choice 
\eqref{wlog} implies that only $(0,2)$ is possible, see Lemma \ref{lemma:nob2}.
If $|\rho^+-\rho^-|=1$, we need to restrict possible shocks 
$(\rho^-,\rho^+)$. \newline\newline 
{\bf Step four: restricting possible shocks.} 
The relevant object is the (microscopic and macroscopic) 
{\em flux function} of our model, introduced in 
\eqref{def_microflux_ladder}--\eqref{eq:G-from-rho_0}. In 
Proposition \ref{cor:step2}, we show that a 
$(\rho^-,\rho^+)$-flux function cannot exist unless 
$(\rho^-,\rho^+)$ is an entropy shock for the macroscopic flux 
function $G$, see Definition \ref{def_set} and 
Remark \ref{rem:entropy} below.  
In Proposition \ref{prop:extrema} and Lemmas 
\ref{lemma:r_finite}--\ref{lemma:nob2},
explicit computations on the macroscopic flux 
$\rho\in[0,2]\mapsto G(\rho)$ allow us to disqualify most shocks 
and prove  (in Proposition \ref{prop:real_r_finite}) 
 statement \textit{(ii)} 
of  Theorem \ref{thm:Characterization_of_invariant_measures}. 
These computations further show that in a certain parameter range 
(see statement \textit{(iii)} of Theorem \ref{cor:entire}), 
no entropy shock, hence no shock measure  of amplitude $1$ exists.  
Condition \eqref{cond_nob2} in \textit{(b)} 
of Theorem \ref{thm:Characterization_of_invariant_measures}, 
which excludes blocking measures, expresses the fact that 
the graph of the flux function $\rho\mapsto G(\rho)$ crosses 
the $\rho$-axis.  \newline\newline
Special situations are \eqref{unless}--\eqref{unless_2}. In these cases 
the function $G(\rho)$ is identically $0$ and does not help to eliminate 
shocks. In the   latter case  we show (statement \textit{(i)} 
of Theorem \ref{cor:entire}) that the system is of diffusive gradient type, 
i.e. the microscopic flux is a gradient, which leads to non-existence 
of shocks. In the former, as mentioned in the comments following 
Theorem \ref{thm:Characterization_of_invariant_measures}, the model 
is presumably diffusive but non-gradient, and specific techniques would 
be required. \newline\newline
\textbf{Step five: uniqueness of a $(\rho^-,\rho^+)$-shock measure.}
We next show in Proposition \ref{prop:translate} that if
$|\rho^+-\rho^-|=k\in\{1,2\}$, there are (up to shifts) at most 
$k$ $(\rho^-,\rho^+)$-shock measures in $\mathcal I_e$, except 
for $k=2$ and $q=0$. Recall indeed from Subsection 
\ref{sec:blocking} that in  the statement of  Theorem \ref{cor:entire} we may have 
a family of two (up to shifts) $(0,2)$-shock measures when $q>0$, 
and infinitely many when $q=0$.
To prove Proposition \ref{prop:translate}, 
 a key step is showing that two 
$(\rho^-,\rho^+)$-shock measures $\mu$ and $\nu$ are comparable 
(Proposition \ref{lemma:translate}). Relying on this, we can extend to arbitrary 
shocks of any amplitude an argument of \cite{Bramson2002} for 
ASEP $(0,1)$-shock measures, whose idea is to squeeze $\nu$ between 
successive translates of $\mu$. Note that the prior comparability 
step is not necessary in the single-lane case. \\ \\
{\bf Step six: the case $q=0$.} 
In this case, the flux function $G(\rho)$ is very explicit, cf. \eqref{flux_degenerate}  in Example \ref{ex:periodic}.   This allows 
more precise shock selection in Step four: in particular 
$\mathcal R=\emptyset$ if $\gamma_0\neq\gamma_1$. \newline\newline
Next, thanks to the condition $q=0$, one can compare each lane 
with an ASEP and use convergence and characterization results 
for single-lane ASEP (\cite[Theorem 1.4]{Liggett1976}, 
\cite[Chapter VIII]{liggett2012interacting}, \cite[Theorem 1]{bm}). \newline\newline
In the non-weakly irreducible case \eqref{non_wi_case}, 
starting from the partial conclusion $\eta\bowtie\xi$ of Step one, 
Proposition \ref{prop:bowtie} further concludes  that the invariant 
measure $\mu$ must be a blocking or a partial blocking measure 
as in Theorem \ref{cor:entire}, \textit{(iv) (b)}. \newline\newline
In the weakly irreducible case, that is cases \textit{(iv)--(vi)} 
of Theorem \ref{cor:entire}, statement \textit{(iii)} of 
Proposition \ref{cor:step2} shows that a shock measure of amplitude 1 
with a profile outside $\mathcal R$ must belong to 
\eqref{def_bl_3_2}, \eqref{def_bl_5} or \eqref{def_bl_3_1}. \newline\newline
Finally, to show that $Bl_2$ is empty outside case \eqref{non_wi_case}, 
assuming that one lane carries a blocking measure, we exhibit (see
 \eqref{def_h_lane_1}) a Lyapunov functional on one lane 
that has a positive probability of decreasing unless the other lane is empty. 
%
\begin{remark}\label{remark:diff}
 As mentioned in the introduction, 
since for the single-lane ASEP the maximal density is $1$, for a 
mean shock of amplitude $1$, we must have $\{\rho^+,\rho^-\}=\{0,1\}$; 
this automatically implies that $\mu$ is asymptotic at $\pm\infty$ 
to the corresponding (deterministic) Bernoulli measures, i.e., 
$\mu$ is a $(0,1)$ or a $(1,0)$-shock measure.
Further analysis shows that it cannot be a $(1,0)$-shock measure.
Thus for single-lane ASEP, 
$\mathcal I_e$ contains only profile measures, and there is 
no need for Steps 3 and 4, namely, proving that $\mu$ 
is a shock measure and analyzing possible shocks. 
\end{remark}
 \subsection{Proof of 
Theorem \ref{thm:Characterization_of_invariant_measures}}
\label{subsec:proof_inv}
We have to distinguish   the case  \eqref{case_deg}, 
where the kernel $p(.,.)$ in \eqref{eq:intensity_c} 
is not weakly irreducible,  cf. Lemma \ref{lemma_wi}. 
In this case, we introduce the following definition. 
\begin{definition}\label{def_bowtie}
For $(\eta,\xi)\in\mathcal X\times\mathcal X$, we write 
$\eta\bowtie\xi$  if and only if the following hold: 
(i) $\eta\supinf\xi$ (cf. Definition \ref{def_supinf}); 
(ii) the number of locations 
$z\in\Z^+$ on lane $1$ that are not occupied by a  coupled particle is  
finite; (iii)  the number of locations 
$z\in\Z^-$  on lane $0$ that are not  occupied by a hole is finite. \\
 We define
\be\label{def:Ebowtie}
E_{\bowtie}:=\{(\eta,\xi)\in{\mathcal X}\times{\mathcal X}:
\,\eta\bowtie\xi\}
\ee 
\end{definition}
 Following the steps described in Subsection \ref{subsec:ideas}
(that we recall below),
 the main results  for the proof 
 of \thmref{Characterization_of_invariant_measures} 
are  Propositions 
\ref{prop:step1}--\ref{prop:bowtie} 
and Corollary \ref{cor:step1}, 
stated below.  Among these, Proposition \ref{prop:real_r_finite} 
is proved in Subsection \ref{subsec:proof_inv_2}, and other results  
in  Subsection \ref{subsec:inter_1}.   \newline\newline
\noindent \textit{Step one.}  Let $\mu\in\mathcal I_e$. 
We prove the following proposition.
\begin{proposition}\label{prop:step1}
 (i) 
There exists a measure $\widetilde{\lambda}(d\eta,d\xi)$ 
on $\mathcal X\times\mathcal X$
with marginals $\mu(d\eta)$ and $\tau_1\mu(d\xi)$, satisfying one of
\eqref{etaless}--\eqref{equal} below  (if  $q>0$), or one of 
\eqref{etaless}--\eqref{case_bowtie} below (if  $l_0=l_1=q=0<p$, 
that is \eqref{case_deg}, the 
  non-weakly irreducible case):  
 \begin{eqnarray}\label{etaless}
\widetilde{\lambda}\,(E_1)&=&1\quad\hbox{where}\quad 
E_1:=\left((\eta,\xi)\in \mathcal X\times\mathcal X:\,
\eta<\xi\right)  \\
\label{etamore}
\widetilde{\lambda}\,(E_2)&=&1\quad\hbox{where}\quad
E_2:=\left((\eta,\xi)\in \mathcal X\times\mathcal X:\,\xi<\eta\right) \\
\label{equal}
\widetilde{\lambda}\,(E_3)&=&1\quad\hbox{where}\quad
E_3:=\left((\eta,\xi)\in \mathcal X\times\mathcal X:\,\eta=\xi\right)  \\
\label{case_bowtie}
\widetilde{\lambda}\,(E_{\bowtie})&=&1
\end{eqnarray} 
(ii)  For any measure $\widetilde{\lambda}(d\eta,d\xi)$ with marginals 
$\mu(d\eta)$ and $\tau_1\mu(d\xi)$ satisfying \eqref{etaless}  
or \eqref{etamore},  
there exists $k\in\{1,2\}$ such that  
(cf.  definition of $D(\eta,\xi)$ below \eqref{def:Dmn}) 
\be\label{number_disc}
\widetilde{\lambda}\left(
(\eta,\xi)\in \mathcal X\times\mathcal X:\,D(\eta,\xi)=k
\right)=1
\ee 
\end{proposition}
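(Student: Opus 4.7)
The plan is to adapt the Liggett~1976 strategy for one-dimensional ASEP to the two-lane setting \cite{Liggett1976}, while addressing two new features: the absence of a priori translation invariance for the sought coupling (since $\mu$ and $\tau_1\mu$ may differ), and the possible failure of weak irreducibility when $q=l_0=l_1=0$.

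For part (i), observe first that $\tau_1\mu\in\mathcal I_e$ because the generator \eqref{eq:generator of the Exclusion} with kernel \eqref{eq:intensity_c} commutes with $\tau_1$, so $\tau_1$ permutes extremal invariant measures. Applying Proposition~2.14 of Chapter~VIII in \cite{liggett2012interacting} to $\mu$ and $\tau_1\mu$ yields $\widetilde\lambda\in\widetilde{\mathcal I}_e$ with marginals $\mu$ and $\tau_1\mu$. By attractiveness \eqref{eq:attra} and the discrepancy calculus of basic coupling, the events $E_1,E_2,E_3$, and (in the degenerate case) $E_\bowtie$ are preserved by $(\widetilde S_t)_{t\geq 0}$: $E_3$ because equality is preserved; $E_-=E_1\cup E_3$ by attractiveness; and hence $E_1=E_-\setminus E_3$, because on $E_-$ every discrepancy is a $\xi$-discrepancy and such discrepancies cannot be annihilated by coalescence but only move; the case $E_2$ is symmetric, and preservation of $\bowtie$ in the degenerate regime is checked directly from the one-sided dynamics. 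These events are disjoint and invariant, so by extremality of $\widetilde\lambda$ in $\widetilde{\mathcal I}$ each has probability $0$ or $1$, and it is enough to show that their union has probability $1$.

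The heart of part (i) is to apply Proposition~\ref{prop_kilroy} to $\widetilde\lambda$. Since $\widetilde\lambda$ need not be translation invariant, we instead verify the finite-expectation condition \eqref{assumption_finite_disc}, using the a priori asymptotic Bernoulli structure of $\mu$ established elsewhere in the paper: the single-site marginals $z\mapsto\Exp_\mu[\eta(z,i)]$ admit limits $\rho^\pm_i$ as $z\to\pm\infty$, and on each invariant ordered piece of $\widetilde\lambda$ a telescoping estimate yields $\Exp_{\widetilde\lambda}[D]<\infty$. Proposition~\ref{prop_kilroy} then forbids opposite-type discrepancies at $p$-connected pairs. When $q>0$, the kernel is weakly irreducible by Lemma~\ref{lemma_wi}, and Lemma~\ref{lemma:irred}(i) forces configurations to be ordered, so $\widetilde\lambda(E_1\cup E_2\cup E_3)=1$. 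In the fully degenerate case $l_0=l_1=q=0<p$, Lemma~\ref{lemma:irred}(ii) permits either ordered configurations or the $\supinf$ relation; the refinement from $\supinf$ to $\bowtie$ follows by combining the structural constraints of Definition~\ref{def_supinf} with the asymptotic densities of $\mu$, which on the $\supinf$ component force $\rho^+_1=1$ and $\rho^-_0=0$ and, jointly with finiteness of $\Exp_{\widetilde\lambda}[D]$, bound almost surely the number of non-coupled particles on $\{z>0\}\times\{1\}$ and of non-holes on $\{z\leq 0\}\times\{0\}$.

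For part (ii), take $\widetilde\lambda$ as in (i) and assume $\widetilde\lambda(E_1)=1$ (the case $E_2$ is symmetric). Under basic coupling on $E_1$ every discrepancy is of $\xi$-type, so coalescence is impossible and $D(\eta,\xi)$ is almost surely conserved by the coupled dynamics. Hence $\{D=k\}$ is an invariant event for every $k\in\mathbb N$, and extremality of $\widetilde\lambda$ in $\widetilde{\mathcal I}$ forces $D$ to be almost surely equal to some constant $k$. Its value is computed by linearity and telescoping:
\[
k=\Exp_{\widetilde\lambda}[D]=\sum_{(z,i)\in V}\bigl(\Exp_\mu\eta(z+1,i)-\Exp_\mu\eta(z,i)\bigr)=(\rho^+_0-\rho^-_0)+(\rho^+_1-\rho^-_1)\in[0,2].
\]
Since $D\geq 1$ on $E_1$ and $D$ is integer valued, $k\in\{1,2\}$. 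The principal obstacle throughout is the loss of translation invariance for $\widetilde\lambda$, which prevents a direct use of Proposition~\ref{prop_kilroy} via $\widetilde{\mathcal S}$-invariance and forces the verification of its alternative finite-expectation hypothesis; this in turn rests on the a priori existence of asymptotic Bernoulli densities for $\mu$, an independent input of the paper. A secondary difficulty is the degenerate regime $l_0=l_1=q=0$, where the Liggett-type ordering is weakened to $\supinf$ and must be refined to $\bowtie$ by a careful bookkeeping of defects on each half-axis.
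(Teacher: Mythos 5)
Your part (ii) is essentially the paper's argument (ordering makes $D$ conserved, an extremality argument makes it a.s.\ constant, and the telescoping identity — at that stage $z\mapsto\Exp_\mu[\eta(z,i)]$ is monotone, so the limits you use do exist — forces $k\in\{1,2\}$; you invoke extremality of $\widetilde\lambda$ where the paper uses extremality of the marginals, which is harmless). The gap is in part (i), at exactly the step you call the heart of the proof. You take an abstract extremal invariant coupling of $\mu$ and $\tau_1\mu$ and plan to apply Proposition \ref{prop_kilroy} after verifying \eqref{assumption_finite_disc}. That verification does not go through: the telescoping bound on $\Exp_{\widetilde\lambda}[D]$ is valid only once $\eta$ and $\xi$ are a.s.\ ordered under $\widetilde\lambda$, which is precisely the conclusion you are after — for a general coupling with these marginals, site marginals give no control on $\Exp|\eta(x)-\xi(x)|$ and $D$ is typically infinite; and since $\mu$ need not be translation invariant, the alternative hypothesis $\widetilde\lambda\in\widetilde{\mathcal S}$ is unavailable too. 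Moreover, the ``a priori asymptotic Bernoulli structure of $\mu$'' you treat as an independent input is not one: in the paper it is Corollary \ref{cor:step1}(ii) together with Proposition \ref{prop:step2}, both downstream of Proposition \ref{prop:step1}, so invoking it here is circular. The paper's proof replaces this hypothesis-check by the Bramson--Liggett dynamical argument: start the coupled process from $(\eta,\tau\eta)$ with $\eta\sim\mu$, prove that the expected number of discrepancies visiting a centered box of width $2T$ during $[\sqrt T,T]$ is $o(T)$ (finite propagation, Proposition \ref{prop:prop}, plus Theorem \ref{thm:characterization_lemma} applied to translation-invariant Cesàro limits), then take a time-Cesàro subsequential limit, which is invariant, has marginals $\mu$ and $\tau\mu$, and satisfies \eqref{eq:blocking_measures-4} via the strong Markov property; only then do Lemma \ref{lemma:irred} and the $0$--$1$ arguments enter. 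Nothing in your sketch substitutes for that $o(T)$ estimate.

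The degenerate case is also not handled. Your refinement from $\supinf$ to $\bowtie$ rests on two unavailable or false ingredients: the asymptotic densities of $\mu$ (same circularity as above), and the claim that finiteness of $\Exp_{\widetilde\lambda}[D]$ bounds the number of non-coupled particles on $\{z>0\}\times\{1\}$ and non-holes on $\{z\le 0\}\times\{0\}$ — it does not: a pair coupled everywhere except two opposite discrepancies, with lane $1$ entirely empty to the right, lies in $E_{\supinf}\setminus E_{\bowtie}$ with $D=2$. The paper rules out $E_{\supinf}\setminus E_{\bowtie}$ dynamically (step three): on $E'_{\bowtie,x,y}$ or $F'_{\bowtie,x,y}$ there is a positive-probability Harris path bringing the two opposite discrepancies to $p$-connected sites, so stationarity would contradict \eqref{eq:blocking_measures-4}. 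Relatedly, your claim that $E_{\bowtie}$ is preserved ``directly from the one-sided dynamics'' is not pathwise true (a lane-$0$ $\xi$-discrepancy can drop onto an empty--empty lane-$1$ site and destroy the $\supinf$ structure); making it rigorous requires the same stationarity-plus-\eqref{eq:blocking_measures-4} mechanism, not a direct check.
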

 \noindent{\em Step two.} 
Proposition \ref{prop:step1} has the following consequences.
\begin{corollary}\label{cor:step1}
%
%
(i) In cases \eqref{etaless}--\eqref{equal}, 
the family $(\tau_n\mu)_{n\in\Z}$ is stochastically monotone.\newline\newline
(ii) If  a probability measure $\widehat \mu$  on $\mathcal X$ 
is such that  $\widehat\mu\in\mathcal I$ and 
$(\tau_n\widehat\mu)_{n\in\Z}$ 
is stochastically monotone, then there exist probability measures 
$\gamma^-(d\rho)$ and $\gamma^+(d\rho)$ on $[0,2]$ 
such that the limits 
\be\label{limits_mu_2}
\widehat\mu_\pm:=\lim_{n\to\pm\infty}\tau_n\widehat\mu
=\int_{[0,2]}\nu_\rho\gamma^\pm(d\rho)
\ee
 hold in the sense of weak convergence.
\end{corollary}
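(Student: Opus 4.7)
The plan is to derive both parts from Proposition \ref{prop:step1} and Theorem \ref{thm:characterization_lemma} by standard monotone-limit arguments on the compact Polish space $\mathcal X$.

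For (i), let $\mu\in\mathcal I_e$ and take the coupling $\widetilde\lambda$ provided by Proposition \ref{prop:step1}(i) in one of the cases \eqref{etaless}--\eqref{equal}. The support condition realizes an ordered coupling of $\mu$ and $\tau_1\mu$, giving $\mu\leq\tau_1\mu$, $\mu\geq\tau_1\mu$, or $\mu=\tau_1\mu$ respectively. Since $\tau_n$ is a measurable bijection of $\mathcal X$ taking increasing sets to increasing sets, it preserves the stochastic order, so iterating yields $\tau_n\mu\leq\tau_{n+1}\mu$ (respectively $\geq$, $=$) for every $n\in\Z$. This is precisely stochastic monotonicity of the family $(\tau_n\mu)_{n\in\Z}$.

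For (ii), the stochastic monotonicity of $(\tau_n\widehat\mu)_n$ combined with compactness of $\mathcal X=\{0,1\}^V$ yields existence of the weak limits $\widehat\mu_\pm$: for any increasing cylinder function $f$, the sequence $n\mapsto\int f\,d(\tau_n\widehat\mu)$ is bounded and monotone, hence convergent, and increasing cylinder functions form a determining class on $\mathcal X$. The limits inherit translation invariance since shifting the index by one does not affect them ($\tau_1\widehat\mu_\pm=\widehat\mu_\pm$). They inherit dynamical invariance as well: each $\tau_n\widehat\mu$ lies in $\mathcal I$ because $L$ commutes with $\tau_n$ (the kernel \eqref{eq:intensity_c} is horizontally translation invariant), and since $Lf$ is a cylinder function for each cylinder $f$, equation \eqref{def_inv} passes to the weak limit.

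Thus $\widehat\mu_\pm\in\mathcal I\cap\mathcal S$. By Theorem \ref{thm:characterization_lemma}, the set of extreme points of this convex set is $\{\nu_\rho:\rho\in[0,2]\}$, and by Remark \ref{rem:increasing set of measures} the parametrization $\rho\mapsto\nu_\rho$ is weakly continuous, so this extremal set is closed. Choquet's theorem then yields the integral representation $\widehat\mu_\pm=\int_{[0,2]}\nu_\rho\,\gamma^\pm(d\rho)$ for some probability measures $\gamma^\pm$ on $[0,2]$, establishing \eqref{limits_mu_2}. There is no substantial obstacle in this corollary: the real work lies in its prerequisites, namely the coupling construction in Proposition \ref{prop:step1} and the translation-invariant classification in Theorem \ref{thm:characterization_lemma}, and the statement is essentially an assembly of those pieces together with routine weak-convergence manipulations.
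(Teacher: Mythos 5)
Your proof is correct and follows essentially the same route as the paper's: part (i) reads off the ordered coupling from Proposition \ref{prop:step1} and propagates it by translation invariance, and part (ii) obtains the limits by monotonicity, shows $\widehat\mu_\pm\in\mathcal I\cap\mathcal S$ by commuting $L$ with the shift, and invokes Theorem \ref{thm:characterization_lemma} with an extremal (Choquet) decomposition. The extra details you supply (increasing cylinder functions as a determining class, weak continuity of $\rho\mapsto\nu_\rho$ to justify the representation) only make explicit steps the paper leaves implicit.
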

 \noindent{\em Step three.} 
We  show that the measures 
$\gamma^\pm$ of Corollary \ref{cor:step1} are 
Dirac measures. 
\begin{proposition}\label{prop:step2}
  In cases \eqref{etaless}--\eqref{etamore}, 
there exists  $(\rho^-,\rho^+)\in[0,2]^2\setminus\mathcal D$  
such that
(i)  
$\gamma^\pm=\delta_{\rho^\pm}$,  thus $\mu$ is a 
$(\rho^-,\rho^+)$-shock measure, 
cf. \eqref{limits_mu}; 
(ii)  $|\rho^+-\rho^-|=k$, where $k$ is defined in 
(ii) of Proposition \ref{prop:step1}. 
\end{proposition}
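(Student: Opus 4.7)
The plan is to address parts (i) and (ii) separately: (i) is proved by introducing two dynamically invariant observables for the asymptotic right-density, defined along dyadic scales, and combining extremality of $\mu$ with a reverse-Fatou sandwich along the ordered coupling of Corollary \ref{cor:step1}(i); (ii) then follows from the coupling of Proposition \ref{prop:step1} via a telescoping identity.

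For (i), set $A_N(\eta):=2^{-N}\sum_{z=2^N}^{2^{N+1}-1}\overline\eta(z)$ (a dyadic right-tail average) and define
$$g(\eta):=\limsup_{N\to\infty}A_N(\eta),\qquad h(\eta):=\liminf_{N\to\infty}A_N(\eta),$$
both $[0,2]$-valued. By finite propagation speed (Proposition \ref{prop:prop}), the net flux across each of the two bonds bounding $[2^N,2^{N+1}-1]$ in time $t$ is stochastically dominated by an $O(t)$ Poisson variable, and these bounds are independent across $N$; hence $\sum_N P(|A_N(\eta_t)-A_N(\eta_0)|>\epsilon)<\infty$ and Borel--Cantelli yields $A_N(\eta_t)-A_N(\eta_0)\to 0$ $P_{\eta_0}$-a.s., whence $g(\eta_t)=g(\eta_0)$ and $h(\eta_t)=h(\eta_0)$ $P_{\eta_0}$-a.s. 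Thus $S_tg=g$ and $S_th=h$ for every $t$, and extremality of $\mu\in\mathcal I$ forces $g\equiv g_\mu$ and $h\equiv h_\mu$ $\mu$-a.s. In case \eqref{etaless}, Corollary \ref{cor:step1} together with Strassen's theorem provides an ordered coupling $(\eta,\eta_+)$ with $\eta\sim\mu$, $\eta_+\sim\mu_+=\int\nu_\rho\,\gamma^+(d\rho)$ and $\eta\leq\eta_+$; the SLLN applied under each Bernoulli factor $\nu_R$ delivers $g(\eta_+)=h(\eta_+)=R$ a.s.\ with $R\sim\gamma^+$, so pointwise monotonicity yields $g_\mu\leq\mathrm{ess\,inf}\,R$, while reverse Fatou plus $\Exp_\mu[\overline\eta(z)]\to\int\rho\,\gamma^+(d\rho)=:\rho^+$ gives
$$g_\mu=\Exp_\mu[\limsup_N A_N]\geq\limsup_N\Exp_\mu[A_N]=\rho^+.$$
Chaining $\rho^+\leq g_\mu\leq\mathrm{ess\,inf}\,R\leq\Exp R=\rho^+$ forces $R\equiv\rho^+$ $\gamma^+$-a.s., i.e.\ $\gamma^+=\delta_{\rho^+}$. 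Case \eqref{etamore} is the mirror image (use $h$, $\mathrm{ess\,sup}$, and ordinary Fatou), and the analogous construction at $-\infty$ produces $\gamma^-=\delta_{\rho^-}$.

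For (ii), I exploit $\widetilde\lambda$ directly. In case \eqref{etaless}, $\eta\leq\xi$ and $D(\eta,\xi)=k$ both hold $\widetilde\lambda$-a.s., and the marginal identity $\Exp_{\tau_1\mu}[\overline\xi(n)]=\Exp_\mu[\overline\eta(n+1)]$ combined with Fubini on nonnegative terms yields
$$\Exp_\mu[\overline\eta(N+1)]-\Exp_\mu[\overline\eta(-N)]=\Exp_{\widetilde\lambda}\Bigl[\sum_{n=-N}^{N}(\overline\xi(n)-\overline\eta(n))\Bigr]\xrightarrow[N\to\infty]{}\Exp_{\widetilde\lambda}[D(\eta,\xi)]=k.$$
Part (i) forces the left-hand side to tend to $\rho^+-\rho^-$, so $\rho^+-\rho^-=k$; case \eqref{etamore} symmetrically gives $\rho^--\rho^+=k$, and in either case $|\rho^+-\rho^-|=k\geq 1$, so $(\rho^-,\rho^+)\in[0,2]^2\setminus\mathcal D$ and $\mu$ is a $(\rho^-,\rho^+)$-shock measure via the weak limits of Corollary \ref{cor:step1}(ii).

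The main obstacle is (i), specifically the semigroup-invariance of the dyadic observables $g$ and $h$ (requiring the independence-plus-Markov bound above for Borel--Cantelli to close) together with the coupling-based sandwich $\rho^+\leq g_\mu\leq\mathrm{ess\,inf}\,R$. Once these are in place, the Dirac identification follows from the elementary fact that $R\geq a$ a.s.\ with $\Exp R=a$ forces $R\equiv a$, and (ii) is essentially a telescoping computation.
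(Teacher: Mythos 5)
Your proposal is correct, and part (i) follows a genuinely different route from the paper, while part (ii) coincides with the paper's argument (it is exactly the telescoping identity \eqref{whereweused} with $m\to-\infty$, $n\to+\infty$ and monotone convergence). For (i), the paper works with a \emph{dynamically invariant} ordered coupling $\widetilde\mu$ of $\mu$ and $\mu_+$ (via \cite[Prop.~2.14, Ch.~VIII]{liggett2012interacting}), shows via finite propagation that the asymptotic density $M(\xi)$ is conserved by the coupled dynamics, uses extremality of $\mu$ to conclude that $\eta$ is independent of $M(\xi)$ under $\widetilde\mu$, and then a monotone cutoff/correlation argument together with $\overline\mu_n\to\mu_+$ yields $\int\rho\,d\gamma^+\leq A$ for every $A$ in the support of $\gamma^+$, forcing a Dirac mass. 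You instead attach the conservation argument to the $\mu$-process itself: the dyadic block averages $A_N$ change by at most $2^{-N}\mathrm{Poisson}(Ct)$, so Borel--Cantelli makes $g=\limsup_N A_N$ and $h=\liminf_N A_N$ a.s.\ conserved, extremality makes them deterministic, and the squeeze $\rho^+\leq g_\mu\leq\operatorname{ess\,inf}R\leq\Exp R=\rho^+$ (reverse Fatou on one side, a \emph{static} Strassen coupling with $\mu_+$ on the other) gives the Dirac identification — the same elementary endgame as the paper, reached without constructing an invariant coupling or manipulating independence. What your route requires in exchange is the standard fact that for extremal $\mu\in\mathcal I$ an a.s.-conserved bounded observable is $\mu$-a.s.\ constant; this deserves one line, e.g.\ the conditioning argument: if $0<\mu(g>c)<1$ then $g(\eta_t)=g(\eta_0)$ a.s.\ implies $\mu(\cdot\,|\,g>c)\in\mathcal I$, contradicting extremality. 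Two cosmetic points: the claimed independence of the flux bounds across $N$ is neither true (adjacent blocks share a boundary bond) nor needed, since only summability enters Borel--Cantelli; and the ``mirror'' cases should be stated precisely (in case \eqref{etaless} one uses $g$, reverse Fatou and $\operatorname{ess\,inf}$ at $+\infty$, but $h$, Fatou and $\operatorname{ess\,sup}$ at $-\infty$, with the roles swapped in case \eqref{etamore}). Neither affects correctness.
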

\noindent \textit{Step four. } 
We first introduce the sets $\mathcal R$ and $\mathcal R'$ 
involved in \thmref{Characterization_of_invariant_measures}.
\begin{definition}\label{def_set_r}
We denote by $\mathcal R$ the set of 
$(\rho^-,\rho^+)\in[0,2]^2\setminus(\mathcal D\cup\mathcal B)$ 
such that $\mathcal I_e$ contains at least one 
$(\rho^-,\rho^+)$-shock measure, 
and by $\mathcal R'$ the set of $(\rho^-,\rho^+)\in\mathcal B_1$ 
such that 
$\mathcal I_e$ contains at least one $(\rho^-,\rho^+)$-shock measure.
\end{definition}
 In Subsection \ref{subsec:proof_inv_2} below, 
we prove the following proposition, 
  after  introducing the \textit{macroscopic flux function} of our model.
   Recall that \eqref{case_deg} corresponds to the 
  non-weakly ireducible case. 
\begin{proposition}\label{prop:real_r_finite}
 Outside   \eqref{unless}--\eqref{unless_2},  
 the following holds:
(i)   in cases \eqref{etaless}--\eqref{etamore} 
with $k=2$, 
$\mu$ is a $(0,2)$-shock measure; 
(ii)   Statements (ii),   \textit{(b), (c)} and \textit{(d)}  
of Theorem \ref{thm:Characterization_of_invariant_measures} 
hold. 
 (iii) Statement (i) of Proposition \ref{prop:translate} 
 below 
still holds if we have \eqref{case_deg} and $\nu\in Bl_1$. 
\end{proposition}
 \noindent\textit{Step five. } 
In Proposition \ref{prop:translate} below, we study 
the relation between extremal invariant measures 
that are $(\rho^-,\rho^+)$-shock measures
for a common pair $(\rho^-,\rho^+)$. 
 The proof of Proposition \ref{prop:translate} 
 requires the following variant of Proposition \ref{prop:step1}. 
\begin{proposition}\label{lemma:translate}
Let $(\rho^-,\rho^+)\in[0,2]^2\setminus\mathcal D$, 
and assume $\nu,\,\nu'\in\mathcal I_e$ 
are two $(\rho^-,\rho^+)$-shock measures. 
Then:  (i) there exists a coupling of $\nu$ and $\nu'$ 
that satisfies 
one of the properties \eqref{etaless}--\eqref{case_bowtie},  
property \eqref{case_bowtie} being possible 
only under assumption \eqref{case_deg}; 
(ii) in case \eqref{case_bowtie}, $\nu$ and $\nu'$ lie in  $Bl_1\cup Bl_2$; 
(iii) in cases \eqref{etaless}--\eqref{etamore}, 
\eqref{number_disc} holds for some 
$k\in(\N\setminus\{0\})\cup\{+\infty\}$. 
\end{proposition}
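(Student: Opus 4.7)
The plan is to mimic the scheme used for Proposition \ref{prop:step1}, with the pair $(\mu,\tau_1\mu)$ replaced by $(\nu,\nu')$.

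The first step is to construct an invariant coupling $\widetilde{\lambda}\in\widetilde{\mathcal I}$ with marginals $\nu$ and $\nu'$ that is extremal within the set of such couplings. Starting from any coupling $\widetilde{\lambda}_0$ of $\nu$ and $\nu'$, one forms Cesaro time averages $T^{-1}\int_0^T \widetilde{\lambda}_0 \widetilde{S}_s\,ds$, extracts a weak limit point $\widetilde{\lambda}$ (which lies in $\widetilde{\mathcal I}$ and still has marginals $\nu,\nu'$ since both are invariant), and then invokes Proposition 2.14 of \cite[Chapter VIII]{liggett2012interacting} to replace $\widetilde{\lambda}$ by an extremal invariant coupling with the same marginals.

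The second, most delicate, step is to show that $\widetilde{\lambda}$ is supported on $p$-ordered pairs, i.e.\ that \eqref{p_ordered_conf} holds. Since $\nu$ and $\nu'$ are not translation invariant, the first alternative in Proposition \ref{prop_kilroy} is unavailable and one must establish the finite expected-discrepancy condition \eqref{assumption_finite_disc}. The crucial input is that both $\nu$ and $\nu'$ are $(\rho^-,\rho^+)$-shock measures and thus share the same Bernoulli tails $\nu_{\rho^\pm}$ at $\pm\infty$: via the weak convergences $\tau_n\nu,\tau_n\nu' \to \nu_{\rho^\pm}$ as $n\to\pm\infty$, one constructs $\widetilde{\lambda}_0$ with almost surely finitely many discrepancies, by pasting together, over successively farther windows, increasingly sharp total-variation couplings of $\nu$ and $\nu'$ with $\nu_{\rho^\pm}$ on those windows. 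Since the coupled dynamics can only destroy discrepancies (through coalescence), the finiteness property passes to the Cesaro limit $\widetilde{\lambda}$, the finite propagation property of Proposition \ref{prop:prop} being used to rule out escape to infinity. This is the main obstacle, and the argument replaces the single-shift bound that was available in Proposition \ref{prop:step1}.

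Granting \eqref{p_ordered_conf}, Lemma \ref{lemma:irred} forces $\widetilde{\lambda}$-almost every pair to be either ordered or, in the degenerate regime \eqref{case_deg}, to satisfy $\eta\supinf\xi$. By attractiveness, the events $E_1$, $E_2$, $E_3$, $E_{\supinf}$ are each stable under the coupled dynamics, so by extremality of $\widetilde{\lambda}$ the measure concentrates on exactly one of them. In the $\supinf$-case, a refinement analogous to Lemma \ref{lemma:irred2}, using dynamical (not spatial) invariance of $\widetilde{\lambda}$ together with the rightmost/leftmost-discrepancy observables \eqref{rightmost}--\eqref{leftmost}, restricts the support to $E_{\bowtie}$, yielding (i). Claim (ii) then follows by transferring the $\bowtie$-structure back onto each marginal, along the lines of Proposition \ref{prop:bowtie}. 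For (iii), in the strictly ordered cases \eqref{etaless}--\eqref{etamore} there are no opposite discrepancies, hence $D(\eta,\xi)$ is conserved by the coupled dynamics; by extremality of $\widetilde{\lambda}$ it is $\widetilde{\lambda}$-a.s.\ equal to a common constant $k\in(\N\setminus\{0\})\cup\{+\infty\}$, yielding \eqref{number_disc}.
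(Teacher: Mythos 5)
Your overall architecture (extremal invariant coupling of $\nu$ and $\nu'$, then $p$-orderedness, then extremality to isolate one of \eqref{etaless}--\eqref{case_bowtie}, then constancy of $D$) matches the paper, but the step you yourself flag as "most delicate" contains a genuine gap. You propose to verify hypothesis \eqref{assumption_finite_disc} of Proposition \ref{prop_kilroy} by building a coupling of $\nu$ and $\nu'$ with a.s.\ finitely many discrepancies, extracted from the weak convergences $\tau_n\nu,\tau_n\nu'\to\nu_{\rho^\pm}$. This does not work: weak convergence only controls finite-dimensional marginals, and two $(\rho^-,\rho^+)$-shock measures with the same Bernoulli tails need not be couplable with finitely many discrepancies. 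Under \emph{any} coupling the expected number of discrepancies is at least $\sum_x|\nu(\eta(x)=1)-\nu'(\eta(x)=1)|$, and nothing in the shock property makes these single-site differences summable (e.g.\ densities approaching $\widetilde\rho_i(\rho^+)$ at rate $1/|x|$ are compatible with \eqref{limits_mu}). This is precisely why the proposition's conclusion allows $k=+\infty$ in \eqref{number_disc}, and why the paper never invokes \eqref{assumption_finite_disc} here. Moreover, even if an initial coupling with a.s.\ finitely many discrepancies existed, your transfer of that property through Ces\`aro time averages and weak limits is unjustified: the discrepancy count is not continuous, a.s.\ finiteness is not tightness, and finite propagation bounds where discrepancies travel in finite time, not their total number uniformly in time; and Proposition \ref{prop_kilroy} would in any case require finite \emph{expectation} of $D$ for the invariant coupling, not a.s.\ finiteness.

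The paper's actual key idea, which your proposal is missing, is to prove only the sublinear estimate \eqref{couple_shocks}: for the extremal invariant coupling $\widetilde{\lambda}_0$, the expected number of discrepancies in $[-T,T]$ is $o(T)$. This is obtained by spatially averaging the translates $\tau_x\widetilde{\lambda}_0$ over $x\in\Z^\pm\cap[-T,T]$; any subsequential limit lies in $\widetilde{\mathcal I}\cap\widetilde{\mathcal S}$, hence by Proposition \ref{prop_kilroy} and Lemma \ref{lemma:irred2} is supported on ordered pairs, and since (by the shock property) its two marginals are the \emph{same} measure $\nu_{\rho^\pm}$, the discrepancy density in the limit is zero. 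With \eqref{couple_shocks} replacing \eqref{discrepancies_in}, one then reruns the $\mathcal N_T$/coalescence argument of Proposition \ref{prop:step1} (with \eqref{discrepancies_out} still coming from Proposition \ref{prop:prop}) to get \eqref{eq:blocking_measures-4} for $\widetilde{\lambda}_0$ itself, and concludes by extremality; your endgame for (ii) and (iii) is then essentially the paper's (for (iii) the paper derives constancy of $k$ from the analogue \eqref{whereweused_new} of \eqref{whereweused}, which depends only on $\nu,\nu'$, rather than from extremality of the coupling, but that difference is harmless). As written, however, your proof of the central $p$-orderedness step does not go through.
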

\begin{proposition}\label{prop:translate}
Let $\nu,\nu'\in \mathcal I_e$ be two $(\rho^-,\rho^+)$-shock measures. 
(i) Assume $|\rho^+-\rho^-|=1$, and we do not  
simultaneously have \eqref{case_deg} and $\nu\in Bl_1$. 
Then $\nu'$ is  a translate of $\nu$, i.e. there exists 
$n\in\Z$ such that $\nu'=\tau_n\nu$.
(ii) Assume $|\rho^+-\rho^-|=2$, $\nu'$ is not a translate of $\nu$, 
and we do not have \eqref{case_deg}. Then every 
$(\rho^-,\rho^+)$-shock measure is either a translate of $\nu$, 
or a translate of $\nu'$.
\end{proposition}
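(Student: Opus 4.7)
The plan is to apply Proposition \ref{lemma:translate} to $(\nu,\nu')$ and exploit the monotonicity of translations from Corollary \ref{cor:step1}(i), together with a telescoping identification of the coupling's discrepancy count as a shift.

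For part (i), Proposition \ref{lemma:translate} yields an invariant coupling $\widetilde\lambda$ of $\nu$ and $\nu'$ falling in one of cases \eqref{etaless}--\eqref{case_bowtie}. Case \eqref{case_bowtie} is excluded under our hypothesis: part (i) of Proposition \ref{lemma:translate} allows it only under \eqref{case_deg}, and part (ii) then forces $\nu\in Bl$; since the amplitude is $1$, necessarily $\nu\in Bl_1$, contradicting the hypothesis. Case \eqref{equal} gives $\nu'=\nu=\tau_0\nu$ immediately. The monotone cases \eqref{etaless}--\eqref{etamore} are handled symmetrically; in case \eqref{etaless} we have $\eta<\xi$ a.s.\ with a deterministic $k\in(\mathbb{N}\setminus\{0\})\cup\{+\infty\}$ discrepancies.

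The central step is then to show $k<+\infty$ and to identify the second marginal $\nu'$ with $\tau_k\nu$. By Corollary \ref{cor:step1}(i), $(\tau_n\nu)_n$ is stochastically monotone, and non-decreasing in the case $\rho^+>\rho^-$. Combined with attractiveness, this produces canonical ordered invariant couplings $\widetilde\lambda^*_n$ of $\nu$ with $\tau_n\nu$; an Abel-type telescoping on the one-point marginals shows that $\widetilde\lambda^*_n$ carries exactly $n(\rho^+-\rho^-)=n$ discrepancies in the amplitude-$1$ regime. The same computation applied to $\widetilde\lambda$ gives
\begin{equation*}
k = \sum_{x\in V}\bigl(\Exp_{\nu'}[\eta(x)] - \Exp_{\nu}[\eta(x)]\bigr),
\end{equation*}
and one argues, using extremality of $\nu,\nu'$ together with the balance condition $f(\rho^+)=f(\rho^-)$ enforced on any stationary amplitude-$1$ shock, that this sum is a finite non-negative integer. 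To conclude $\nu'=\tau_k\nu$, I would compare $\widetilde\lambda$ with $\widetilde\lambda^*_k$ --- both invariant ordered couplings of $\nu$ with a second marginal having exactly $k$ discrepancies --- and invoke a uniqueness argument for such couplings.

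For part (ii), the amplitude is $2$, so the telescoping gives $k=2m$, i.e.\ the integer shift $m$ equals $k/2$. Given two non-translate $(\rho^-,\rho^+)$-shock measures $\nu,\nu'$ and an arbitrary third candidate $\nu''$, I would couple $\nu''$ with each of $\nu$ and $\nu'$, read off the implied integer shifts from the two discrepancy counts, and use consistency between the two identifications to place $\nu''$ either in the $\nu$-translation class or in the $\nu'$-translation class. The two classes correspond to the two lane-indexed profiles that can be shifted independently modulo a common translation.

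The main obstacle is the uniqueness step at the end of part (i): proving that two invariant ordered couplings of $\nu$ with translates of itself, sharing the same discrepancy count, must coincide. This requires a delicate analysis of the second-class-particle-type process formed by the $k$ discrepancies evolving on the $\nu$-distributed background, using the shock-speed-$0$ identity $f(\rho^+)=f(\rho^-)$ to nullify the collective drift and the extremality of $\nu$ to enforce rigidity of the discrepancy law.
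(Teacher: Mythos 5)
Your opening moves match the paper: applying Proposition \ref{lemma:translate} to $(\nu,\nu')$, excluding \eqref{case_bowtie} via its item (ii) together with the hypothesis, dismissing \eqref{equal}, and the telescoping identity giving $n|\rho^+-\rho^-|$ discrepancies between $\nu$ and $\tau_n\nu$ (this is \eqref{nu_and_nextnu}/\eqref{nu_and_nextnu_2} in the paper). The genuine gap is the step you yourself flag as the ``main obstacle'': from ``$\nu\le\nu'$ under an invariant ordered coupling with exactly $k$ discrepancies'' you cannot conclude $\nu'=\tau_k\nu$ by invoking ``uniqueness of invariant ordered couplings with the same discrepancy count''. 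That uniqueness claim is about the second \emph{marginal}, not about the coupling, and is essentially as hard as the proposition itself; the second-class-particle/shock-speed analysis you sketch to justify it is nowhere available in the paper and would be a substantial new argument. As written, part (i) is not proved, and part (ii), which rests on the same identification plus an unspecified ``consistency'' step between two discrepancy counts, inherits the gap. (Also note the paper never needs $k<+\infty$ for the pair $(\nu,\nu')$; see below.)

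The paper circumvents this difficulty by comparing $\nu'$ with \emph{all} translates of $\nu$ at once. It sets $\Delta(\gamma,\gamma'):=\sum_{x\in V}|\gamma(\eta(x))-\gamma'(\eta(x))|$, which for ordered measures equals the expected discrepancy count of any ordered coupling \eqref{prop_dist_1} and is additive along ordered triples \eqref{dist_triangle_1}. Proposition \ref{prop:step1} (with Proposition \ref{prop:step2}) gives $\tau_{n-1}\nu\le\tau_n\nu$ and $\Delta(\tau_{n-1}\nu,\tau_n\nu)=|\rho^+-\rho^-|$; Proposition \ref{lemma:translate}, applied to each pair $(\tau_n\nu,\nu')$, shows $\tau_n\nu$ and $\nu'$ are ordered with $\Delta(\tau_n\nu,\nu')\in\N\cup\{+\infty\}$. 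The shock asymptotics \eqref{limits_mu} force the set $\{n:\nu'\le\tau_n\nu\}$ to be nonempty and bounded below, yielding a sandwich $\tau_{n_0-1}\nu<\nu'\le\tau_{n_0}\nu$. In case (i), additivity gives $1=\Delta(\tau_{n_0-1}\nu,\tau_{n_0}\nu)=\Delta(\tau_{n_0-1}\nu,\nu')+\Delta(\nu',\tau_{n_0}\nu)$, the first term is a nonzero integer, hence the second vanishes and $\nu'=\tau_{n_0}\nu$. In case (ii) the same splitting reads $2=1+1$, and sandwiching a suitable translate of an arbitrary third shock measure $\nu''$ between consecutive translates of $\nu$ forces it, again by additivity and integrality, to coincide with $\tau_{n_0}\nu$, $\tau_{n_0-1}\nu$ or $\nu'$. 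To repair your write-up, replace the uniqueness-of-couplings step by this sandwich-plus-additivity argument; no analysis of the motion of the discrepancies is needed.
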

\noindent\textit{Step six. } 
 We  conclude in case 
\eqref{case_bowtie} of Proposition \ref{prop:step1}. 
 This step is pursued in Subsection \ref{subsec:proof_inv_3}, 
in the proof of statements \textit{(iv)--(vi)} of 
Theorem \ref{cor:entire}. 
\begin{proposition}\label{prop:bowtie}
In case \eqref{case_bowtie}, 
we have   $\mu\in Bl_1\cup Bl_2$. 
\end{proposition}
 \noindent \textit{Final step.}  
We   assemble the previous steps to conclude 
the proof of \thmref{Characterization_of_invariant_measures}. 
\begin{proof}[Proof of \thmref{Characterization_of_invariant_measures}]
  First, for $\mu\in\mathcal I_e$,  we consider 
  the different possibilities in Proposition \ref{prop:step1}.
In case \eqref{equal}, 
we have $\mu\in(\mathcal I\cap\mathcal S)_e$;
 by Theorem \ref{thm:characterization_lemma}, 
 $\mu=\nu_\rho$ for some $\rho\in[0,2]$. 
In case \eqref{case_bowtie}  
(which may only occur under \eqref{case_deg}),  
Proposition \ref{prop:bowtie}  implies $\mu\in Bl_1\cup Bl_2$, 
with $Bl_1$ and $Bl_2$ given by \eqref{def_bl_3_2}--\eqref{def_bl2(iv)(b)}. 
In cases \eqref{etaless}--\eqref{etamore} 
with $k=2$  in \eqref{number_disc},  Proposition 
 \ref{prop:step2}   and \textit{(i)} of 
 Proposition \ref{prop:real_r_finite}  lead to $\mu\in Bl_2$.
 In cases \eqref{etaless}--\eqref{etamore} 
 with $k=1$ in \eqref{number_disc},
by Proposition  \ref{prop:step2}, $\mu$ is a shock measure 
of amplitude $1$. \newline\newline
Next, to obtain \eqref{charac_ext}, we consider the structure 
modulo translations of shock measures. Cardinality bounds 
for $\mathcal R$ and $\mathcal R\cup\mathcal R'$ are given 
by Proposition \ref{prop:real_r_finite}. 
By \textit{(i)} of Proposition \ref{prop:translate}  
and \textit{(iii)} of Proposition \ref{prop:real_r_finite}, 
for every 
 $(\rho^-,\rho^+)\in\mathcal R\cup\mathcal R'$,  the set of 
$(\rho^-,\rho^+)$-shock measures in $\mathcal I_e$ 
consists of translates of a single measure.
 The set $Bl_2$ is stable  by translation 
because the generator  
\eqref{eq:generator of the Exclusion}  with transition kernel  
\eqref{eq:intensity_c} is translation invariant. 
 This concludes the proof of  \textit{(i)}.  
Statements  \textit{(ii), (b), (c)} and \textit{(d)} 
are contained in 
statement \textit{(ii)} of Proposition \ref{prop:real_r_finite}. 
 We conclude with the proof of \textit{(ii), (a)}.  By \textit{(ii)} 
 of Proposition \ref{prop:translate}, 
outside \eqref{case_deg}, $Bl_2$ consists of at most 
(up to translations) two measures. \newline\newline
 We now prove that if $Bl_2$ is nonempty, outside case \eqref{case_deg}, 
 it consists of exactly (up to translations) two measures. We already know by 
{\it (ii), (a)} of Theorem \ref{thm:Characterization_of_invariant_measures} 
that $Bl_2$ has at most two elements. Thus we must show that if $Bl_2$ 
contains some element
$\nu$, it contains another one $\nu'$ that is not a shift of $\nu$. 
Since for $\eta\in\mathcal X_2$ and $x\in\Z$, we have
\be\label{shift_H}
H_2(\tau_x\eta)=H(\eta)-2x
\ee
for $H_2$ defined by \eqref{def_H_2}, 
without loss of generality, we may assume that $\nu$ is supported 
on $\{\eta\in\mathcal X_2:\,H_2(\eta)=0\}$ or $\{\eta\in\mathcal X_2:\,H_2(\eta)=1\}$.
The proof being similar in both cases, we assume the former.
Let 
\be\label{def_leftmost}
X_0(\eta):=\inf\{x\in\Z:\,\eta(x,0)+\eta(x,1)=1\}
\ee
denote the position of the leftmost $\eta$-particle, that is  
finite on $\mathcal X_2$ and thus under $\nu$. 
At time $0$ we consider an initial random configuration $\eta\sim\nu$ 
and define a random configuration $\xi$
by adding to $\eta$ a (so-called second-class) particle at 
$Y_0(\eta,\xi):=(X_0(\eta)-1,0)$. 
We consider the coupled process $(\eta_t,\xi_t)$ starting 
from the random initial configuration $(\eta,\xi)$. We denote t
he law of $\xi_t$ by $\nu'_t$  and 
consider
\[
M'_t:=\frac{1}{t}\int_0^t\nu'_s\, ds
\]
The family $(M'_t)_{t>0}$ is tight because it is supported 
on the compact space $\mathcal X$. The following proposition is proved 
in Appendix \ref{app:tight}, and with \eqref{shift_H}, 
yields the desired conclusion.
\end{proof}
\begin{proposition}\label{prop:tight}
Any subsequential limit 
$M'(d\xi)$ of the family $(M'_t)_{t>0}$ is an element of $Bl_2$ 
supported on the set
\be\label{def_H_2_1} \mathcal X_{2,1}:=\{\eta\in\mathcal X_2:\,H_2(\eta)=1\}\ee
\end{proposition}
%
%
\subsection{Proof of  Proposition \ref{prop:real_r_finite} }
\label{subsec:proof_inv_2}
We begin by defining the flux function, which will 
also play an important role in the proof of Theorem \ref{cor:entire},
and state some of its properties.
  \subsubsection{Microscopic current and macroscopic flux}\label{subsec:current-flux} 
 We first define the microscopic current by 
\be\label{def_microflux_ladder}
j(\eta):=\sum_{x(0)\leq 0,y(0)>0}p(x,y)\eta(x)(1-\eta(y))
-\sum_{x(0)\leq 0,y(0)>0}p(y,x)\eta(y)(1-\eta(x))
\ee
 for $\eta\in\mathcal{X}$.  With the kernel 
 defined by \eqref{eq:intensity_c}, this yields 
\begin{eqnarray}
j(\eta) & = &  \sum_{i=0}^1
\left\{d_i\eta^i(0)[1-\eta^i(1)]-l_i\eta^i(1)[1-\eta^i(0)]\right\} \\
 & = & 
=\sum_{i=0}^1\left\{\gamma_i\eta^i(0)[1-\eta^i(1)]
+l_i[\eta^i(0)-\eta^i(1)]\right\}\label{def_microflux_ladder_2} 
\end{eqnarray}
The macroscopic flux is then given by,  for $\rho\in[0,2]$, 
\be\label{def_macroflux_ladder}
G\left(\rho\right)  :=  \int j(\eta)d\nu_{\rho}\left(\eta\right).
\ee
 Using  \eqref{def_nurho} and 
 (\ref{eq:two-rate Bernoulli measure}), this yields
\begin{equation}\label{eq:G-from-rho_0}
G\left(\rho\right)
= \gamma_0 G_{0}\left[\widetilde{\rho}_{0}\left(\rho\right)\right]
+ \gamma_1 G_{0}\left[\widetilde{\rho}_{1}\left(\rho\right)\right]
\end{equation}
where  $G_0$ is the flux function of the single-lane TASEP, given by
\begin{equation}\label{flux_tasep}
G_{0}\left(\alpha\right)=\alpha\left(1-\alpha\right)
\quad  \forall \alpha\in[0,1]. 
\end{equation} 
 In the following two special cases, 
 the function $G$  has a simple expression. 
\begin{example}\label{ex:periodic}
Assume  $q=0<p$.  Then, by \eqref{eq:G-from-rho_0} 
and \eqref{tilderho_2b}, 
\be\label{flux_degenerate}
G(\rho)=\left\{
\ba{lll}
\dsp  \gamma_1\rho(1-\rho) & \mbox{if} & \rho\in[0,1]\\ 
\dsp \gamma_0(\rho-1)(2-\rho) & \mbox{if} & \rho\in(1,2]
\ea
\right.
\ee
In particular, when  $\gamma_0=\gamma_1$,  
the flux is a function of period $1$ 
whose restriction to $[0,1]$ is the TASEP flux. 
It exhibits a change of convexity at $\rho=1$, 
where it is also non differentiable. 
Note that the latter property is not seen in usual  
single-lane models with product invariant measures.
\end{example}
\begin{example}\label{ex:sym}
Assume $p=q>0$. Then, by \eqref{eq:G-from-rho_0} 
and \eqref{tilderho_1b},
\be\label{flux_sym}
 G(\rho)=\frac{\gamma_0+\gamma_1}{4}\rho(2-\rho)
\ee
Here, the flux has the same shape as the single-lane TASEP flux 
(from which it is obtained by a scale change in the horizontal 
and vertical directions). It is in particular strictly concave.
\end{example}
 Useful properties of $G$ are gathered in the following proposition. 
 Except for statement \textit{(iii)} below, these properties 
do not rely on \eqref{wlog}. 
\begin{proposition}\label{prop:extrema}\mbox{}\newline\newline
(o) $G(0)=G(2)=0$.  The function $G$ is identically $0$ 
in cases  \eqref{unless}-- \eqref{unless_2}. \\ \\  
(i)  Outside cases 
 \eqref{unless}, \eqref{unless_2} and \eqref{unless_3}, 
$G$ has at least one and at most three local extrema. \\ \\
(ii)  (a)  $G(1)=0$  if and only if $q=0$ or $\gamma_0+\gamma_1=0$; 
 (b)  if $q> 0$, $G$ is continuously differentiable on $[0,2]$, 
and $G'(1)=0$ if and only if  $\gamma_0=\gamma_1$  or $p=q$. \\ \\
(iii)  Under \eqref{wlog},  
$G'(2)\leq 0$.
Besides, 
$G'(2)< 0$  
holds unless we have \eqref{unless}, or \eqref{unless_2}, 
or $q=\gamma_0=0$. \\ \\
%
 (iv) The function $G$ depends only on the parameters 
 $\gamma_0,\gamma_1$  and $r$ defined in \eqref{reduced_par}. 
 Denoting $G=G_{\gamma_0,\gamma_1,r}$, it holds that 
  for every $\rho\in[0,2]$, 
\be\label{symmetry_g}
G_{\gamma_0,\gamma_1,r}(2-\rho)  
=  G_{\gamma_1,\gamma_0,r}(\rho)=G_{\gamma_0,\gamma_1,r^{-1}}(\rho)
\ee
where the last equality holds when $r>0$.
If $\gamma_0+\gamma_1\neq 0$, 
it holds that  for every $\rho\in[0,2]$, 
\be\label{hom_g}
G_{\gamma_0,\gamma_1,r}(\rho)
=(\gamma_0+\gamma_1) G_{\mathfrak{a},1-\mathfrak{a},r}(\rho) 
\quad\mbox{with\quad}  \mathfrak{a}\mbox{ defined in  }\eqref{reduced_par}. 
\ee
 (v) Assume $\gamma_0=\gamma_1\neq 0$, that is  $\mathfrak{a}=1/2$. 
Then: (a) $G'(1/2)>0$; (b) for   $r\in(0,r_0)\cup(1/r_0,+\infty)$,  
with $r_0$ given by \eqref{def_r0}, we have $G(1/2)>G(1)$. \\ \\
 (vi) 
If $q\neq 0$ and $\gamma_0+\gamma_1\neq 0$, 
the equation $G(\rho+1)-G(\rho)=0$ has a unique solution in $[0,1]$. 
If  $q\neq 0$,  $p\neq q$ and  $\gamma_0+\gamma_1=0\neq\gamma_0\gamma_1$,  
the solutions of this equation in $[0,1]$ are $\rho=0$ and $\rho=1$. \\ \\
 (vii) Outside cases \eqref{unless}--\eqref{unless_2}, 
the equation $G(\rho)=0$ has at least one solution in $(0,2)$ 
if and only if condition \eqref{cond_nob2} holds. In this case, 
the solution is  unique and $G$ changes sign around this solution.
\end{proposition}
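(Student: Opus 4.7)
The plan is to base everything on the explicit formula \eqref{eq:G-from-rho_0}--\eqref{flux_tasep}, combined with the algebraic identity $\widetilde\rho_1(1-\widetilde\rho_1) = r\,\widetilde\rho_0(1-\widetilde\rho_0)/(1+(r-1)\widetilde\rho_0)^2$, which follows from $\widetilde\rho_1 = \phi_r(\widetilde\rho_0)$ (cf.\ \eqref{def_phi}). Setting $u := \widetilde\rho_0(\rho)$, one obtains, when $pq > 0$, the compact expression
\begin{equation*}
G(\rho) \;=\; u(1-u)\,\Bigl[\gamma_0 + \frac{\gamma_1 r}{(1+(r-1)u)^2}\Bigr],
\end{equation*}
while the degenerate case $q=0$ is covered by \eqref{flux_degenerate}. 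Most of the proposition then amounts to careful evaluation and differentiation of this formula.

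Parts (o), (ii) and (iii) reduce to computing $u$ and its derivative at the distinguished densities $\rho \in \{0,1,2\}$. Part (o) is immediate since $u\in\{0,1\}$ at $\rho=0,2$. For (ii)(a), solving $\rho_0 + \phi_r(\rho_0) = 1$ explicitly gives $\widetilde\rho_0(1) = 1/(1+\sqrt r)$, hence $G(1) = (\gamma_0+\gamma_1)\sqrt r/(1+\sqrt r)^2$, vanishing iff $r=0$ or $\gamma_0+\gamma_1=0$. For (ii)(b), $C^1$ regularity on $[0,2]$ is provided by Remark \ref{remark_lemma_phi}, and implicit differentiation of $\rho = u + \phi_r(u)$ combined with $\phi_r'\bigl(1/(1+\sqrt r)\bigr) = 1$ yields $\widetilde\rho_0'(1) = \widetilde\rho_1'(1) = 1/2$, whence $G'(1) = (\sqrt r -1)(\gamma_0-\gamma_1)/[2(1+\sqrt r)]$, vanishing iff $p=q$ or $\gamma_0=\gamma_1$. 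Part (iii) uses $\phi_r'(1) = 1/r$ to obtain $\widetilde\rho_0'(2) = r/(r+1)$, $\widetilde\rho_1'(2) = 1/(r+1)$, giving $G'(2) = -(\gamma_0 r + \gamma_1)/(r+1)$; the degenerate $q=0$ case is treated directly from \eqref{flux_degenerate}, which yields $G'(2^-) = -\gamma_0$.

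Part (iv) follows from the symmetry Lemma \ref{lemma_sym}: lane exchange $\sigma'$ swaps $(\gamma_0,\gamma_1)$ and inverts $r$ without changing $G$, while particle-hole symmetry $\sigma''$ reverses both drifts, inverts $r$, and sends $\rho$ to $2-\rho$; combined with linearity of $G$ in $(\gamma_0,\gamma_1)$, these yield the identities of (iv), and \eqref{hom_g} amounts to factoring out $\gamma_0+\gamma_1$. Part (v) is a direct computation at $u_0 := \widetilde\rho_0(1/2)$, found by solving $u + \phi_r(u) = 1/2$: the factored form of $G$ above shows $G'(1/2)$ has the sign of $(\gamma_0+\gamma_1)$ times a positive rational expression in $u_0$ and $r$, while the comparison $G(1/2) > G(1)$ reduces, after clearing denominators, to a polynomial inequality in $\sqrt r$ whose threshold root is exactly $r_0$ defined in \eqref{def_r0}.

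The main obstacles are (i) and (vi). For (i), the derivative $dG/du$ has numerator
\begin{equation*}
P(u) := \gamma_0(1-2u)(1+(r-1)u)^3 + \gamma_1 r\,\bigl(1 - (r+1)u\bigr),
\end{equation*}
a quartic in $u$, which a priori admits up to four real roots in $[0,1]$. Tightening the count to three extrema requires ruling out a fourth interior root; I would do this by evaluating $P$ at the boundary points and at the special point $u = 1/(r+1)$ (where the second term vanishes), then invoking Descartes' rule of signs within the sign structure imposed by \eqref{wlog} and the excluded cases \eqref{unless}--\eqref{unless_3}. The lower bound of one extremum follows from $G(0)=G(2)=0$ together with the non-vanishing of $G$ in the interior, ensured by (ii)(a). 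For (vi), I would analyze $h(\rho) := G(\rho+1) - G(\rho)$ on $[0,1]$, noting $h(0) = G(1) = -h(1)$ by (o), so $h(0)$ and $h(1)$ have opposite signs whenever $G(1) \ne 0$, producing a zero by the intermediate value theorem. Uniqueness is the delicate point: the cleanest route is to establish strict monotonicity of $h$, provable via the structure of $P$ and the boundedness on the number of sign changes of $G''$ derived in (i). In the case $\gamma_0+\gamma_1=0$ with $\gamma_0\gamma_1 \ne 0$, the symmetry \eqref{symmetry_g} specializes to $G(2-\rho) = -G(\rho)$, hence $h(\rho) = -h(1-\rho)$; then $\{0,1\}$ being the only zeros of $h$ reduces again to a sign-change count for $h$ on $[0,1]$ inherited from the quartic analysis of $P$.
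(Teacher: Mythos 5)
Your overall framework (the explicit formula \eqref{eq:G-from-rho_0}, calculus in the variable $u=\widetilde\rho_0(\rho)$, the symmetry route to (iv) via Lemma \ref{lemma_sym}, and the treatment of $q=0$ through \eqref{flux_degenerate}) is broadly parallel to the paper's proof and is fine for (o), (ii)(a) and (v). But there are concrete problems. First, your starting identity $\widetilde\rho_1=\phi_r(\widetilde\rho_0)$ with $r=q/p$ is the wrong parametrization of $\mathcal F$: \eqref{def_f_always} forces $\frac{\widetilde\rho_1}{1-\widetilde\rho_1}=\frac{p}{q}\,\frac{\widetilde\rho_0}{1-\widetilde\rho_0}$, i.e. $\widetilde\rho_1=\phi_{p/q}(\widetilde\rho_0)$ (the printed $s=q/p$ in \eqref{def_f_gen} conflicts with \eqref{def_f_always}, with \eqref{def_f_q0}--\eqref{tilderho_2b}, and with \eqref{eq:inverse_density}, and it is \eqref{eq:inverse_density} that defines the $G$ of the proposition; e.g. \eqref{eq:inverse_density} gives $\widetilde\rho_0(1)=\sqrt r/(1+\sqrt r)$, not $1/(1+\sqrt r)$). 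Your convention silently exchanges the two lanes. This is harmless for the claims that are symmetric under $\gamma_0\leftrightarrow\gamma_1$ (it only flips the sign of your $G'(1)$, not the iff in (ii)(b)), but it breaks (iii), which is tied to the asymmetric normalization \eqref{wlog}: your formula $G'(2)=-(r\gamma_0+\gamma_1)/(r+1)$ can be strictly positive for admissible parameters (e.g. $\gamma_1<0<\gamma_0$ and $r$ small), so the asserted inequality $G'(2)\le 0$ cannot be derived from it. The correct value is $G'(2)=-(\gamma_0+r\gamma_1)/(r+1)$, from which (iii) does follow.

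Second, the two hard parts, (i) and (vi), are only sketched and the sketches do not close. For (i), "Descartes' rule within the sign structure" plus boundary evaluations of the quartic $P$ is not an argument that rules out four roots in $(0,1)$; what does work (and is the analogue of the paper's $G^{(3)}$ computation) is the explicit factorization $P''(u)=6\gamma_0(r-1)\bigl(1+(r-1)u\bigr)\bigl(r-3-4(r-1)u\bigr)$, which changes sign at most once on $(0,1)$, hence $P'$ at most twice and $P$ at most three times — but this is not in your proposal. For (vi), uniqueness is exactly the crux, and "strict monotonicity of $h$, provable via ... the boundedness on the number of sign changes of $G''$" is not a proof: a bound on sign changes of $G''$ says nothing about the sign of $h'(\rho)=G'(\rho+1)-G'(\rho)$. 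The paper needs a separate, substantial argument here ($F''>0$ on $[0,1)$ by a term-by-term sign analysis, plus $F'(0)<0$ and $F'(1)<0$, the latter via a case analysis of a cubic in $\sqrt r$ over the range of $d$); none of that is replaced by anything in your outline. Moreover, in the case $\gamma_0+\gamma_1=0$, $p\neq q$, your claimed antisymmetry is false: $G(2-\rho)=-G(\rho)$ gives $h(1-\rho)=h(\rho)$ (symmetry about $1/2$), not $h(\rho)=-h(1-\rho)$, and indeed $h$ is not monotone there (it decreases then increases, with $h(0)=h(1)=0$), so the reduction you propose for that case does not stand as stated.
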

\begin{remark}\label{rk_r0}
Statement (v) of Proposition \ref{prop:extrema} is involved 
in the proof of statement (iii) of Theorem \ref{cor:entire}. 
In the latter the condition on $r$ is $r\in(0;r_0)$ instead of 
$r\in(0,r_0)\cup(1/r_0,+\infty)$, because the theorem is stated 
under \eqref{wlog}, hence $r<1$.
\end{remark}
 \subsubsection{Proof of Proposition \ref{prop:real_r_finite}}
\label{subsec:proof-prop37} 
 The scheme of proof of Proposition \ref{prop:real_r_finite}
is the following. 
We introduce in Definition \ref{def_set}  
a set denoted by $\mathcal R_0$,
which depends only on the flux function. 
Lemma \ref{lemma:r_finite} (which will be proved 
using Proposition \ref{prop:extrema})  
says that $\mathcal R_0$ contains at most three elements, 
in most cases no more than one, and sometimes none. 
 Proposition \ref{cor:step2}  provides information
  on possible stationary shock measures, implying  that 
  $\mathcal R$ is contained in $\mathcal R_0$; 
  part of this proposition will be useful 
  for the proof of Theorem \ref{cor:entire}.  
  Lemma \ref{lemma:nob2} sets restrictions on possible 
  shock measures of amplitude $2$.   The proof of Proposition 
\ref{prop:real_r_finite} is concluded using 
 Lemma \ref{lemma:r_finite}, Lemma \ref{lemma:nob2}  
 and Proposition \ref{cor:step2}; 
these are proved in Subsection \ref{subsec:inter_2}. 
\begin{definition}\label{def_set}
Let $\mathcal R_0$ denote the set of pairs 
$(\rho^-,\rho^+)\in [0,2]^2\setminus\mathcal D$ 
satisfying the following conditions: 
(i) $|\rho^+-\rho^-|=1$; 
  (ii) $G(\rho^+)=G(\rho^-)
  =\min_{\rho\in[\rho^-,\rho^+]}G(\rho)$ if $\rho^-<\rho^+$; or
$G(\rho^+)=G(\rho^-)=\max_{\rho\in[\rho^+,\rho^-]}G(\rho)$ 
if $\rho^->\rho^+$,  where $G$ is defined by 
 \eqref{def_macroflux_ladder}--\eqref{eq:G-from-rho_0}.   
\end{definition}
\begin{remark}\label{rem:entropy}
Condition (ii) in Definition \ref{def_set} 
means that $(\rho^-,\rho^+)$ is 
\textit{an entropy shock} for the scalar conservation law 
with flux function $G$, 
that is the expected hydrodynamic equation  of our model 
for the {\em total} density (that is the sum of densities 
over all lanes), see \cite{abbs2}.  Thus $\mathcal R_0$ 
is exactly the set of entropy shocks of amplitude $1$. 
\end{remark}
\begin{lemma}\label{lemma:r_finite}
 Outside \eqref{unless}--\eqref{unless_3}, the set 
 $\mathcal R_0$ contains at most three elements. More precisely:  \newline
(i)   If $q>0$ and $\gamma_0+\gamma_1\neq 0$,  $\mathcal R_0$ 
contains   one  element,  and $\mathcal B_1\cap\mathcal R_0=\emptyset$.  \\
 (ii)  If  $q>0$, $p\neq q$ and 
 $\gamma_0+\gamma_1=0\neq\gamma_0\gamma_1$,  or if $q=0$ and 
 $\gamma_0\neq\gamma_1$,  $\mathcal R_0$ 
contains  two elements, and $\mathcal R_0\subset\mathcal B_1$.  \\
 (iii) Assume  $\mathfrak{a}=1/2$,   and recall $r_0$ defined by \eqref{def_r0}.  
 Then $\mathcal R_0=\{(1/2,3/2)\}$ if and only if  
$r\in[r_0,1]$,  $\mathcal R_0=\emptyset$ if and only if  
\be\label{cond_r0_0}
r\in(0,r_0),
\ee
and $\mathcal R_0=\{(3/2,1/2);(0,1);(1,2)\}$ if and only if $r=0$. \newline
(iv)  There exists an open subset $\mathcal Z$ of $[0,1]^2$, 
containing  $\{1/2\}\times(0,r_0)$,  such that $\mathcal R_0=\emptyset$ 
for  $(\mathfrak{a},r)\in\mathcal Z$. 
\end{lemma}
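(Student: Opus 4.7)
The overall strategy is to reduce the lemma to counting zeros of the single-variable function $H(\rho) := G(\rho+1) - G(\rho)$ on $[0,1]$, then checking for each zero whether exactly one of the two candidate pairs $(\rho,\rho+1)$ or $(\rho+1,\rho)$ satisfies the entropy (min/max) condition in Definition \ref{def_set}. Since a pair $(\rho^-,\rho^+)\in[0,2]^2\setminus\mathcal D$ with $|\rho^+-\rho^-|=1$ is determined by $\rho:=\min(\rho^-,\rho^+)\in[0,1]$ and a choice of orientation, counting zeros of $H$ provides the upper bound. Proposition \ref{prop:extrema}\,(vi) does most of this counting for us, and the remaining work is to translate the information of Proposition \ref{prop:extrema}\,(ii)--(v) into the entropy check.

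For (i), Proposition \ref{prop:extrema}\,(vi) gives a unique zero $\rho^\star$ of $H$ in $[0,1]$. Using (ii)(a), since $q>0$ and $\gamma_0+\gamma_1\neq 0$ we have $G(1)\neq 0=G(0)=G(2)$, so neither $\rho=0$ nor $\rho=1$ is a zero of $H$; hence $\rho^\star\in(0,1)$ and the corresponding pairs lie outside $\mathcal B_1$. Since $G(\rho^\star)=G(\rho^\star+1)$ and $G$ is nonconstant on $[\rho^\star,\rho^\star+1]$ (using the smoothness from (ii)(b) together with the extrema bound (i) of Proposition \ref{prop:extrema}), one of the min/max on this interval is attained at the endpoints and the other strictly inside, so exactly one orientation is entropic. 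This gives $|\mathcal R_0|=1$. For (ii), Proposition \ref{prop:extrema}\,(vi) gives the zeros $\rho=0$ and $\rho=1$, so candidates are confined to $\mathcal B_1$; checking entropy on $[0,1]$ and $[1,2]$ separately (using $G(0)=G(1)=G(2)=0$ and the sign/shape of $G$ on each half, which follows from the formulas \eqref{eq:G-from-rho_0} and \eqref{flux_degenerate}) selects exactly one orientation on each half, giving $|\mathcal R_0|=2$.

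For (iii), when $d=1/2$ the symmetry relation \eqref{symmetry_g} gives $G(2-\rho)=G(\rho)$, so $\rho^\star=1/2$ is a zero of $H$; by (vi), when $r\in(0,1]$ it is the only one. Proposition \ref{prop:extrema}\,(v)(a) gives $G'(1/2)>0$, so $G(1/2)$ is not the maximum on $[1/2,3/2]$, ruling out $(3/2,1/2)$. Whether $(1/2,3/2)$ is entropic amounts to whether $G(1/2)$ is the minimum on $[1/2,3/2]$; Proposition \ref{prop:extrema}\,(v)(b) rules this out precisely when $r\in(0,r_0)$ (since $G(1)<G(1/2)$ places a strictly smaller interior value), and the converse when $r\in[r_0,1]$ is obtained by combining the symmetry $G(1+t)=G(1-t)$, the bound of at most three extrema (Proposition \ref{prop:extrema}\,(i)), and the signs $G'(1/2)>0$ and $G'(1)=0$ (from (ii)(b)), which forces $G$ to be unimodal on $[1/2,3/2]$ with maximum at $\rho=1$. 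The case $r=0$ follows from the explicit periodic flux in Example \ref{ex:periodic}: direct enumeration shows the only entropy pairs are $(0,1),(1,2),(3/2,1/2)$.

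For (iv), the flux $G_{\gamma_0,\gamma_1,r}$ is jointly $C^1$ in $(d,r)\in[0,1]\times(0,1]$ (from \eqref{eq:G-from-rho_0} and the explicit formulas for $\widetilde\rho_i$ in Lemma \ref{lemma_phi}), and by (vi) the unique solution $\rho^\star(d,r)$ of $H=0$ depends continuously on $(d,r)$. The strict inequality $G(1)<G(1/2)$ for $d=1/2$, $r\in(0,r_0)$ (Proposition \ref{prop:extrema}\,(v)(b)) persists in an open neighborhood, and there gives an interior point where $G$ is strictly below $G(\rho^\star)$, hence $(\rho^\star,\rho^\star+1)$ remains non-entropic; the strict inequality $G'(\rho^\star)>0$ (perturbed from $G'(1/2)>0$) continues to rule out the reverse orientation. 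I expect the main obstacle to be case (iii), specifically showing for $r\in[r_0,1]$ that $G(1/2)$ is genuinely the minimum of $G$ on all of $[1/2,3/2]$, not merely bounded above by $G(1)$; the argument must exclude the possibility of a secondary interior local minimum below $G(1/2)$, and this is where the three-extrema bound from Proposition \ref{prop:extrema}\,(i) and the symmetry about $\rho=1$ are essential.
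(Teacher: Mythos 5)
Your overall reduction -- parametrizing candidate shocks by the solutions of $G(\rho+1)=G(\rho)$ on $[0,1]$ via Proposition \ref{prop:extrema}\,(vi) and then testing the two orientations against condition (ii) of Definition \ref{def_set} -- is exactly the paper's strategy, and your treatments of (ii) and (iv) are essentially the paper's arguments. However, your proof of (i) contains a step that fails. The claim that ``one of the min/max on $[\rho^\star,\rho^\star+1]$ is attained at the endpoints and the other strictly inside, so exactly one orientation is entropic'' is not a consequence of nonconstancy: $G$ may go both strictly above and strictly below the common endpoint value $G(\rho^\star)$ inside the interval, in which case \emph{neither} orientation is entropic and $\mathcal R_0=\emptyset$. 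This is not hypothetical: $d=1/2$, $r\in(0,r_0)$ satisfies $q>0$ and $\gamma_0+\gamma_1\neq 0$, and there $\mathcal R_0=\emptyset$ by (iii)--(iv), which you yourself prove -- so your conclusion $|\mathcal R_0|=1$ in (i) contradicts your own (iii). What nonconstancy gives (and what the paper proves, consistently with Theorem \ref{thm:Characterization_of_invariant_measures}, (ii)(b)) is the bound ``at most one'': if both $(\rho^\star,\rho^\star+1)$ and $(\rho^\star+1,\rho^\star)$ were entropic, $G$ would be constant on the interval, which is excluded outside \eqref{unless}--\eqref{unless_3}.

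There is a second gap in (iii), in the direction $r\in[r_0,1]\Rightarrow(1/2,3/2)\in\mathcal R_0$. You assert that symmetry, the three-extrema bound and the signs $G'(1/2)>0$, $G'(1)=0$ force $G$ to be unimodal on $[1/2,3/2]$ with maximum at $\rho=1$. This is false on part of the range: by \eqref{eq:G-deriv2-1/2}, $G'$ vanishes at $\rho=1$ and wherever $\psi(\rho)=1/4$, and the latter has two roots inside $(1/2,3/2)$ whenever $\left(\frac{r-1}{r+1}\right)^2>3/4$, i.e.\ for all $r$ below roughly $0.072$; since $r_0\approx 0.042$, for $r\in[r_0,0.072)$ the flux has a local \emph{minimum} at $\rho=1$ and two interior maxima, so it is not unimodal there, although $(1/2,3/2)\in\mathcal R_0$ still holds. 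The correct route (the paper's) is a case split using Proposition \ref{prop:extrema}\,(i) and (iii): either $G$ has a single extremum, necessarily a maximum at $\rho=1$, and then $G(1/2)=\min_{[1/2,3/2]}G$ automatically; or it has three extrema with a local minimum at $1$ and two maxima symmetric about $1$, in which case $\min_{[1/2,3/2]}G=\min\bigl(G(1/2),G(1)\bigr)$, so the entropy condition is exactly $G(1/2)\leq G(1)$, which holds precisely for $r\in[r_0,1]$ by the equivalence \eqref{eq:lacondition} established in the proof of Proposition \ref{prop:extrema}\,(v)(b) (note that (v)(b) as stated gives only the direction $r\in(0,r_0)\Rightarrow G(1/2)>G(1)$; you need the iff from its proof to conclude for $r\in[r_0,1]$).
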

\begin{proposition}\label{cor:step2}
 (i) Assume that a measure $\nu\in\mathcal I$ 
 is a $(\rho^-,\rho^+)$-shock measure. 
 Then $(\rho^-,\rho^+)$ satisfies 
 condition (ii) of Definition \ref{def_set}. \\
(ii)  Assume that in Proposition \ref{prop:step1} 
we have \eqref{etaless} 
or \eqref{etamore}, and $k=1$. 
Then the pair $(\rho^-,\rho^+)$ in Proposition 
\ref{prop:step2} satisfies 
$(\rho^-,\rho^+)\in\mathcal R_0$. \\
 (iii) Under the assumptions of (ii),  suppose in addition that 
 $(\rho^-,\rho^+)\in \mathcal B_1$; then either 
 $\gamma_0+\gamma_1=0$, or $q=0$.
If  $q=0$,  
we are in one of the cases (iv), resp. (v), (vi) 
of  Theorem  \ref{cor:entire}, and  $\nu$ lies in the set  given by
\eqref{def_bl_3_2}, resp. \eqref{def_bl_5}, \eqref{def_bl_3_1}. 
\end{proposition}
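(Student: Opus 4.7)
The proof breaks into three parts; I would handle them in the order (i), (ii), (iii), with (iii) further split into its first and second statements.

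For part (i), I would separate the Rankine–Hugoniot equality from the entropy min/max condition. The equality $G(\rho^-) = G(\rho^+)$ follows from flux conservation: testing stationarity of $\nu$ against the cylindric functions $\eta \mapsto \eta^0(z)$ and $\eta \mapsto \eta^1(z)$ and adding the two resulting identities makes the vertical current terms cancel, yielding that $\phi(n) := \mathbb E_{\tau_n \nu}[j]$, with $j$ as in \eqref{def_microflux_ladder_2}, is independent of $n \in \Z$. Since $j$ is a cylinder function and $\tau_n \nu \to \nu_{\rho^\pm}$ weakly, $\phi(n) \to G(\rho^\pm)$, so $G(\rho^-) = G(\rho^+)$. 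For the entropy condition, I would fix $\rho$ strictly between $\rho^-$ and $\rho^+$ and construct an invariant coupling $\widetilde\lambda$ of $\nu$ and $\nu_\rho$ as a Cesàro limit of the basic coupling with initial product law $\nu \otimes \nu_\rho$. The mean net current of positive-minus-negative discrepancies under $\widetilde\lambda$ is constant across bonds by stationarity; computing its limits as the bond is sent to $\pm\infty$ expresses them in terms of $G(\rho^\pm) - G(\rho)$. A violation of $G(\rho) \geq G(\rho^\pm)$ (in the case $\rho^- < \rho^+$) would force same-sign discrepancies to flow outward toward both infinities in a pattern incompatible with attractiveness of the coupling and the shock asymptotics of $\nu$, closely paralleling the discrepancy analysis behind Proposition \ref{prop:step1}.

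Part (ii) is then immediate: Proposition \ref{prop:step2}(ii) gives $|\rho^+ - \rho^-| = k = 1$, establishing condition (i) of Definition \ref{def_set}; applying (i) yields condition (ii), so $(\rho^-, \rho^+) \in \mathcal R_0$. For part (iii), the first statement follows quickly: if $(\rho^-, \rho^+) \in \mathcal B_1 = \{(0,1),(1,0),(1,2),(2,1)\}$, then one endpoint equals $1$ and the other lies in $\{0,2\}$, where $G$ vanishes by Proposition \ref{prop:extrema}(o); the min/max condition of Definition \ref{def_set}(ii) then forces $G(1) = 0$, and Proposition \ref{prop:extrema}(ii)(a) delivers $q = 0$ or $\gamma_0 + \gamma_1 = 0$.

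For the second statement of (iii), assume $q = 0$. I would argue in three steps. First, \eqref{tilderho_2b} shows that for each element of $\mathcal B_1$ one of the lanes has constant asymptotic density ($0$ or $1$) while the other carries a single-lane $(0,1)$ or $(1,0)$ jump. Second, since $q = 0$ means only downward vertical jumps are allowed, stationarity of $\nu$ against a suitable cylinder function produces the rigidity $\nu\{\eta^0(z) = 1,\,\eta^1(z) = 0\} = 0$ for every $z$, which together with the uniform asymptotic density forces the uniform lane to be deterministically empty or full throughout. Third, once one lane is frozen, the other evolves as an autonomous single-lane SEP, so by the classification recalled in Section \ref{sec:inv_single} its marginal under $\nu$ is a Bernoulli measure or a blocking measure $\widehat\mu_n$; the shock asymptotics exclude Bernoulli and fix $n$. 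A case split according to the sign pattern of $(\gamma_0,\gamma_1)$ under \eqref{wlog}---both positive (case (iv)(a)), opposite signs (case (v)), or $\gamma_0 = 0 < \gamma_1$ (case (vi))---matches the resulting measure to the explicit lists \eqref{def_bl_3_2}, \eqref{def_bl_5}, \eqref{def_bl_3_1}; in case (v), the lane symmetry operator $\sigma$ accounts for the reversal of the lane-$1$ blocking profile induced by $\gamma_1 < 0$. The principal obstacle throughout is the entropy (min/max) half of part (i): analyzing the invariant coupling $\widetilde\lambda$ of a shock measure with a constant-density Bernoulli measure delivers genuinely new information, distinct from the coupling of $\mu$ with $\tau_1 \mu$ used in Proposition \ref{prop:step1}, and controlling the two infinite-bond limits of the discrepancy current simultaneously is where the real work lies. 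Everything else is comparatively mechanical once (i), Proposition \ref{prop:extrema}, and the single-lane classification are in hand.
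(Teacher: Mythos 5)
Your overall architecture is the right one, and two pieces are genuinely fine: the Rankine--Hugoniot half of (i) via constancy of $\Exp_{\tau_n\nu}[j]$ (this is a simpler route than the paper, which obtains the equality as the endpoint case $r=\rho^\pm$ of a coupled inequality, and it is the same flux-conservation argument the paper uses elsewhere, cf.\ \eqref{difference_currents}); and the first assertion of (iii), which is exactly the paper's argument behind Lemma \ref{lemma:r_finite}\,(i) ($G(0)=G(2)=0$ plus Proposition \ref{prop:extrema}\,(ii)). The entropy half of (i), however, rests on a false claim: under an invariant coupling of $\nu$ with $\nu_\rho$ the mean discrepancy current is \emph{not} constant across bonds. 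Discrepancies coalesce, so applying stationarity to $D_{-N,N}(\eta,\xi)$ (the computation \eqref{computation_1}--\eqref{computation_3}) yields only the one-sided bound \eqref{boundary_flux}: the difference of boundary currents equals the non-negative expected coalescence rate inside the box. If the current really were constant, the limit identification you propose would force $2G(\rho)=G(\rho^-)+G(\rho^+)$ for every intermediate $\rho$, i.e.\ $G$ constant on $[\rho^-,\rho^+]$, which is false for the fluxes at hand; it is precisely the inequality, combined with Rankine--Hugoniot, that gives the min/max condition. Your subsequent ``flow outward toward both infinities'' heuristic gestures at the correct mechanism (no creation of discrepancies), but it is not carried out, and a second ingredient is missing: to express the limiting boundary currents as $G$-differences you need the limiting couplings $\widetilde\nu_{\pm\infty}$ to be supported on ordered pairs so that $\widetilde j$ reduces to $\pm\bigl(j(\eta)-j(\xi)\bigr)$ as in \eqref{difference_currents-2}. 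This is not a consequence of attractiveness (the marginals are not ordered a priori); in the paper it comes from Proposition \ref{prop_kilroy} together with Lemmas \ref{lemma:irred} and \ref{lemma:irred2}, i.e.\ the machinery behind \eqref{ordered_coupling}.

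In the second assertion of (iii) there is a further genuine gap. The rigidity $\nu\{\eta^0(z)=1,\eta^1(z)=0\}=0$ can indeed be extracted for these shocks (sum the stationarity identity for $\Exp_\nu[\eta^0(z)]$ over $z$ and use that the lane-$0$ horizontal current vanishes at both ends), but rigidity plus zero asymptotic lane-$0$ density does \emph{not} force lane $0$ to be deterministically empty: a measure concentrating on configurations with finitely many lane-$0$ particles sitting above occupied lane-$1$ sites satisfies both constraints, and ruling it out requires a dynamical argument, not a static one. The paper supplies this by dominating $\eta^0_t$ by an autonomous single-lane ASEP $\zeta^0_t$ (legitimate only because $q=0$ means lane $0$ can lose but never gain particles), invoking the convergence theorem of \cite{bm} (or \cite{liggett2012interacting} when $\gamma_0=0$) for ASEP started from data with density $0$ at $\pm\infty$ to conclude $\zeta^0_t$, hence $\eta^0_t$, converges to the empty lane, and then using stationarity of $\nu$; only after lane $0$ is frozen does lane $1$ become an autonomous SEP whose marginal is classified by Liggett's theorem, with the shock asymptotics excluding the Bernoulli part (and showing the $(0,1)$ case is impossible when $\gamma_1=0$). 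The $(1,2)$ and case-(v) shocks are then handled by the symmetries of Lemma \ref{lemma_sym} ($\sigma\sigma'\sigma''$, resp.\ $\sigma$), which is consistent with your matching to \eqref{def_bl_3_2}, \eqref{def_bl_5}, \eqref{def_bl_3_1}, but that matching is only ``mechanical'' once the freezing step above is actually proved.
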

\begin{lemma}\label{lemma:nob2}
(i) If $\mu\in\mathcal I$ is a $(\rho^-,\rho^+)$-shock measure 
of amplitude $2$, then $(\rho^-,\rho^+)=(0,2)$;
(ii) Under condition \eqref{cond_nob2}, no such measure exists.
\end{lemma}
%
%
\begin{proof}[Proof of Proposition \ref{prop:real_r_finite}]
\mbox{}\newline\newline
\textit{Proof of (i).}   This follows from 
\textit{(i)} of Lemma \ref{lemma:nob2}.  \newline\newline
\textit{Proof of (ii).}   Statement \textit{(ii), (b)}  
of  Theorem \ref{thm:Characterization_of_invariant_measures}
 follows from \textit{(ii)} of Lemma \ref{lemma:nob2}. 
We turn to statements \textit{(ii), (c)} and \textit{(d)} 
of  Theorem \ref{thm:Characterization_of_invariant_measures}. 
By Definition \ref{def_set_r} 
and \textit{(ii)} of Proposition \ref{prop:step2}, 
$\mathcal R$ and $\mathcal R'$ contain only shocks 
of amplitude $1$ associated 
with stationary shock measures.
By Proposition \ref{cor:step2},  \textit{(i)}, 
and Remark \ref{rem:entropy},
any shock associated with a stationary shock measure 
is an entropy shock.  
Thus by Definitions \ref{def_set_r}, \ref{def_set} 
and Remark \ref{rem:entropy},  
we have $\mathcal R\cup\mathcal R'\subset \mathcal R_0$, 
$\mathcal R\subset\mathcal R_0\setminus\mathcal B_1$ 
and $\mathcal R'\subset\mathcal R_0\cap\mathcal B_1$. 
The results then follow from \textit{(i)} and 
\textit{(ii)} of Lemma \ref{lemma:r_finite} if $q>0$.  
If $q=0$, $\gamma_0\gamma_1\neq 0$ and $\gamma_0\neq\gamma_1$, 
\eqref{flux_degenerate} and Definition \ref{def_set} show that 
$\mathcal R_0$ contains at most two points and
$\mathcal R_0\subset\mathcal B_1$, thus $\mathcal R=\emptyset$ 
and $\mathcal R'\subset\mathcal R_0$.  \newline\newline
 {\em Proof of (iii).} In this case, by \textit{(iii)} of 
 Proposition \ref{cor:step2}, $Bl_1$ is contained in the 
 right-hand side of \eqref{def_bl_3_2}. Each of the two sets 
 on this right-hand side consists of translates of a single measure; 
 the first set contains only $(1,2)$-shock measures and 
 the second one only $(0,1)$-shock measures.
\end{proof}
\subsection{Proof of Theorem \ref{cor:entire}}
\label{subsec:proof_inv_3}
 We start with the 
\begin{proof}[Proof of Lemma \ref{def_blocking_h2}] 
We prove the first equality \eqref{shift_rev_twolane}, 
the proof of the second one being similar.  Let  
$\mathcal X_n:=\{\eta\in\mathcal X:\,H_2(\eta)=2n\}$,  
and $\xi^n$ denote  the element of $\mathcal X_n$  defined by 
\[
\xi^n(z,i)={\bf 1}_{\{z>-n\}};\quad z\in\Z,\,i\in W
\] 
so that $\xi^n=\tau_{n}\xi^0$,  with $\xi^0\in\mathcal X_0$. 
For $\eta,\xi\in\mathcal X_n$, let $\mathcal A$, resp. 
 $\mathcal A'$, 
denote the  set of $x=(z,i)\in V$, where $z\in\Z$ 
and $i\in W$,   for which $\eta(x)=1-\xi(x)=0$, 
resp. $\eta(x)=1-\xi(x)=1$. 
Then  $|\mathcal A|=|\mathcal A'|<+\infty$,  and 
\begin{eqnarray*}
\frac{\nu^{\rho^c_.}(\eta)}{\nu^{\rho^c_.}(\xi)}
 & = &  \prod_{ x\in\mathcal A'}\frac{\rho_x}{1-\rho_x}
 \prod_{x\in\mathcal A}\frac{1-\rho_x}{\rho_x}  \nonumber\\
& = & \left(\frac{p}{q}\right)^{\sum_{z\in\Z,\,i\in W}
i[\eta(z,i)-\xi(z,i)]}\theta^{\sum_{z\in\Z,\,i\in W}z[\eta(z,i)-\xi(z,i)]} 
=:  r(\eta,\xi)
\end{eqnarray*}
The second equality  above  follows from  
$\rho_{z,i}/(1-\rho_{z,i})=c(p/q)^i\theta^z$ 
 for $(z,i)\in V$.  
We apply this to $\eta\in\mathcal X_n$ and $\xi^n$:
\[
\check{\nu}_n(\eta)  =  \frac{\nu^{\rho^c_.}(\eta)}
{\nu^{\rho^c}(\mathcal X_n)} \\
= \frac{r(\eta,\xi^n)}{Z^n} 
\]
where
\[
Z^n:=\sum_{\xi\in\mathcal X_n}r(\xi,\xi^n)
\]
 Thus $\check{\nu}_n$ does not depend on $c$. 
Note that  if $\eta\in\mathcal X_0$, we have 
$\tau_n\eta\in\mathcal X_n$, and 
$r(\tau_n\eta,\xi^n)=r(\eta,\xi^0)$. This implies that 
$Z^n$ does not depend on $n$ and that
\[
\check{\nu}_n(\tau_n\eta)=\check{\nu}_0(\eta)
\]
\end{proof}
 We will need the following lemma, 
 proved in Section \ref{subsec:inter_2}. 
\begin{lemma}\label{lemma_blocking}
Assume $\nu^1,\nu^2\in\mathcal I$ are supported on $\mathcal X_2$, 
and $H_2(\eta)$ defined by \eqref{def_H_2} has the same constant value 
under $\nu^1$ and $\nu^2$. Then, unless $l_0=l_1=q=0<p$,
 that is \eqref{case_deg}, the 
  non-weakly irreducible case, 
it holds that $\nu^1=\nu^2$.
\end{lemma}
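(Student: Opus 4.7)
The plan is to derive uniqueness of invariant probability measures on each level set $\{H_2=n\}\cap\mathcal X_2$ from irreducibility of the Markov chain induced by the two-lane SEP on this (countable) state space; standard Markov-chain theory then yields $\nu^1=\nu^2$.

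As a preliminary I would verify that $H_2$ is conserved by every elementary transition: a horizontal jump across the bond $\{(0,i),(1,i)\}$ on lane $i$ changes both sums in \eqref{def_H_2} by the same amount and so leaves their difference invariant, while every other horizontal or any vertical jump leaves both sums individually untouched. Since every $\eta\in\mathcal X_2$ is determined by its finite symmetric difference with the reference configuration $\eta^{\bot,0,0}$, each set $\{H_2=n\}\cap\mathcal X_2$ is countable; both $\nu^1$ and $\nu^2$ thus induce invariant probability measures on the same countable-state Markov chain, and irreducibility of that chain together with the existence of an invariant probability measure (which forces positive recurrence) would imply uniqueness of the invariant probability measure.

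The core of the argument is irreducibility of this chain outside the excluded case. When $q>0$, given two configurations agreeing with $\eta^{\bot,0,0}$ outside a sufficiently large box $B_M:=[-M,M]\times W$, I would connect them by moving one discrepancy at a time: the bidirectional vertical channel combined with at least one horizontal direction on each lane (from \eqref{always_move}) supplies short cycles realizing arbitrary elementary transpositions inside $B_M$, while the half-lines outside $B_M$ act as inexhaustible reservoirs of coupled particles and holes. Weak irreducibility of the kernel, Lemma \ref{lemma_wi}\textit{(i)}, is precisely the property this rearrangement requires.

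The main obstacle is the case $q=0$ with $l_0+l_1>0$, where vertical motion is one-way. Here the single-lane quantity $h_0(\eta):=\sum_{z\leq 0}\eta^0(z)-\sum_{z>0}(1-\eta^0(z))$ is non-increasing under the dynamics (downward vertical jumps decrease it by one, horizontal jumps preserve it), so stationarity of $\nu^i$ forces the $\nu^i$-expected rate of vertical jumps to vanish and concentrates $\nu^i$ on the closed set $E:=\{\eta:\eta^0\leq\eta^1\}$ of configurations from which no vertical jump is possible. Within $E$ only horizontal moves occur, and the lane on which $l_i>0$ supplies the rewind needed to prove irreducibility of the effective dynamics on $E\cap\{H_2=n\}\cap\mathcal X_2$; this is the step that requires genuine care. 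The excluded case $l_0=l_1=q=0$ is precisely the one where no such rewind exists: there the deterministic configurations $\eta^{\bot,n_0,n_1}$ with $n_0\geq n_1$ are fixed points of the dynamics, and infinitely many such pairs share any given value of $H_2=-(n_0+n_1)$, so many distinct invariant measures on the same level set genuinely exist in that regime.
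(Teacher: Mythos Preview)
Your approach is genuinely different from the paper's, and it has a real gap. The paper does not attempt to prove irreducibility of the SEP on the level sets $\{H_2=n\}\cap\mathcal X_2$. Instead it takes an invariant coupling $\widetilde\nu\in\widetilde{\mathcal I}$ of $\nu^1$ and $\nu^2$ (which exists by \cite[Chapter VIII, Proposition 2.14]{liggett2012interacting}), observes that both marginals being supported on $\mathcal X_2$ forces the total number of discrepancies to be finite, and then invokes Proposition~\ref{prop_kilroy} under hypothesis~\eqref{assumption_finite_disc}. Since outside the excluded case the kernel is weakly irreducible (Lemma~\ref{lemma_wi}), $p$-ordered pairs are ordered pairs. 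Finally $H_2$ is strictly increasing on $\mathcal X_2$, so an ordered pair $(\eta,\xi)$ with $H_2(\eta)=H_2(\xi)$ must satisfy $\eta=\xi$; hence $\nu^1=\nu^2$. No irreducibility of the countable-state chain is needed.

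Your irreducibility claim is in fact false in general. Take $q>0$ but $l_0=l_1=0$ (both lanes totally asymmetric to the right, bidirectional vertical channel); this is not the excluded case, and Lemma~\ref{lemma_wi} does give weak irreducibility of the \emph{kernel}. Nevertheless the SEP on $\{H_2=0\}\cap\mathcal X_2$ is not irreducible: the configuration $\breve\eta$ with $\breve\eta(z,i)={\bf 1}_{\{z>0\}}$ is absorbing, while configurations with particles further to the left lie in the same level set but cannot be reached from $\breve\eta$. Weak irreducibility of the single-particle kernel does not transfer to irreducibility of the exclusion process on a level set; what you would actually need is uniqueness of the closed communicating class, which is a different and harder statement that you do not address. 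Your $q=0$ argument has a second problem: the set $E=\{\eta^0\le\eta^1\}$ is \emph{not} closed under the dynamics (a horizontal jump on either lane can produce a site $z$ with $\eta^0(z)=1>\eta^1(z)=0$), so there is no well-defined ``effective dynamics on $E$'' to which your irreducibility argument could apply; and the supermartingale step would also require integrability of $h_0$ under $\nu^i$, which you have not established. The paper's coupling-plus-monotonicity argument avoids all of these issues.
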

\begin{proof}[Proof of Theorem  \ref{cor:entire}]
 We prove here the results stated in  Theorem  \ref{cor:entire}
as well as their complements given in Subsection \ref{sec:blocking}. 
\mbox{}\newline\newline
\textit{ $\bullet$ Preliminaries on $Bl_2$:  Proof of (o).}
Since $\mu\in\mathcal I$, we have  (recalling \eqref{config_total}) 
 \[
 \int L\sum_{ x\in\Z:\, 
 m\leq x\leq n}\overline{\eta}\left(x\right)d\mu(\eta)=0
 \]
 where 
\be\label{difference_currents}
L\overline{\eta}(x)=\tau_{x-1}j(\eta)-\tau_x j(\eta)
\ee
with $j$ defined by  \eqref{def_microflux_ladder_2}.  Hence,
%
 for arbitrary $n,m\in\mathbb{Z}$, we conclude that 
 the quantity  $\mu[\tau_x j(\eta)]$
 is independent of $x$. 
Since $\mu$ is a $(0,2)$-shock measure  (see \eqref{limits_mu}), 
we have 
\[
\lim_{n\to+\infty}\mu[\eta(n,i)]=1,\quad
\lim_{n\to-\infty}\mu[\eta(n,i)]=0
\]
Since $0\leq \eta(x,i)(1-\eta(y,i))\leq\min[\eta(x,i),1-\eta(y,i)]$ 
for  $x,y\in \Z$,  this implies
\be\label{zero_current}
\lim_{n\to+\infty}\mu[\eta(x,i)(1-\eta(x+1,i))]
=\lim_{n\to+\infty}\mu[\eta(x+1,i)(1-\eta(x,i))]=0
\ee
for $i\in\{0,1\}$. Thus  $\mu[\tau_x j(\eta)]=0$,  
which can be written
\be\label{zero_current_2}
\mu\left\{
\sum_{i=0}^1 \gamma_i\eta^i(x)[1-\eta^i(x+1)]
\right\}=\mu\left\{\sum_{i=0}^1 l_i[\eta^i(x)-\eta^i(x+1)]\right\}
\ee
Summing \eqref{zero_current_2} over $x\in\Z$ 
and using \eqref{zero_current}, we obtain 
\be\label{zero_current_3}
\sum_{i=0}^1\gamma_i\mu\left\{
\sum_{x\in\Z}\eta^i(x)[1-\eta^i(x+1)]
\right\}<+\infty
\ee
For each $i\in\{0,1\}$,  $\gamma_i>0$, hence the series inside braces 
in \eqref{zero_current_3} converges $\mu$-almost surely. 
Thus, $\mu$-almost surely,
$\eta^i(x)[1-\eta^i(x+1)]\to 0$ as $x\to\pm\infty$ implying  
$\eta^i(x)[1-\eta^i(x+1)]=0$ 
for $|x|$ large enough, and  $\eta\in{\mathcal X}_2$.\newline\newline
\textit{ $\bullet$ Symmetric case on each lane, $\gamma_0=\gamma_1=0$:  
Proof of (i).} Let $\varphi\in C^0_K(\R)$, 
 that is, a continuous function with compact support.  
 We consider the function 
$F_N:\mathcal X\to\R$  defined by
\[
F_N(\eta):=N\sum_{x\in \Z}\varphi
\left(\frac{x}{N}\right)\overline{\eta}(x)
\]
Since $\gamma_0=\gamma_1=0$, i.e. $l_0=d_0$ and $l_1=d_1$, 
the microscopic current \eqref{def_microflux_ladder_2} writes
\be\label{current_diff}
j(\eta)=\sum_{i=0}^1 d_i[\eta^i(0)-\eta^i(1)]
\ee
Using \eqref{difference_currents}, \eqref{current_diff}  
and two summations by parts, we obtain 
\be\label{ipp}
LF_N(\eta)=N^{-1}\sum_{i=0}^1 
d_i\sum_{x\in\Z}\varphi''\left(\frac{x}{N}\right)\eta^i(x)+o_N(1)
\ee 
where $o_N(1)$ is a quantity bounded in modulus 
by a deterministic sequence vanishing as $N\to+\infty$. 
By Theorem \ref{thm:Characterization_of_invariant_measures}, 
$\mu$ satisfies \eqref{limits_mu}, where either 
$\rho^+=\rho^-$ and $\mu$ is a product measure 
given by Theorem \ref{thm:characterization_lemma}, or $\rho^+\neq\rho^-$
and $\mu$ is a $(\rho^-,\rho^+)$-shock measure. 
We show that we are in the first situation. 
Indeed, \eqref{limits_mu} implies
\be\label{limits_ends}
\lim_{x\to\pm\infty}
\int_{\mathcal X}\eta^i(x)d\mu(\eta)=\widetilde{\rho}_i(\rho^\pm)
\ee
Thus, taking the expectation of \eqref{ipp} 
and using stationarity of $\mu$, we have
\be\label{conclude_diff}
0=\int_{\mathcal X} LF_N(\eta)d\mu(\eta)= \varphi''(0)  
\sum_{i=0}^1 d_i\left[
\widetilde{\rho}_i(\rho^-)-\widetilde{\rho}_i(\rho^+)
\right]+\varepsilon_N
\ee
where $\varepsilon_N\to 0$ as $N\to+\infty$. Since $\widetilde{\rho}_i$ 
is increasing and $\varphi$ arbitrary, it follows that 
$\rho^+=\rho^-$. \newline\newline
  \textit{$\bullet$ Case $q>0$: Proof of (ii), (iii).}  \\
\textit{Proof of (ii), (a).} The product measure 
 $\mu_{\rho^c_.}$ is reversible because
 $\rho^c_.$ satisfies the reversibility equations 
\eqref{eq:detailed balance_alpha} with respect to the 
kernel \eqref{eq:intensity_c}. 
The result follows since the measures in \eqref{cond_rev_twolane} 
are defined by conditioning the reversible measure $\mu_{\rho_.}$ on the 
conserved quantity $H_2$. 
Assume now $\check{\nu}_n=(1-\alpha)\nu^1+\alpha\nu^2$ 
with $\nu^1,\nu^2\in\mathcal I$.
 Since $\check{\nu}_n$ is supported  on $\mathcal X_2$ 
and $H_2$ has constant value 
$2n$ under $\check{\nu}_n$, the same holds for $\nu^1$ and $\nu^2$. Thus 
$\nu^1=\nu^2$ by Lemma \ref{lemma_blocking}, implying that 
$\check{\nu}_n$ is extremal. 
The same argument applies to $\widehat{\nu}_n$.  \newline\newline
 \textit{Proof of (ii), (b).} The measure $\check{\nu}_0$ 
 is a product measure of the form  \eqref{alpha_x}, with $\rho_.$ given by
\be\label{rho_case_2_1}
\rho_{x,i}= \check{\rho}_{x,i}:={\bf 1}_{\{x > 0\}},
\quad (x,i)\in\Z\times W,
\ee
 On the other hand, let $\widehat{\mu}_0$ denote 
 the product measure \eqref{alpha_x} where 
\begin{eqnarray}\nonumber
\rho_{x,i} & = & \widehat{\rho}_{x,i}:={\bf 1}_{\{x > 0\}} 
%
+ \rho_0{\bf 1}_{\{( 0,0)\}}(x,i)
 +\rho_1{\bf 1}_{\{( 0,1)\}}(x,i),
 \quad (x,i)\in\Z\times W\label{rho_case_2_2}
\end{eqnarray}
with 
\be\label{rho_case_2_2_bis}
\rho_0:=\frac{c}{1+c},\,\quad
\rho_1:=\frac{\frac{p}{q}c}{1+\frac{p}{q}c};\quad c>0
\ee
 The functions defined by \eqref{rho_case_2_1} and 
\eqref{rho_case_2_2}--\eqref{rho_case_2_2_bis}  are solutions of 
\eqref{eq:detailed balance_alpha}. Thus, $\widehat{\mu}_0$  and  
$\check{\nu}_0$ are reversible. 
Under $\widehat{\mu}_0$, we have a.s. that
\be\label{frozen}\eta(x,i)=1\mbox{ for }x> 0,
\quad\eta(x,i)=0\mbox{ for }x< 0;\quad i\in\{0,1\}\ee
which does not evolve in time.
Hence  under  $\widehat{\mu}_0$,  $\eta(0,0)+\eta(0,1)$ 
is conserved by the evolution,
and conditioning $\widehat{\mu}_0$ on 
  $\{\eta(0,0)+\eta(0,1)=1\}$ 
yields a reversible measure satisfying \eqref{frozen},
under which the vertical layer $\{ 0\}\times \{0,1\}$  
contains a single  particle 
located at $i\in\{0,1\}$ with probability $p_i$ given by
\begin{eqnarray*}
p_0 & = & \frac{\rho_0(1-\rho_1)}{\rho_0(1-\rho_1)
+\rho_1(1-\rho_0)}=\frac{q}{p+q}\\
p_1 & = & \frac{\rho_1(1-\rho_0)}{\rho_0(1-\rho_1)
+\rho_1(1-\rho_0)}=\frac{p}{p+q}
\end{eqnarray*}
This measure is exactly $\widehat{\nu}_0$.  
Note that the process starting 
with \eqref{frozen} and a single particle on  $\{ 0\}\times\{0,1\}$ 
reduces to the two state Markov process 
followed by this single  particle 
jumping between lanes $0$ and $1$, and $\widehat{\nu}_0$ 
reduces to the unique 
 invariant measure of this process (which is reversible). \newline\newline
For the measures $\check{\nu}_0$ and $\widehat{\nu}_0$,  
the proof of extremality 
in \textit{(ii), (a)} also applies here. Finally, 
by Theorem \ref{thm:Characterization_of_invariant_measures}, 
\textit{(ii), (a)}, 
the above measures are  (modulo horizontal translations)  
 the only elements of $Bl_2$.  \newline\newline
 \textit{Proof of (iii).} 
That $\mathcal R=\mathcal R'=\emptyset$ 
follows from Definition \ref{def_set_r}, \textit{(ii)} 
of Proposition \ref{cor:step2} and \textit{(ii)--(iv)} 
of Lemma \ref{lemma:r_finite}. When $d_1=\lambda d_0$ 
and $l_1=\lambda l_0$ with $\lambda$ close to $1$, 
then  $\mathfrak{a}$  is close to $1/2$, and \eqref{charac_sym} follows 
from \eqref{charac_ext} and \textit{(ii)} of Theorem 
\ref{cor:entire} proven above.  \newline\newline
\textit{ $\bullet$ Case $q=0$:  Proof of (iv)--(vi)}  
(end of step six from Subsection \ref{subsec:ideas}). 
We first treat $Bl_1$ with an argument common to 
the three situations. 
Indeed in (\textit{iv)}, resp. \textit{(v)}, \textit{(vi)}, 
by statement \textit{(iii)} of Proposition \ref{cor:step2}, 
any element of $Bl_1$  must belong to the set \eqref{def_bl_3_2}, 
resp. \eqref{def_bl_5}, \eqref{def_bl_3_1}.
Conversely, elements of these sets are extremal invariant 
probability measures in each case  and they are reversible.
Reversibility follows from the fact that the process starting 
from these measures is isomorphic to an ASEP on the  lane 
that is not full or empty, and its restriction to this lane is 
a blocking measure for this ASEP, known to be reversible. 
We detail the extremality  argument for $\nu^{\bot,+\infty,n}$ 
in case \textit{(iv)}, all others are similar. 
Assume $\nu^{\bot,+\infty,n}=(1-\alpha)\nu^1+\alpha\nu^2$, 
with $\nu^1,\nu^2\in\mathcal I$ and 
$\alpha\in(0,1)$. Since lane $0$ is empty under 
$\nu^{\bot,+\infty,n}$, the same holds 
under $\nu^1$ and $\nu^2$. Thus under the three measures, 
lane $1$ evolves as an autonomous 
SEP with jump rate $d_1$ to the right and $l_1$ to the left, i.e., 
\eqref{def_tasep}  with transition kernel  
\eqref{eq:generator of the Exclusion} with $(d,l)=(d_1,l_1)$. 
The marginal of each measure on lane $1$ is then 
an invariant measure for this  SEP. 
Since the marginal of  $\nu^{\bot,+\infty,n}$ 
is an extremal (blocking) invariant measure 
for the SEP on lane $1$, we have 
$\nu^1=\nu^2=\nu^{\bot,+\infty,n}$. Since lane $0$ 
remains empty under the evolution, $\nu^{\bot,+\infty,n}$ 
is indeed an invariant measure for the two-lane SEP.\newline\newline
We next treat $\mathcal R$ and $Bl_2$ with arguments 
specific to each situation. \newline\newline
\textit{Proof of (iv), (a).} 
This corresponds to  \eqref{flux_degenerate},  
that is, example \ref{ex:periodic}. 
 Note first that, since $\gamma_0>0$ and $\gamma_1>0$,
\be\label{shocks_in_B}
(\rho^-,\rho^+)=(0,1);\quad (\rho^-,\rho^+)=(1,2)
\ee
satisfy the conditions of Definition \ref{def_set}. These two  
 shocks belong to $\mathcal B_1$. \newline\newline
 If $\gamma_0=\gamma_1$, we have $G(\rho+1)=G(\rho)$ 
 for every $\rho\in[0,1]$; thus  $(\rho^-,\rho^+)=(\rho,\rho+1)$ 
 and $(\rho^-,\rho^+)=(\rho+1,\rho)$ 
satisfy $G(\rho^+)=G(\rho^-)$ for every $\rho\in[0,1]$. 
Among such shocks 
different from the ones in \eqref{shocks_in_B},  
only $(\rho^-,\rho^+)=(3/2,1/2)$ satisfies
the variational equality in condition \textit{(ii)} 
of Definition \ref{def_set}. 
Thus $\mathcal R_0\subset \{(3/2,1/2);(0,1);(1,2);(0,2)\}$, 
and  $\mathcal R\subset\{(3/2,1/2)\}$.  \newline\newline
 If  $\gamma_0\neq \gamma_1$, then for every $\rho\in(0,1)$, 
 we have $G(\rho+1)\neq G(\rho)$. 
 Thus $\mathcal R_0$ contains no other shock 
 than those in \eqref{shocks_in_B}.
 Hence $\mathcal R=\emptyset$. \newline\newline
We finally prove  by contradiction that $Bl_2$ is empty 
unless $l_0=l_1=0$. Let  $\mu\in Bl_2$. The function  
\be\label{def_h_lane_1}
 H_{\mathbb{L}_1}(\eta):=\sum_{z\in\Z:\,  z\leq 0}\eta(z,1)
 -  \sum_{z\in\Z:\,  z>0}[1-\eta(z,1)]
\ee
is well-defined since $\mu$ is supported on $\mathcal X_2$. It 
is constant along horizontal jumps  and is increased  
by vertical jumps from lane $0$ to lane $1$.   
 Let $(\eta_t)_{t\geq 0}$ denote the stationary process such that $\eta_0\sim\mu$. 
We claim and prove below that if $l_0>0$,  
there is a positive probability that by time $1$, 
the leftmost particle initially on lane $0$ has jumped to lane $1$. 
This implies
\be\label{contra_h1}
\Exp_\mu\left[H_{\mathbb{L}_1}(\eta_1)-H_{\mathbb{L}_1}(\eta_0)\right]>0
\ee
which contradicts stationarity.  
Similarly,  if $l_1>0$,  there is a positive probability 
that by time $1$, the leftmost particle on lane $1$ 
has jumped to lane $0$, 
which implies the reverse strict inequality in \eqref{contra_h1}.
 \begin{remark}\label{rem:not-confused} 
In \eqref{contra_h1}, $\eta_0$ and $\eta_1$ denote 
the two-lane process configurations at respective times $0$ and $1$. 
These should not be confused with $\eta^0$ and $\eta^1$, 
cf. \eqref{config_lane}, which denote the restriction of 
a two-lane configuration $\eta$ to lane $0$ and $1$ respectively. 
\end{remark} 
We now  prove the claim when $l_0>0$ 
(the proof in the case $l_1>0$ is similar). 
In the sequel, on each lane, we call \textit{active} 
those particles initially on the left 
of the rightmost hole and the next particle 
to the right of this hole (we also call active 
those sites where active particles are initially sitting). 
For $x,y\in V$, 
we say a Poisson process $\mathcal N_{(x,y)}$ 
of the Harris construction is 
\textit{attached} to  some site $z\in V$ if $z\in\{x,y\}$. 
We condition $\mu$ on the number and positions 
of active  particles on each lane. 
Denote respectively by $x_0$, $y_0$, $x_1$ the initial positions 
of the leftmost particle 
on lane $0$, the next particle on its right, 
and the leftmost particle on lane $1$. 
We couple our two-lane SEP with a random walk on lane $0$ 
starting from $(x_0,0)$, 
that jumps to the right and left with respective rates 
$d_0,l_0$ and is reflected at $(y_0,0)$. 
The random walk is defined from the Harris system as follows: 
if its current position 
is $(x,0)\in V$, at the first point of a Poisson process 
$\mathcal N_{(x,0),(x+\varepsilon,0)}$ 
where $\varepsilon\in\{-1,1\}$, it jumps to $x+\varepsilon$, 
except if $x=y_0-1$ and $\varepsilon=1$.
Let $x'_0:=\min(x_0,x_1-1,y_0-2)$, and  $E_0$ 
denote the event that the random walk hits 
$(x'_0,0)$ for the first time before time $1/2$ 
and stays there at least until time $1$
 (if $x'_0=x_0$, $E_0$ corresponds to the return time to $(x_0,0)$).  
This event has positive probability and depends only 
on the Poisson processes 
$\mathcal N_{(x,x+1)}$  and $\mathcal N_{(x+1,x)}$   
for $x'_0-1\leq x\leq y_0-2$.
Let $T_0$ denote  the first time  among all 
the following Poisson processes: \\
(a) $\mathcal N_{(y_0,0),(y_0-1,0)}$; \\
(b) $\mathcal N_{(x,0),(x,1)}$ for $x'_0< x\leq y_0-1$; and \\
(c) the Poisson processes attached to active sites on lane $1$; $T_0$ 
is an exponential random variable. Consider the event
\begin{eqnarray*}
E'_0 & := & E_0\cap\{T_0>1\} 
\\
&& \cap  
 \{\mathcal N_{(x'_0,0),(x'_0,1)}
\mbox{ has at least one point in the time interval }[1/2,1] \}
\end{eqnarray*}
On $E_0$, in the two-lane SEP starting from the conditioned measure,  
the particle initially at $(x_0,0)$  coincides 
with the random walk until  it
reaches $(x'_0,0)$; then its next motion is  a jump 
from there to $(x'_0,1)$ 
before time $1$; all this occurs before any particle 
initially on lane $1$ has moved  
and before the particle initially at $(y_0,0)$ has moved. 
The  three  events 
of which $E'_0$ is the intersection are independent, because
they depend on disjoint sets of Poisson processes. 
It follows that $E'_0$ has positive probability when starting 
from the conditioned measure, irrespective of the conditioning. \newline\newline
\textit{Proof of (iv), (b).} Assume $l_0=l_1=0$. 
The measure $\nu^{\bot,i,j}$ is shown to be reversible 
as in \textit{(ii)} above, since it is a product measure 
of the form \eqref{alpha_x} with 
\[
\rho_{x,0}={\bf 1}_{\{x>i\}},\quad \rho_{x,1}={\bf 1}_{\{x>j\}}
\] 
for $x\in\Z$,
and the above function $\rho_.$ satisfies the reversibility conditions 
\eqref{eq:detailed balance_alpha}.
The measure $\nu^{\bot,i,j}$ is a Dirac measure, hence it is extremal 
in the set of probability measures on $\mathcal X$, 
thus \textit{a fortiori} extremal in $\mathcal I$.  \newline\newline 
Now we prove that any element  $\mu$ of $Bl_2$ 
is of the form $\nu^{\bot,i,j}$ 
 for $(i,j)\in\mathbb{B}$.  Indeed, since $\mu$ is supported on 
$\mathcal X_2$, the random variable
\[
n_0:=\inf\{z\in\Z:\, \eta(z,i)=1,\quad\forall z\geq n_0,\,i\in\{0,1\}\}
\]
is $\mu$-a.s. finite, as well as the number of particles 
to the left of $n_0$. 
Since jumps are totally asymmetric both horizontally 
and vertically, conditioned 
on this number and on $n_0$, the process lives on a finite space, and 
its irreducible classes are singletons containing states 
$\{\eta^{\bot,i,j}\}$  that can be reached from the initial state 
(indeed, states $\eta^{\bot,i,j}$ 
are the only ones from which no transition is possible, 
and no return is possible from a state not belonging to this class).
 This implies that $\mu$ is a mixture of the invariant measures 
 $\nu^{\bot,i,j}$, and by extremality, it must be one of them. \newline\newline 
\textit{Proof of (v).} 
The flux function $G$ has the form \eqref{flux_degenerate}, 
 that is, example \ref{ex:periodic}.  Since $\gamma_1<0<\gamma_0$, 
 the only pairs $(\rho^-,\rho^+)$  
 satisfying the requirements of  Definition \ref{def_set}  are 
$(\rho^-,\rho^+)=(1,0)$ and $(\rho^-,\rho^+)=(1,2)$. 
These shocks belong to 
$\mathcal B_1$, hence $\mathcal R=\emptyset$. We next prove 
that $Bl_2$ is empty. 
Indeed by statement \textit{(i)} of Proposition \ref{cor:step2}, 
a $(\rho^-,\rho^+)$-shock measure must satisfy condition \textit{(ii)} 
of Definition \ref{def_set}. This is not the case for 
$(\rho^-,\rho^+)=(0,2)$ or $(\rho^-,\rho^+)=(2,0)$,  
because by \eqref{flux_degenerate}, $G(0)=G(2)$ 
and $\gamma_1<0<\gamma_0$ implies that $G$ is negative on $(0,1)$ 
and positive on $(1,2)$. \newline\newline
\textit{Proof of (vi).} The flux function $G$ 
has the form \eqref{flux_degenerate}, 
 that is, example \ref{ex:periodic}. 
The proof that $\mathcal R=\emptyset$ is similar 
to the case $\gamma_0\neq\gamma_1$ 
in  \textit{(iv), (a)} (the fact that $\gamma_0=0$ 
being irrelevant there).  \newline\newline
The proof that $Bl_2$ is empty  can be reduced as follows 
to the same argument as 
in  \textit{(iv), (a)} above. First, note that
$\gamma_0=0$ implies $l_0>0$. Unlike in \textit{(iv), (a)}, 
since $\gamma_0=0$, we cannot use \textit{(o)}, i.e. we
do not know whether $\mu$ is supported on $\mathcal X_2$.
Nevertheless, since $\gamma_1>0$, partially repeating the 
proof of \textit{(o)} shows that $\mu$ is supported on the set 
of configurations $\eta$ for which $H_{\mathbb{L}_1}(\eta)$, 
see \eqref{def_h_lane_1}, is finite. 
 Let now $\eta'_0$ be the random configuration 
obtained by removing from $\eta_0\sim\mu$ all particles 
at sites $(z,0)$ such that $z<0$, and $(\eta'_t)_{t\geq 0}$ 
be the process starting from $\eta'_0$. Then 
\be\label{h_with_etaprime}
 H_{\mathbb{L}_1}(\eta'_0)=H_{\mathbb{L}_1}(\eta_0),\quad 
 H_{\mathbb{L}_1}(\eta'_1)\leq H_{\mathbb{L}_1}(\eta_1)\ee
where the inequality follows from attractiveness.
We can then repeat the argument in \textit{(iv), (a)} 
for $(\eta'_t)_{t\geq 0}$ to infer \eqref{contra_h1} 
for the latter process. Using \eqref{h_with_etaprime} 
we obtain \eqref{contra_h1} for $(\eta'_t)_{t\geq 0}$. 
\end{proof}
\subsection{Proof of 
Theorem 
\ref{thm:Characterization_of_invariant_measures_gen}}\label{subsec:proof_gen}
\textit{Proof of   (o).  } The proof of 
Theorem \ref{thm:characterization_lemma} 
requires only minor changes. First we can repeat the proof 
of Lemma \ref{lem:invariant_measures_for_the_asymetric_ladder}. 
The only difference 
is that on a vertical layer $\{z\}\times W$, we now use 
the fact that $L^z_v$ 
is the generator of a translation-invariant SEP on a torus, 
and $\nu_\rho$ is a homogeneous product Bernoulli measure, 
which is thus invariant for $L^z_v$.
The rest of the proof 
is exactly similar to that of Theorem 
\ref{thm:characterization_lemma}. Note that here 
by \textit{(ii)} of Lemma \ref{lemma_wi}, 
a $p$-ordered pair of configurations is ordered, 
so we do not need analogues of Lemmas \ref{lemma:irred} 
and \ref{lemma:irred2} in this context.  \newline\newline
\textit{Proof of   (i).  }  \textit{(a)}.  
We can repeat the following steps of 
the proof of Theorem \ref{thm:Characterization_of_invariant_measures}:
Proposition \ref{prop:step1} (leading to \eqref{etaless}--\eqref{equal} 
with $k\in\{1,\ldots,n\}$ instead of 
$k\in\{1,2\}$ in \eqref{number_disc}, 
\eqref{case_bowtie} is irrelevant here because $p(.,.)$ 
is weakly irreducible), 
Corollary \ref{cor:step1}, Proposition \ref{prop:step2} 
and Proposition \ref{prop:translate}. 
This yields that an extremal invariant measure 
that is not invariant by horizontal translations 
is a shock measure whose amplitude 
lies in $[1,n]\cap\Z$.  Similarly to Proposition 
\ref{prop:translate}, we can prove that there are 
at most (up to translations) $k$ shock measures of amplitude $k$.  
To further characterize possible shocks 
$(\rho^-,\rho^+)$, we  consider the macroscopic flux 
function $G$  defined by \eqref{def_microflux_ladder} and 
\eqref{def_macroflux_ladder} in this setup. 
By definition \eqref{def_inv_gen} of $\nu_\rho$, 
\[
G(\rho)=\left(\sum_{i=0}^{n-1}\gamma_i\right)
\frac{\rho}{n}\left(1-\frac{\rho}{n}\right)
\]
We can then repeat the proof of statement \textit{(i)} 
of Proposition \ref{cor:step2}. 
Since the above function $G$ is strictly concave and 
symmetric around $\rho=n/2$, 
shocks satisfying condition \textit{(ii)} of 
Definition \ref{def_set} are those 
specified in the theorem.\newline 
\textit{(b)}. The proof is similar to 
Theorem \ref{cor:entire}, \textit{(o)}.\newline
\textit{(c)}
Stationarity of the product measure $\nu^{\rho^c_.}$ 
is proved as in Lemma 
\ref{lem:invariant_measures_for_the_asymetric_ladder},
observing that on vertical layers we have a periodic SEP 
for which a homogeneous product measure is invariant. 
Stationarity and extremality of the conditioned measure 
are proved as in Theorem \ref{cor:entire}, \textit{(ii)}. \newline\newline
\textit{Proof of   (ii).  } We can repeat with minor modifications the proof 
of statement \textit{(i)} 
of Theorem \ref{cor:entire}. The microscopic current 
is as in \eqref{current_diff}. 
Summation there and in \eqref {limits_ends}--\eqref{conclude_diff} 
is now over 
$i\in W:=\{0,\ldots,n-1\}$. In the latter two displays,
$\widetilde{\rho}_i(\rho)$ is replaced by $\rho/n$.  \newline\newline
\textit{Proof of   (iii).  } Let $\mu\in\mathcal I_e$. 
If $\mu\in\mathcal S$, 
the conclusion follows from \textit{(o)}.
Otherwise, since the generator 
\eqref{eq:generator of the Exclusion} is invariant 
by $\tau'$, $\mu':=\tau'\mu\in{\mathcal I}_e$. 
By \textit{(1)},  $\mu$ and $\mu'$  are 
shock measures, and $\mu'=\tau'\mu$ implies 
that they are $(\rho^-,\rho^+)$-shock 
measures for the same pair $(\rho^-,\rho^+)$.  
The proof of Proposition 
\ref{lemma:translate} carries over to the multilane model 
(notice indeed that under condition \textit{(iii')}, 
the global kernel $p(.,.)$ is weakly irreducible; thus
when repeating the part of the proof of 
Proposition \ref{prop:step1} that is used to derive Proposition 
\ref{lemma:translate}, we always obtain 
\eqref{eq:blocking_measures-4}, and do not need 
an analogue of step three). 
Hence, we have either $\mu\leq\mu'$ or $\mu'\leq\mu$. 
Since $\tau'$ is a periodic shift, this implies $\mu'=\mu$. 
 \section{Proofs of 
intermediate results from Subsections 
\ref{subsec:proof_inv}--\ref{subsec:proof_inv_3}}
\label{subsec:proofs_lemmas} 
\subsection{Proofs of intermediate results 
from Subsection \ref{subsec:proof_inv}}\label{subsec:inter_1}
\begin{proof}[Proof of Proposition \ref{prop:step1}]
\mbox{}\newline\newline
\textit{Proof of (i).}  Cases \eqref{etaless}--\eqref{equal} 
are an adaptation of 
\cite[Proposition 3.2]{Bramson2002},  the main ingredients 
of which we recall
in steps one and two below, whereas 
an additional argument (step three below) is required for \eqref{case_bowtie}.  
 Let us fix $T>0$. \newline\newline
\textit{Step one.} 
 Let $\widetilde{\lambda}_0$  denote the distribution 
 on $\mathcal{X}\times\mathcal{X}$
of the coupled configuration  $(\eta_0,\xi_0)$, 
where $\eta_0\sim\mu$ and $\xi_0=\tau\eta_0$. 
We denote by $(\eta_t,\xi_t)_{t\ge 0}$  the coupled process 
starting from $(\eta_0,\xi_0)$. 
Define 
\begin{equation}
\mathcal{R}_{T}=\left\{ x\in V:-T\leq x\left(0\right)
\leq T\right\} \label{eq:blocking_measures-5}
\end{equation}
 Let $\mathcal{N}_{T}$ be the number of discrepancies of  
 $(\eta_t,\xi_t)_{t\ge 0}$ 
 that visit $\mathcal{R}_{T}$ at any time in $[\sqrt{T},T]$,  
 $\mathcal N_T^{in}$ 
 the number of these starting from $[-(1+\sigma)T,(1+\sigma)T]$
(where $\sigma$ is the constant in Proposition \ref{prop:prop}), and 
$\mathcal N_T^{out}$ the number of these starting outside this interval.
Adapting the proof in \cite[Proposition 2.5]{Bramson2002} 
to our model  yields 
 \begin{equation}
 \mathbb{E}^{\widetilde{\lambda}_{0}}
 \left(\mathcal{N}_{T}\right)=o\left(T\right)
 \qquad  \hbox{when}\quad
  T\rightarrow\infty.\label{eq:blocking_measures-2}
 \end{equation}
 The proof of \cite[Proposition 2.5]{Bramson2002} 
 used only the following properties 
 of single-lane SEP, which hold also for our two-lane model. \\
 (a) The finite propagation property (Proposition \ref{prop:prop}) 
 is used to show 
\be\label{discrepancies_out}
\mathbb{E}^{\widetilde{\lambda}_{0}}
\left(\mathcal{N}_{T}^{out}\right)=o\left(T\right)
\ee
(b)  the invariance of the generator with respect to 
horizontal translations, 
and \\
(c) the characterization theorem (here  
Theorem  \ref{thm:characterization_lemma}) 
for stationary measures invariant with respect to 
such translations: these are used to show
\be\label{discrepancies_in}
\mathbb{E}^{\widetilde{\lambda}_{0}}
\left(\mathcal{N}_{T}^{in}\right)=o\left(T\right)
\ee
 \textit{Step two.}
  For $x,y\in\Z$, 
  let $\mathcal N_T^{x,y}$ denote 
  the number of discrepancies that visit either $x$ or $y$ 
  and disappear during
the time interval $[\sqrt{T},T]$. Recall the definition 
\eqref{def_dk} of $E_{x,y}$, and define
\[
e_{x,y}:=\inf_{(\eta,\xi)\in E_{x,y}}\Prob_{(\eta,\xi)}\left(
\mbox{one of the discrepancies at $x$ and $y$ has coalesced by time $1$}
\right)
\]
where $\Prob_{(\eta,\xi)}$ denotes the law of the coupled process 
starting from $(\eta,\xi)$. The same argument as in 
\cite[Lemma 3.1]{Bramson2002} shows that 
\be\label{prob_connect}
e_{x,y}>0\mbox{ if }x\rightarrow_p y\mbox{ or }y\rightarrow_p x
\ee 
Let
 \be\label{time_average_measure}
  	\widetilde{\lambda}^{T}=\frac{1}{T-\sqrt{T}}
  	\int_{\sqrt{T}}^{T}\widetilde{\lambda}_0\widetilde{S}_{t}dt
 \ee 
 and let $\widetilde{\lambda}
 =\lim_{i\rightarrow\infty}\widetilde{\lambda}^{T_{i}}$ 
 be a subsequential weak   limit. Then 
 \be\label{eq:tildelambdainvt}
 \widetilde{\lambda}\in\widetilde{\mathcal I}
 \ee  
 Since $\mu\in\mathcal I_e$ 
 and the two-lane  SEP   is translation-invariant 
 in the $\Z$-direction, we have 
 $\tau\mu\in\mathcal I_e$. 
Since $\widetilde{\lambda}_0$ has marginals 
$\mu\in\mathcal I$ 
and $\tau\mu\in\mathcal I$, $\widetilde{\lambda}$ 
has marginals $\mu$ and $\tau\mu$. 
   As in \cite[Proposition 3.2]{Bramson2002}, \eqref{eq:blocking_measures-2} 
   and the strong Markov property yield respectively 
   the following equality and inequality:
\be\label{bystepone}
0=\liminf_{T\to+\infty}\frac{1}{T}\Exp_{\widetilde{\lambda}_0}
\left(\mathcal N_T^{x,y}\right)\geq e_{x,y}\widetilde{\lambda}(E_{x,y})
\ee
Combining \eqref{prob_connect} and \eqref{bystepone}, we obtain
 \begin{equation}
\label{eq:blocking_measures-4}
\widetilde{\lambda}\left\{(\eta,\xi)
\mbox{ is }p-\mbox{ordered}\right\}=1
 \end{equation}
 that is, \eqref{p_ordered_conf}. 
In the case  $q>0$,  by \textit{(i)} of Lemma \ref{lemma:irred},  
\eqref{eq:blocking_measures-4} implies \eqref{ordered_coupling}. 
When  $q=0$,  we only arrive at \eqref{notquite}.\newline\newline
%
 \textit{Step three.}   
Assuming  $q=0$,  we prove below that 
\be\label{wefirstprove}
\widetilde{\lambda}(E_{\supinf}\setminus E_{\bowtie})=0
\ee  
This together with \eqref{notquite} implies
\be\label{implies}
\widetilde{\lambda}(
E_0\cup E_1\cup E_2\cup E_{\bowtie}
)=1
\ee
Moreover, each of the events  in \eqref{implies} is invariant 
under the coupled dynamics. 
Then using the fact that $\mu$ and $\tau\mu$ lie in $\mathcal I_e$, 
we can conclude as in \cite[Proposition 3.2]{Bramson2002} 
that $\widetilde{\lambda}$ actually satisfies one of the  conditions 
\eqref{etaless}--\eqref{case_bowtie}. \newline\newline
We now prove the claim \eqref{wefirstprove}. 
Recall the random variables $X,Y$ 
defined by \eqref{rightmost}--\eqref{leftmost}.  Then, by 
conditions \textit{(ii)--(iii)} of Definition \ref{def_bowtie}, 
\be\label{decomp_bowtie}
E_{\supinf}\setminus
E_{\bowtie}\,\subset\bigcup_{x,y\in\Z:\,x<y}E'_{\bowtie,x,y}
\cup\bigcup_{x,y\in\Z:\,x<y}F'_{\bowtie,x,y}
\ee
where, for $x<y$, 
\begin{eqnarray*}
E'_{\bowtie,x,y} & := & E_{\supinf}\cap
\{X=x,\,Y=y\}
\\
 &&
  \cap \, \{
\mbox{There are at least }y-x\mbox{ holes on lane }1
\mbox{ to the right of }x
\}\\ 
F'_{\bowtie,x,y} & := &  E_{\supinf}\cap
\{X=x,\,Y=y
\}
\\
 &&  
 \cap \, \{
\mbox{There are at least }y-x\mbox{ coupled particles 
on lane }0\mbox{ to the
}\\ & & \phantom{\{}\mbox{
left of }y
\}
\end{eqnarray*}
 We claim that 
$\widetilde{\lambda}(E'_{\bowtie,x,y})
=\widetilde{\lambda}(F'_{\bowtie,x,y})=0$ 
 which, in view of \eqref{decomp_bowtie}, implies  
 \eqref{wefirstprove}. 
On  $E'_{\bowtie,x,y}$,  there is a possible sequence 
of moves with positive probability  
that brings the discrepancy from $(x,1)$ to $(y,1)$  
that is $p$-connected 
to  $(y,0)$. Indeed one can construct an event 
on the Harris system prescribing 
that on the time interval $[0,1]$, the corresponding Harris clocks
will ring in the desired order while no other clock rings. Hence, 
by stationarity, $\widetilde{\lambda}(E'_{\bowtie,x,y})>0$ implies
$\widetilde{\lambda}(E_{(y,0),(y,1)})>0$, in contradiction with 
\eqref{eq:blocking_measures-4}.  Similarly on  $F'_{\bowtie,x,y}$, 
there is a possible sequence of moves with positive probability  
that brings the discrepancy from $(y,0)$ to $(x,0)$  
that is $p$-connected to  $(x,1)$. \newline\newline
\noindent \textit{Proof of (ii).}  
Since the coupled configurations $\eta$ and $\xi$ are a.s. ordered under 
$\widetilde{\lambda}$, all discrepancies (if any) are of the same type
(that is  $\eta$ or $\xi$ discrepancies), so no coalescence occurs.
Thus,  recalling the definition of $D(\eta,\xi)$ 
from \eqref{def:Dmn},  the sets $A_k:=\{D(\eta,\xi)=k\}$,
 with $k\in\mathbb{N}\cup\left\{ +\infty\right\}$,  
 are invariant under the dynamics. Hence, 
\be\label{decomp_2}
\widetilde{\lambda}=\sum_{k\in\N\cup\{+\infty\}:\,
\widetilde{\lambda}(A_k)>0}\widetilde{\lambda}(A_k)
\widetilde{\lambda}_k
\ee
where $\widetilde{\lambda}_k:=\widetilde{\lambda}(.|A_k)\in\widetilde{\mathcal I}$. 
 Since $\mu$ and $\tau\mu$ are extremal elements of $\mathcal I$,  
 for each $k$ such that $\widetilde{\lambda}(A_k)>0$, 
$\widetilde{\lambda}_k$ has marginals $\mu$ and $\tau\mu$.
Assume for instance that $\widetilde{\lambda}$ 
(and thus $\widetilde{\lambda}_k$) 
satisfies \eqref{etaless}. Then
\begin{eqnarray}
\nonumber
{\widetilde{\lambda}_k} 
 [D_{m,n}(\eta,\xi)]  
& = & {\widetilde{\lambda}_k}\left\{
\sum_{x\in V:\,m\leq x(0)\leq n} [\xi(x)-\eta(x)]\right\}
\\ & = & 
 \mu[\overline{\eta}(n+1)]-\mu[\overline{\eta}(m)]\in [0,2]
\label{whereweused}
\end{eqnarray}
Letting $m\to-\infty$ and  $n\to+\infty$,  by monotone convergence, 
 and because $\widetilde{\lambda}_k$ is supported on $A_k$,  we obtain 
\be\label{eq:supportedon}
k={\widetilde{\lambda}_k}[D(\eta,\xi)]\in\{0,1,2\}
\ee 
 Notice that the right-hand side of \eqref{whereweused}, 
 and thus also its limit, 
 depends only on $\mu$. Hence $k$ depends only on $\mu$.  
 This shows that $\widetilde{\lambda}=\widetilde{\lambda}_k$ 
 for a unique $k\in\{0,1,2\}$. 
  Since we are in case \eqref{etaless}, $k=0$ 
  would yield a contradiction.
 Thus $k\in\{1,2\}$. \\
 Dealing with the case \eqref{etamore} is similar. 
\end{proof}
\begin{proof}[Proof of Corollary \ref{cor:step1}]
\textit{(i)} The marginals of $\widetilde{\lambda}$ are 
$\mu$ and $\tau \mu$, 
thus $\mu\leq\tau\mu$ in case \eqref{etaless}, or $\tau\mu\leq\mu$ 
in case \eqref{etamore}, or $\tau\mu=\mu$ in case \eqref{equal}.\newline\newline
\textit{(ii)}  Given the assumptions,  
the limits \eqref{limits_mu_2} exist and satisfy 
 $\tau\widehat\mu_\pm=\widehat\mu_\pm$, that
is $\widehat\mu_\pm\in\mathcal S$. Besides, 
we have $\widehat\mu_\pm\in\mathcal I$. 
Indeed if $f$ is a local function on $\mathcal X$,
\begin{eqnarray*}
\int_{\mathcal X}Lf(\eta)d\widehat\mu_\pm(\eta) 
& = & \lim_{n\to\pm\infty}\int_{\mathcal X}
Lf(\eta)d(\tau_n\widehat\mu)(\eta)
\\
& = & 
\lim_{n\to\pm\infty}
\int_{\mathcal X}L[\tau_n f](\eta)d\widehat\mu(\eta)=0
\end{eqnarray*}
where we used that $L$ commutes with the shift 
and $\widehat\mu\in\mathcal I$. 
The last equality in \eqref{limits_mu_2} follows 
from \thmref{characterization_lemma} 
and \eqref{sameset}.
\end{proof}
\begin{proof}[Proof of Proposition \ref{prop:bowtie}]
 Recall from Proposition \ref{prop:step1} that \eqref{case_bowtie} 
 may only occur when  $q=l_0=l_1=0$. 
Hence the dynamics of horizontal jumps on each lane is a TASEP, 
and these TASEP's interact through vertical jumps from lane $0$ 
to lane $1$. 
Let  $(\eta_0,\xi_0)=(\eta,\xi)\sim\widetilde{\lambda}$, 
where $\widetilde{\lambda}$  
is the measure in Proposition \ref{prop:step1}.
We couple the process $\eta_.$ through basic coupling with 
a process $\zeta_.$ 
such that for every $i\in\{0,1\}$, $\zeta^i_t$ is 
a TASEP  on lane $i$ 
starting from configuration $\eta^i_0=\eta^i$ 
 (with jumping rates  $l_i=0<d_i$).  Then one has, 
for every $t\geq 0$, 
\be\label{compare_tasep}
\eta^1_t\geq\zeta^1_t,\quad \eta^0_t\leq\zeta^0_t
\ee
Indeed,  to derive the first inequality  in \eqref{compare_tasep}, 
note that 
at certain random times belonging to one of the Poisson processes 
$\mathcal N_{(z,0),(z,1)}$, a new particle may   appear (following 
a jump of a particle from lane $0$)  at site $z\in\Z$ in $\eta^1_.$ 
that  does not appear  in $\zeta^1_.$. On the other hand, 
between such  times, 
both processes evolve as coupled TASEP's
on lane $1$, whose order is preserved by attractiveness 
property \eqref{attractive}. 
A similar argument holds for the second inequality  
in \eqref{compare_tasep}.  \newline\newline
For $t>0$, define the empirical measures
\be\label{empirical_measures}
M_t^i:=\frac{1}{t}\int_0^t \mu_s^i ds,
\quad N_t^i:=\frac{1}{t}\int_0^t \nu_s^i ds
\ee
where $\mu_t^i$ denotes the law of $\eta_t^i$ and $\nu_t^i$ 
that of $\zeta_t^i$. 
Since  $\mu\in\mathcal I$,   $\mu_t=\mu$ does not depend on $t$, hence 
$M_t^i=:\mu^i$ does not depend on $t$ and is the marginal 
of $\mu$ on lane $i$. \newline\newline
Let $t_n\uparrow+\infty$ be a subsequence along which
$N^i_{t_n}\to\nu^i_\infty$, where $\nu^i_\infty$ is an 
invariant measure for TASEP. 
Since $\eta\bowtie\xi$,   there is a random variable 
$N\in\Z\cup\{-\infty\}$ such that
 $\zeta^i_0(x)=\eta^i_0(x)=1$ for $i\in\{0,1\}$ and $x\in\Z$ 
 with $x\geq N$. By TASEP dynamics,
this remains true at time $t$ for $\zeta^i_t$ with the same  $N$. 
Thus if $\zeta$ is a random configuration with distribution 
$\nu^i_\infty$, we  a.s. have 
\be\label{shock_for_zeta}\zeta(x)=1,\quad\forall  x\geq N'\ee
where $N'$ has the same law as $N$. As   $\nu^i_\infty$ 
is invariant for TASEP by \cite[Theorem 1.4]{Liggett1976}, 
it is a mixture of Bernoulli and blocking measures. 
But by \eqref{shock_for_zeta},
the only possible Bernoulli measure is the one with density $1$. 
Thus there exists a random variable  
$N_i\in \Z\cup\{-\infty\}$ such that  
the random configuration  $\zeta^i_\infty:=\eta^*_{N_i}$ 
 has distribution $\nu^i_\infty$. \newline\newline
By \eqref{compare_tasep}, $\mu^1\geq\nu^1_\infty$  and $\mu^0\leq\nu^0_\infty$.  
It follows from the above that there exist random variables $M_0$ 
and $M_1$ with values in $\Z\cup\{-\infty\}$ such that 
$\eta^1_0\geq \eta^*_{M_1}$ and $\eta^0_0\leq \eta^*_{M_0}$ a.s. 
Since $\eta\bowtie\xi$, the dynamics of $(\eta^1_t)_{t\ge 0}$ 
can only possibly create 
a finite number of particles (from lane $0$) to  the left of $M_1$  
and move 
these particles to the right until they pile up and get blocked. 
The same argument 
applies to holes in $\eta^0$,  since the dynamics of holes 
is a two-lane TASEP 
with jumps to the left and from lane $1$ to lane $0$.
Thus  there exist  random variables  
$-\infty\leq M'_1\leq M_1<+\infty$ and 
$-\infty<M_0\leq M'_0\leq +\infty$ such that
\be\label{convergence_blocking}
\lim_{t\to+\infty}\eta^1_t=\eta^{*}_{M'_1},\quad 
\lim_{t\to+\infty}\eta^0_t=\eta^{*}_{M'_0}
\ee
 Since particles can jump from lane $0$ to lane $1$ 
 but not the other way,  the dynamics imposes 
\be\label{imposes}
M'_0\geq M'_1
\ee
The limits in \eqref{convergence_blocking}  imply that 
$\eta_t$ converges in law 
to the distribution of the random configuration
$\eta_\infty$ defined by $\eta_\infty^i=\eta^*_{M'_i}$  
for $i\in\{0,1\}$, 
that is (cf. \eqref{def_etabot}) $\eta_\infty=\eta^{\bot,M'_0,M'_1}$. 
By stationarity, $\mu$ is the distribution of 
this configuration;  hence, recalling the definition 
of $\overline{\mathbb{B}}$  above \eqref{def_etabot},
\be\label{mixture_bot}
\mu=\int_{\overline{\mathbb{B}}}
\nu^{\bot,i,j}dm(i,j)
\ee
where $m(di,dj)$ denotes the law of $(M'_0,M'_1)$.
This  with \eqref{def_bl_3_2}--\eqref{def_bl2(iv)(b)} 
implies that $\mu$ is a mixture of the measures in   $Bl_1\cup Bl_2$. 
\end{proof}
\begin{proof}[Proof of Proposition \ref{prop:step2}]
\mbox{}\newline\newline
 \textit{Proof of (i).}  Without loss of generality, 
 we may assume $\mu\leq\tau\mu$.
Since $\mu\leq\mu_+$  (where $\mu_+$ is defined as 
in Corollary \ref{cor:step1}) 
and  $\mu\neq\mu_+$  (because we are not in case \eqref{equal}), 
 by \cite[Proposition 2.14 in Chapter VIII]{liggett2012interacting}
there exists a coupling measure $\widetilde{\mu}(d\eta,d\xi)$ 
with marginals $\mu(d\eta)$ and $\mu_+(d\xi)$, such that 
\be\label{order_tilde}
\widetilde{\mu}\left(
(\eta,\xi)\in\mathcal X\times\mathcal X:\, \eta<\xi
\right)=1
\ee
and which is  invariant for the coupled process.\par
\medskip\noindent 
 For $n,m\in\Z$ such that $m\leq n$, and $\xi\in\mathcal X$, we set 
\[
 M_{m,n}(\xi):=\frac{1}{n-m+1}\sum_{x\in V:\,m\leq x(0)\leq n}\xi(x)
\]
and simply write $M_n(\xi)$ when $m=1$.
Because $\mu_+$ is a mixture of Bernoulli measures, 
by the ergodic theorem, the limit
\be\label{ergo_xi}
M(\xi):=\lim_{n\to+\infty}M_n(\xi)=\lim_{n\to+\infty}M_{-n,n}(\xi)
\ee
exists $\widetilde{\mu}$-a.s. The distribution of 
 $M(\xi)$  is exactly $\gamma^+$.
Besides, 
 $M(\xi)$  is a conserved quantity for the dynamics 
 of the  stationary  coupled 
process $(\eta_t,\xi_t)_{t\geq 0}$ starting from 
$\widetilde{\mu}(d\eta_0,d\xi_0)$. 
Indeed, by the finite propagation property 
(Proposition \ref{prop:prop}),
\begin{eqnarray*}
\frac{2n+1-4\lfloor\sigma t\rfloor}{2n+1}M_{-n+2\lfloor 
\sigma t\rfloor,n-2\lfloor \sigma t\rfloor}(\xi_0) 
& \leq & M_{-n,n}(\xi_t)
\\
& \leq & 
 \frac{2n+1+4\lfloor\sigma t\rfloor}{2n+1}
M_{-n-2\lfloor \sigma t\rfloor,n+2\lfloor \sigma t\rfloor}(\xi_0)
\end{eqnarray*}
with probability greater than $1-e^{-Cn}$. 
Letting $n\to+\infty$ yields
\be\label{conserved}
M(\xi_t)=M(\xi_0)
\ee
It follows that for every $\rho$ in the support of   $\gamma^+$, 
the conditioned measure  $\widetilde{\mu}(\rho)$ defined by 
\be\label{conditioned_measure}
 \widetilde{\mu}(\rho)(d\eta,d\xi):=\widetilde{\mu}\left(
(d\eta,d\xi)| M(\xi)=\rho
\right)
\ee
is invariant for the coupled process. 
 Indeed, 
for every bounded  function $f$ on $\mathcal X\times\mathcal X$ 
and every bounded measurable function  $g$ on $[0,2]$,
\begin{eqnarray*}
\int_{[0,2]} <\widetilde{\mu}_\rho, \widetilde{S}_t f>
g(\rho)\gamma^+(d\rho)  
& = & \widetilde{\Exp}_{\widetilde{\mu}} 
\left[f(\eta_t,\xi_t)g\left( M(\xi_0)\right)\right] \\
& = & \widetilde{\Exp}_{\widetilde{\mu}} 
\left[f(\eta_t,\xi_t)g\left( M(\xi_t)\right)\right]\\
& = & \widetilde{\Exp}_{\widetilde{\mu}} 
\left[f(\eta_0,\xi_0)g\left( M(\xi_0)\right)\right] \\
& = & \int_{[0,2]} <\widetilde{\mu}(\rho), f>
g(\rho)d\gamma^+(\rho)
\end{eqnarray*}
In  the above display,  the first and last  equality 
follow from  definition 
\eqref{conditioned_measure}, 
the second one  from \eqref{conserved}, 
and the third one from stationarity.\par\medskip\noindent 
Hence, the $\eta$-marginal of  $\widetilde{\mu}(\rho)$, 
that is ${\mu}(\rho)$ defined by 
%
%
${\mu}(\rho)(d\eta):=\widetilde{\mu}\left(
d\eta| M(\xi)=\rho
\right)$
%
%
is invariant for $L$. Since
\be\label{decomp_measure}
\mu=\int_{[0,2]}\mu(\rho)  d\gamma^+(\rho), 
\ee
by extremality of $\mu$, we must have $\mu(\rho)=\mu$ 
for $\gamma^+$-a.e. $\rho\in[0,2]$. 
This means that
that under $\widetilde{\mu}(d\eta,d\xi)$, $\eta$ 
is independent of  $M(\xi)$.  \newline\newline
 {\em Notational remark.} In the above argument, we emphasize 
 the use of notations $\widetilde{\mu}(\rho)$ and $\mu(\rho)$, 
 but {\em not} $\mu_\rho$, to avoid any confusion 
 with the unrelated product measures $\mu_\rho$ defined 
 in \eqref{alpha_x} and Remark \ref{rk_prod}.   \newline\newline
Now we consider $A,B,A',B'\in\R$ such that $A$ 
lies in the support of $\gamma^+$ and $B<B'<A'<A$. 
Let $f,g$ be nondecreasing continuous functions on $\mathcal X$ 
supported respectively on $[A',+\infty)$ and $(-\infty,B']$,
taking constant value $1$ respectively 
on $[A,+\infty)$ and $(-\infty,B]$.
By \eqref{order_tilde},  \eqref{ergo_xi}, 
and independence  of $M_n(\eta)$ and $M(\xi)$, 
the following holds under $\widetilde{\mu}$:
\begin{eqnarray*}
0=
\widetilde{\Exp}_{\widetilde{\mu}}
\left[
f( M_n(\eta))g( M_n(\xi))
\right] & = & 
\widetilde{\Exp}_{\widetilde{\mu}}\left[
%
%
f( M_n(\eta)) g(M(\xi))
\right]+\varepsilon_n
\\
& = & 
 \widetilde{\Exp}_{\widetilde{\mu}}\left[
f( M_n(\eta))
\right]
\widetilde{\Exp}_{\widetilde{\mu}}\left[
 g(M(\xi))
\right] + \varepsilon_n
\end{eqnarray*}
for some sequence $\varepsilon_n\to 0$. It follows that
\[
\lim_{n\to+\infty}{\widetilde{\mu}}\left(
 M_n(\eta)>A\right){\widetilde{\mu}}\left(
 M(\xi)<B\right)=0
\]
Choosing $B$ strictly larger than the infimum 
of the support of $\gamma^+$ yields
\[
\lim_{n\to+\infty}{\widetilde{\mu}}\left(
 M_n(\eta)>A\right)=0
\]
It follows that
\be\label{exp_bareta}
\limsup_{n\to+\infty}\widetilde{\Exp}_{\widetilde{\mu}}
[ M_n(\eta)]\leq A
\ee
Set
\[
\overline{\mu}_n:=\frac{1}{n}\sum_{x=1}^n\tau_x\mu
\]
so that  \eqref{exp_bareta} also writes
\be\label{exp_bareta_2}
\limsup_{n\to+\infty}\int_{\mathcal X} \eta(0)
d\overline{\mu}_n(\eta)\leq A
\ee
On the other hand, by Proposition \ref{prop:step1}, 
$\overline{\mu}_n\to\mu_+$, thus
\[
\int_{\mathcal X}\eta(0)d\mu_+(\eta)=\int_{[0,2]}\rho 
d\gamma^+(\rho)\leq A
\]
for every $A$ in the support of $\gamma^+$. 
Hence $\gamma^+=\delta_{\rho^+}$ for some 
$\rho^+\in[0,2]$.\newline\newline
 \textit{Proof of  (ii).}  
Assume for instance $\rho^-<\rho^+$, the other case being similar.
The equality \eqref{whereweused}  yields 
(recall that $\widetilde{\lambda}=\widetilde{\lambda}_k$  
for $k\in\{0,1,2\}$, cf.
\eqref{eq:supportedon}) 
\[
\widetilde{\lambda}[D(\eta,\xi)]=k
=\lim_{n\to+\infty}\mu[\eta(n)]-\lim_{m\to-\infty}\mu[\eta(m)]
=\rho^+-\rho^-
\]
\end{proof}
\begin{proof}[Proof of Proposition \ref{lemma:translate}]
 The proof of \textit{(ii)} is similar to that of 
 Proposition \ref{prop:bowtie}. We prove \textit{(i)} 
 and \textit{(iii)} below. \newline\newline
 \textit{Proof of (i), step one.} 
We show that if  $\widetilde{\lambda}_0\in\widetilde{\mathcal I}_e$ 
is a coupling 
of $\nu$ and $\nu'$ (that exists by 
\cite[Proposition 2.14 in Chapter VIII]{liggett2012interacting}), 
 then
\be\label{couple_shocks}
\int_{\mathcal X\times\mathcal X}\left(
\sum_{x\in\Z,\,i\in W,\:\,|x|\leq T}|\eta^i(x)-\xi^i(x)|
\right)d\widetilde{\lambda}_0(\eta,\xi)=o(T),\quad\mbox{as }T\to+\infty
\ee
Let  (recall definition \eqref{def:Dmn})  
\[
\widetilde{\lambda}_T^\pm:=\frac{1}{|[-T,T]\cap\Z^\pm|}
\sum_{x\in\Z^\pm:\,|x|\leq T}\tau_x\widetilde{\lambda}_0,
\]
\[
A_l(\eta,\xi) 
:= \frac{1}{2(2l+1)}
\sum_{y\in\Z,\,i\in W:\,|y|\leq l}|\eta^i(y)-\xi^i(y)|
 = \frac{1}{2(2l+1)} D_{-l,l}(\eta,\xi) 
\]
\[
B_l(\eta,\xi) 
:= \left|
\frac{1}{2(2l+1)}\sum_{y\in\Z:\,\,i\in W:\,|y|\leq l}\eta^i(y)
-\frac{1}{2(2l+1)}\sum_{y\in\Z:\,i\in W:\,|y|\leq l}\xi^i(y)
\right|
\]
Every subsequential weak limit $\widetilde{\lambda}^\pm_\infty$ 
of the family $(\widetilde{\lambda}^\pm_T)_{T\geq 0}$  (which is tight 
as it lives on a compact space) lies in 
$\widetilde{\mathcal I}\cap\widetilde{S}$. 
Thus by Proposition \ref{prop_kilroy} and Lemma \ref{lemma:irred2},
it is supported on 
 $E_-\cup E_+$ (see \eqref{def:E-}--\eqref{def:E+}),  where $A_l=B_l$. 
The desired conclusion \eqref{couple_shocks} is equivalent to having, 
for any subsequential limit $\widetilde{\lambda}_\infty$,
\begin{eqnarray}\nonumber
0 & = & \lim_{l\to+\infty}\lim_{T\to+\infty}
\int_{ \mathcal X\times \mathcal X }\left(
\frac{1}{T}\sum_{x\in\Z^\pm,\,|x|\leq T}\tau_x A_l(\eta,\xi)
\right)d\widetilde{\lambda}_0(\eta,\xi)  \\
\label{eq:150}
& = & \lim_{l\to+\infty}
\int_{\mathcal X\times\mathcal X}B_l(\eta,\xi)
d\widetilde{\lambda}^\pm_\infty(\eta,\xi) 
\end{eqnarray}
By definition \eqref{limits_mu} of shock measures,
$\widetilde{\lambda}^\pm_\infty$ has marginals $\nu_{\rho^\pm}$. 
It follows that under $\widetilde{\lambda}^\pm_\infty$, 
the spatial averages 
in $B_l(\eta,\xi)$ both converge in probability and 
(being bounded by $2$) 
in $L^1$ to $\rho^\pm$, thus implying the limits 
 in \eqref{eq:150}.  
\newline\newline
 \textit{Proof of (i), step two.} 
We now adapt the proof of Proposition \ref{prop:step1}, 
defining $\mathcal N_T$, 
$\mathcal N_T^{in}$ and $\mathcal N_T^{out}$ as we did there, 
and replacing 
the initial distribution $\widetilde{\lambda}_0$ 
defined there by the one 
considered in the first step of the current proof. In the first step of 
the proof of Proposition \ref{prop:step1}, 
we similarly derive \eqref{discrepancies_out} 
from Proposition \ref{prop:prop}, whereas we can now 
obtain \eqref{discrepancies_in} 
as a consequence of \eqref{couple_shocks}. 
Steps two and three are unchanged 
and yield \eqref{eq:blocking_measures-4},
where the measure $\widetilde{\lambda}$ now coincides with 
$\widetilde{\lambda}_0$, 
because the latter is invariant.  Hence,  
we obtain \eqref{ordered_coupling} if $q>0$,
 or \eqref{implies} if $q=0$.   By extremality, 
 this implies that  $\widetilde{\lambda}_0$ 
 satisfies one of \eqref{etaless}--\eqref{case_bowtie}. \newline\newline
 {\em Proof of (iii).} The proof is similar to that of 
 Proposition \ref{prop:step1}, statement \textit{(ii)}. 
 The only differences lie in the following points,
assuming for instance that the conclusion of \textit{(i)} 
is \eqref{etaless}. First, the second line of \eqref{whereweused} 
is now
\be\label{whereweused_new}
\sum_{x\in V:\,m\leq x(0)\leq n}[\nu'(\xi(x))-\nu(\xi(x))]\in[0,+\infty]
\ee
which depends only on $\nu,\nu'$. Next, in \eqref{eq:supportedon}, 
$k$ can be a priori any  value in $\N\cup\{+\infty\}$ instead of 
only $0,1,2$. 
\end{proof}
\begin{proof}[Proof of Proposition \ref{prop:translate}]
 For two {\em ordered} probability measures $\gamma,\gamma'$ on 
 $\mathcal X$, let
\be\label{def_dist}
\Delta(\gamma,\gamma'):=\sum_{x\in V}|\gamma(\eta(x))
-\gamma'(\eta(x))|\in[0,+\infty]
\ee
Note  that $\Delta(\gamma,\gamma')$ satisfies 
the three  following properties: 
\be\label{dist_equal}
\Delta(\gamma,\gamma')=0\mbox{ if and only if }\gamma=\gamma'
\ee
 If $\widetilde{\gamma}$ is an ordered coupling 
of $\gamma$ and $\gamma'$, we have
\begin{eqnarray}
\Delta(\gamma,\gamma') & = & \int_{\mathcal X\times\mathcal X}
D(\eta,\xi)d\widetilde{\gamma}(\eta,\xi) 
\label{prop_dist_1}
\end{eqnarray}
 If a probability measure  $\gamma''$ 
on $\mathcal X$ is such that 
 $\gamma\leq\gamma'\leq\gamma''
\mbox{ or }\gamma''\leq\gamma'\leq\gamma$, then
\begin{eqnarray}
%
\Delta(\gamma,\gamma'') & = & \Delta(\gamma,\gamma')+\Delta(\gamma',\gamma'') 
\label{dist_triangle_1}
\end{eqnarray}
 {\em Proof of (i).} Without loss of generality, 
 we assume $\rho^-<\rho^+$.  For $n\in\Z$,  let us denote 
 $\nu_n:=\tau_n\nu$. 
We can apply Proposition \ref{prop:step1} to $\nu$ and 
rule out the case \eqref{case_bowtie} by assumption and 
Proposition \ref{prop:bowtie}. Thus by \textit{(i)} of 
Corollary \ref{cor:step1},
$\nu_n\leq\nu_{n+1}$ for all $n\in\Z$. We can also exclude 
the case  $k=2$ by \textit{(ii)} of Proposition \ref{prop:step2} 
because $|\rho^+-\rho^-|=1$; and the case \eqref{equal} 
because $\rho^-\neq\rho^+$.
Thus by  \eqref{prop_dist_1},  \textit{(ii)} of Proposition 
\ref{prop:step1} and \textit{(ii)} of Proposition \ref{prop:step2},
\be\label{nu_and_nextnu}
\Delta(\nu_{n-1},\nu_{n})=1,\quad\forall n\in\Z
\ee
By  \textit{(i)} of  Proposition \ref{lemma:translate},  for $n\in\Z$,
there exists a coupling $d\widetilde{\nu}_n(\eta,\xi)$
of $d\nu_n(\eta)$ and $d\nu'(\xi)$ that satisfies one of the 
properties \eqref{etaless}--\eqref{case_bowtie} 
of Proposition \ref{prop:step1}.
 By assumption and \textit{(ii)} of Proposition 
 \ref{lemma:translate}, we can rule out \eqref{case_bowtie}. 
 Thus $\nu_n$ and $\nu'$ are ordered. Besides, 
 \textit{(iii)} of Proposition \ref{lemma:translate}
and \eqref{prop_dist_1} imply 
\be\label{nu_and_nuprime} \Delta(\nu_n,\nu')\in\N,  
\quad\forall n\in\Z\ee
Let $S:=\{n\in\Z:\,\nu'\leq\nu_n\}$. We claim that $S$ is 
non-empty and bounded from below. Indeed if  $S$ were empty,  
 since $\nu$ a $(\rho^-,\rho^+)$-shock measure 
 (cf. definition \eqref{limits_mu}),  $\nu'\geq\nu_n$ 
 and $n\to+\infty$ would imply
$\nu'\geq\nu_{\rho^+}$; if $S$ were not bounded from below,  
$n\to-\infty$ along a subsequence where $\nu'\leq\nu_n$ 
would imply $\nu'\leq\nu_{\rho^-}$. Both conclusions would 
contradict  $\nu'$  being a $(\rho^-,\rho^+)$-shock measure. 
We set $n_0:=\min(S)$, thus
\be\label{sandwich_nu}\nu_{n_0-1}<\nu'\leq\nu_{n_0}\ee
By \eqref{sandwich_nu} and \eqref{dist_triangle_1},
\be\label{app_triangle} 
\Delta(\nu_{n_0-1},\nu_{n_0})=\Delta(\nu_{n_0-1},\nu')
+\Delta(\nu',\nu_{n_0})\ee
By \eqref{sandwich_nu}, \eqref{dist_equal} and 
\eqref{nu_and_nuprime}, the first term on the 
right-hand side of \eqref{app_triangle} is a nonzero integer; 
thus by \eqref{nu_and_nextnu}  for $n=n_0$,   
the second term is zero, and the conclusion follows 
from \eqref{dist_equal}.\newline\newline
{\em Proof of (ii).} We can consider $n_0$ and the couplings of 
$\nu_{n_0}$ with $\nu'$ and $\nu_{n_0-1}$ with  $\nu_{n_0}$ 
as in \textit{(i)}.  Let $\nu''$ be a $(\rho^-,\rho^+)$-shock 
measure. 
For  $n\in\Z$,  we can also apply \textit{(i)} of 
Proposition \ref{lemma:translate} to   $\nu''_{n}:=\tau_n\nu''$  
and $\nu$, and rule out case \eqref{case_bowtie},  
since by assumption we exclude \eqref{case_deg};  
thus these measures are ordered. 
The same holds for $\nu''_n$ and $\nu'$. 
Similarly to $n_0$, we can then define $n_1\in\Z$ such that 
\be\label{sandwich_n1}
\nu''_{n_1-1}<\nu_{n_0-1}\leq\nu''_{n_1}
\ee
Property \eqref{nu_and_nuprime} holds, 
but instead of \eqref{nu_and_nextnu}, 
 \textit{(iii)} of Proposition \ref{lemma:translate} 
now implies
\be\label{nu_and_nextnu_2}
\Delta(\nu_{n-1},\nu_{n})=2
=\Delta(\nu''_{n-1},\nu''_{n}),\quad\forall n\in\Z
\ee
Since $\nu'$ is not a translate of $\nu$, both terms 
on the right-hand side of \eqref{app_triangle} are now 
nonzero integers.
The first equality in \eqref{nu_and_nextnu_2} for $n=n_0$,  
combined with \eqref{app_triangle}, then yields 
\be\label{wenowhave_triangle}
\Delta(\nu_{n_0-1},\nu')=\Delta(\nu',\nu_{n_0})=1
\ee
We now distinguish the following cases.\newline\newline
(1) If $\nu''_{n_1}\geq\nu_{n_0}$,  by
\eqref{sandwich_n1} we have 
$\nu''_{n_1-1}\leq\nu_{n_0-1}\leq\nu_{n_0}\leq \nu''_{n_1}$; 
by \eqref{dist_triangle_1},
\[\Delta(\nu''_{n_1-1},\nu''_{n_1})
=\Delta(\nu_{ n''_1-1},\nu_{n_0-1})
+\Delta(\nu_{n_0-1},\nu_{n_0})+\Delta(\nu_{n_0},\nu''_{n_1})\]
 From \eqref{sandwich_n1}, \eqref{nu_and_nextnu_2} with $n=n_1$,  
 and \eqref{dist_equal}, we obtain  $\nu''_{n_1}=\nu_{n_0}$. \newline\newline
(2) If $\nu_{n_0-1}\leq\nu''_{n_1}\leq\nu_{n_0}$, 
we distinguish whether (a) $\nu_{n_0-1}\leq\nu''_{n_1}\leq\nu'$ 
or (b) $\nu'\leq \nu''_{n_1}\leq \nu_{n_0}$. 
In the former case,  \eqref{dist_triangle_1}  
and \eqref{wenowhave_triangle} yield 
\[
1=\Delta(\nu_{n_0-1},\nu')
=\Delta(\nu_{n_0-1},\nu''_{n_1})+\Delta(\nu''_{n_1},\nu')
\]
and one of the terms on the r.h.s. must be $0$. 
Case (b) is similar.
\end{proof}
\subsection{Proofs of intermediate results from Subsections 
\ref{subsec:proof_inv_2}--\ref{subsec:proof_inv_3}}\label{subsec:inter_2}
 \begin{proof}[Proof of Proposition \ref{prop:extrema}]
\mbox{}\newline\newline
 \textit{Proof of (o).}  This follows from 
 \eqref{eq:G-from-rho_0}, \eqref{flux_tasep},
 and Lemma \ref{lemma_phi}.  \newline\newline
 \textit{Proof of (i).} 
For the following,
we rely on the expression for $G$ given in (\ref{eq:G-from-rho_0}).
 Therefore $G\geq 0$.   
In cases \eqref{unless} and \eqref{unless_2}, $G$ is identically $0$ 
(the former follows from  example \ref{ex:sym}). 
We henceforth exclude these cases. 
If $q=0$, the conclusion follows from  example \ref{ex:periodic}. 
If $q>0$   (that is $r>0$), which we henceforth assume,  
$G$ is continuously differentiable.
First, $G'$ vanishes at  least  once because $G(0)=G(2)=0$,   cf. \textit{(o)}.    
 Next, 
\begin{equation}
G(\rho)=  (\gamma_0+\gamma_1)\frac{\rho}{2} 
\left(1-\frac{\rho}{2}\right) 
+  (\gamma_0-\gamma_1)(1-\rho) \varphi(\rho)
-  (\gamma_0+\gamma_1)\varphi(\rho)^2  \label{eq:Gviaphi}
\end{equation}
with
\begin{eqnarray}
\varphi(\rho)
&=& \frac{1}{2}\left(\frac{r+1}{r-1}\right)
\left(1-\sqrt{\psi(\rho)}\right)\mbox{ if }r\neq 1\nonumber\\
&  =  &  0  \mbox{ if } r=1\label{eq:phi}\\
\label{eq:psi}
\psi(\rho)&=& 1+ \left(\frac{r-1}{r+1}\right)^2 \rho(\rho-2)
\end{eqnarray}  
Note that   $0<\psi(\rho)\le 1$, where the first inequality 
follows from $r>0$.  We then compute
\begin{eqnarray}\label{eq:psi-deriv}
\psi'(\rho)&=& \left(\frac{r-1}{r+1}\right)^2 2(\rho-1)
\\\label{eq:phi-deriv}
\varphi'(\rho)&=&-\frac{1}{2}\left(\frac{r-1}{r+1}\right)
\frac{(\rho-1)}{\sqrt{\psi(\rho)}}\\\label{eq:phi-deriv2}
\varphi''(\rho)&=& -2r\left(\frac{(r-1)}{(r+1)^3}\right)
\psi(\rho)^{-3/2}\\\label{eq:phi-deriv3}
\varphi^{(3)}(\rho)
&=& 6r(\rho-1)\left(\frac{(r-1)^3}{(r+1)^5}\right)\psi(\rho)^{-5/2}
\end{eqnarray}
\begin{eqnarray}
\nonumber
G'(\rho)&=& (\gamma_0+\gamma_1)\frac{1}{2}(1-\rho) 
 + (\gamma_0-\gamma_1)\left[- \varphi(\rho)+(1-\rho)\varphi'(\rho)\right]
\nonumber\\ &&\qquad 
-  2(\gamma_0+\gamma_1)\varphi(\rho)\varphi'(\rho)\label{eq:G-deriv}\\
G''(\rho)&=& -\frac{1}{2}(\gamma_0+\gamma_1) 
 + (\gamma_0-\gamma_1) \left[- 2\varphi'(\rho)+(1-\rho)\varphi''(\rho)\right]
\nonumber\\ &&\qquad 
-  2(\gamma_0+\gamma_1)[\varphi'(\rho)^2+\varphi(\rho)\varphi''(\rho)]
\label{eq:G-deriv2}
\end{eqnarray}
\begin{eqnarray}
G^{(3)}(\rho)&=&  + (\gamma_0-\gamma_1) 
\left[- 3\varphi''(\rho)+(1-\rho)\varphi^{(3)}(\rho)\right]
\nonumber\\&& \qquad 
-  (\gamma_0+\gamma_1)[6\varphi'(\rho)\varphi''(\rho)
+2\varphi(\rho)\varphi^{(3)}(\rho)] \nonumber\\ 
&=& 6r \frac{(r-1)^2}{(r+1)^4}\psi(\rho)^{-5/2}\times
\nonumber\\&& \qquad
\left[ (\gamma_0-\gamma_1)\frac{4r}{(r-1)(r+1)}
+(\gamma_0+\gamma_1)(1-\rho)\right]\label{eq:G-deriv3}
\end{eqnarray}
 Hence,  $G^{(3)}$ is identically $0$ if $r=1$ (that is $p=q$), 
 and has a constant sign if $\gamma_0+\gamma_1=0$ and $r\neq 1$. 
 Whereas if $\gamma_0+\gamma_1\neq 0$  and $r\neq 1$, 
we have that $G^{(3)}(\rho)$ changes sign  exactly once,  for the value 
\begin{equation}\label{eq:tilderho0}
\widetilde\rho_0:
=1 + \frac{\gamma_0-\gamma_1}{\gamma_0+\gamma_1}\frac{4r}{(r-1)(r+1)}
\end{equation} 
Therefore $G''$ is increasing before $\widetilde\rho_0$ and decreasing afterwards.  
Hence $G''$ changes sign at most twice and $G'$ changes sign 
at most three times.\newline\newline
\textit{Proof of (ii).}  
 If $q=0$, then $G(1)=0$ by \eqref{flux_degenerate}. 
If $q\neq 0$, the functions $\widetilde{\rho}_i$ in Lemma \ref{lemma_phi} 
are continuously differentiable on $[0,2]$, thus the same holds for $G$.
By \eqref{eq:Gviaphi}, \eqref{eq:phi}--\eqref{eq:psi}, 
\eqref{eq:phi-deriv} and \eqref{eq:G-deriv},
\[
G(1)= \frac{(\gamma_0+\gamma_1)\sqrt{r}}{(\sqrt{r}+1)^2},\quad
G'(1)=\frac{ \gamma_1-\gamma_0 }{2}\left(\frac{\sqrt{r}-1}{\sqrt{r}+1}\right),
\]
whence the desired conclusions.\newline\newline
\textit{Proof of (iii).} 
Here we obtain 
\[
%
G'(2)=-\frac{\gamma_0+r\gamma_1}{r+1}
\]
Under \eqref{wlog}, we have 
$\gamma_0+r\gamma_1\geq r(\gamma_0+\gamma_1)\geq 0$. 
The lower bound is positive if $r>0$ and $\gamma_0+\gamma_1> 0$. 
On the other hand, $\gamma_0+r\gamma_1=(1-r)\gamma_0>0$ if 
$\gamma_0+\gamma_1=0$ 
and $\gamma_0\neq 0$; and
$\gamma_0+r\gamma_1>0$ if $r=0$ and $\gamma_0>0$.
Finally, $\gamma_0+r\gamma_1=0$ if we have \eqref{unless}, or \eqref{unless_2}, 
or $q=\gamma_0=0$. 
\newline\newline
\textit{Proof of (iv).} This follows from \eqref{eq:Gviaphi}, 
\eqref{eq:phi} and \eqref{eq:psi}.\newline\newline
 {\em Proof of (v).} Without loss of generality, we assume 
  $\gamma_0=\gamma_1=1/2$. 
Then \eqref{eq:Gviaphi} becomes
\begin{eqnarray}\label{eq:Gviaphi-1/2}
G(\rho)&=& \frac{\rho}{2} \left(1-\frac{\rho}{2}\right) - \varphi(\rho)^2 
\end{eqnarray}  
and \eqref{eq:G-deriv} becomes
\begin{eqnarray}\label{eq:G-deriv-1/2}
G'(\rho)&=& \frac{1}{2}(1-\rho) 
-2\varphi(\rho)\varphi'(\rho)=(1-\rho)
\left(1-\frac{1}{2\sqrt{\psi(\rho)}}\right)\label{eq:G-deriv2-1/2}
\end{eqnarray}
We have that 
\begin{eqnarray*}
G'(1/2)&=&\frac{1}{2}\left(1-\frac{1}{2\sqrt{\psi(1/2)}}\right)\\
G'(1/2)&>&0 \Leftrightarrow \psi(1/2)>\frac{1}{4} 
\Leftrightarrow 1>\left(\frac{r-1}{r+1}\right)^2
\end{eqnarray*}
which is true. Then  after some computations, one can see that 
\begin{eqnarray}\nonumber
G(1/2)>G(1)&\Leftrightarrow& 4\psi(\frac{1}{2})\psi(1)<\left[-1+\frac{7}{4}\left(\frac{r-1}{r+1}\right)^2\right]^2\\
&\Leftrightarrow&  3 - 14 R - R^2
< 0 \label{eq:lacondition}
\end{eqnarray}
 where 
\[ R=\frac{1}{4}\left(\frac{r-1}{r+1}\right)^2 \]
Indeed, we have that
\begin{eqnarray}\label{eq:valeurs}
\psi(1/2)= 1-3R;\,\psi(1)=1-4R
\end{eqnarray}
so that
$\quad\psi(1/2)+\psi(1)=2-7R,\, \psi(1/2)-\psi(1)=R,\quad$ and
\begin{eqnarray}\nonumber
G(1/2)>G(1)&\Leftrightarrow&R< 
\left[\sqrt{\psi(1/2)}-\sqrt{\psi(1)}\right]\times
\left[2-\left(\sqrt{\psi(1/2)}+\sqrt{\psi(1)}\right)\right]
\\\nonumber
&\Leftrightarrow&R< 
2\left[\sqrt{\psi(1/2)}-\sqrt{\psi(1)}\right]+
\left[\psi(1)-\psi(1/2)\right]
\\\nonumber
&\Leftrightarrow&R
\left[\sqrt{\psi(1/2)}+\sqrt{\psi(1)}\right]<R
\\\nonumber
&\Leftrightarrow&
4\psi(1/2)\psi(1)
<\left[1-\left(\psi(1)+\psi(1/2)\right)\right]^2
\end{eqnarray} 
Solving  inequation \eqref{eq:lacondition} 
 with respect to $r$ 
gives the condition in \textit{(b)}. \\ \\
 {\em Proof of (vi).} 
Let $F(\rho):=G(\rho+1)-G(\rho)$. Note that
\begin{eqnarray}\label{eq:dvptF}
F(\rho)&=&\mathcal{F}(\rho+1)-\mathcal{F}(\rho)\qquad\hbox{with}\\\label{eq:calF}
\mathcal{F}(\rho)&=&(\gamma_0+\gamma_1)
\left[ -\frac{1}{4}(\rho-1)^2 - \varphi(\rho)^2\right]
 - (\gamma_0-\gamma_1)(\rho-1)\varphi(\rho)
\end{eqnarray}
{\em First case.} We assume $\gamma_0+\gamma_1\neq 0$. 
By \eqref{hom_g}, without loss of generality, 
we may consider  $\gamma_0=\mathfrak{a}$ and $\gamma_1=1-\mathfrak{a}$ with 
$\mathfrak{a}\in\R$. 
We have
\begin{eqnarray*}
 \mathcal{F}'(\rho)&=& -\frac{1}{2}(\rho-1)
-2\varphi(\rho)\varphi'(\rho) 
+ ( 2\mathfrak{a}-1)(1-\rho) \varphi'(\rho)-( 2\mathfrak{a}-1)\varphi(\rho)\\
F'(\rho)&=& -1 
+ \frac{1}{2}\left[\frac{\rho}{\sqrt{\psi(\rho+1)}}
- \frac{(\rho-1)}{\sqrt{\psi(\rho)}} \right]\\
&&+ \frac{( 2\mathfrak{a}-1)}{2}\left(\frac{r-1}{r+1}\right)
\left[\frac{\rho^2}{\sqrt{\psi(\rho+1)}}
- \frac{(\rho-1)^2}{\sqrt{\psi(\rho)}} \right]\\
&&+ \frac{( 2\mathfrak{a}-1)}{2}\left(\frac{r+1}{r-1}\right)
\left[\sqrt{\psi(\rho+1)}- \sqrt{\psi(\rho)}\right]
\end{eqnarray*}
 then
\begin{eqnarray*}
\mathcal{F}''(\rho)&=& -\frac{1}{2}
-2\varphi'(\rho)^2-2\varphi(\rho)\varphi''(\rho) 
+ ( 2\mathfrak{a}-1)(1-\rho) \varphi''(\rho) 
-2( 2\mathfrak{a}-1)\varphi'(\rho)\\
&=& -\frac{1}{2} -2\left[-\frac{1}{2}\left(\frac{r-1}{r+1}\right)
\frac{\rho-1}{\sqrt{\psi(\rho)}}\right]^2\\&&
+2r\left(\frac{r+1}{r-1}\right)\left(1-\sqrt{\psi(\rho)}\right)
\left(\frac{r-1}{(r+1)^3}\right)
\psi(\rho)^{-3/2}\\
&&-2r( 2\mathfrak{a}-1)(1-\rho)\left(\frac{r-1}{(r+1)^3}\right)\psi(\rho)^{-3/2}\\
&&+( 2\mathfrak{a}-1)\left(\frac{r-1}{r+1}\right)
\frac{(\rho-1)}{\sqrt{\psi(\rho)}}
\end{eqnarray*}
so that 
\begin{eqnarray}\label{F''1}
F''(\rho)&=&  
- \frac{1}{2}\left(\frac{r-1}{r+1}\right)^2
\left[\frac{\rho^2}{\psi(\rho+1)}
- \frac{(\rho-1)^2}{\psi(\rho)} \right]\\\label{F''2}
&&+\frac{2r}{(r+1)^2}
\left[\frac{1}{\psi(\rho+1)^{3/2}} - \frac{1}{\psi(\rho)^{3/2}}
- \frac{1}{\psi(\rho+1)} + \frac{1}{\psi(\rho)} \right]\\\nonumber
&&+ ( 2\mathfrak{a}-1)\left(\frac{r-1}{r+1}\right)
\left[\left(\frac{\rho}{\sqrt{\psi(\rho+1)}}
 +\frac{2r}{(r+1)^2}\frac{\rho}{\psi(\rho+1)^{3/2}}\right)\right.\\\label{F''3}
&&\qquad\qquad\qquad\qquad\qquad
\left.-\left(\frac{(\rho-1)}{\sqrt{\psi(\rho)}}
+ \frac{2r}{(r+1)^2} \frac{(\rho-1)}{\psi(\rho)^{3/2}}\right)\right]
\end{eqnarray}
We check the sign of each term.
\begin{eqnarray}\nonumber
f(\rho)&=& \frac{(\rho-1)}{\sqrt{\psi(\rho)}} \\\nonumber
f'(\rho)&=& \frac{1}{\psi(\rho)^{3/2}}\frac{4r}{(r+1)^2}>0 \\\nonumber
g(\rho)&=& f(\rho)^2\\\label{F''g'}
g'(\rho)&=& 2f(\rho)f'(\rho) <0
\quad\mbox{ for }\rho\in[0,1)\\\nonumber
\bar{f}(\rho)&=& \frac{1}{\psi(\rho)^{3/2}} - \frac{1}{\psi(\rho)}\\\label{F''barf'}
\bar{f}'(\rho)&=& \frac{\psi'(\rho)}{2\psi(\rho)^{5/2}[2\sqrt{\psi(\rho)}+3]}
\left[-5+4\left(\frac{r-1}{r+1}\right)^2\rho(\rho-2)\right] 
\geq  0\\\nonumber
h(\rho)&=& \frac{(\rho-1)}{\psi(\rho)^{3/2}}
\left[\psi(\rho)+\frac{2r}{(r+1)^2}\right]\\\label{F''h'}
h'(\rho)&=& \frac{1}{\psi(\rho)^{5/2}}\frac{2r}{(r+1)^2}\frac{12r}{(r+1)^2} >0
\end{eqnarray}
 (note that $\bar{f}'(\rho)=0$ if $r=1$, and $\bar{f}'(\rho)>0$ if $r\neq 1$).
  By  \eqref{F''g'} and  \eqref{F''barf'}, the first two terms  
 \eqref{F''1} and \eqref{F''2} of $F''(\rho)$ are non-negative. By \eqref{F''h'}
 the sign of the third term \eqref{F''3} of $F''(\rho)$  depends on the sign of
 $( 2\mathfrak{a}-1)(r-1)$: If $( 2\mathfrak{a}-1)(r-1)\geq 0$, then the third term of 
 $F''(\rho)$ will also be non-negative. \\ \\
Let us assume first that   $\mathfrak{a}\geq 1/2,\,r\leq 1$. 
Hence $F''(\rho)> 0$  for $\rho\in[0,1)$.  
Then
\begin{eqnarray}\label{F'0}
F'(0)&=& -\frac{1}{2}-\frac{ 2\mathfrak{a}-1}{2}
\left[\left(\frac{r-1}{r+1}\right)+
\left(\frac{\sqrt{r}-1}{\sqrt{r}+1}\right)\right]<0\\\label{F'1}
F'(1)&=& -\frac{1}{2}+\frac{ 2\mathfrak{a}-1}{2}
\left[\left(\frac{r-1}{r+1}\right)+
\left(\frac{\sqrt{r}-1}{\sqrt{r}+1}\right)\right] <0 
\quad\hbox{(see below)}\\\label{F0}
F(0)&=& G(1)=\frac{\sqrt{r}}{(\sqrt{r}+1)^2} >0 \\\label{F1}
F(1)&=& -G(1) <0
\end{eqnarray}
 Thus $F$ being non-increasing the equation $F(\rho)=0$ has a unique solution.
 We now show that $F'(1)<0$. We write $X=\sqrt{r}$, 
and we consider $X\geq 1$. 
\begin{eqnarray*}\nonumber
\mathfrak{f}(X)&:=&2(r+1)(\sqrt{r}+1)F'(1)\\
\label{F'1-1}
&=&
(4\mathfrak{a}-3) X^3 -X^2-X-(4\mathfrak{a}-3)\\
\label{F'1-2}
\mathfrak{f}(1)&=& -4<0\\\label{F'1-3}
\mathfrak{f}'(X)&=& 3(4\mathfrak{a}-3) X^2 -2X-1
\end{eqnarray*}
 If  $\mathfrak{a}=3/4 $, $\mathfrak{f}'(X)< 0$. 
Otherwise we solve $\mathfrak{f}'(X)=0$. 
\begin{eqnarray*}\label{F'1-4}
\delta&=& 4(3\mathfrak{a}-2)>0 \quad\hbox{for}\quad \mathfrak{a}>2/3\\\label{F'1-5}
X_\pm&=&\frac{1\pm\sqrt{\delta}}{3(4\mathfrak{a}-3)}\quad\hbox{for}\quad
\delta\geq 0 
\end{eqnarray*}
Then
\begin{itemize}
\item if $\mathfrak{a}<2/3$, $\delta< 0$, $\mathfrak{f}'(X)< 0$, 
$\mathfrak{f}$ is decreasing hence $F'(1)<0$.
\item if $2/3\leq \mathfrak{a}<3/4$, $\mathfrak{f}'(X)> 0$ 
for $X\in(X_-,X_+)$; but $X_\pm<0$, 
hence $\mathfrak{f}'(X)< 0$, $\mathfrak{f}$ is decreasing and 
$F'(1)<0$. 
\item if $\mathfrak{a}=3/4$, $\mathfrak{f}'(X)< 0$, hence 
$F'(1)<0$. 
\item if $\mathfrak{a}>5/6$, $X_-<0<X_+$ and $X_+>1$ because 
\[X_+<1\Leftrightarrow 9(\mathfrak{a}-1)(4\mathfrak{a}-3)>0\Leftrightarrow \mathfrak{a}\notin (3/4,1)\]
thus  $\mathfrak{f}'(X)< 0$, $\mathfrak{f}$ is decreasing hence 
$F'(1)<0$.
\item if $3/4<\mathfrak{a}<5/6$ we also have $X_+>1$, thus $F'(1)<0$.
 \item if $\mathfrak{a}=5/6$, then $X_+=1+\frac{2}{\sqrt{2}}>1$ 
hence $\mathfrak{f}$ is decreasing and  $F'(1)<0$. 
\end{itemize}
 Thanks to \eqref{symmetry_g},  the case $\mathfrak{a}\geq 1/2,\,r\leq 1$ 
 is also solved.
A similar reasoning enables to deal with 
$\mathfrak{a}\leq 1/2,\,r\leq 1$, then with  $\mathfrak{a}\leq 1/2,\,r\geq 1$.  \\ \\
 {\em Second case.} We assume $\gamma_0+\gamma_1=0$ 
 and $p\neq q$.  Without loss of generality, 
we can consider  $\gamma_0=1$.  
This amounts to repeating the computations 
of the first case keeping only in $F'(\rho)$ 
and $F''(\rho)$ those terms with the factor $( 2\mathfrak{a}-1)/2$, 
which we replace by $1$. 
This leads similarly to $F''(\rho)<0$ for $\rho\in[0,1)$. 
However, we now have $F'(0)<0$ and $F'(1)>0$. 
Thus there exists $\rho^*\in(0,1)$ such that $F$ 
is decreasing on $[0,\rho^*]$ and increasing on 
$[\rho^*,1]$. Besides,  \eqref{F0}--\eqref{F1} 
are now replaced by  $F(0)=F(1)=0$, cf.  \textit{(o)} 
and \textit{(ii)} of Proposition \ref{prop:extrema}. 
 This implies that $0$ and $1$ are the only solutions 
of the equation  $G(\rho+1)-G(\rho)=0$. \\ \\
 {\em Proof of (vii).}
If $q=0$, this follows from Remark \ref{remark_case_b} 
and Example \ref{ex:periodic}. We now assume $q>0$.
We write the function $G(\rho)$ in terms 
of a different variable. Recall definition \eqref{def_f_always} of $\mathcal F$, 
definition of $\widetilde{\rho}_0(.)$
in Lemma \ref{lemma_phi}, and expression \eqref{eq:G-from-rho_0} 
for $G$. We can then write
\[
G(\rho)=\gamma_0\rho_0(1-\rho_0)+\gamma_1\rho_1(1-\rho_1)
\]
where $(\rho_0,\rho_1)$ is the unique element of $\mathcal F$ 
such that $\rho_0+\rho_1=\rho$. By \eqref{def_f_always}, 
setting $r=q/p>0$, there is a unique
$\lambda\in[0,+\infty)$ such that
\be\label{density_twolane_fug}
\rho_0=\frac{r\lambda}{1+r\lambda},\quad\rho_1=\frac{\lambda}{1+\lambda}
\ee
It follows that
\be\label{flux_twolane_fug}
G(\rho)=\widetilde{G}(\lambda)=\lambda\left\{
\frac{\gamma_0r}{(1+r\lambda)^2}+\frac{\gamma_1}{(1+\lambda)^2}
\right\}
\ee
Then, nonzero solutions of the equation $\widetilde{G}(\lambda)=0$ are solutions of 
\be\label{quadra}
r(\gamma_0+\gamma_1r)\lambda^2+2r(\gamma_0+\gamma_1)\lambda+\gamma_0r+\gamma_1=0
\ee
Positive solutions of \eqref{quadra} correspond to solutions 
of $G(\rho)=0$ in $(0,2)$.
If $\gamma_0+\gamma_1 r=0$, that is 
%
$p\gamma_0+q\gamma_1=0$,
%
then since $p>q$, we have $\gamma_0+\gamma_1\neq 0$.  
The unique solution of \eqref{quadra} is
\[
\lambda=-\frac{\gamma_1+\gamma_0r}{2r(\gamma_0+\gamma_1)}
\]
 and $\widetilde{G}$ changes sign around this solution. 
Recalling \eqref{wlog}, we find that $\lambda>0$ 
if $q\gamma_0+p\gamma_1<0$, that is \eqref{cond_nob2}, otherwise $\lambda\leq 0$.
If $p\gamma_0+q\gamma_1\neq 0$,  then \eqref{quadra} is quadratic 
with  reduced discriminant 
\[
\Delta'=-r(1-r)^2\gamma_0\gamma_1
\]
Under \eqref{cond_nob2}, by \eqref{wlog} and $q>0$, we have 
$\gamma_0+\gamma_1 r>0$. By Remark \ref{remark_case_b}, 
we have $\Delta'>0$, hence two solutions 
$\lambda_1<\lambda_2$  around which $\widetilde{G}(\lambda)$ 
changes sign. These solutions are such that 
\[
\lambda_1\lambda_2=\frac{\gamma_0r+\gamma_1}
{r(\gamma_0+\gamma_1r)}<0,\quad\lambda_1+\lambda_2
=-2\frac{\gamma_0+\gamma_1}{\gamma_0+\gamma_1 r}\leq 0
\]
 where the first inequality follows from \eqref{cond_nob2} 
 and the second one from \eqref{wlog}.  
By the former, there is a 
positive solution to \eqref{quadra}.
If \eqref{cond_nob2} fails and $\Delta'\geq 0$,  since $\lambda_1\lambda_2\geq 0$ 
and $\lambda_1+\lambda_2\leq 0$,  there is no positive solution. 
Finally, if \eqref{cond_nob2} fails and $\Delta'<0$, there is no solution.
\end{proof}
 \begin{proof}[Proof of Lemma \ref{lemma:r_finite}]
 In cases \textit{(i)--(ii)} below, 
we always have $|\mathcal R_0|\leq 3$. 
The only case not covered below is $q=0<p$ 
and $\gamma_0\neq\gamma_1$. Then 
\eqref{flux_degenerate} and Definition \ref{def_set} 
show that $\mathcal R_0$ is reduced to two elements 
of $\mathcal B_1$.  \newline\newline
\textit{Proof of (i).}
 By Definition \ref{def_set}, for any $(\rho^-,\rho^+)$ 
in $\mathcal R_0$, $\rho=\min(\rho^-,\rho^+)$ must be a solution 
of the equation $G(\rho+1)-G(\rho)=0$. 
By \textit{(vi)} of Proposition \ref{prop:extrema},  
this equation has exactly one solution $\rho$ in $[0,1]$. 
This implies $\mathcal R_0\subset\{(\rho,\rho+1);(\rho+1,\rho)\}$. 
But condition \textit{(ii)} of Definition \ref{def_set} 
implies that $(\rho,\rho+1)$ and $(\rho+1,\rho)$ cannot 
both lie in $\mathcal R_0$.  Indeed, $G$ would then be 
constant on $[\rho,\rho+1]$, and the only situations 
where $G$ can be constant on a nontrivial interval 
are \eqref{unless}, \eqref{unless_2} and \eqref{unless_3}, 
which are excluded here.  \newline\newline 
  Since $G(0)=G(2)=0$  by \textit{(o)} 
of Proposition \ref{prop:extrema},  
in order to have  $\mathcal B_1\cap\mathcal R_0\neq\emptyset$,  
it is necessary to have $G(1)=0$. 
By \textit{(ii)} of Proposition \ref{prop:extrema}, 
this only occurs if $q=0$ or $\gamma_0+\gamma_1=0$.\newline\newline
\textit{Proof of (ii).} 
{\em First case:} $q>0$, $p\neq q$ and 
$\gamma_0+\gamma_1=0\neq\gamma_0\gamma_1$.
Similarly to \textit{(i)}, using \textit{(vi)} 
of Proposition \ref{prop:extrema}, we see that 
$\mathcal R_0\subset\mathcal B_1$.  By \textit{(o)} 
and \textit{(ii)} of Proposition \ref{prop:extrema}, 
$G$ only vanishes for $\rho\in\{0,1,2\}$;  thus by 
Definition \ref{def_set}, one of the points $(0,1)$ 
or $(1,0)$, and one of the points $(1,2)$ or $(2,1)$, 
lie in $\mathcal R_0$. And since $(\rho,\rho+1)$ 
and $(\rho+1,\rho)$ cannot both lie in 
$\mathcal R_0$, $\mathcal R_0$ contains two elements. \newline
{\em Second case:}  $q=0$ and $\gamma_0\neq\gamma_1$. 
Then  \eqref{flux_degenerate} and Definition \ref{def_set} 
shows that $\mathcal R_0$ is reduced to two elements of 
$\mathcal B_1$.
 \newline\newline
{\em Proof of (iii).}
 Assume first $r>0$. 
By \eqref{symmetry_g}, since $\gamma_0=\gamma_1$, 
we have $G(2-\rho)=G(\rho)$ for all $\rho\in[0,2]$. 
Thus $G(1/2)=G(3/2)$ and $G'(1)=0$.   Recalling \textit{(i)}, 
there can be no shock of amplitude $1$ other than 
$(1/2,3/2)$ or $(3/2,1/2)$; and at most one of these 
 lies in $\mathcal R_0$. 
If $G$ has a single extremum (which must be at $1$), 
by \textit{(iii)} of Proposition \ref{prop:extrema}, 
$G$  is bell-shaped and this  extremum is a maximum. 
 Thus $\mathcal R_0=\{(1/2,3/2)\}$.
If $G$ has more than one extremum, by symmetry it must have three. 
Still by \textit{(iii)} of Proposition \ref{prop:extrema}, 
the extremum at $1$ is then a local minimum and the other two 
are local maxima symmetric with respect to $1$. Since 
$G'(1/2)>0$ by \textit{(v)} of Proposition \ref{prop:extrema}, 
condition \textit{(ii)} of Definition \ref{def_set} 
cannot hold with $(\rho^-,\rho^+)=(3/2,1/2)$. 
On the other hand, this condition holds with 
$(\rho^-,\rho^+)=(1/2,3/2)$ if and only if 
$G(1/2)\leq G(1)$. The conclusion then follows from 
\textit{(v)} of Proposition \ref{prop:extrema}. 
 Finally, for $r=0$, $\mathcal R_0$ follows 
from \eqref{flux_degenerate} and Definition \ref{def_set} 
(recall \eqref{wlog}, implying here that $\gamma_0>0$ 
and $\gamma_1>0$). \newline\newline
{\em Proof of (iv).} 
For $(\mathfrak{a},r)\in[1/2,1]\times[1,+\infty)$, let us denote by 
$\rho(\mathfrak{a},r)$ the unique solution given by \textit{(vi)} 
of Proposition \ref{prop:extrema} of
 $F_{\mathfrak{a},1-\mathfrak{a},r}(\rho)=0$,  where 
 $F_{\mathfrak{a},1-\mathfrak{a},r}(\rho)
 :=G_{\mathfrak{a},1-\mathfrak{a},r}(\rho+1)-G_{\mathfrak{a},1-\mathfrak{a},r}(\rho)$. 
The proof of Proposition \ref{prop:extrema}, \textit{(vi)} 
showed that  $F'_{\mathfrak{a},1-\mathfrak{a},r}(\rho)<0$  for every $\rho\in[0,1]$. 
Besides, by \eqref{eq:Gviaphi}, \eqref{eq:phi} 
and \eqref{eq:psi},  $F_{\mathfrak{a},1-\mathfrak{a},r}$ 
is continuously differentiable with respect to $(\mathfrak{a},r)$. 
Thus the implicit function theorem implies that 
$(\mathfrak{a},r)\mapsto \rho(\mathfrak{a},r)$ is continuously differentiable.  Let
\[
I(\mathfrak{a},r)
:=\inf_{\rho\in [\rho(\mathfrak{a},r),1+\rho(\mathfrak{a},r)]} 
G_{\mathfrak{a},1-\mathfrak{a},r}(\rho),\quad
S(\mathfrak{a},r):=\sup_{\rho\in [\rho(\mathfrak{a},r),1+\rho(d,r)]} 
G_{\mathfrak{a},1-\mathfrak{a},r}(\rho) 
\]
We define
\be\label{def_openset}
\mathcal Z:=\left\{
(\mathfrak{a},r)\in[0,1]\times[0,1]:\,
I(\mathfrak{a},r)
< G_{\mathfrak{a},1-\mathfrak{a},r}[\rho(\mathfrak{a},r)]<S(\mathfrak{a},r)
\right\}
\ee
The set $\mathcal Z$ is an open subset of $[0,1]^2$ 
because $(\mathfrak{a},r)\mapsto \rho(\mathfrak{a},r)$ is continuous. 
By \textit{(iii)}, it contains  $\{1/2\}\times(0,r_0)$. 
Finally, by \textit{(ii)} of Definition \ref{def_set}, 
for $(\mathfrak{a},r)\in\mathcal Z$, neither $(\rho(\mathfrak{a},r),1+\rho(\mathfrak{a},r))$ 
nor $(1+\rho(\mathfrak{a},r),\rho(\mathfrak{a},r))$ lies in $\mathcal R_0$, 
thus $\mathcal R_0=\emptyset$. 
\end{proof}
\begin{proof}[Proof of Lemma \ref{lemma:nob2}]
\mbox{}\newline\newline
{\em Proof of (i).} By  \textit{(i)}  
of Proposition \ref{prop:step2}, 
$\mu$ is a shock measure of amplitude $2$, 
 that is either a $(0,2)$ or a $(2,0)$-shock measure.
The second possibility and \textit{(i)} of 
Proposition \ref{cor:step2} would imply that $(2,0)$ 
satisfies condition \textit{(ii)} of Definition \ref{def_set}, 
thus  by \textit{(o)} of Proposition \ref{prop:extrema},  
that the maximum of $G$ is  $0$;
whereas  \textit{(iii)} of Proposition \ref{prop:extrema}  (when $q>0$)   
and \eqref{flux_degenerate}   (when $q=0$)   imply that this maximum 
is positive under \eqref{wlog}.  \newline\newline
{\em Proof of (ii).} 
 By \textit{(vii)} of Proposition \ref{prop:extrema},  the equation 
$G(\rho)=0$ has a solution in $(0;2)$  and changes sign 
around this solution.  Since $G(0)=G(2)=0$, 
condition \textit{(ii)} of Definition \ref{def_set} cannot hold, 
and Proposition \ref{cor:step2} implies that a 
$(0,2)$-shock measure cannot exist. This and \textit{(i)} 
above imply the desired conclusion. 
\end{proof}
\begin{proof}[Proof of Proposition \ref{cor:step2}]
\mbox{}\newline\newline
 \textit{Proof of (i).}  Assume for instance $\rho^-<\rho^+$, 
 the case $\rho^->\rho^+$ 
 being similar. Let $r\in[\rho^-,\rho^+]$. 
 Let  $d\widetilde{\nu}(\eta,\xi)$  be a coupling 
 of $d\nu(\eta)$ and $d\nu_r(\xi)$ that is invariant 
 for the coupled generator \eqref{coupled_gen} 
 (it exists by \cite[Proposition 2.14 in Chapter VIII]{liggett2012interacting}).  
 Since $\widetilde{\nu}$ is supported on a compact space, 
 there exists an increasing  
 $\N$-valued sequence $x_n\to +\infty$ such that 
 $\tau_{-x_n-1}\widetilde{\nu}$ 
 and $\tau_{x_n}\widetilde{\nu}$ have  weak limits denoted 
 respectively by $\widetilde{\nu}_{-\infty}$ 
 and $\widetilde{\nu}_{+\infty}$. By \eqref{limits_mu} 
 and translation invariance 
 of $\nu_r$, $\widetilde{\nu}_{\pm\infty}$ is 
 a coupling of $\nu_{\rho^\pm}$ and $\nu_r$. 
 Since the coupled generator  $\widetilde{L}$ 
 given by \eqref{coupled_gen} 
  for the transition kernel  \eqref{eq:intensity_c} 
 is translation invariant in the $\Z$-direction, we have 
 $\widetilde{\nu}\in\widetilde{\mathcal I}\cap\widetilde{\mathcal S}$. 
 Hence, by  \eqref{ordered_coupling} in  
 the proof of \thmref{characterization_lemma}, $\widetilde{\nu}$ 
 is supported on ordered pairs $(\eta,\xi)$. On the other hand, 
 under $\widetilde{\nu}_{\pm\infty}$, empirical averages 
 (cf. \eqref{ergo_xi}) 
 exist by the law of large numbers and are given by 
 $M(\eta)=\rho^\pm$ and $M(\xi)=r$. 
 These averages must be ordered like $\eta$ and $\xi$, 
 hence $\widetilde{\nu}_{-\infty}$ 
 and $\widetilde{\nu}_{+\infty}$ are supported respectively 
 on  $E_-$ and $E_+$.  \newline\newline
 Let  $N\in\N$, $R_N:=(\Z\cap[-N,N])\times W$, and 
\be\label{sum_rect}
 \widetilde{F}_N(\eta,\xi):= D_{-N,N}(\eta,\xi)
=\sum_{i\in W}\sum_{ z\in\Z\cap[-N,N]}|\eta(z,i)-\xi(z,i)|
\ee 
 Since $\widetilde{\nu}\in\widetilde{\mathcal I}$, we have
\be\label{invtilde}
\int_{\mathcal X\times\mathcal X}\widetilde{L} 
\widetilde{F}_N(\eta,\xi)  d\widetilde{\nu}(\eta,\xi)=0
\ee 
By \cite[Lemma 2.4]{Liggett1976}, we have 
\begin{eqnarray}
\label{computation_1}
\widetilde{L}\widetilde{F}_N(\eta,\xi) 
& = & \sum_{x\not\in R_N,\,y\in R_N} p(x,y)J_{x,y}(\eta,\xi)\\
\label{computation_2}
& - & \sum_{x\in R_N,\,y\not\in R_N} p(x,y)J_{x,y}(\eta,\xi) \\
\label{computation_3}
& - & \sum_{x\in R_N,\,y\in R_N,\,x\neq y} 
[p(x,y)+p(y,x)] {\bf 1}_{E_{x,y}}(\eta,\xi)
\end{eqnarray}
 where  $E_{x,y}$ was defined in \eqref{def_dk}, and 
\begin{eqnarray}
J_{x,y}(\eta,\xi) & := & [\eta(x)(1-\eta(y))-\xi(x)(1-\xi(y))]\left\{
{\bf 1}_{\{\eta(x)\geq \xi(x),\,\eta(y)\geq \xi(y)\}}-
\right. 
\nonumber\\
& & \left.
{\bf 1}_{\{\eta(x)\leq \xi(x),\,\eta(y)\leq \xi(y)\}}
\right\}\label{coupled_current}
\end{eqnarray}
Let
\[
\widetilde{j}(\eta,\xi):=\sum_{x(0)\leq 0,y(0)>0}p(x,y)J_{x,y}(\eta,\xi)
-\sum_{x(0)\leq 0,y(0)>0}p(y,x)J_{y,x}(\eta,\xi)
\]
where $J_{x,y}$ is defined by \eqref{coupled_current}.
Then \eqref{computation_1}--\eqref{computation_2} 
can be written as  
$\tau_{-N-1}\widetilde{j}(\eta,\xi)-\tau_{N}\widetilde{j}(\eta,\xi)$. 
By \eqref {coupled_current} and \eqref{def_microflux_ladder}
\be\label{difference_currents-2}
\widetilde{j}(\eta,\xi)=j(\eta)-j(\xi)\mbox{ if }\eta\leq\xi,\quad
\widetilde{j}(\eta,\xi)=j(\xi)-j(\eta)\mbox{ if }\xi\leq\eta
\ee
The stationarity relation \eqref{invtilde} combined with 
\eqref{computation_1}--\eqref{computation_3} yields 
\be\label{boundary_flux}
\widetilde{\nu}(\tau_{-N-1}\widetilde{j})
-\widetilde{\nu}(\tau_{N}\widetilde{j})\geq 0
\ee 
Taking  $N=x_n$ and letting  $n\to+\infty$ yields
\be\label{boundary_flux_lim}
\widetilde{\nu}_{-\infty}(\widetilde{j})
-\widetilde{\nu}_{+\infty}(\widetilde{j})\geq 0
\ee
Under $\widetilde{\nu}_{\pm\infty}$, 
we can use \eqref{difference_currents-2} 
for ordered configurations. The marginals of  
$\widetilde{\nu}_{\pm\infty}$  then yield 
\be\label{entropy}
G(r)-G(\rho^-)\geq G(\rho^+)-G(r)
\ee
Since $r\in[\rho^-,\rho^+]$ is arbitrary,  
we first obtain  $G(\rho^+)=G(\rho^-)$ 
by letting $r=\rho^\pm$, and then
$G(\rho^+)=G(\rho^-)=\min_{r\in[\rho^-,\rho^+]}G(r)$.  \newline\newline
\textit{Proof of (ii). } This follows from \textit{(i)} above, 
and \textit{(ii)} of Proposition \ref{prop:step2}. \newline\newline
\textit{Proof of  (iii).}   By Lemma \ref{lemma:r_finite}, 
$\gamma_0+\gamma_1=0$ or $q=0$. 
Assume from now on that the latter holds. \newline\newline
(a) We assume first $\gamma_1\geq 0$. Then by \eqref{flux_degenerate} 
and Definition \ref{def_set}, if $\gamma_0$ and $\gamma_1$ 
are not both $0$, we have 
$\mathcal R_0\cap\mathcal B_1=\{(0,1);(1,2)\}$.  \newline\newline
We consider first $(\rho^-,\rho^+)=(0,1)$.  
We show that this case is impossible if $\gamma_1=0$, 
whereas if $\gamma_1>0$,  $\mu$ is one of the measures 
$\nu^{\bot,+\infty,j}$ in \eqref{def_bl_3_2}.   
  To this end, observe first that since $q=0<p$,  
  $\nu^0$ is the probability measure supported 
  on the empty configuration and $\nu^1$ is supported 
  on the configuration that is empty 
  on lane $0$ and full on lane $1$.  Since $\mu$ is a 
  $(0,1)$-shock measure, we have
\begin{eqnarray}
\label{shock_lane_0}
\lim_{x\to-\infty}\tau_x\eta^0_0=\mu_0,\quad 
\lim_{x\to+\infty}\tau_x\eta^0_0=\mu_0, & & \\ 
\label{shock_lane_1}
\lim_{x\to-\infty}\tau_x\eta^1_0=\mu_0,\quad 
\lim_{x\to+\infty}\tau_x\eta^1_0=\mu_1 
\end{eqnarray}
where  $\mu_\rho=\mu_{\Z,\rho}$  (recall \eqref{alpha_x} and Remark \ref{rk_prod}) 
denotes the product Bernoulli measure 
on $\{0,1\}^\Z$ with parameter $\rho$; for $\rho\in\{0,1\}$ as above, 
these are Dirac measures supported on the empty or full configuration. 
 As in the proof of  Proposition \ref{prop:bowtie}, 
we couple $\eta_.$ with   an ASEP  $\zeta_.^0$ on lane $0$ 
starting from $\zeta^0_0:=\eta^0_0$,
with jump rate $d_0$ to the right and $l_0$ to the left, that is 
\eqref{def_tasep}--\eqref{eq:generator of the Exclusion} 
with $(l,d)=(l_0,d_0)$. 
The limit \eqref{shock_lane_0} implies
\be\label{shock_zeta}
\lim_{n\to+\infty}\frac{1}{n}\sum_{x=1}^n\zeta^0_0(x)
=\lim_{n\to+\infty}\frac{1}{n}\sum_{x=-n}^1\zeta^0_0(x)=0
\ee
in probability. 
 Since the initial configuration satisfies \eqref{shock_zeta}, 
 $\zeta_t^0$ converges 
in law as $t\to+\infty$ to the Bernoulli invariant measure 
with zero density, 
that is the empty configuration;
this follows from \cite[Theorem 1]{bm} when $\gamma_0>0$, 
or \cite[Chapter VIII]{liggett2012interacting} when $\gamma_0=0$.
Since $\eta_t^0\leq\zeta_t^0$, 
the same limit  holds for $\eta_t^0$. By stationarity of $\mu$, 
this implies that 
under $\mu$, lane $0$ is almost surely empty. 
It follows that $(\eta^1_t)_{t\geq 0}$ 
is itself an autonomous  SEP.  Thus the marginal of $\mu$ on lane $1$ is 
an invariant measure for  SEP.  By \cite[Theorem 1.4]{Liggett1976},
it is a mixture of Bernoulli and blocking measures. 
Because of \eqref{shock_lane_1}, 
only blocking measures are present in the mixture.  
Note that this is only possible if $\gamma_1>0$. 
In this case, $\mu$ is a mixture of  
the invariant measures $\nu^{\bot,+\infty,j}$ in 
\eqref{def_bl_3_2} for $j\in\Z$. Since $\mu$ is extremal, 
it is one of them.\newline\newline
 Next, we consider $(\rho^-,\rho^+)=(1,2)$. This can be 
 reduced to the previous case 
 by Lemma \ref{lemma_sym}, considering the image of 
 $\eta_t$ by $\sigma\sigma'\sigma''$. 
 The resulting process has drift $\gamma'_0=\gamma_1$ on lane $0$, 
 and $\gamma'_1=\gamma_0$ 
 on lane $1$. The  image $\mu''$ of $\mu$  is a $(0,1)$-shock 
 measure invariant 
 for the transformed process. It follows from the above  that: \newline\newline
- If $\gamma_0>0$, 
$\mu''=\nu^{\bot,+\infty,j}$, thus $\mu=\nu^{\bot,j,-\infty}$, 
for some $j\in\Z$. \newline\newline
- If  $\gamma_0=0$,  that is $\gamma'_1=0$,  from the above discussion, 
it is impossible for $\mu''$ to be a $(0,1)$-shock measure, 
and thus for $\mu$ 
to be a $(1,2)$-shock measure.  \newline\newline
Putting together the cases $(\rho^-,\rho^+)=(0,1)$ 
and $(\rho^-,\rho^+)=(1,2)$, 
we  conclude that 
in case \textit{(iv)} of Theorem  \ref{cor:entire}, 
a $(\rho^-,\rho^+)$-shock measure with 
$(\rho^-,\rho^+)\in  \mathcal B_1$ 
lies in the set \eqref{def_bl_3_2}; 
whereas in case \textit{(vi)}  it lies in the set 
 \eqref{def_bl_3_1}.  In the former case 
 $\mathcal R'=\mathcal R_0\cap\mathcal B_1=\{(0,1);(1,2)\}$, 
 whereas in the latter case  
 $\mathcal R'=\{(0,1)\}\neq\mathcal R_0\cap 
 \mathcal B_1=\{(0,1);(1,2)\}$.\newline\newline
\textit{(b)} We consider now $\gamma_1<0<\gamma_0$. 
Here, by \eqref{flux_degenerate} 
and  Definition \ref{def_set},  we have 
$\mathcal R_0\cap\mathcal B_1=\{(1,0);(2,1)\}$. The case 
$(\rho^-,\rho^+)=(1,0)$ is treated like 
$(\rho^-,\rho^+)=(0,1)$ in (a) above;
except that on lane $1$ we have a $(1,0)$-shock with a negative drift. 
The case $(\rho^-,\rho^+)=(1,2)$ is deduced 
by  Lemma \ref{lemma_sym} and particle-hole symmetry
 (recall \eqref{sym_op}). 
\end{proof}
\begin{proof}[Proof of Lemma \ref{lemma_blocking}]
Let $\widetilde{\nu}$ denote a coupling of $\nu^1$ and $\nu^2$ 
such that $\widetilde{\nu}\in\widetilde{\mathcal I}$. 
Since $\nu^1$ and $\nu^2$ 
are supported on $\mathcal X_2$, $\widetilde{\nu}$ satisfies assumption 
\eqref{assumption_finite_disc} of Proposition \ref{prop_kilroy}. 
Since we excluded the case $l_0=l_1=q=0<p$,  
by Lemma \ref{lemma_wi}, 
$p(.,.)$ is weakly irreducible. Thus, by \eqref{p_ordered_conf} 
and the proof of \thmref{characterization_lemma}, 
$\widetilde{\nu}$ is supported on ordered pairs 
of configurations. Since $H_2$ 
is a nondecreasing function on $\mathcal X_2$ 
and has the same value under 
both marginals of $\widetilde{\nu}$, it follows that  
$\widetilde{\nu}$ is supported on  $E_3$  (defined in \eqref{equal}),  
whence the conclusion. 
\end{proof}
\begin{appendix}\label{append}
\section{Extensions}\label{app:ext}
\subsection{Results}\label{app:ext_res}
 We discuss below situations where our approach should still work 
to extend parts of our results with minor modifications, or with 
suitable extensions but without essentially new ideas. Some explanations
on the feasability of these extensions are given in 
 Appendix \ref{app:ext_proof}. 
\subsubsection{Non nearest-neighbour horizontal kernels}\label{subsec:nnnhk}
We may consider kernels of the form \eqref{restrict_kernel} 
in which the horizontal kernels $Q_i(.)$ are no longer 
assumed nearest-neighbour, but only weakly irreducible, 
cf. Definition \ref{def_irred}. 
The results of Theorems \ref{thm:characterization_lemma} 
and \ref{thm:Characterization_of_invariant_measures}  remain valid 
as such, because their proofs do not   require  the nearest-neighbour 
assumption. This is partly true for Theorems \ref{cor:entire} 
and \ref{thm:Characterization_of_invariant_measures_gen} 
with the following restrictions or modifications (only statements 
that do not carry over as such are mentioned). \newline\newline
{\em In Theorem \ref{cor:entire}.} \newline\newline
{\em Statement (o).} The proof  carries over if we assume that 
$Q_i(z)>Q_i(-z)$ for all $i\in W$ and $z>0$ such that $Q_i(z)>0$; 
this is an intermediate condition 
between the single-lane conditions in \cite[Theorem 5.1]{fls} 
and \cite[Theorem 1.4]{Bramson2002}.\newline\newline
{\em Statement (i).} The proof carries over under the assumption 
that the kernel $Q_i(.)$ on each lane is symmetric. \newline\newline
{\em Statement (ii).} This may be extended under the following 
assumption, automatically satisfied for nearest-neighbour kernels 
(see \cite{fls} for a similar condition for single-lane ASEP, 
or \cite{ligd} for $d$-dimensional ASEP): there exists a constant 
$\theta\in(0,+\infty]$ such that
\be\label{cond_fls}
\forall i\in W,\,z\in\Z,\quad \frac{Q_i(z)}{Q_i(-z)}=\theta^z
\ee
Under this condition blocking measures can still be constructed 
from \eqref{sol_rev_twolane}. \newline\newline
{\em Statement (iii).} Under condition \eqref{cond_fls}, 
the conditions $d_1=\lambda d_0$ and $l_1=\lambda l_0$  
for some $\lambda\in\R$  should be 
replaced by $Q_1(z)=\lambda Q_0(z)$ for all $z\in\Z$. \newline\newline
{\em Statements (iv)--(vii).} The proof  of the description 
of $Bl_1$, cf; \eqref{def_bl_3_2}, \eqref{def_bl_5}, \eqref{def_bl_3_1}, 
remains valid as long as the kernels $Q_i(.)$ satisfy assumptions 
of \cite{BMM} or \cite{Bramson2002} ensuring
existence of profile measures for the corresponding single-lane ASEP. 
Then  the  family of  measures $\{\widehat{\mu}_n, n\in\Z\}$ 
involved in the construction of $\nu^{\bot,i,j}$ is more generally 
the family of profile measures from \cite{Bramson2002}, instead of 
being defined by \eqref{blocking_config}, \eqref{cond_rev_meas}. 
The statement that $Bl_2=\emptyset$ in \textit{(iv)} and \textit{(vii)} 
can be generalized under the assumption (similar to \textit{(o)} 
above) that $Q_1(z)>Q_1(-z)$ for all $z>0$. \newline\newline
{\em In Theorem \ref{thm:Characterization_of_invariant_measures_gen}.}
As above, statement (2) and its proof carry over under the assumption 
that the kernel on each lane is symmetric, and the description of 
blocking measures in $\mathcal I_n$ based on \eqref{sol_rev_twolane} 
can be generalized under condition \eqref{cond_fls}. 
\subsubsection{Multilane models}\label{subsec:mm}
 While the ladder process (that is a 
vertically cyclic multilane ASEP) was discussed in Subsection 
\ref{subsec:rotation}, another natural multilane generalization of 
the two-lane model is the kernel \eqref{restrict_kernel}, 
where $W=\{0,\ldots,n-1\}$ and
\be\label{natural_multi}
q(i,j)=p{\bf 1}_{\{i<n-1,\,j=i+1\}}+q{\bf 1}_{\{i>0,\,j=i-1\}}
\ee
with a nearest-neighbour kernel $Q_i(.)$ and corresponding drift 
$\gamma_i$ on lane $i$.
Note that the cyclic model of Subsection \ref{subsec:rotation} 
contains the vertical kernels  (for which tagged particle motion 
is studied in \cite{zhang-lad}) 
\be\label{cyclic_multi}
q(i,j)=p{\bf 1}_{\{i<n-1,\,j=i+1\}}+p{\bf 1}_{\{i=n-1,\,j=0\}}
+q{\bf 1}_{\{i>0,\,j=i-1\}}+q{\bf 1}_{\{i=0,\,j=n-1\}}
\ee
and \eqref{natural_multi}--\eqref{cyclic_multi} coincide if and only if 
$n=2$ and $p=q$. \newline\newline
{\em Theorem \ref{thm:characterization_lemma}.} The result can be proved 
similarly for the multilane model \eqref{natural_multi} if we extend the 
definition of $\mathcal F$ as the set of $(\rho_0,\ldots,\rho_{n-1})\in[0,1]^n$ 
such that
\be\label{def_f_ext}
p\rho_i(1-\rho_{i+1})=q\rho_{i+1}(1-\rho_i),\quad\forall i\in\{0,\ldots,n-1\}
\ee
{\em Theorems 
\ref{thm:Characterization_of_invariant_measures}--\ref{cor:entire}.} Outside the case
\be\label{unless_ext}p=q\mbox{ and }\sum_{i=0}^{n-1}\gamma_i=0\ee
(that is, the extension of \eqref{unless}), the proofs of this paper 
could be extended to show the following statements, 
some of which are analogous to
Theorem \ref{thm:Characterization_of_invariant_measures_gen}. \newline\newline
1. Elements of $\mathcal I_e$ that are not homogeneous Bernoulli measures 
consist of finitely many (up to shifts) shock measures of amplitude 
$k\in\{1,\ldots,n\}$, with at most $k$ shock measures of amplitude $k$.\par\medskip
\noindent
2. Statement \textit{(1), (b)} of 
Theorem \ref{thm:Characterization_of_invariant_measures_gen} holds. \par\medskip
\noindent
3. If $q>0$, under the same condition as in statement 
\textit{(1), (c)}, the set of shock measures  with shock 
 ($\rho^-=0,\rho^+=n)$ consists of (up to shift) $n$ blocking measures. 
These are constructed similarly to Lemma \ref{def_blocking_h2} when 
$l_i>0$ for all $i$; now the conditioning on $H_2$ depends on the 
remainder of $H_2$ modulo $n$. When $l_i=0$ for all $i$, these 
measures are constructed as in Subsection \ref{sec:blocking}, 
case \textit{(ii), (b)} and \textit{1, (c)} 
of Theorem \ref{thm:Characterization_of_invariant_measures_gen}. 
As in \eqref{blocking_config_multi}--\eqref{blocking_meas_multi}, 
we have a family of blocking measures $\nu_i$ for $i=0,\ldots,n-1$. 
However the weights are no longer uniform as in 
\eqref{blocking_meas_multi}:  we now have
\[
\nu_i:=\sum_{A\subset\{0,\ldots,n-1\}:\,|A|=i}w_i(A)\delta_{\eta_A}
\]
We may identify a subset $A$ of $W$ with an exclusion configuration on $W$ 
for which $A$ is the set occupied sites. The weight $w_i(A)$ is then the 
probability of $A$  under the unique invariant measure with $i$ particles 
of the SEP on $W$ with jump kernel \eqref{natural_multi}.\par\medskip
\noindent
4. If $q=0$, results of Theorem \ref{cor:entire} could be extended 
as follows. First, if all lane drifts $\gamma_i$ are different, the set 
of non-homogeneous invariant measures contains no shock measure except 
blocking or partial blocking measures described below.\newline\newline
To construct blocking or partial blocking measures,
we must partition lanes into ``groups''. A group of $k\geq 2$ 
lanes consists of adjacent lanes that are totally asymmetric 
in the same direction (if such lanes exist). Other lanes are 
viewed as singleton groups. For a group containing $k\geq 2$ lanes, 
we can construct blocking measures
similar to those of Theorem \ref{cor:entire}, \textit{(iv), (b)}, 
cf. \eqref{def_bl2(iv)(b)}. On singleton groups we use blocking measures 
of single-lane ASEP as defined in \eqref{blocking_config}--\eqref{cond_rev_meas}.
Then partial blocking measures are defined by considering blocking 
measures on one group, setting downstream lanes to $1$ and 
upstream lanes to $0$. \newline\newline
 To state this precisely, we introduce more notation. Let $i_1=n-1$, 
 and for $k\geq 1$, define $i_{k+1}$ as follows: $i_{k+1}=i_k-1$ 
 unless lanes $i_k$ and $i_{k}-1$ are totally asymmetric in the 
 same direction, that is, $0=d_kl_k+d_{k+1}l_{k+1}<d_kd_{k+1}+l_kl_{k+1}$. 
 In this case we set $i_{k+1}$ to be the smallest $\ell<i_k$ such that 
 lanes $\ell$ to $i_k$ are totally asymmetric in the same direction. 
 For some $k=k_0$ we eventually reach $i_k=0$. 
Let $G_k=\{i_{k+1},\ldots,i_k\}$ be the $k$-th group, and let 
$s_k:=(j_{i_{k+1}},\ldots,j_{i_k})$ be a ``shift vector'' taking 
values in $\Z\cup\{\pm\infty\}$ such that 
$j_{i_{k+1}}\geq\cdots\geq j_{i_k}$.  Similarly to \eqref{def_bl2(iv)(b)}, 
we define  $\nu^{\bot,j_{i_{k+1}},\ldots,j_{i_k}}$ to be the Dirac 
mass on the configuration $\eta^{\bot,j_{i_{k+1}},\ldots,j_{i_k}}$ 
whose restriction to lane  $\ell\in\{i_{k+1},\ldots,i_k\}$
is $\eta^*_{j_\ell}$.  We can then define a family of 
(generally partial) blocking measures $\nu^{\bot,k,s_k}$ 
indexed by a group number $k=1,\ldots,k_0$ and a shift vector 
$s_k$, but excluding  group numbers $k$ such that  $|G_{k}|=1$ 
with drift $\gamma_{i_k}=0$.  Under the measure $\nu^{\bot,k,s_k}$:\newline\newline
(A) All lanes with numbers $i>i_k$ are fully occupied, 
i.e. $\eta(z,i)=1$ for all $z\in\Z$.\\
(B) All lanes with numbers $i<i_{k+1}$ are empty, i.e. 
$\eta(z,i)=0$ for all $z\in\Z$. \\
(C) Assume $G_k=\{i_k\}$. Then if $\gamma_{i_k}>0$, the restriction 
of $\nu^{\bot,k,s_k}$ to lane $i_k$ is $\widehat{\mu}_{n}$ 
with $n=j_{i_k}$, given by \eqref{blocking_config} or 
\eqref{cond_rev_meas}  with $l=l_{i_k},d=d_{i_k}$. 
If $\gamma_{i_k}<0$, the restriction of $\nu^{\bot,k,s_k}$ to lane 
$i_k$ is the image of $\widehat{\mu}_{n}$ with $n=j_{i_k}$ by the 
symmetry operator $\sigma$  (defined in \eqref{sym_op}).  \\
(D) Assume $|G_k|\geq 2$. Then if lanes in $G_k$ are totally 
asymmetric to the right,  the restriction of $\nu^{\bot,k,s_k}$ 
to lanes in $G_k$ is $\nu^{\bot,j_{i_{k+1}},\ldots,j_{i_k}}$. 
If it is to the left, the restriction is the image of 
$\nu^{\bot,j_{i_{k+1}},\ldots,j_{i_k}}$ by $\sigma$. \newline\newline
Notice that only one group at a time can carry actual ``blocking'' 
measures. The above measures are $(\rho^-,\rho^+)$-shock measures 
where $\rho^\pm$ are integers such that  $0\leq\rho^-<\rho^+\leq n$. 
If there is a single group, i.e. all lanes are totally asymmetric 
in the same direction (as in Theorem \ref{cor:entire}, \textit{(iv) (b)}), 
finite-valued shift vectors yield global blocking measures, i.e. 
$\rho^-=0$ and $\rho^+=n$. If there are at least two groups 
(as in Theorem \ref{cor:entire}, \textit{(iv) (b)}, \textit{(v)} 
and \textit{(vi)}) , only partial blocking measures  are obtained, 
i.e. $\rho^+-\rho^-<n$.
\subsubsection{Single-lane models with several particles per site.} \label{item_mis}
We may consider the Misanthrope's process introduced in \cite{coc}. 
Let us recall the definition of this model. Given some 
$K\in\N\setminus\{0\}$, the state space of this process is 
$\mathcal X=\{0,\ldots, K\}^\Z$, and its generator of the form
\be\label{gen_mis}
Lf\left(\eta\right)
=\sum_{x,y\in V}p\left(x,y\right)b[\eta(x),\eta(y)]
\left(f\left(\eta^{x,y}\right)-f\left(\eta\right)\right),
\ee
where $p(x,y)=P(y-x)$ is a kernel satisfying \eqref{cond_lig}, 
and $b(.,.):\{0,\ldots,K\}^2\to[0,+\infty)$ is a jump rate function 
satisfying the following assumptions: \newline\newline
(M1)  $b(0,.)=b(.,K)=0$\\
(M2) For every $n,m\in\{0,\ldots,K\}$, $b(.,m)$ and $b(n,.)$ 
are respectively nondecreasing and nonincreasing.\\
(M3) For every $n,m\in\{0,\ldots,K\}$,
\be\label{condinvar}
\frac{b(n,m)}{b(m+1,n-1)}=\frac{b(n,0)b(1,m)}{b(m+1,0)b(1,n-1)}
\ee 
 (M4) For every $n,m\in\{0,\ldots,K\}$,
\be\label{gradient_cond}
b(n,m)-b(m,n)=b(n,0)-b(m,0)
\ee
{\em Homogeneous product invariant measures.} In \cite{coc},
homogeneous product invariant measures are constructed. 
The one-site marginal of these invariant measures is an exponential 
family  $(\theta^\lambda)_{\lambda\geq 0}$  
of probability measures on $\N$
of the form
\be\label{fugacity}
 \theta^\lambda(n):=Z(\lambda)^{-1}\lambda^n\theta^1(n)
\ee
where $\lambda$ is the fugacity, $Z(\lambda)$ the normalizing factor, 
and  $\theta^1(.)$  depends explicitely 
on the jump rate function $b(.,.)$. 
Homogeneous invariant measures are product measures 
 $\nu^\lambda$  on $\mathcal X$ such that
\be\label{inv_mis}
 \nu^\lambda[\eta(x)=i]=\theta^\lambda(i),\quad i\in\N,\,x\in\Z 
\ee
For $\lambda\to+\infty$,  $\theta^\lambda$  converges weakly to 
$\delta_K$ and  $\nu^\lambda$  to the corresponding product 
measure under which each site has $K$ particles. We may 
thus define by extension  $\theta^{+\infty}$ and $\nu^{+\infty}$. 
Then, one can reparametrize the family 
 $(\nu^\lambda)_{\lambda\in[0,+\infty]}$  to get a family 
$(\nu_\rho)_{\rho\in[0,K]}$ of product invariant measures 
indexed by the mean density  of particles, i.e. such that 
the expectation of $\eta(x)$ under $\nu_\rho$ is $\rho$,  by setting
\be\label{repar_mis}
\nu_\rho:=\nu^{R^{-1}(\rho)}
\ee
where $R(\lambda)$ is the mean of $\theta^\lambda$ 
(which is increasing and continuous with respect to $\lambda$).
With these measures, a characterization theorem similar to 
Theorem \ref{thm:characterization_lemma} is given  in \cite{coc}  for 
$\mathcal I\cap\mathcal S$, under the assumption that 
the jump kernel $p(.,.)$ is weakly irreducible. \newline\newline
{\em Blocking measures.} In the case of nearest-neighbour jumps, 
 $P(1)=d$ and $P(-1)=l$,  explicit blocking measures can be obtained 
by letting the fugacity in \eqref{inv_mis} depend on the site 
as follows:
\be\label{blocking_mis}
\mu^c[\eta(x)=i]= \theta^{\lambda(x)}(i), \quad i\in\N,\,x\in\Z
\ee
with
\be\label{fugacity_blocking}
\lambda(x)=c\left(\frac{d}{l}\right)^x,\quad c>0
\ee
Such blocking measures are studied in \cite{bfj}  as a basis 
for deriving remarkable combinatorial identites.
Interestingly, though the Misanthrope's  and two-lane exclusion 
process look quite different, the particular structure 
\eqref{cond_rev_twolane}--\eqref{shift_rev_twolane} is found 
in both settings. Namely, the above blocking measures can be 
decomposed by conditioning on the analogue of \eqref{def_H_2}, 
that is here the conserved quantity (when initially finite)
\be\label{def_H_single}
H(\eta):=
\sum_{ x\in\Z:\,x\leq 0}
\eta(x)-\sum_{ x\in\Z: \, x>0}[1-\eta(x)]
\ee 
As in Lemma \ref{def_blocking_h2}, the  conditioned measure 
 \be\label{78bis}
\mu^c(.|H(\eta)=k)\ee 
 does not depend on $c>0$. \newline\newline
{\em Characterization of invariant measures.}
To our knowledge, there exists so far no characterization result 
for $\mathcal I$. As mentioned in the introduction and explained 
in Subsection \ref{subsec:ideas}, compared to what is known for ASEP, 
new problems are induced by the fact that several particles per site 
are allowed. 
With a suitable adaptation of our proofs, the following results may be 
obtained for this model in the line of Theorems \ref{thm:Characterization_of_invariant_measures_gen}--\ref{cor:entire}. \par\medskip 
\noindent
1. Extremal invariant measures that are not homogeneous 
product measures consist (up to shifts) of a finite set of shock 
measures with integer amplitude $k\in\{1,\ldots,K\}$. For each 
$k\in\{1,\ldots,K\}$, there are at most $k$ shock measures. 
For $k=K$, there are exactly $K$ shock measures, which are 
the above conditioned blocking measures  \eqref{78bis}. \par\medskip
\noindent
2. For $K=2$, a function $b(.,.)$ satisfying conditions 
 (M1)--(M4)  above is uniquely determined by the parameters 
$b(1,0)$, $b(2,0)\geq b(1,0)$, and $b(1,1)\leq b(1,0)$; 
then $b(2,1)=b(2,0)-b(1,0)$.
One can then obtain the following result similar to 
\textit{(iii)} of Theorem \ref{cor:entire}. When 
\be\label{similar_mis}
|b(2,0)-2b(1,0)|\mbox{  and }b(1,1)\mbox{ are small enough},
\ee
all extremal invariant measures are either homogeneous 
product measures or blocking measures (i.e., there is no shock 
measure of amplitude $1$).
An explanation of the link between condition \eqref{similar_mis} 
and the set $\mathcal Z$ in \textit{(iii)} of Theorem 
\ref{cor:entire} (i.e. the conditions 
that $d$ is close to $1/2$ and $r$ is small enough) is given 
 by Lemma \ref{lemma:mapping_flux} below. \par\medskip
\noindent
3. If $p(.,.)$ is weakly irreducible and symmetric, all extremal 
invariant measures are homogeneous product measures.
\subsection{Extensions of main ideas}\label{app:ext_proof}
We now comment on the robustness of the steps  
of proof outlined in Subsection \ref{subsec:ideas}  
with respect to the extensions mentioned in Appendix \ref{app:ext_res}. 
These steps mainly use the following general properties of the model:
(i) attractiveness property \eqref{eq:attra}--\eqref{attractive};
(ii) weak irreducibility property  (see Definition \ref{def_irred}) 
 for the global kernel \eqref{restrict_kernel} when $q>0$, 
see Lemma \ref{lemma:irred};
(iii) finite propagation property (Proposition \ref{prop:prop});
(iv) the characterization Theorem \ref{thm:characterization_lemma} 
for $(\mathcal I\cap\mathcal S)_e$;
(v) the fact that $(\mathcal I\cap\mathcal S)_e$ consists 
of product measures.\newline\newline
Besides these properties, we use a fairly explicit expression 
of the flux function $G(\rho)$ in Step four, and the  incomplete 
ordering relations $\eta\supinf\xi$ and $\eta\bowtie\xi$ introduced 
in the case $q=0$. The explicit expression of the flux is allowed 
by property (v), and its degree of precision is still improved when 
the number of lanes is $2$.
For the models in  Appendix \ref{app:ext_res}, we have: \par\bigskip
\noindent
1. {\em For all models.} 
The ingredients listed in Step one hold so long as the global kernel 
$p(.,.)$ (see \eqref{restrict_kernel} for multilane models) is weakly 
irreducible. This is the case for the multilane model 
\eqref{natural_multi} when $q>0$;
for the ladder process \eqref{cyclic_multi} even if $q=0$; 
for the model with finite-range horizontal kernels $Q_i(.)$ 
in \eqref{restrict_kernel}, if $q>0$ and each $Q_i(.)$ is assumed 
weakly irreducible;  for the Misanthrope's process  if the single-lane 
kernel $p(.,.)$ in \eqref{gen_mis} is weakly irreducible.\par\medskip
\noindent
2. {\em For non-nearest neighbour horizontal kernels.} 
Expression \eqref{eq:G-from-rho_0} of the flux function is still 
valid. When $q=0$, in Step six above, the use of 
\cite[Theorem 1.4]{Liggett1976} (which is restricted to nearest-neighbour 
kernels) can be replaced by the more general \cite{Bramson2002}.\par\medskip
\noindent
 3. {\em For non-nearest neighbour horizontal kernels and 
multilane models.} 
Suitable extensions of the incomplete ordering relations 
$\eta\supinf\xi$ and $\eta\bowtie\xi$ can be introduced when $q=0$.\par\medskip
\noindent
4. {\em For multilane models.} 
When $q=0$, an explicit expression of the form 
\eqref{flux_degenerate} for the flux when $q=0$  still holds. 
When $q>0$, we do not know how to obtain as detailed information 
on the flux function $G(\rho)$ as in Proposition \ref{prop:extrema}, 
because its expression is less explicit. However, 
one can still show that $G$ has finitely many extrema,  
which implies a weaker form of statement \textit{(vi)} 
in Proposition \ref{prop:extrema}: namely that the equation 
$G(\rho+k)-G(\rho)$ has finitely many solutions for any integer $k$. 
This allows us to infer in Step four above that the number of possible 
shock profiles is finite.\par\medskip
\noindent
5. {\em For Misanthrope's process.} 
The proof of Theorem \ref{cor:entire} \textit{(i)} is similar 
to the two-lane ASEP proof. Indeed using the symmetry of the jump kernel 
$p(.,.)$ in \eqref{gen_mis} and the gradient condition (M2), 
one can write the microscopic current as a gradient 
as in \eqref{current_diff}.\newline\newline
We next come to possible shock measures when $\gamma\neq 0$
 ($\gamma:=\sum_{z\in\Z}zp(z)$ denotes
the mean drift of the jump kernel). 
The flux function expressed as a function of fugacity is a 
ratio of two polynomials. Indeed, let
\be\label{density_fug}R(\lambda):=\mu_\lambda[\eta(0)]=
\frac{\sum_{k=0}^K k\lambda^k\theta_1(k)}{\sum_{k=0}^K \lambda^k\theta_1(k)}
\ee
denote the mean density as a function of fugacity. Then, 
\be\label{flux_mis_fug}
G(R(\lambda))=\gamma\mu_\lambda[b(\eta(0),\eta(1)]
=\gamma\frac{\sum_{k=0}^K\sum_{l=0}^Kb(k,l)
\lambda^{k+l}\theta_1(k)\theta_1(l)}
{\left(\sum_{k=0}^K\lambda^{k}\theta_1(k)\right)^2}
\ee
{}From this one can show that when $\gamma\neq 0$, $G(.)$ has 
finitely many extrema.
This leads (as above for multilane models) to the conclusion 
that there are finitely many possible shock profiles. The more complete result
under condition \eqref{similar_mis} can be obtained because for $K=2$, 
the misanthrope's flux is as explicit as that of the two-lane ASEP. \newline\newline
 More precisely, the following mapping  proven below  holds between fluxes of 
two-lane ASEP and two-particle misanthrope's process. 
\begin{lemma}\label{lemma:mapping_flux}
Let $K=2$. Without loss of generality, assume $\gamma=1$, 
$b(1,0)=1$, $b(2,0)=\alpha\geq 1$, $0<b(1,1)=\beta\leq 1$, 
$b(2,1)=\alpha-1$ (cf. \eqref{gradient_cond}).  Let
\[
G^M_{\alpha,\beta}(\rho):=\nu_{\rho,\alpha,\beta}[b(\eta(0),\eta(1))]
\]
denote the macroscopic flux function 
of the corresponding Misanthrope's process, where 
$\nu_{\rho,\alpha,\beta}$ is the product invariant measure of 
this process with mean density $\rho$,  see 
\eqref{repar_mis}, where we added notational dependence on $\alpha,\beta$. 
Denote by $G_{\gamma_0,\gamma_1,r}$ the flux function of the two-lane ASEP, 
cf. \eqref{symmetry_g}--\eqref{hom_g}.
Then we have
\be\label{mapping_fluxes}
G_{\gamma_0,\gamma_1,r}=G^M_{\alpha,\beta}
\ee
if the following relations hold:
\be\label{mapping_param_1}
\frac{r}{(1+r)^2}=\frac{\beta}{\alpha}
\ee
and
\be\label{mapping_param_2}
\gamma_0r+\gamma_1=1+r,\quad\gamma_0+\gamma_1=\alpha
\ee
In particular, for given $0<\beta\leq 1\leq\alpha$ such that 
 $\beta\leq {\alpha}/{4}$,  the system \eqref{mapping_param_1}--\eqref{mapping_param_2} 
has a unique solution $(r,\gamma_0,\gamma_1)$ such that $r\in(0,1]$ 
and $\gamma_0,\gamma_1\geq 0$.
\end{lemma}
We note that for $\gamma_0=\gamma_1$ and $r\to 0$, we obtain $\beta\to 0$ 
and $\alpha\to 2$. Thus the image of the set $\mathcal Z$ in 
\textit{(iii)} of Theorem \ref{cor:entire} is a neighbourhood of 
$(\alpha=2,\beta=0)$ excluding $\beta=0$. 
\begin{proof}[Proof of Lemma \ref{lemma:mapping_flux}]
For $n\in\{0,\ldots,K\}$, let 
\[
q(n):=\frac{b(n,0)}{b(1,n-1)}, \quad q(n)!:=\prod_{i=1}^n q(i)
\]
where by convention the empty product equals $1$. 
The one-site marginal of  $\nu^1$  is then given (\cite{coc}) by 
\[
\theta^1(n)=\frac{1}{q(n)!}
\]
Under the assumptions of the lemma, we have
\[
q(0)=0,\quad q(1)=1,\quad q(2)=\frac{\alpha}{\beta}
\]
Plugging this into \eqref{flux_mis_fug}, 
we obtain the density and flux of the 
misanthrope's process as functions of fugacity:
\begin{eqnarray}
R^M(\lambda) & = & \frac{\lambda+2\frac{\beta}{\alpha}\lambda^2}
{1+\lambda+\frac{\beta}{\alpha}\lambda^2} \label{density_mis_fug_2}\\
\widetilde{G}^M(\lambda) & = & \frac{\lambda+2\beta\lambda^2
+(\alpha-1)\frac{\beta}{\alpha}\lambda^3}
{\left(1+\lambda+\frac{\beta}{\alpha}\lambda^2\right)^2}\label{flux_mis_fig_2}
\end{eqnarray}
We want to match the above expressions with the density and flux 
of two-lane ASEP as functions of fugacity $\lambda$. These respectively 
correspond to 
$\rho_0+\rho_1$ in \eqref{density_twolane_fug} and 
$\widetilde{G}(\lambda)$ in \eqref{flux_twolane_fug}. 
They can be written  as follows,
first in $\lambda$, and then in $\Lambda:=(1+r)\lambda$:
\begin{eqnarray}
R(\lambda) & = & 
\frac{(1+r)\lambda+2r\lambda^2}{1+(1+r)\lambda+r\lambda^2}
=\frac{\Lambda+\frac{2r}{(1+r)^2}\Lambda^2}
{1+\Lambda+\frac{r}{(1+r)^2}\Lambda^2}=:S(\Lambda)\label{density_asfollows} \\
\widetilde{G}(\lambda) 
& = & \frac{(\gamma_0r+\gamma_1)\lambda+2(\gamma_0+\gamma_1)r\lambda^2
+r(\gamma_0+\gamma_1 r)\lambda^3}{\left[1+(1+r)\lambda+r\lambda^2\right]^2}\nonumber\\
& = & \frac{\frac{(\gamma_0r+\gamma_1)}{1+r}\Lambda+2(\gamma_0
+\gamma_1)\frac{r}{(1+r)^2}\Lambda^2+\frac{r}{(1+r)^3}(\gamma_0+\gamma_1 r)
\Lambda^3}{\left[1+\Lambda+\frac{r}{(1+r)^2}\Lambda^2\right]^2}\nonumber\\
& =: & \widetilde{H}(\Lambda)\label{flux_asfollows}
\end{eqnarray}
We see that $R^M=S$ and $\widetilde{G}^M=\widetilde{H}$ if 
\eqref{mapping_param_1} and \eqref{mapping_param_2} hold as well as
\[
\gamma_0+\gamma_1 r=(1+r)(\alpha-1)
\]
But the latter is actually a consequence of \eqref{mapping_param_2}. 
Finally, since $G=\widetilde{H}\circ S^{-1}$ and 
$G^M=\widetilde{G}^M\circ (R^M)^{-1}$, we obtain $G^M=G$.
\end{proof}
\section{Proof of Proposition \ref{prop:tight}}\label{app:tight}
First it is a standard fact for Markov processes that 
$M'\in\mathcal I$. We must prove that it is supported 
on the set 
$\mathcal X_{2,1}$ in \eqref{def_H_2_1}.
We consider the coupled process $(\eta_t,\xi_t)_{t\geq 0}$. 
Its distribution at time $t$ is denoted by $\overline{\nu}_t$, and we set
\[
\overline{M}_t:=\frac{1}{t}\int_0^t\overline{\nu}_s\,ds
\]
The family $(\overline{M}_t)_{t\geq 0}$ is tight because 
it is supported on the compact set $\mathcal X\times\mathcal X$. 
Thus there exists a subsequential weak limit $\overline{M}$ 
of  the  family such that $M'$ is the $\xi$-marginal of $\overline{M}$.
By attractiveness and particle conservation, $\xi_t$ is obtained 
from $\eta_t$ by adding a second-class particle at some site. 
This implies for any $t>0$,
\be\label{difference_H}
H_2(\xi_t)=H_2(\eta_t)+1=1
\ee
because $H_2(.)$ is a conserved quantity,
thus $M'_t$ is supported on $\mathcal X_{2,1}$.  
However since $H_2$ is not continuous on $\mathcal X_2$, 
it is not {\em a priori} true that $M'$ is supported on $\mathcal X_{2,1}$.
However we now prove that it is indeed the case.
We couple via a common Harris system 
(see Subsection \ref{subsec:couple}) the process $(\xi_t)_{t\geq 0}$ 
starting from $\xi$ and the processes $\left(\eta^{(n)}_t\right)_{t\geq 0}$ 
starting from $\eta^{(n)}:=\tau_n\eta$. We denote 
$\vec{\eta}:=\left(\eta^{(n)}\right)_{n\in\N}$, 
$\vec{\eta}_t:=\left(\eta^{(n)}_t\right)_{n\in\N}$ 
(so $\eta^{(0)}=\eta$ and $\eta^{(0)}_t=\eta_t$).
Let 
$\vec{\mathbb{P}}$ denote the law of the process 
$(\xi_t,\vec{\eta}_t)_{t\geq 0}$ starting from the random coupled 
configuration $(\xi,\vec{\eta})$. 
For $\eta\in\mathcal X_2$, we set
\[
X_1(\eta):=\inf\{x\in\Z:\,\eta(y,0)=\eta(y,1)=1\mbox{ for all }y\geq x\}\in\Z
\]
Note that $X_1(\eta)\geq X_0(\eta)$, cf. \eqref{def_leftmost}, and
\be\label{prob_shift}
\vec{\mathbb{P}}\left\{
\xi\leq\eta^{(n)}
\right\}\geq
\vec{\mathbb{P}}\left\{
X_1(\eta)-n\leq X_0(\eta)-1
\right\}\to 1\mbox{ as }n\to+\infty
\ee
The inequality above holds because if we shift  $\eta$ 
far enough to the left so that its fully occupied region 
$[X_1(\eta),+\infty)\cap\Z$ comes to the right of 
the second-class $\xi$-particle at $X_0(\eta)-1$, 
the shifted configuration becomes greater or equal than $\xi$. 
Then
\be\label{using_shift}
\vec{\mathbb{P}}\left\{
X_0(\xi_t)\geq X_0\left(\eta^{(n)}_t\right)
\right\}\geq\vec{\mathbb{P}}\left\{
\xi_t\leq\eta^{(n)}_t
\right\}\geq\vec{\mathbb{P}}\left\{
\xi\leq\eta^{(n)}
\right\}
\ee
where the first inequality follows from the fact that 
the position of the leftmost particle is a nonincreasing function, 
and the second one from attractiveness.
Since $\left(\eta^{(n)}_t\right)_{t\geq 0}$ is stationary, 
the family $\left\{X_0\left(\eta^{(n)}_t\right):\,t\geq 0\right\}$ is tight. 
We next write
\be\label{next_write}
\vec{\mathbb{P}}\left\{
X_0(\xi_t)<-A
\right\}\leq \vec{\mathbb{P}}\left\{
X_0(\xi_t)< X_0\left(\eta^{(n)}_t\right)
\right\}+\vec{\mathbb{P}}\left\{
X_0\left(\eta^{(n)}_t\right)<-A
\right\},
\ee
We use \eqref{prob_shift}, \eqref{using_shift}, \eqref{next_write} 
and the above mentioned tightness, let $A\to+\infty$ and then 
$n\to+\infty$, to obtain
\be\label{hence_tight}
\liminf_{A\to+\infty}\liminf_{t\to+\infty}
\vec{\mathbb{P}}\left\{X_0(\xi_t)\geq -A\right\}=1
\ee
On the other hand, since $\xi_t\geq\eta_t$ by attractiveness, we also have
\be\label{other_hand}
\vec{\mathbb{P}}\left\{
X_1(\xi_t)\leq X_1(\eta_t)
\right\}=1
\ee
Again using stationarity and thus tightness of the process 
$\left(X_1(\eta_t)\right)_{t\geq 0}$, we obtain
%
%
\be\label{hence_tight_2}
\liminf_{A\to+\infty}\liminf_{t\to+\infty}
\vec{\mathbb{P}}\left\{X_1(\xi_t)\leq A\right\}=1
\ee
Since for $\eta\in\mathcal X$,
\[
N(\eta):=\sum_{ x\in V:\,  x(0)\leq 0}
\eta(x)+\sum_{ x\in V: \, x(0)>0}[1-\eta(x)]
\leq 2[\max(0,X_1(\eta))+\max(0,-X_0(\eta)],
\]
by 
\eqref{hence_tight} and \eqref{hence_tight_2},
$(N(\xi_t))_{t\geq 0}$ is a tight family. 
This implies $M'$ is supported on $\mathcal X_2$.
For $A\in\N$, let 
\[
H_2^A(\eta):=H_2(\eta):=
\sum_{ x\in V:\,  x(0)\in[-A,0]}
\eta(x)-\sum_{ x\in V: \, x(0)\in[1,A]}[1-\eta(x)]
\]
Note that 
\be\label{truncate_H}
H_2=H_2^A\quad\mbox{on }\{\eta\in\mathcal X_2:\,-A\leq X_1(\eta)\leq X_2(\eta)\leq A\}
\ee
It follows from 
%
\eqref{hence_tight} and \eqref{hence_tight_2} that
\[
\lim_{A\to+\infty}\liminf_{t\to+\infty}\vec{\mathbb{P}}\left\{
H_2^A(\xi_t)=H_2^A(\eta_t)+1
\right\}=
\lim_{A\to+\infty}\liminf_{t\to+\infty}\overline{\nu}_t\left\{
H_2^A(\xi)=H_2^A(\eta)+1
\right\}
=1
\]
By Cesaro limit along a subsequence of $\overline{M}_t$ converging to $\overline{M}$,
\[
\lim_{A\to+\infty}\liminf_{t\to+\infty}\overline{M}_t\left\{
H_2^A(\xi)=H_2^A(\eta)+1
\right\}
=1=\lim_{A\to+\infty}\overline{M}\left\{
H_2^A(\xi)=H_2^A(\eta)+1
\right\}
\]
Further, since $\lim_{A\to +\infty}H_2^A(\eta)=H_2(\eta)$ 
for every $\eta\in\mathcal X_2$, and $H_2$ and $H_2^A$ 
take integer values, on $\mathcal X_2\times\mathcal X_2$, we have
\[
\limsup_{A\to+\infty}\left\{
H_2^A(\xi)=H_2^A(\eta)+1
\right\}=\left\{
H_2(\xi)=H_2(\eta)+1
\right\}
\]
It follows by Fatou's lemma that 
\[
\overline{M}\left\{
H_2(\xi)=H_2(\eta)+1
\right\}=1
\]
so $M'$ is indeed supported on $\mathcal X_{2,1}$.
\section{Proof of  Proposition  \ref{prop_kilroy}}\label{app:nodis}
Let us rewrite the coupled generator \eqref{coupled_gen} as
\be\label{coupled_gen_2}
\overline{L}f(\eta,\xi)=\sum_{(\eta',\xi')
\in\mathcal X\times\mathcal X}a[(\eta,\xi);(\eta',\xi')]\left[
f(\eta',\xi')-f(\eta,\xi)
\right]
\ee
where the rates $a[(\eta,\xi);(\eta',\xi')]$ are defined as follows. 
First, for any $(x,y)\in V$ such that $x\neq y$,
$a[(\eta,\xi);(\eta',\xi')]$ is given by
\be\label{coupled_rates}
\left\{
\ba{lll}
p(x,y)[\eta(x)(1-\eta(y))]\vee[\xi(x)(1-\xi(y))] 
& \mbox{if} & (\eta',\xi')=\left(\eta^{x,y},\xi^{x,y}\right)\\
p(x,y)[\eta(x)(1-\eta(y))-\xi(x)(1-\xi(y))]^+
 & \mbox{if} & (\eta',\xi')=\left(\eta^{x,y},\xi\right)\\
p(x,y)[\eta(x)(1-\eta(y))-\xi(x)(1-\xi(y))]^- 
& \mbox{if} & (\eta',\xi')=\left(\eta,\xi^{x,y}\right)
\ea
\right.
\ee
with the kernel $p(.,.)$ given by \eqref{eq:intensity_c}. 
Next, $a[(\eta,\xi);(\eta',\xi')]=0$ if there exists no $(x,y)\in V^2$ 
such that $x\neq y$ and $(\eta',\xi')\in\left\{
\left(\eta^{x,y},\xi^{x,y}\right), 
\left(\eta^{x,y},\xi\right), \left(\eta,\xi^{x,y}\right)
\right\}$.\newline\newline
If $a[(\eta,\xi);(\eta',\xi')]\neq 0$, 
we say there is a transition from $(\eta,\xi)$ to $(\eta',\xi')$.
 Recalling the notation $x\stackrel{k}{\rightarrow}y$ introduced before 
 Definition \ref{def_irred}, we shall prove the following. 
\begin{lemma}\label{lemma_nodis}
Let $\widetilde{\nu}\in\widetilde{\mathcal I}
\cap\widetilde{\mathcal S}$. 
Then \eqref{nodis} holds for every $(x,y)\in V\times V$ 
such that $x\neq y$, and 
$x\stackrel{k}{\rightarrow}y$ or $y\stackrel{k}{\rightarrow}x$ 
for some $k$.
\end{lemma}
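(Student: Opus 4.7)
I would prove the lemma by induction on the directed path length $k$, adapting the classical argument of \cite[Theorem 1.1]{Liggett1976} while using only translation invariance in the $\Z$-direction. By symmetry of the marginals, I may assume $x \stackrel{k}{\rightarrow} y$; the case $y \stackrel{k}{\rightarrow} x$ is identical after relabelling.

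For the base case $k=2$ (so $p(x,y)>0$), set $\widetilde{F}_N := D_{-N,N}$ and apply the coupled generator using the decomposition \eqref{computation_1}--\eqref{computation_3}. Stationarity of $\widetilde{\nu}$ gives $\int \widetilde{L}\widetilde{F}_N \, d\widetilde{\nu} = 0$; the boundary terms \eqref{computation_1}--\eqref{computation_2} collapse to $\widetilde{\nu}(\tau_{-N-1}\widetilde{j}) - \widetilde{\nu}(\tau_N \widetilde{j})$, uniformly bounded in $N$ thanks to \eqref{cond_lig} and translation invariance in $\Z$. Equating the boundary contribution to the nonnegative dissipation $\sum_{u,v\in R_N,\, u\neq v}[p(u,v)+p(v,u)]\widetilde{\nu}(D_{u,v})$ from \eqref{computation_3}, and isolating the contribution of horizontal translates of the pair $(x,y)$ lying in $R_N\times R_N$, I obtain via translation invariance of both the kernel and $\widetilde{\nu}$ a lower bound of the form $(2N+1-|y(0)-x(0)|)\, p(x,y)\, \widetilde{\nu}(E_{x,y})$. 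A positive value of $\widetilde{\nu}(E_{x,y})$ would drive this to infinity as $N\to\infty$, contradicting the uniform boundary bound; hence $\widetilde{\nu}(E_{x,y}) = 0$.

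For the inductive step, assume the conclusion for all pairs joined by a directed path of length $\leq k$, and let $x = x_0, x_1, \ldots, x_k = y$ be a path of length $k+1$. Suppose $\widetilde{\nu}(E_{x,y})>0$ for contradiction; by symmetry in $(\eta,\xi)$, restrict to the positive-measure subset carrying an $\eta$-discrepancy at $x_0$ and a $\xi$-discrepancy at $x_k$. The inductive hypothesis forbids any discrepancy at an intermediate $x_i$ ($1\leq i\leq k-1$): an $\eta$-discrepancy there would give opposite discrepancies at $(x_i, x_k)$ (path length $k-i+1\leq k$), and a $\xi$-discrepancy at $(x_0,x_i)$ (length $i+1\leq k$). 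So almost surely each intermediate site is a coupled hole or a coupled particle. Using the Harris construction on a bounded interval $[0,T]$, I then exhibit a positive-probability event driving the process into a state whose discrepancy pair is joined by a shorter directed path: \textit{(i)} if some $x_j$ ($1\leq j\leq k-1$) is a coupled hole, fire successively the arrows $x_{j-1}\to x_j,\, x_{j-2}\to x_{j-1},\, \ldots,\, x_1\to x_2$ (each swapping a coupled particle--hole pair and cascading the hole leftward to $x_1$), then fire $x_0\to x_1$ to transport the $\eta$-discrepancy from $x_0$ to $x_1$, landing in $E_{x_1, x_k}$; \textit{(ii)} otherwise all intermediate sites are coupled particles, and a single firing of $x_{k-1}\to x_k$ creates a $\xi$-discrepancy at $x_{k-1}$ while leaving a coupled particle at $x_k$, landing in $E_{x_0, x_{k-1}}$. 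In either case, stationarity of $\widetilde{\nu}$ would give positive measure to a set whose discrepancy pair is joined by a path of length $\leq k$, contradicting the inductive hypothesis.

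The main technical obstacle lies in the inductive step: one must verify that, irrespective of the intermediate blocker pattern, a finite sequence of Poisson arrows of positive probability brings the process into a state of shorter discrepancy path. This rests on the observation that coupled particles and coupled holes can be shuffled along the path by jump arrows without disturbing the endpoint discrepancies at $x_0$ and $x_k$, together with the fact that, thanks to the boundedness \eqref{cond_lig} of the kernel, confining the required firings to the interval $[0,T]$ with no extraneous firings in a finite neighborhood is a positive-probability event for the Harris system.
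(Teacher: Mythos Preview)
Your proof is correct and follows the same induction-on-path-length strategy as the paper: the base case via the nonnegative dissipation term in $\int\widetilde{L}D_{-N,N}\,d\widetilde{\nu}=0$, and the inductive step by exhibiting explicit coupled transitions that shorten the discrepancy path. The paper's differences are cosmetic---it observes that the boundary currents cancel \emph{exactly} under $\widetilde{\mathcal S}$ (rather than being merely bounded) and phrases the inductive step through a generator inequality $\widetilde{L}{\bf 1}_{A_n}\geq a_n{\bf 1}_{A_{n+1}}-b_n{\bf 1}_{A_n}$ instead of the Harris system; in your case~(i) you should take $j$ \emph{minimal} so that $x_1,\ldots,x_{j-1}$ are guaranteed to be coupled particles and your cascade goes through verbatim.
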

\begin{proof}[Proof of Lemma \ref{lemma_nodis}]
We prove by induction on $k$ that \eqref{nodis} 
holds for every $(x,y)\in V\times V$ 
such that $x\neq y$ and $x\stackrel{k}{\rightarrow}y$.
Applying the statement to $(\xi,\eta)$ then shows that it holds for 
$(\eta,\xi)$ and $y\stackrel{k}{\rightarrow}x$. \newline\newline
 We now use the computation done between \eqref{sum_rect} 
 and  \eqref{coupled_current}.
 The sums in  \eqref{computation_1}--\eqref{computation_2} 
 are boundary contributions, 
 that we denote respectively by $\Gamma_N^i(\eta,\xi)$ 
 and $\Gamma_N^o(\eta,\xi)$.  
 Since $\widetilde{\nu}\in\widetilde{\mathcal I}$, we have
\be\label{invtilde-2}
\int_{\mathcal X\times\mathcal X}
\widetilde{L} F_N(\eta,\xi)  d\widetilde{\nu}(\eta,\xi)=0
\ee
 We have  to exploit \eqref{invtilde-2}; for this 
 we distinguish between the two assumptions: \newline\newline
\textit{First case.} We assume that 
$\widetilde{\nu}\in\widetilde{\mathcal S}$. 
Since $J_{((u+z,i),(v+z,j))}=\tau_z J_{(u,i),(v,j)}$ 
for all $u,v,z\in\Z$,  we have
\be\label{cancel}
\int_{\mathcal X\times\mathcal X}
\Gamma_N^i(\eta,\xi)d\widetilde{\nu}(\eta,\xi)
-\int_{\mathcal X\times\mathcal X}
\Gamma_N^o(\eta,\xi)d\widetilde{\nu}(\eta,\xi)=0
\ee
 \textit{Second case.} We assume 
 \eqref{assumption_finite_disc}. The latter with the inequalities
\begin{eqnarray*}
|\Gamma_N^i(\eta,\xi)| 
& \leq & \sum_{i\in W}l_i(|\eta(-N-1,i)
-\xi(-N-1,i)|+|\eta(-N,i)-\xi(-N,i)|) \\
|\Gamma_N^o(\eta,\xi)| & \leq & 
\sum_{i\in W}d_i(|\eta(N,i)-\xi(N,i)|+|\eta(N+1,i)-\xi(N+1,i)|)
\end{eqnarray*}
leads to
\be\label{cancel_2}
\lim_{N\to+\infty}\left\{
\int_{\mathcal X\times\mathcal X}
\Gamma_N^i(\eta,\xi)d\widetilde{\nu}(\eta,\xi)
-\int_{\mathcal X\times\mathcal X}
\Gamma_N^o(\eta,\xi)d\widetilde{\nu}(\eta,\xi)
\right\}=0
\ee 
 Using  \eqref{cancel} for all $N$,  
 we obtain that for every $(x,y)\in V^2$ such that $p(x,y)>0$,
\[
\int_{\mathcal X\times\mathcal X} 
D_{x,y}(\eta,\xi)d\widetilde{\nu}(\eta,\xi)=0
\]
This implies \eqref{nodis} for $k=1$. \newline\newline
Now assume \eqref{nodis} holds for $k-1$.
If $A$ is a subset of $\mathcal X\times\mathcal X$ and 
$(\eta,\xi)\in\mathcal X\times\mathcal X$, 
we write  $(\eta,\xi)\stackrel{n}{\rightarrow}A$  
if there exists a sequence of coupled 
configurations,  $(\eta_0,\xi_0)
=(\eta,\xi),\ldots,(\eta_n,\xi_n)=(\eta',\xi')$,  
such that $a[(\eta_i,\xi_i);(\eta_{i+1},\xi_{i+1})]>0$ 
for every  $i=0,\ldots,n-1$,  and $(\eta',\xi')\in A$.
Assume $A=A_0$ is a local set (that is, 
such that its indicator function is a local function) and 
\begin{eqnarray*}
A_n & := & \{(\eta,\xi)\in\mathcal X\times\mathcal X:\,
(\eta,\xi)\stackrel{n}{\rightarrow} A_0\}\\
A'_n & := & \{(\eta,\xi)\in\mathcal X\times\mathcal X:\,
(\eta,\xi)\stackrel{i}{\rightarrow} A_0\mbox{ for some }i\leq n\}
\end{eqnarray*} 
 Then \eqref{coupled_gen_2}--\eqref{coupled_rates} 
 implies that there exist positive 
 constants  $a_n,b_n$ such that
\be\label{genleadsto}
 \widetilde{L}{\bf 1}_{A_n}  \geq a_n{\bf 1}_{A_{n+1}}-b_n{\bf 1}_{A_n}
\ee
 Iterating \eqref{genleadsto} shows that if 
 $\widetilde{\nu}\in\widetilde{\mathcal I}$ and 
 $\widetilde{\nu}(A)=0$, then  $\widetilde{\nu}(A_n)=0$,  
 hence $\widetilde{\nu}(A'_n)=0$. 
For the induction step, we use  this as follows. 
Let $E^n$ denote the set of 
coupled configurations $(\eta,\xi)\in\mathcal X\times\mathcal X$ 
such that 
there is no pair of opposite discrepancies at sites $x,y\in V$ if 
$x\stackrel{i}{\rightarrow}y$ or $y\stackrel{i}{\rightarrow}x$ 
for any $i\leq n$.
We choose $A_0=E^{k-1}$ so that $\widetilde{\nu}(A_0)=0$ 
by the induction assumption. 
Then we claim that $E^k$ is contained in $A'_{k-1}$.  Indeed,
assume $x\stackrel{k}{\rightarrow}y$ and
$(\eta,\xi)\in E_{x,y}$.   Let $(x=x_0,\ldots,x_k=y)$ denote a $p$-path 
from $x$ to $y$. By the induction assumption, 
$\widetilde{\nu}$-almost surely, 
we have $\eta(x_i)=\xi(x_i)$ for all 
$i=1,\ldots,k-1$. If $\eta(x_1)=\xi(x_1)=0$, then
$(\eta^{x_0,x_1},\xi^{x_0,x_1})\in E_{x_1,y}$. 
Otherwise let $i^*$ be the maximum index $i$ such that 
$\eta(x_i)=\xi(x_i)=1$. Then
one can find a sequence of at most $k-1$ transitions 
leading from $(\eta,\xi)$ 
to some $(\eta',\xi')\in E_{x,x_{k-1}}$ as follows: 
\textit{(i)}  if $i^*<k-1$, 
the coupled particle at $x_{i^*}$ jumps from $x_{i^*}$ 
to $x_{k-1}$ along the path; 
\textit{(ii)} the coupled particle at $x_{k-1}$ 
exchanges with the $\xi$-discrepancy at $y$. 
\end{proof}
\end{appendix}
\noindent 
{\bf Acknowledgements.} 
This work  has been conducted within the FP2M federation 
(CNRS FR 2036) and was partially supported by laboratoire MAP5,
grants ANR-15-CE40-0020-02 and ANR-14-CE25-0011 (for C.B. and E.S.),
LabEx CARMIN (ANR-10-LABX-59-01).
G.A. was supported by the Israel Science Foundation grant \# 957/20.
O.B. was supported by EPSRC's EP/R021449/1 Standard Grant.
Part of this work was done during the stay of C.B, O.B. and E.S. at the
Institut Henri Poincar\'e (UMS 5208 CNRS-Sorbonne Universit\'e) -
Centre Emile Borel for
the trimester ``Stochastic Dynamics Out of Equilibrium''.
The authors thank these institutions for hospitality and support.
C.B., O.B. and E.S. thank
Universit\'{e} Paris Descartes for hospitality, as well as Villa Finaly
(where they attended the conference ``Equilibrium and 
Non-equilibrium Statistical Mechanics'').
%
\bibliographystyle{plain}
\bibliography{MLTnew}

\begin{thebibliography}{10}

\bibitem{abbs2}
G.~Amir, C.~Bahadoran, O.~Busani, and E.~Saada.
\newblock Hydrodynamic limit of multilane asymmetric exclusion processes.
\newblock {\em In preparation}, 2023.

\bibitem{Bahadoran2002}
C.~Bahadoran, H.~Guiol, K.~Ravishankar, and E.~Saada.
\newblock A constructive approach to {E}uler hydrodynamics for attractive
  processes. {A}pplication to {$k$}-step exclusion.
\newblock {\em Stochastic Process. Appl.}, 99(1):1--30, 2002.

\bibitem{bm}
C.~Bahadoran and T.~Mountford.
\newblock Convergence and local equilibrium for the one-dimensional asymmetric
  exclusion process.
\newblock {\em Probab. Theory Related Fields}, 136:341--362, 2006.

\bibitem{bfj}
M.~Bal\'azs, D.~Fretwell, and J.~Jay.
\newblock Interacting {P}article {S}ystems and {J}acobi style identities.
\newblock {\em Research in the Mathematical Sciences}, 9:Article number: 48,
  2022.

\bibitem{Bramson2002}
M.~Bramson, T.~M. Liggett, and T.~Mountford.
\newblock Characterization of stationary measures for one-dimensional exclusion
  processes.
\newblock {\em Ann. Probab.}, 30(4):1539--1575, 2002.

\bibitem{ligd}
M.~Bramson and T.M. Liggett.
\newblock Exclusion processes in higher dimensions: stationary measures and
  convergence.
\newblock {\em Ann. Probab.}, 33:2255--2313, 2005.

\bibitem{BMM}
M.~Bramson and T.~Mountford.
\newblock Stationary blocking measures for one-dimensional nonzero mean
  exclusion processes.
\newblock {\em Ann. Probab.}, 30(3):1082--1130, 2002.

\bibitem{coc}
C~Cocozza-Thivent.
\newblock Processus des misanthropes.
\newblock {\em \em Z. Wahrsch. Verw. Gebiete}, 70:509--523, 1985.

\bibitem{ff}
P.~Ferrari and L.R. Fontes.
\newblock Shock fluctuations in the asymmetric simple exclusion process.
\newblock {\em Probab. Theory Related Fields}, 99:305--319, 1994.

\bibitem{fks}
P.~Ferrari, C.~Kipnis, and E.~Saada.
\newblock Microscopic structure of travelling waves in the asymmetric simple
  exclusion process.
\newblock {\em Ann. Probab.}, 19:226--244, 1991.

\bibitem{fls}
P.~Ferrari, J.~Lebowitz, and E.~Speer.
\newblock Blocking measures for asymmetric exclusion processes via coupling.
\newblock {\em Bernoulli}, 7:935--950, 2001.

\bibitem{Harris72}
T.~E. Harris.
\newblock Nearest-neighbor {M}arkov interaction processes on multidimensional
  lattices.
\newblock {\em Adv. Math.}, 9:66--89, 1972.

\bibitem{Liggett1976}
T.~M. Liggett.
\newblock Coupling the simple exclusion process.
\newblock {\em Ann. Probab.}, 4(3):339--356, 1976.

\bibitem{liggett2012interacting}
T.~M. Liggett.
\newblock {\em Interacting particle systems}.
\newblock Classics in Mathematics. Springer-Verlag, Berlin, 2005.
\newblock Reprint of the 1985 original.

\bibitem{ligsym}
T.M. Liggett.
\newblock A characterization of the invariant measures for an infinite particle
  system with interactions.
\newblock {\em Trans. Amer. Math. Soc.}, 179:433--453, 1973.

\bibitem{rvw}
F.~Redig and H.~van Wiehcen.
\newblock Ergodic theory of multi-layer interacting particle systems.
\newblock {\em J. Stat. Phys.}, 190. issue 4:Article number: 88, 2023.

\bibitem{spi}
F.~Spitzer.
\newblock Interaction of {M}arkov processes.
\newblock {\em Adv. Math.}, 5:246--290, 1973.

\bibitem{zhang-lad}
F.~X. Zhang.
\newblock Asymptotic behavior of a tagged particle in the exclusion process on
  parallel lattices.
\newblock {\em Science China Mathematics}, 10:2069--2080, 2015.

\end{thebibliography}

\end{document}